\def\Tiny{\font\Tinyfont = cmr10 at 5pt \relax  \Tinyfont}
\definecolor{darkblue}{rgb}{0,0,0.4}
\numberwithin{equation}{section}
\newtheorem{thm}{Theorem}[section]
\newtheorem{theorem}[thm]{Theorem}
\newtheorem{lem}{Lemma}[section]               
\newtheorem{lemma}[lem]{Lemma}
\newtheorem{corollary}[lem]{Corollary}               
\newtheorem{prop}[lem]{Proposition}
\newtheorem{proposition}[lem]{Proposition}
\theoremstyle{definition}
\newtheorem{defn}[lem]{Definition} 
\newtheorem{definition}[lem]{Definition} 
\newtheorem{conjecture}[lem]{Conjecture}  
\theoremstyle{remark}     
\newtheorem{rem}{Remark}[section]
\newtheorem{remark}[rem]{Remark}
\newtheorem{exam}[rem]{Example}
\numberwithin{figure}{section}
\newcommand{\Section}[1]{\hyperref[sec:#1]{Section~\ref*{sec:#1}}}
\newcommand{\Subsection}[1]{\hyperref[subsec:#1]{Subsection~\ref*{subsec:#1}}}
\newcommand{\Lemma}[1]{\hyperref[lem:#1]{Lemma~\ref*{lem:#1}}}
\newcommand{\Theorem}[1]{\hyperref[thm:#1]{Theorem~\ref*{thm:#1}}}
\newcommand{\Definition}[1]{\hyperref[def:#1]{Definition~\ref*{def:#1}}}
\newcommand{\Remark}[1]{\hyperref[rem:#1]{Remark~\ref*{rem:#1}}}
\newcommand{\Figure}[1]{\hyperref[fig:#1]{Figure~\ref*{fig:#1}}}
\newcommand{\Conjecture}[1]{\hyperref[conj:#1]{Conjecture~\ref*{conj:#1}}}
\newcommand{\Corollary}[1]{\hyperref[cor:#1]{Corollary~\ref*{cor:#1}}}
\newcommand{\Proposition}[1]{\hyperref[prop:#1]{Proposition~\ref*{prop:#1}}}
\newcommand{\Question}[1]{\hyperref[ques:#1]{Question~\ref*{ques:#1}}}
\newcommand{\Example}[1]{\hyperref[exam:#1]{Example~\ref*{exam:#1}}}
\newcommand{\Equation}[2][{}]{Equation#1~(\ref{eq:#2})}
\newcommand{\Diagram}[2][{}]{Diagram#1~(\ref{eq:#2})}
\newcommand{\Case}[2][{}]{Case#1~(\ref{case:#2})}
\newcommand{\Condition}[2][{}]{Condition#1~(\ref{condition:#2})}
\newcommand{\ConditionCont}[1]{(\ref{condition:#1})}
\newcommand{\Part}[2][{}]{Part#1~(\ref{item:#2})}
\newcommand{\PartCont}[1]{(\ref{item:#1})}
\newcommand{\Item}[1]{(\ref{item:#1})}
\newcommand{\Object}[1]{(\ref{#1})}
\newcommand{\R}{\mathbb{R}}
\newcommand{\Z}{\mathbb{Z}}
\newcommand{\F}{\mathbb{F}}
\newcommand{\N}{\mathbb{N}}
\newcommand{\E}{\mathbb{E}}
\newcommand{\mf}{\mathfrak}
\newcommand{\wt}{\widetilde}
\newcommand{\ol}{\overline}
\newcommand{\del}{\partial}
\newcommand{\sbs}{\subset}
\newcommand{\sbseq}{\subseteq}
\newcommand{\sm}{\setminus}
\renewcommand{\emptyset}{\varnothing}
\newcommand{\interior}{\mathring}
\newcommand{\smas}{\wedge}
\newcommand{\si}{\sigma}
\newcommand{\ep}{\epsilon}
\newcommand{\from}{\colon}
\newcommand{\into}{\hookrightarrow}
\newcommand{\imto}{\looparrowright}
\newcommand{\onto}{\twoheadrightarrow}
\newcommand{\sequence}[3][1]{{#2}_{#1},\allowbreak\dots,\allowbreak{#2}_{#3}}
\newcommand{\tuple}[3][1]{(\sequence[#1]{#2}{#3})}
\newcommand{\set}[2]{\{#1\mid#2\}}
\newcommand{\restrict}[2]{{#1}|_{#2}}
\renewcommand{\th}{^{\text{th}}}
\renewcommand{\hat}{\widehat}
\DeclareMathOperator{\Ob}{Ob}
\DeclareMathOperator{\Id}{Id}
\DeclareMathOperator{\Sq}{Sq}
\DeclareMathOperator{\colim}{colim}
\DeclareMathOperator{\Hom}{Hom}
\DeclareMathOperator*{\fiberprod}{\times}
\DeclareMathOperator*{\fibercup}{\cup}
\newcommand{\Kh}{\mathit{Kh}}
\newcommand{\rKh}{\widetilde{\Kh}}
\newcommand{\KhCx}{\mathit{KC}}
\newcommand{\rKhCx}{\widetilde{\KhCx}}
\newcommand{\Cat}{\mathscr{C}}
\newcommand{\Funky}{\mathscr{F}}
\newcommand{\MorseFlowCat}{\mathscr{C_M}}
\newcommand{\Realize}[2][{}]{|#2|_{#1}}
\newcommand{\Codim}[1]{\langle#1\rangle}
\newcommand{\CubeFlowCat}{\mathscr{C}_C}
\newcommand{\KhFlowCat}{\mathscr{C}_K}
\newcommand{\rKhFlowCat}{\widetilde{\mathscr{C}_K}}
\newcommand{\Moduli}{\mathcal{M}}
\newcommand{\ParaModuli}{\widetilde{\mathcal{M}}}
\newcommand{\Forget}{\mathcal{F}}
\newcommand{\gen}[1]{#1}
\newcommand{\ob}[1]{\mathbf{#1}}
\newcommand{\gr}{\mathrm{gr}}
\newcommand{\intgr}{\gr_{q}}
\newcommand{\homgr}{\gr_{h}}
\newcommand{\expect}{\mathit{exp}}
\newcommand{\KhSpace}{\mathcal{X}_\mathit{Kh}}
\newcommand{\rKhSpace}{\widetilde{\mathcal{X}}_\mathit{Kh}}
\newcommand{\Euclid}[2]{\mathbb{E}^{#2}_{#1}}
\newcommand{\ESpace}[3][{}]{\mathbb{E}_{#1}[#2:#3]}
\newcommand{\Cell}[2][{}]{\mathcal{C}_{#1}(#2)}
\newcommand{\CellPrime}[2][{}]{\mathcal{C}'_{#1}(#2)}
\newcommand{\Tup}{}
\newcommand{\TupV}{\mathbf}
\newcommand{\Frame}{\varphi}
\newcommand{\Cube}{\mathcal{C}}
\newcommand{\imprec}{\leq_1}
\newcommand{\BO}{\mathit{BO}}
\newcommand{\EO}{\mathit{EO}}
\newcommand{\Grassmann}[1]{\mathit{Gr}(#1)}
\newcommand{\AssRes}[2]{D_{#1}(#2)}
\newcommand{\CubeCat}[1]{\underline{2}^{#1}}
\newcommand{\CubeSubCat}[1]{\underline{2}^{#1}\setminus\overline{1}}
\newcommand{\JCat}{\mathcal{J}}
\newcommand{\diff}{\delta}
\newcommand{\vect}{\overline}
\newcommand{\Permu}[1]{{P}_{#1}}
\newcommand{\matching}{-}
\newcommand{\mirror}[1]{m(#1)}
\newcommand{\SWdual}[1]{#1^\vee}
\newcommand{\inout}{*}
\newcommand{\OutIn}{right}
\newcommand{\InOut}{left}
\newcommand{\co}{\colon}
\newcommand{\bdy}{\partial}
\newcommand{\RR}{\R}
\newcommand{\DD}{\mathbb{D}}
\DeclareMathOperator{\ind}{ind}
\DeclareMathOperator{\rank}{rank}
\newcommand{\pt}{\mathrm{pt}}
\newcommand{\CInc}{\mathcal{I}}
\newcommand{\ZZ}{\mathbb{Z}}
\DeclareMathOperator{\cokernel}{coker}
\newcommand{\respectively}{resp.\ }
\begin{document}
\title[A Khovanov stable homotopy type]{A Khovanov stable homotopy type}

\author{Robert Lipshitz}
\thanks{RL was supported by an NSF grant number DMS-0905796 and a Sloan Research Fellowship.}
\email{\href{mailto:lipshitz@math.columbia.edu}{lipshitz@math.columbia.edu}}

\author{Sucharit Sarkar}
\thanks{SS was supported by a Clay Mathematics Institute Postdoctoral Fellowship}
\email{\href{mailto:sucharit@math.columbia.edu}{sucharit@math.columbia.edu}}

\subjclass[2010]{\href{http://www.ams.org/mathscinet/search/mscdoc.html?code=57M25,55P42}{57M25,
    55P42}}

\address{Department of Mathematics, Columbia University, New York, NY 10027}
\keywords{}

\date{\today}

\begin{abstract}
  Given a link diagram $L$ we construct spectra $\KhSpace^j(L)$ so
  that the Khovanov homology $\Kh^{i,j}(L)$ is isomorphic to the
  (reduced) singular cohomology
  $\widetilde{H}^{i}(\KhSpace^j(L))$. The construction of
  $\KhSpace^j(L)$ is combinatorial and explicit. We prove that the
  stable homotopy type of $\KhSpace^j(L)$ depends only on the isotopy class
  of the corresponding link.
\end{abstract}

\maketitle

\tableofcontents

\section{Introduction}
In~\cite{Kho-kh-categorification}, Khovanov introduced an elaboration
of the Jones polynomial, now generally called Khovanov homology. The
Khovanov homology of a link $L$ takes the form of a bigraded abelian
group $\Kh^{i,j}(L)$, the homology of a bigraded chain complex which
we denote $\KhCx^{i,j}(L)$. Khovanov homology relates to the Jones
polynomial by taking the graded Euler characteristic:
\[
\chi(\Kh^{i,j}(L))=\sum_{i,j}(-1)^i q^j\rank\Kh^{i,j}(L)=(q+q^{-1})V(L).
\]

In this paper we give a space-level version of Khovanov homology. That
is, we construct a family of (suspension) spectra $\KhSpace^j(L)$ so
that the Khovanov homology of $L$ is the (reduced) singular cohomology
of these spaces. To be precise:
\begin{thm}\label{thm:kh-space}
  Let $L$ be an oriented link diagram. Let
  $\KhSpace(L)=\bigvee_j\KhSpace^j(L)$ 
  be the Khovanov spectrum (\Definition{Kh-space}) and let $\Kh(L)$ be the
  Khovanov homology (\Definition{knot-diag-to-kh-chain-complex}). Then
  the following hold.
  \begin{enumerate}[label=($\Theta$-\arabic*), ref=$\Theta$-\arabic*]
  \item\label{item:thm-refines} The reduced cohomology of
    $\KhSpace^j(L)$ is the Khovanov homology $\Kh^{*,j}(L)$:
    \[
    \wt{H}^i(\KhSpace^j(L))=\Kh^{i,j}(L).
    \]
  \item\label{item:thm-invariant} The stable homotopy type of $\KhSpace^j(L)$
    is an invariant of the isotopy class of the link corresponding to
    $L$ (and is independent of all the other choices in its construction).
  \end{enumerate}
\end{thm}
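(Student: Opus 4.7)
The plan is to work entirely at the level of framed flow categories in the sense of Cohen--Jones--Segal, so that $\KhSpace^j(L)$ arises as the geometric realization of such a category $\KhFlowCat^j(L)$ associated to the quantum-graded sub-cube of the Khovanov cube of resolutions. With this setup, part~\ref{item:thm-refines} becomes almost tautological: the cellular cochain complex of a CJS realization is the ``Morse'' complex of the framed flow category, with generators the objects (shifted by the framing dimension) and differential given by signed counts of zero-dimensional framed moduli spaces. I would arrange for the objects of $\KhFlowCat^j(L)$ in homological grading $i$ to be the Khovanov generators in bigrading $(i,j)$, and for the zero-dimensional moduli spaces to reproduce Khovanov's matrix elements with his sign conventions. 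The identification $\wt{H}^i(\KhSpace^j(L))\cong\Kh^{i,j}(L)$ then follows by unwinding definitions.

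For part~\ref{item:thm-invariant}, the strategy splits into (a) independence of the auxiliary choices made in constructing $\KhFlowCat^j(L)$ (ordering of crossings, sign assignment making the cube anti-commutative, ladybug matchings, neat embedding and framing), and (b) invariance under the three Reidemeister moves. For (a), I would isolate a small list of ``moves'' on framed flow categories---reordering of objects, stabilization of the framing, and framed cobordism of flow categories---that are known to preserve the stable homotopy type of the realization, and then check that any two admissible choices produce flow categories related by a sequence of such moves. For (b), I would model the argument on Bar-Natan's chain-level proof of Reidemeister invariance, which proceeds by cancelling acyclic summands via Gaussian elimination. The corresponding operation at the spectrum level is the cancellation of a pair of objects connected by an invertible morphism in $\KhFlowCat^j$, and I would prove once and for all that such cancellations correspond to collapsing a contractible subcomplex of the realization, hence induce stable equivalences.

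The main obstacle, I expect, is twofold. The ladybug configurations---where two distinct matchings of parallel saddle cobordisms produce a priori distinct flow categories whose underlying chain complexes nonetheless agree---force one to argue consistency at a level that is invisible to the chain homotopy type; this will require a direct comparison of the two-dimensional moduli spaces and their framings. Second, lifting Reidemeister invariance (especially \textbf{RIII}) requires that the cancellation lemma operate inside flow categories with nontrivial higher-dimensional moduli spaces, so that after cancellation the framings and compactifications on the surviving objects can be coherently reassembled. Verifying these compatibilities---in effect, proving that the Khovanov construction is rigid enough at the \emph{two}-dimensional level, not just the one-dimensional level relevant for the differential---is where I anticipate the bulk of the technical work will lie.
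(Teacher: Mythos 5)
Your part~(\ref{item:thm-refines}) and your treatment of the embedding/framing choices match the paper's argument. The genuine gap is in your ``once and for all'' cancellation lemma for the Reidemeister moves. Gaussian elimination of an arbitrary pair of objects joined by an invertible morphism changes the differential among the surviving generators (it introduces zig-zag terms), so at the space level it is \emph{not} simply the collapse of a contractible subcomplex; and lifting such a cancellation to the flow-category level would require constructing new moduli spaces, compactifications and framings for the reduced category --- precisely the ``reassembly'' you flag as the hard point, and for which you give no mechanism. The paper never does this. It never modifies a flow category at all: it only ever deletes upward- or downward-closed subcategories whose associated chain complexes are acyclic. By \Lemma{down-sub-up-quot} such a subcategory corresponds to a subcomplex or quotient complex of the CW realization, and \Lemma{exact-sequence-space} (the long exact sequence of the pair plus Whitehead's theorem) shows that the inclusion, quotient, or Puppe map is a stable equivalence. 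For RII, and especially for RIII (\Proposition{RIII}), the entire content of the proof is to order Bar-Natan's cancellations so that \emph{every} step removes a two-generator acyclic piece that is itself a subcomplex or quotient complex of what remains, so that no new morphisms are ever introduced; your proposal does not identify this constraint, and without it the plan does not go through. (Sign-assignment independence is likewise not handled by generic ``moves'': the paper interpolates between two sign assignments by taking a disjoint union with a one-crossing unknot and again collapsing an acyclic closed subcategory.)

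The second gap is the ladybug matching. You propose a ``direct comparison of the two-dimensional moduli spaces and their framings,'' but the two matchings genuinely produce different flow categories and no direct isomorphism or framed cobordism between them is constructed anywhere; it is not clear such a comparison can be made to work. The paper's argument (\Proposition{ladybug-invariance}) is indirect: reflecting the diagram across a vertical axis interchanges the \InOut{} and \OutIn{} pairs, giving an isomorphism of framed flow categories $\KhFlowCat(L)\cong\KhFlowCat^{*}(L')$ where $L'$ is a diagram of the same link; combining this with Reidemeister invariance (already established for a fixed choice of matching) yields independence of the matching. So the matching should be handled \emph{after} Reidemeister invariance, by symmetry, rather than up front by comparing moduli spaces.
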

In particular, the Euler characteristic of our space $\KhSpace^j(L)$
is the coefficient of $q^i$ in the Jones polynomial
$(q+q^{-1})V(L)$. (There is also a reduced version of both theories,
which eliminates the $q+q^{-1}$, but, as with Khovanov homology itself, the
relationship between the reduced and un-reduced homotopy types is not
entirely trivial.)

Our work is inspired by a question posed by Cohen-Jones-Segal fifteen
years ago~\cite{CJS-gauge-floerhomotopy}: is Floer homology the ordinary
(singular) homology of some naturally associated space (or rather,
spectrum)?  Since then, such a space has been constructed in several
interesting
cases~\cite{Man-gauge-swspectrum,Man-gauge-gluing,KM-gauge-swspectrum,Kragh:transfer-spectra,Cohen10:Floer-htpy-cotangent},
but the question remains open in general.

In fact, Cohen-Jones-Segal went further: they proposed a construction
of such a spectrum using the higher-dimensional Floer moduli
spaces. These moduli spaces are encoded in a structure they call a
\emph{framed flow category}; and to any framed flow category they
showed how to associate a CW complex. A framed flow category is a
category $\Cat$, together with a $\ZZ$-valued grading function $\gr\co
\Ob(\Cat)\to\ZZ$ such that for $x,y\in \Ob(\Cat)$, $\Hom(x,y)$ is a
compact $(\gr(x)-\gr(y)-1)$-dimensional manifold with corners,
satisfying certain compatibility conditions with respect to the
composition maps and corner structure (see
Definitions~\ref{def:flow-cat} and~\ref{def:framed-flow-cat}). The
realization $\Realize{\Cat}$ of a framed flow category $\Cat$ is not
the usual geometric realization of a (topological) category, but
rather a generalization of the Dold-Thom construction. For example, in the
special case that $\Cat=\{x_1,\dots,x_k,y_1,\dots,y_\ell\}$ with
$\gr(x_i)=n+1$, $\gr(y_j)=n$, the realization $\Realize{\Cat}$ is obtained from a
wedge sum of $\ell$ $n$-spheres, one for each $y_j$, by attaching $k$
$(n+1)$-disks, one for each $x_i$. The attaching map
$S^n=\bdy\DD^{n+1}_{x_i}\to \bigvee_{j=1}^\ell S^n_{y_j}\to S^n_{y_j}$ is
determined by an integer, the degree; this degree is the signed number
of points in the framed $0$-manifold $\Hom(x_i,y_j)$. One way to
construct the attaching map is to embed $\amalg_j\Hom(x_i,y_j)$ into $\bdy
\DD^{n+1}$, use the framing to identify a neighborhood of
$\Hom(x_i,y_j)$ with $\Hom(x_i,y_j)\times \DD^n$, project
$\Hom(x_i,y_j)\times \DD^n\to S^n_{y_j}$, and collapse the complement
of these neighborhoods to the basepoint. The general case of the
Cohen-Jones-Segal construction follows along these lines; see
\Definition{flow-gives-space}.

So, to construct a Khovanov stable homotopy type, it suffices to
construct a flow category for Khovanov homology. Our Khovanov flow
category has one object for each generator of the standard Khovanov
complex (which is reviewed in \Section{res-config}), and the grading
is the homological grading on Khovanov homology. Let $\diff$ denote
the differential on the Khovanov complex. If $\gr(x)-\gr(y)=1$ and $x$
occurs in $\delta(y)$ with coefficient $n_{xy}$, then we take
$\Hom(x,y)$ to be a disjoint union of $|n_{xy}|$ points, all framed
positively or negatively according to the sign of $n_{xy}$. The spaces
$\Hom(x,y)$ with $\gr(x)-\gr(y)>1$ are constructed inductively. The
boundary of $\Hom(x,y)$ is already determined by lower-dimensional
$\Hom$ spaces, and it turns out that $\bdy\Hom(x,y)$ is a disjoint
union of spheres. We choose $\Hom(x,y)$ to be a disjoint union of
disks. (Keeping track of the corner structure, these disks are, in
fact, permutohedra; see \Example{permutohedron} and
\Lemma{cube-cat-struct}.) If $\gr(x)-\gr(y)>2$ then there is a unique
way to construct such disks starting with their boundaries; if
$\gr(x)-\gr(y)=2$ then the construction depends on a choice, which we
call the \emph{ladybug matching} (see \Section{1D}). 

To prove inductively that it is possible to choose the $\Hom$ spaces
to be disks, that is, their expected boundaries (unions of
products of lower dimensional $\Hom$ spaces) are spheres, and to
specify the framings of these disks, we use an auxiliary flow
category, called the \emph{cube flow category}
(\Definition{cube-flow-category}). The cube flow category has one
object for each vertex of the cube $\{0,1\}^N$, and its $\Hom$ spaces
are permutohedra. During our inductive construction of the $\Hom$
spaces for the Khovanov flow category, we impose an additional
requirement that these spaces should admit covering maps to the
corresponding $\Hom$ spaces of the cube flow category. For dimensions
up to $2$, we check explicitly that it is possible to construct $\Hom$ spaces with such covering maps
(Sections~\ref{sec:0D}--\ref{sec:ind-3-bdy}). For the $n$-dimensional
$\Hom$ spaces with $n>2$ (\Section{nd-moduli-spaces}), by the
inductive construction, their expected boundaries already admit
covering maps to the boundary of the $n$-dimensional permutohedron,
which, being $S^{n-1}$, is simply-connected; therefore, their expected
boundaries are disjoint unions of spheres and the covering map is
trivial. Then there is a unique way to define the $n$-dimensional
$\Hom$ spaces to be disks, and they admit covering maps to the
$n$-dimensional permutohedron. The use of the cube flow category in
this construction may be viewed as a space-level analogue of the fact
that the Khovanov complex lies over the cube $\{0,1\}^N$.

In particular, our construction gives a CW complex whose cells are in
one-to-one correspondence with generators of the standard Khovanov
chain complex.  Furthermore, it also follows that the Khovanov flow
category decomposes as a disjoint union of subcategories, one for each
quantum grading, and so its realization decomposes as a wedge sum of
spaces, one for each quantum grading. Like the construction of
Khovanov homology, our construction of the Khovanov flow category is
entirely combinatorial; although we use some language from Morse
theory, no Morse theory (not to mention Floer theory) is required.

The cohomology of a spectrum does not carry a cup product, but it does
carry stable cohomology operations. In the sequel~\cite{RS-steenrod}
we give an explicit computation of the operation $\Sq^2$ on Khovanov
homology coming from our stable type. This operation is nontrivial
for many simple knots, such as the torus knot $T_{3,4}$.  This implies
in particular that our Khovanov stable type is not simply a wedge
sum of Moore spaces.
Indeed, Seed has extended the results from~\cite{RS-steenrod} to find pairs of
links with isomorphic Khovanov homology but distinct Khovanov homotopy
types~\cite{See-kh-squares}. So, $\KhSpace(L)$ is a strictly
stronger invariant than $\Kh(L)$.

Rasmussen constructed a slice genus bound, called the $s$-invariant,
using Khovanov homology \cite{Ras-kh-slice}. Using the Khovanov
homotopy type, we produce a family of generalizations of the
$s$-invariant \cite{RS-s-invariant}; each of them is a slice genus
bound, and we show that at least one of them is a stronger bound.

A different kind of Khovanov stable homotopy type has been constructed by
Everitt-Turner~\cite{ET-kh-spectrum}. Their construction gives spaces
whose homotopy groups (rather than homology groups) are Khovanov
homology. It is shown in~\cite{ELST-trivial} that the spaces
constructed in~\cite{ET-kh-spectrum} are products of Eilenberg-MacLane
spaces; in particular, they are determined by the Khovanov
homology. Since the first version of this paper, Hu-Kriz-Kriz have
given another construction of a Khovanov stable homotopy type (in our
sense)~\cite{HKK-Kh-htpy}. Although their construction uses quite
different techniques, it seems natural to conjecture that the spaces
constructed in~\cite{HKK-Kh-htpy} are stably homotopy equivalent to the
spaces constructed here. (In particular, their construction also uses
the ladybug matching from \Section{1D} as a key choice.) 

This paper is organized as follows. The first two sections are
background: \Section{res-config} introduces notation related to the
Khovanov chain complex, that will be used when constructing the
Khovanov flow category, and \Section{cjs-puppe} discusses flow
categories themselves and how one produces a CW complex from a flow
category. \Section{cube-flow} introduces a simple flow
category that will be a kind of local model for the Khovanov flow
category. The Khovanov flow category itself is constructed in
\Section{Kh-flow}. Invariance of the Khovanov homotopy type is
proved in \Section{invariance}. Like Khovanov homology, the Khovanov
homotopy types trivially satisfy an
unoriented skein triangle; this is explained in \Section{skein}. The
spaces for reduced Khovanov homology are introduced in
\Section{reduced}. We describe with some examples in
\Section{examples}, including the computation of $\KhSpace(L)$ for any
alternating link $L$, in terms of the Khovanov homology $\Kh(L)$. We
conclude in \Section{speculations} with some speculations.

\textit{Acknowledgments.} The authors are indebted to Ralph Cohen,
Christopher Douglas and Ciprian Manolescu for many clarifying
conversations on the Cohen-Jones-Segal construction. We thank Mikhail
Khovanov and Cotton Seed for helpful comments on a previous version. Finally, we thank the referee for many helpful suggestions.

\section{Resolution configurations and Khovanov homology}\label{sec:res-config}
The original definition of Khovanov homology
\cite{Kho-kh-categorification} is in terms of Kauffman states, and
transitions between consecutive Kauffman states (with respect to the
obvious partial order). Our Khovanov homotopy type will be defined in
terms of more complicated families of Kauffman states. This section is
devoted to developing terminology to discuss these sequences of
Kauffman states. In \Definition{res-config}, we introduce
\emph{resolution configurations}; essentially, a resolution
configuration is a Kauffman state together with a marking of the
crossings at which one took the $0$-resolution. (Resolution
configurations have appeared informally elsewhere in the
literature---for instance, in~\cite{OSzR-kh-oddkhovanov}.)
We then introduce
several operations on resolution configurations, a notion of a labeled
resolution configuration, and a partial order on labeled resolution
configurations. Most of these concepts are illustrated in
Figures~\ref{fig:res-config-from-knot}
and~\ref{fig:res-config-all}. We conclude the section by restating the
definition of Khovanov homology in this language.

\begin{definition}\label{def:res-config}
  A \emph{resolution configuration} $D$ is a pair $(Z(D),A(D))$, where
  $Z(D)$ is a set of pairwise-disjoint embedded circles in $S^2$, and
  $A(D)$ is a totally ordered collection of disjoint arcs embedded in
  $S^2$, with $A(D)\cap Z(D)=\bdy A(D)$.

  We call the number of arcs in $A(D)$ the \emph{index} of the
  resolution configuration $D$, and denote it by $\ind(D)$.

  We sometimes abuse notation and write $Z(D)$ to mean $\bigcup_{Z\in
    Z(D)}Z$ and $A(D)$ to mean $\bigcup_{A\in A(D)} A$.
\end{definition}

Occasionally, we will describe the total order on
$A(D)$ by numbering the arcs: a lower numbered arc precedes a
higher numbered one.

\begin{definition}\label{def:knot-diag-to-res-config}
  Given a link diagram $L$ with $n$ crossings, an ordering of the
  crossings in $L$, and a vector $v\in \{0,1\}^n$ there is an
  associated resolution configuration $\AssRes{L}{v}$ gotten by taking
  the resolution of $L$ corresponding to $v$ (i.e., taking the
  $0$-resolution at the $i\th$ crossing if $v_i=0$, and the
  $1$-resolution otherwise) and then placing arcs corresponding to
  each of the crossings labeled by $0$'s in $v$ (i.e., at the $i\th$
  crossing if $v_i=0$); see \Figure{res-config-from-knot}.
\end{definition}

Therefore, $n-\ind(\AssRes{L}{v})$ equals $|v|=\sum v_i$, the
(Manhattan) norm of $v$.

\captionsetup[subfloat]{font=normalsize,labelformat=simple,labelsep=colon,width={0.45\textwidth}}
\begin{figure}
  \centering
  \subfloat[The $0$-resolution and the
  $1$-resolution.]{
    \psfrag{0}{$0$}
    \psfrag{1}{$1$}
    \includegraphics[width=0.45\textwidth]{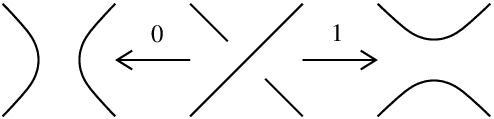}}\\
  \captionsetup[subfloat]{width={0.35\textwidth}}
  \subfloat[A knot diagram $K$ for the trefoil (the three crossings
  are numbered).]{ \tiny
    \psfrag{a}{$1$} \psfrag{b}{$2$} \psfrag{c}{$3$} 
    \hspace{.05\textwidth}\includegraphics[width=0.25\textwidth]{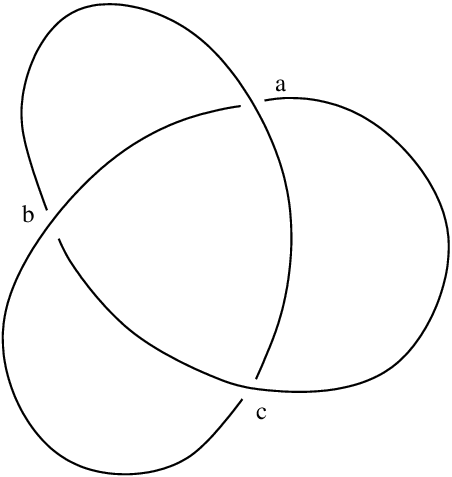}}
  \hspace{0.1\textwidth} \subfloat[The resolution
  configuration $\AssRes{K}{(0,1,0)}$.]{
    \hspace{.05\textwidth}\includegraphics[width=0.25\textwidth]{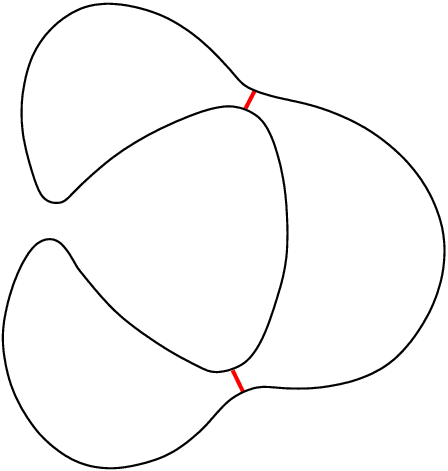}}
  \caption[Resolution configurations from a knot diagram.]{\textbf{Resolution configurations from a knot diagram.}}
  \label{fig:res-config-from-knot}
\end{figure}

\captionsetup[subfloat]{width={0.3\textwidth}}
\begin{figure}
  \centering
  \subfloat[The starting resolution configuration $D$. The circles
  and arcs are numbered.]{\label{fig:res-config-all-a}\tiny      
    \psfrag{z1}{$Z_1$}
    \psfrag{z2}{$Z_2$}
    \psfrag{z3}{$Z_3$}
    \psfrag{z4}{$Z_4$}
    \psfrag{z5}{$Z_5$}
    \psfrag{a1}{$A_1$}
    \psfrag{a2}{$A_2$}
    \psfrag{a3}{$A_3$}
    \psfrag{a4}{$A_4$}
    \includegraphics[width=0.3\textwidth]{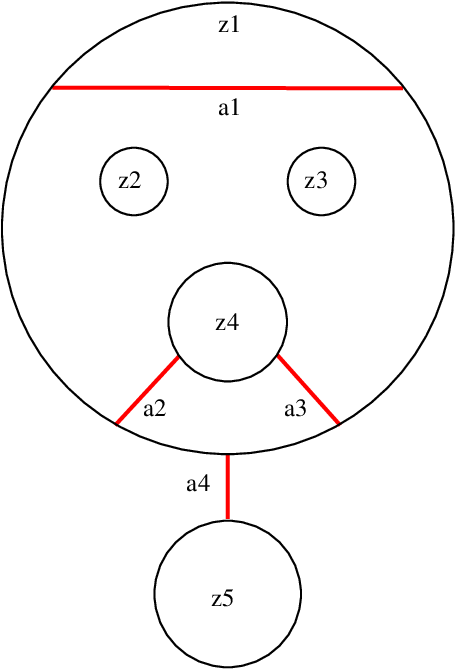}}
  \hspace{0.03\textwidth} 
  \subfloat[The core $c(D)$.]{\label{fig:res-config-all-b}
    \includegraphics[width=0.3\textwidth]{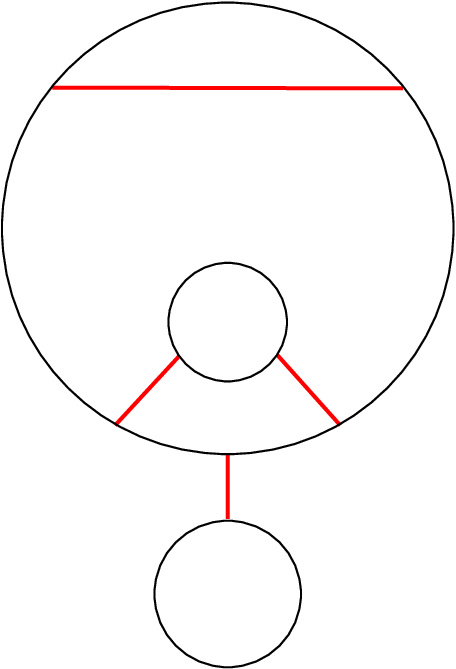}}
  \hspace{0.03\textwidth}
  \subfloat[The surgery $E=s_{\{A_2,A_3\}}(D)$.]{\label{fig:res-config-all-c}
    \includegraphics[width=0.3\textwidth]{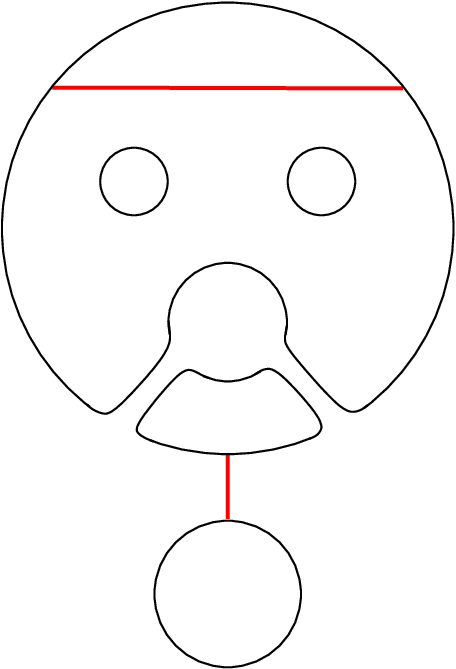}}\\
  \subfloat[The resolution configuration $D\sm E$.]{\label{fig:res-config-all-d}
    \includegraphics[width=0.3\textwidth]{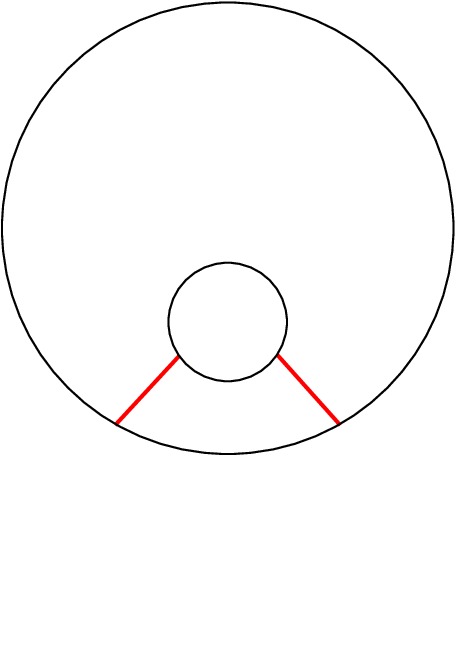}}
  \hspace{0.03\textwidth} 
  \subfloat[The resolution configuration $D\cap E=D\sm(D\sm E)$.]{\label{fig:res-config-all-e}
    \includegraphics[width=0.3\textwidth]{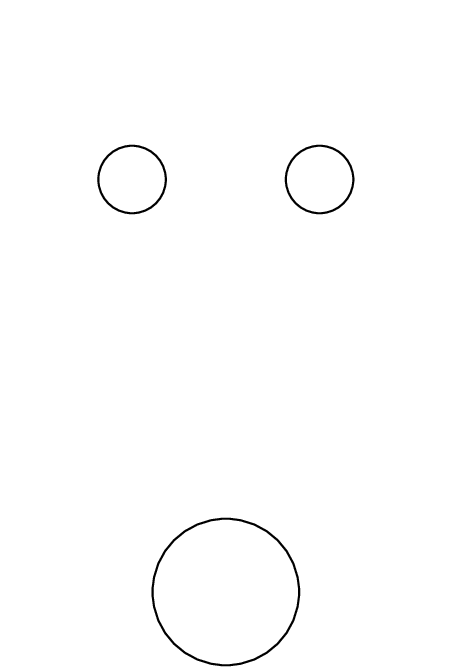}}
  \hspace{0.03\textwidth}
  \subfloat[The dual resolution configuration $D^*$.]{\label{fig:res-config-all-f}
    \includegraphics[width=0.3\textwidth]{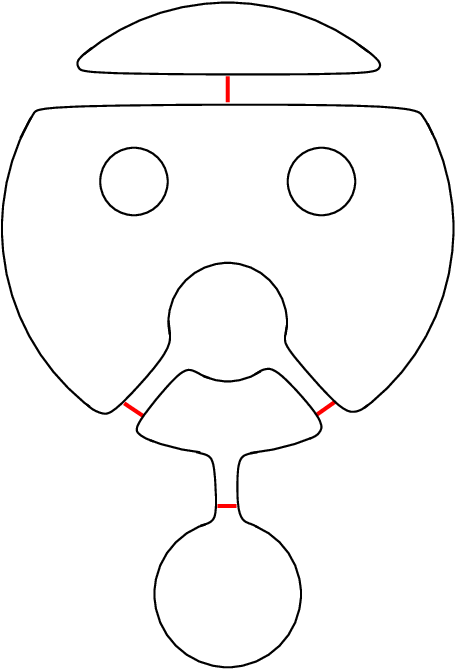}}
  \caption[An illustration of some resolution configuration notations.]{\textbf{An illustration of some resolution configuration notations.}}
  \label{fig:res-config-all}
\end{figure}

\begin{definition}
  Given resolution configurations $D$ and $E$ there is a new
  resolution configuration $D\setminus E$ defined by
  \[
  Z(D\setminus E)=Z(D)\setminus Z(E) \qquad A(D\setminus
  E)=\set{A\in A(D)}{\forall Z\in Z(E)\co\del A\cap Z=\emptyset}.
  \]
  Let $D\cap E=D\sm(D\sm E)$. See \Figure{res-config-all}.
\end{definition}

Note that $Z(D\cap E)=Z(E\cap D)$ and $A(D\cap E)=A(E\cap D)$;
however, the total orders on $A(D\cap E)$ and $A(E\cap D)$ could be
different.

\begin{definition}\label{def:core-res-config}
  The \emph{core $c(D)$} of a resolution configuration $D$ is the
  resolution configuration obtained from $D$ by deleting all the
  circles in $Z(D)$ that are disjoint from all the arcs in $A(D)$. See
  \Figure{res-config-all-b}.

  A resolution configuration $D$ is called \emph{basic} if $D=c(D)$,
  i.e., if every circle in $Z(D)$ intersects an arc in $A(D)$. 
\end{definition}

\begin{definition}\label{def:surger-res-config}
  Given a resolution configuration $D$ and a subset $A'\subseteq A(D)$
  there is a new resolution configuration $s_{A'}(D)$, the
  \emph{surgery of $D$ along $A'$}, obtained as follows. The circles
  $Z(s_{A'}(D))$ of $s_{A'}(D)$ are obtained by performing embedded
  surgery along the arcs in $A'$; in other words, $Z(s_{A'}(D))$ is
  obtained by deleting a neighborhood of $\bdy A'$ from $Z(D)$ and
  then connecting the endpoints of the result using parallel
  translates of $A'$. The arcs of $s_{A'}(D)$ are the arcs of $D$ not
  in $A'$, i.e., $A(s_{A'}(D))=A(D)\setminus A'$. See
  \Figure{res-config-all-c}.

  Let $s(D)=s_{A(D)}(D)$ denote the maximal surgery on $D$.
\end{definition}

\begin{lem}\label{lem:res-config-surgery}
  If a resolution configuration $E$ is obtained from a resolution
  configuration $D$ by a surgery, then $D\sm E$ is a basic resolution
  configuration, $E\sm D=s(D\sm E)$ and $D\cap E=E\cap D$.
\end{lem}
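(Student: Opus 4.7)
My plan is to unravel all three claims directly from the definitions of surgery, $D\setminus E$, and intersection, using the hypothesis $D=s_{A'}(E)$ for some subset $A'\sbseq A(E)$ as the single organizing fact.

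First I would record the two basic consequences of $D=s_{A'}(E)$: on the arc side, $A(D)=A(E)\sm A'$ (with the induced total order), and on the circle side, $Z(D)$ consists of the circles of $Z(E)$ that are disjoint from $\bdy A'$ together with the new circles produced by cutting out a neighborhood of $\bdy A'$ and attaching parallel translates of $A'$. In particular, a circle of $Z(E)$ lies in $Z(D)$ iff it meets no arc of $A'$, and a circle of $Z(D)$ lies in $Z(E)$ iff it is one of the unaffected circles. This dichotomy will be used repeatedly.

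Next I would compute $E\sm D$. For the circles, $Z(E\sm D)=Z(E)\sm Z(D)$ is precisely the set of circles in $Z(E)$ that are met by some arc in $A'$. For the arcs, an arc $A\in A(E)$ lies in $A(E\sm D)$ iff $\bdy A$ is disjoint from every circle of $Z(D)$; since arcs in $A'$ have their endpoints in the excised neighborhood (and hence off $Z(D)$), while arcs in $A(E)\sm A'$ have endpoints that survive onto circles of $Z(D)$, this gives $A(E\sm D)=A'$. From these two identifications, every circle in $Z(E\sm D)$ meets some arc of $A(E\sm D)=A'$, so $E\sm D$ is basic, proving the first assertion.

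For the second assertion, an arc $A\in A(D)=A(E)\sm A'$ has $\bdy A$ on a circle of $Z(E)$, so $A\notin A(D\sm E)$; hence $A(D\sm E)=\emptyset$, and $Z(D\sm E)=Z(D)\sm Z(E)$ is exactly the set of new circles created by the surgery. On the other hand, $s(E\sm D)=s_{A'}(E\sm D)$ has no arcs, and its circles are obtained by surgering the touched circles $Z(E\sm D)$ along $A'$---which is the same set of new circles. Thus $D\sm E=s(E\sm D)$. Finally, for the third assertion, I would use $E\cap D=E\sm(E\sm D)$ and $D\cap E=D\sm(D\sm E)$ to get $Z(E\cap D)=Z(E)\cap Z(D)=Z(D\cap E)$, and to check that both arc sets equal $\{A\in A(E)\sm A'\mid \bdy A\cap Z'=\emptyset \text{ for every touched circle } Z'\}$---the two descriptions coincide because a point of $\bdy A$ sits on a touched circle of $Z(E)$ iff it sits on a new circle of $Z(D)$. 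Because $A(D)=A(E)\sm A'$ carries the order induced from $A(E)$, the restricted orders on the two intersections also agree, giving equality as ordered configurations.

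The main thing to be careful about is the surgery bookkeeping in step two: one must choose the excised neighborhoods of $\bdy A'$ small enough to avoid $\bdy(A(E)\sm A')$, so that endpoints of unsurgered arcs really do land on circles of $Z(D)$. Once that is set up, the rest is a direct unwinding of definitions with no further subtlety.
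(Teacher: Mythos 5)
Your proof is correct and takes the same route as the paper, which simply declares the lemma ``immediate from the definitions''; your write-up is just that definitional unwinding made explicit (including the sensible care about choosing the excised neighborhoods of $\bdy A'$ small enough to miss the endpoints of the unsurgered arcs). Nothing further is needed.
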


\begin{proof}
This is immediate from the definitions.
\end{proof}

\begin{definition}\label{def:dual-res-config}
  Associated to a resolution configuration $D$ is a \emph{dual
    resolution configuration $D^*$}, defined as follows. The circles
  $Z(D^*)$ are obtained from $Z(D)$ by performing embedded surgery
  according to the arcs $A(D)$; i.e., $Z(D^*)=Z(s(D))$. The arcs
  $A(D^*)$ are duals to the arcs $A(D)$. That is, write
  $A(D)=A_1\cup\dots\cup A_n$. For each $i$, choose an arc $A_i^*$ in
  a neighborhood of $A_i$, with boundary on $Z(D^*)$, and intersecting
  $A_i$ once. Then $A(D^*)=A_n^*\cup\dots\cup A_1^*$. See
  \Figure{res-config-all-f}.
\end{definition}

There is a forgetful map from resolution configurations to graphs, by
collapsing the circles:
\begin{definition}\label{def:graph-from-res-config}
  A resolution configuration $D$ specifies a graph $G(D)$ with one
  vertex for each element of $Z(D)$ and an edge for each element of
  $A(D)$. Given an element $Z\in Z(D)$ (\respectively $A\in A(D)$) we
  will write $G(Z)$ (\respectively $G(A)$) for the corresponding vertex
  (\respectively edge) of $G(D)$.

  A \emph{leaf} of a resolution configuration $D$ is a circle $Z\in
  Z(D)$ so that $G(Z)$ is a leaf of $G(D)$. A \emph{co-leaf} of $D$ is
  an arc $A\in A(D)$ so that one endpoint of the dual arc $A^*\in
  A(D^*)$ is a leaf of the dual configuration $D^*$. For example, in
  \Figure{res-config-all-a}, the circle $Z_5$ is a leaf of $D$, while
  the arc $A_1$ is a co-leaf of $D$.
\end{definition}

\begin{definition}
  A \emph{labeled resolution configuration} is a pair $(D,\gen{x})$ of
  a resolution configuration $D$ and a labeling $\gen{x}$ of each
  element of $Z(D)$ by either $x_+$ or $x_-$.
\end{definition}

\begin{definition}\label{def:prec}
  There is a partial order $\prec$ on labeled resolution
  configurations defined as follows. We declare that $(E,\gen{y})\prec
  (D,\gen{x})$ if:
  \begin{enumerate}
  \item The labelings $\gen{x}$ and $\gen{y}$ induce the same labeling
    on $D\cap E=E\cap D$. 
  \item $D$ is obtained from $E$ by surgering along a single arc of
    $A(E)$. In particular, either:
    \begin{enumerate}
    \item\label{case:prec-split}
      $Z(E\sm D)$ contains exactly one circle, say $Z_i$, and $Z(D\sm
      E)$ contains exactly two circles, say $Z_j$ and $Z_k$, or
    \item\label{case:prec-merge} 
      $Z(E\sm D)$ contains exactly two circles, say $Z_i$ and $Z_j$,
      and $Z(D\sm E)$ contains exactly one circle, say $Z_k$.
    \end{enumerate}
  \item In \Case{prec-split}, either
    $\gen{y}(Z_i)=\gen{x}(Z_j)=\gen{x}(Z_k)=x_-$ or $\gen{y}(Z_i)=x_+$
    and $\{\gen{x}(Z_j),\gen{x}(Z_k)\}=\{x_+,x_-\}$.

    In \Case{prec-merge}, either
    $\gen{y}(Z_i)=\gen{y}(Z_j)=\gen{x}(Z_k)=x_+$ or
    $\{\gen{y}(Z_i),\gen{y}(Z_j)\}=\{x_-,x_+\}$ and $\gen{x}(Z_k)=x_-$.
  \end{enumerate}
  Now, $\prec$ is defined to be the transitive closure of this
  relation.
\end{definition}

Morally, the three conditions of \Definition{prec} say that
$(D,\gen{x})$ is obtained from $(E,\gen{y})$ by applying one of the
edge maps in the Khovanov cube;
cf.~\Definition{knot-diag-to-kh-chain-complex}.

\begin{definition}
  A \emph{decorated resolution configuration} is a triple
  $(D,\gen{x},\gen{y})$ where $D$ is a resolution configuration and
  $\gen{x}$ (\respectively $\gen{y}$) is a labeling of each component of
  $Z(s(D))$ (\respectively $Z(D)$) by an element of $\{x_+,x_-\}$, such
  that: $(D,\gen{y})\preceq (s(D),\gen{x})$.
  
  Associated to a decorated resolution configuration
  $(D,\gen{x},\gen{y})$ is the poset $P(D,\gen{x},\gen{y})$ consisting
  of all labeled resolution configurations $(E,\gen{z})$ with
  $(D,\gen{y})\preceq (E,\gen{z})\preceq (s(D),\gen{x})$.
\end{definition}

\begin{defn}\label{def:decorated-res-config-dual}
  The \emph{dual} of a decorated resolution configuration
  $(D,\gen{x},\gen{y})$ is the decorated resolution configuration
  $(D^*,\gen{y}^*,\gen{x}^*)$, where $D^*$ is the dual of $D$, the
  labelings $\gen{x}$ and $\gen{x}^*$ are dual, i.e.\ they disagree on
  every circle in $Z(D^*)=Z(s(D))$, and the labelings $\gen{y}$ and
  $\gen{y}^*$ are dual, i.e.\ they disagree on every circle in
  $Z(D)=Z(s(D^*))$.
\end{defn}

\begin{lem}\label{lem:res-config-dual}
  For any decorated resolution configuration $(D,\gen{x},\gen{y})$,
  the poset $P(D^*,\gen{y}^*,\gen{x}^*)$ is the reverse of the poset
  $P(D,\gen{x},\gen{y})$.
\end{lem}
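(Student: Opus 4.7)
The plan is to exhibit an explicit order-reversing bijection between $P(D,\gen{x},\gen{y})$ and $P(D^*,\gen{y}^*,\gen{x}^*)$. Every element $(E,\gen{z})$ of $P(D,\gen{x},\gen{y})$ has $E=s_{A'}(D)$ for a unique $A'\sbseq A(D)$. To such an $(E,\gen{z})$ I would associate the pair $(E^\dagger,\gen{z}^\dagger)$, in which $E^\dagger := s_{(A(D)\sm A')^*}(D^*)$ is the surgery of $D^*$ along the duals of the arcs in $A(D)\sm A'$, and $\gen{z}^\dagger$ is the labeling that disagrees with $\gen{z}$ on every circle, transferred to $Z(E^\dagger)$ via a natural identification $Z(E)=Z(E^\dagger)$.

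The first step is to justify that identification of underlying circles. Since surgery along disjoint arcs commutes, the vertex of the ``cube of full resolutions'' on $(Z(D),A(D))$ at which exactly the arcs in $A'$ have been surgered is reached equally well from $D$ (by surgering $A'$) or from $D^*$ (by surgering the duals of the remaining arcs, which locally undoes the surgeries on those arcs that are baked into the definition of $D^*$). The extreme cases $A'=\emptyset$ and $A'=A(D)$ recover the identifications $Z(D)=Z(s(D^*))$ and $Z(s(D))=Z(D^*)$ that are already implicit in \Definition{decorated-res-config-dual}. In particular, the bijection sends the minimum $(D,\gen{y})$ of $P(D,\gen{x},\gen{y})$ to the maximum $(s(D^*),\gen{y}^*)$ of $P(D^*,\gen{y}^*,\gen{x}^*)$, and the maximum $(s(D),\gen{x})$ to the minimum $(D^*,\gen{x}^*)$, so the bijection exchanges top and bottom as required.

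Since $\prec$ is by definition the transitive closure of its single-arc cover relations, it suffices to check that the bijection reverses covers. A cover $(E_1,\gen{z}_1)\prec(E_2,\gen{z}_2)$ corresponds to surgering a single arc $A\in A(E_1)$, which performs either a split or a merge of circles; the corresponding dual cover $(E_2^\dagger,\gen{z}_2^\dagger)\prec(E_1^\dagger,\gen{z}_1^\dagger)$ surgers $A^*\in A(E_2^\dagger)$, which performs the opposite operation. The verification that the labeling conditions of \Definition{prec} transform correctly is case by case but immediate: for example, the split pattern $\gen{z}_1(Z_i)=\gen{z}_2(Z_j)=\gen{z}_2(Z_k)=x_-$ becomes, after the $x_\pm$-swap, the merge pattern $\gen{z}_2^\dagger(Z_j)=\gen{z}_2^\dagger(Z_k)=\gen{z}_1^\dagger(Z_i)=x_+$, matching the first merge condition of \Definition{prec} with $\gen{z}_2^\dagger$ in the ``source'' role and $\gen{z}_1^\dagger$ in the ``target'' role; the other three sub-cases behave analogously.

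The main (though routine) obstacle is the bookkeeping behind the asymmetry in \Definition{prec}: the ``source'' and ``target'' roles of the labelings must swap when the cover is reversed, and the split/merge asymmetry in the labeling conditions must be reconciled with the $x_\pm$-duality. Once this dictionary is written out explicitly (``$x_- \mapsto x_-, x_-$'' dualizes to ``$x_+, x_+ \mapsto x_+$'', and ``$x_+ \mapsto x_+, x_-$'' dualizes to ``$x_+, x_- \mapsto x_-$''), the lemma follows immediately by taking the transitive closure.
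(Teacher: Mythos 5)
Your proposal is correct and follows essentially the same route as the paper: the paper verifies the index-$1$ (single-surgery) case directly from \Definition{prec} and then invokes the fact that $\prec$ is a transitive closure, which is exactly your cover-reversal check plus taking transitive closures. You simply make explicit the circle identification (via commuting surgeries) and the $x_\pm$-dualization dictionary that the paper leaves as "immediate."
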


\begin{proof}
  When $\ind(D)=1$, this is immediate from \Definition{prec}. The
  general case reduces to the index $1$ case since $\prec$ is defined
  as a transitive closure.
\end{proof}

The following is a technical lemma that will be useful to us for a
later case analysis.

\begin{lem}\label{lem:technical-res-config-leaf}
  Let $(D,\gen{x},\gen{y})$ be a decorated resolution diagram. Let
  $Z_1\in Z(D)$ be a leaf of $D$, and let $A_1\in A(D)$ be the arc
  one of whose endpoints lies on $Z_1$. Let $Z_2\in Z(D)$ be the
  circle containing the other endpoint of $A_1$, let $Z^*_1\in Z(s(D))$ be the
  circle which contains both the endpoints of $A^*_1$, and let
  $Z_2^*\in Z(s_{A(D)\sm \{A_1\}}(D))\sm\{Z_1\}$ be the unique circle
  that contains an endpoint of $A_1$. 

  Let $D'$ be the resolution configuration obtained from $D$ by
  deleting $Z_1$ and $A_1$. Let $(D',\gen{x}',\gen{y}')$ be the
  following decorated resolution configuration: The labeling
  $\gen{y}'$ on $Z(D')$ is induced from the labeling $\gen{y}$ on
  $Z(D)$. The labeling $\gen{x}'$ on $Z(s(D'))\sm\{Z^*_2\}$ is induced
  from the labeling $\gen{x}$ on $Z(s(D))\sm\{Z^*_1\}$. If
  $\gen{y}(Z_1)=x_+$, set $\gen{x}'(Z^*_2)=\gen{x}(Z^*_1)$; otherwise,
  set $\gen{x}'(Z^*_2)=x_+$.

  Then $P(D,\gen{x},\gen{y})=P(D',\gen{x}',\gen{y}')\times\{0,1\}$,
  where $\{0,1\}$ is the two-element poset with $0\prec 1$.
\end{lem}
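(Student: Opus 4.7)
The plan is to construct an explicit order-isomorphism
\[
\phi\co P(D,\gen{x},\gen{y}) \to P(D',\gen{x}',\gen{y}') \times \{0,1\}.
\]
The key structural observation is that since $Z_1$ is a leaf, $A_1$ is the unique arc in $A(D)$ with an endpoint on $Z_1$; hence for any $(E,\gen{z}) \in P(D,\gen{x},\gen{y})$, we have $Z_1 \in Z(E)$ if and only if $A_1 \in A(E)$, and in that case $\gen{z}(Z_1) = \gen{y}(Z_1)$, since the label on $Z_1$ cannot change until $A_1$ is surgered. This dichotomy will correspond to the two values of the $\{0,1\}$ factor.

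In Case 0 ($A_1 \in A(E)$), I define $\phi(E,\gen{z}) := ((E \sm \{Z_1, A_1\},\, \gen{z}|_{Z(E) \sm \{Z_1\}}),\, 0)$. In Case 1 ($A_1 \notin A(E)$), the $A_1$-surgery (at some stage of the surgery sequence from $D$ to $E$) merged $Z_1$ with a descendant of $Z_2$; since $Z_1$ meets no arc other than $A_1$, the $A_1$-surgery commutes with every other surgery, so $E$ is combinatorially the same as a configuration $E'$ obtained from $D'$ by exactly the same non-$A_1$ surgeries, with one circle $W \in Z(E)$ (the one containing the endpoint of $A_1$) corresponding to a unique circle $W' \in Z(E')$ (also containing that endpoint). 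Define the labeling $\gen{z}'$ on $E'$ to agree with $\gen{z}$ on circles other than $W'$, and set $\gen{z}'(W') = \gen{z}(W)$ if $\gen{y}(Z_1) = x_+$, and $\gen{z}'(W') = x_+$ otherwise (the merge rule forces $\gen{z}(W) = x_-$ in this latter subcase). Set $\phi(E,\gen{z}) := ((E',\gen{z}'),1)$.

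Next, I would verify that $\phi$ is a well-defined bijection, so that the $P$-poset conditions $(D,\gen{y}) \preceq (E,\gen{z}) \preceq (s(D),\gen{x})$ correspond precisely to $(D',\gen{y}') \preceq (E',\gen{z}') \preceq (s(D'),\gen{x}')$; the inverse is the evident one (restore $Z_1,A_1$ in the $0$-case; re-merge $Z_1$ with $W'$ using the inverse labeling rule in the $1$-case). The boundary identifications $(D,\gen{y}) \leftrightarrow ((D',\gen{y}'),0)$ and $(s(D),\gen{x}) \leftrightarrow ((s(D'),\gen{x}'),1)$ then follow from the defining rule for $\gen{x}'(Z_2^*)$ together with the merge rule applied to the final $A_1$-surgery; indeed the special labeling rule for $\gen{x}'(Z_2^*)$ is engineered precisely for this.

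Finally, to see $\phi$ is an order-isomorphism it suffices to match cover relations. Each cover $(E,\gen{z}) \prec (F,\gen{w})$ in $P(D,\gen{x},\gen{y})$ is a single surgery: if it surgers $A_1$, then $\phi$ sends it to the $0 \to 1$ cover in the second factor with the first factor unchanged; otherwise it corresponds to a cover in the first factor with the second factor unchanged. These are exactly the cover relations of the product poset, and $\prec$ in both $P(D,\gen{x},\gen{y})$ and the product is the transitive closure of covers. The main technical obstacle will be the label bookkeeping in Case 1: verifying that the rule for $\gen{z}'(W')$ intertwines correctly with the split/merge rules of \Definition{prec}, which requires a careful case analysis depending on $\gen{y}(Z_1)$ and on whether subsequent surgeries split or merge $W$.
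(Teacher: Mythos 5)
Your proposal is correct and is essentially the paper's own argument: the same dichotomy according to whether $A_1$ has been surgered in $E$, the same identification of $E'$ with the result of performing the non-$A_1$ surgeries on $D'$, and the identical rule for the label of the one differing circle (copy $\gen{z}$ when $\gen{y}(Z_1)=x_+$, otherwise use $x_+$). The only difference is that you sketch the cover-relation bookkeeping that the paper dismisses as ``a straightforward, albeit elaborate, check.''
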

\begin{proof}
  We will produce a map from $P(D,\gen{x},\gen{y})$ to
  $P(D',\gen{x}',\gen{y}')\times\{0,1\}$ that will induce an
  isomorphism of posets. Let $(E,\gen{z})\in P(D,\gen{x},\gen{y})$
  with $E=s_A(D)$ for some $A\sbseq A(D)$. We will produce
  $((E',\gen{z}'),i)\in P(D',\gen{x}',\gen{y}')\times\{0,1\}$.

  There are two cases.
  \begin{enumerate}
  \item $A_1\notin A$. Set $i=0$ and $E'=s_A(D')$. The labeling
    $\gen{z}'$ on $Z(s_A(D'))$ is induced from the labeling $\gen{z}$
    on $Z(s_A(D))=Z(s_A(D'))\amalg\{Z_1\}$.
  \item $A_1\in A$. Set $i=1$ and $E'=s_{A\sm\{A_1\}}(D')$. Let
    $\{Z_A\}=Z(s_A(D))\sm Z(s_{A\sm\{A_1\}}(D'))$ and let
    $\{Z'_A\}=Z(s_{A\sm\{A_1\}}(D'))\sm Z(s_A(D))$. The labeling
    $\gen{z}'$ on $Z(s_{A\sm\{A_1\}}(D'))\cap Z(s_A(D))$ is induced
    from $\gen{z}$. It only remains to define $\gen{z}'(Z'_A)$. If
    $\gen{y}(Z_1)=x_+$, set $\gen{z}'(Z'_A)=\gen{z}(Z_A)$; otherwise,
    set $\gen{z}'(Z'_A)=x_+$.
\end{enumerate}

It is a straightforward, albeit elaborate, check that this map
$P(D,\gen{x},\gen{y})\to P(D',\gen{x}',\gen{y}')\times\{0,1\}$ induces
an isomorphism of posets.
\end{proof}

We conclude this section by giving the definition of the Khovanov
chain complex from \cite{Kho-kh-categorification} in the language of
resolution configurations and the partial order $\prec$:
\begin{definition}\label{def:knot-diag-to-kh-chain-complex}
  Given an oriented link diagram $L$ with $n$ crossings and an
  ordering of the crossings in $L$, the \emph{Khovanov chain complex}
  is defined as follows.

  The chain group $\KhCx(L)$ is the $\Z$-module freely
  generated by labeled resolution configurations of the form
  $(\AssRes{L}{u},\gen{x})$ for $u\in\{0,1\}^n$. The chain group $\KhCx(L)$
  carries two gradings, a \emph{homological grading} $\homgr$ and a
  \emph{quantum grading} $\intgr$, defined as follows:
  \begin{align*}
  \homgr((\AssRes{L}{u},\gen{x}))&=-n_-+|u|,\\
  \intgr((\AssRes{L}{u},\gen{x}))&=n_+-2n_-+|u|+\#\set{Z\in
    Z(\AssRes{L}{u})}{\gen{x}(Z)=x_+}\\
  &\qquad\qquad{}-\#\set{Z\in Z(\AssRes{L}{u})}{\gen{x}(Z)=x_-}.
  \end{align*}
  Here $n_+$ denotes the number of positive crossings in $L$; and
  $n_-=n-n_+$ denotes the number of negative crossings.

  The differential preserves the quantum grading, increases the
  homological grading by $1$, and is defined as
  \[
  \diff (\AssRes{L}{v},\gen{y}) = \!\!\!\!\!
  \sum_{\substack{(\AssRes{L}{u},\gen{x})\\|u|=|v|+1\\(\AssRes{L}{v},\gen{y})\prec
      (\AssRes{L}{u},\gen{x})}}\!\!\!\!\!
  (-1)^{s_0(\Cube_{u,v})}(\AssRes{L}{u},\gen{x}),
  \]
  where $s_0(\Cube_{u,v})\in\F_2$ is defined as follows: if
  $u=(\ep_1,\dots,\ep_{i-1},1,\ep_{i+1},\dots,\ep_n)$ and
  $v=(\ep_1,\dots,\ep_{i-1},0,\ep_{i+1},\dots,\ep_n)$, then
  $s_0(\Cube_{u,v})=(\ep_1+\dots+\ep_{i-1})$; see also
  \Definition{sign-assignment}.
\end{definition}

\section{The Cohen-Jones-Segal construction}\label{sec:cjs-puppe}

In Morse theory, under suitable hypotheses the moduli
spaces of flows admit compactifications as manifolds with corners,
where the codimension-$i$ boundary consists of $i$-times broken flows;
in \Section{Kh-flow} we will construct a collection of spaces
refining the Khovanov complex, satisfying the same
property. In \cite{CJS-gauge-floerhomotopy}, Cohen-Jones-Segal
proposed a convenient language for encoding this information, which
they called a flow category. They went on to explain how one can build
a spectrum from a flow category; in the Morse theory case, their construction
recovers the suspension spectrum of the base manifold. 

This section is devoted to reviewing the Cohen-Jones-Segal
construction, and the basic
definitions underlying it. In \Section{mfld-w-corners} we
review basic notions about manifolds with corners, and in
\Section{flow-cats} we recall the definition of a (framed)
flow category, and how these arise from Morse
theory. \Section{flow-to-space} gives a reformulation of the
Cohen-Jones-Segal procedure for building a space from a flow
category.

The flow category for Khovanov homology is built as a cover of a
simpler flow category; this notion of a cover is introduced in
\Section{cover-sub-quot-flow}. The proof of invariance of the Khovanov
stable homotopy type will make extensive use of subcomplexes and
quotient complexes, basic results on which are also given in
\Section{cover-sub-quot-flow}.

\subsection{\texorpdfstring{$\Codim{n}$}{<n>}-manifolds}\label{sec:mfld-w-corners}

\captionsetup[subfloat]{width=0.25\textwidth}
\begin{figure}
  \centering
  \subfloat[A manifold with corners which is not a manifold with faces.]{\hspace{.025\textwidth}\includegraphics[width=.2\textwidth]{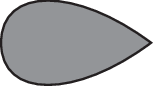}}
  \hspace{.05\textwidth}
  \subfloat[A manifold with faces which is not a $\Codim{2}$-manifold.]{\hspace{.025\textwidth}\includegraphics[width=.2\textwidth]{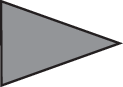}}
  \hspace{.05\textwidth}
  \subfloat[A $\Codim{2}$-manifold. $\partial_1$ is dotted and $\partial_2$ is solid.]{\hspace{.025\textwidth}\includegraphics[width=.2\textwidth]{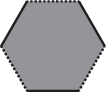}}
  \caption[Manifolds with corners, manifolds with faces, and $\Codim{n}$-manifolds.]{\textbf{Manifolds with corners, manifolds with faces, and $\Codim{n}$-manifolds.}}
  \label{fig:mfld-w-corners}
\end{figure}

We start with some preliminaries on $\Codim{n}$-manifolds, following
\cite{Lau-top-cobordismcorners} and \cite{Jan-top-o(n)manifolds}.
\begin{defn}\label{def:codim-n-mfld}
  Let $\RR_+=[0,\infty)$.

  A \emph{$k$-dimensional manifold with corners} is a (possibly empty)
  topological space $X$ along with a maximal atlas, where an atlas is
  a collection of charts $(U,\phi)$, where $U$ is an open subset of
  $X$ and $\phi$ is a homeomorphism from $U$ to an open subset of
  $(\RR_+)^k$, such that
  \begin{itemize}
  \item $X=\bigcup U$ and
  \item given charts $(U,\phi)$ and $(V,\psi)$, the map $\phi\circ
    \psi^{-1}$ is a $C^\infty$ diffeomorphism from $\psi(U\cap V)$ to
    $\phi(U\cap V)$.
  \end{itemize}

  For $x\in X$, let $c(x)$ denote the number of coordinates in
  $\phi(x)$ which are $0$, for any chart $(U,\phi)$ with $U\ni x$. The
  \emph{codimension-$i$ boundary} of $X$ is the subspace $\{x\in X\mid
  c(x)= i\}$; the usual boundary $\del X$ is the closure of the
  codimension-$1$ boundary. A \emph{connected face} is the closure of
  a connected component of the codimension-$1$ boundary of $X$. A
  \emph{face} of $X$ is a (possibly empty) union of disjoint connected
  faces of $X$.

  A \emph{$k$-dimensional manifold with faces} is a $k$-dimensional
  manifold with corners $X$ such that every $x\in X$ belongs to
  exactly $c(x)$ connected faces of $X$. It is easy to see that any
  face of a manifold with faces is also a manifold with faces. A
  \emph{$k$-dimensional $\Codim{n}$-manifold} is a $k$-dimensional
  manifold with faces $X$ along with an ordered $n$-tuple
  $(\del_1X,\dots,\del_nX)$ of faces of $X$ such that
  \begin{itemize}
  \item $\bigcup_i\del_iX=\del X$ and
  \item for all distinct $i,j$, $\del_iX\cap\del_jX$ is a face of both
    $\del_iX$ and $\del_jX$.
  \end{itemize}

  See \Figure{mfld-w-corners}.
\end{defn}

\begin{exam}\label{exam:permutohedron}
  The \emph{permutohedron $\Permu{n+1}$} is the following $n$-dimensional
  $\Codim{n}$-manifold. As a polytope, it is the convex hull in
  $\R^{n+1}$ of the $(n+1)!$ points $(\si(1),\dots,\si(n+1))$, where
  $\si$ varies over the permutations of $\{1,\dots,n+1\}$. (See
  \cite[Example 0.10]{Zie-top-polytopes} for more details.) The faces
  of $\Permu{n+1}$ are in bijection with subsets $\emptyset\neq
  S\sbs\{1,\dots,n+1\}$: the face corresponding to such a subset $S$
  is the intersection of $\Permu{n+1}$ and plane $\sum_{i\in
    S}x_i=\frac{k(k+1)}{2}$ in $\R^{n+1}$, where $k$ is the
  cardinality of $S$. Define $\del_i\Permu{n+1}$ to be the union of
  faces corresponding to subsets of cardinality $i$.
\end{exam}

\begin{defn}
  A smooth map $f$ from an $\Codim{n}$-manifold $X$ to another
  $\Codim{n}$-manifold $Y$ is said to an \emph{$n$-map} if $f^{-1}(\del_i Y)
  = \del_i X$ for all $i$.
\end{defn}

\begin{defn}\label{def:n-diagrams}
  Let $\CubeCat{n}$ be the category which has one object for each
  element of $\{0,1\}^n$, and a unique morphism from $b$ to $a$ if
  $b\leq a$ in the obvious partial order on $\{0,1\}^n$.  For $a\in
  \{0,1\}^n$ let $|a|$ denote the sum of the entries in $a$; we call
  this the \emph{weight} of $a$. Let $\vect{0}$ denote $(0,0,\dots,0)$
  and let $\vect{1}$ denote $(1,1,\dots,1)$.

  An \emph{$n$-diagram} is a functor from $\CubeCat{n}$ to the
  category of topological spaces.
  
  An $\Codim{n}$-manifold $X$ can be treated as $n$-diagram in the
  following way.  For $a=\tuple{a}{n}\in\{0,1\}^n$, define
  \[
  X(a)=
  \begin{cases}
    X&\text{ if }a=\vect{1}\\
    \bigcap\limits_{i\in\{i\mid a_i=0\}}\del_i X&\text{ otherwise.}
  \end{cases}
  \]
  For $b\leq a$ in $\{0,1\}^n$, the map $X(b)\to X(a)$ is the inclusion
  map.  It is easy to see that $X(a)$ is an $\Codim{|a|}$-manifold; in
  fact, the $|a|$-diagram corresponding to $X(a)$ is the functor
  restricted to the full subcategory of $\CubeCat{n}$ corresponding to
  the vertices less than or equal to $a$. 

  By an abuse of terminology, henceforth whenever we say
  $\Codim{n}$-manifold, we will either mean a topological space or an
  $n$-diagram, depending on the context.
  
  In this language, products are easy to define: Given
  $\Codim{n_i}$-manifolds $X_i$ for $i\in\{1,2\}$, the \emph{product}
  $X_1\times X_2$ is an $\Codim{n_1+n_2}$-manifold defined via the
  isomorphism $\CubeCat{n_1+n_2}=\CubeCat{n_1}\times\CubeCat{n_2}$.
\end{defn}

Since the category $\CubeCat{n}$ has a unique maximal vertex
$\vect{1}$, and for $Y$ an $\Codim{n}$-manifold the maps $Y(a)\to
Y(\vect{1})$ are injective, the manifold $Y=Y(\vect{1})$ is the direct limit (colimit) of $Y$, thought of as an $n$-diagram.
This observation allows us to talk about the
boundary of an $\Codim{n}$-manifold $Y$ without explicitly mentioning
$Y(\vect{1})$:

\begin{defn}\label{def:n-boundary}
  Let $\CubeSubCat{n}$ be the full subcategory of $\CubeCat{n}$
  generated by all the elements except $\vect{1}$. A
  \emph{truncated $n$-diagram} is a functor from $\CubeSubCat{n}$ to
  the category of topological spaces. A \emph{$k$-dimensional
    $\Codim{n}$-boundary} is a truncated $n$-diagram such that the
  restriction of the functor to each maximal dimensional face of
  $\CubeSubCat{n}$ is a $k$-dimensional $\Codim{n-1}$-manifold.

  Given an $\Codim{n}$-boundary $Y$, let $\colim Y$ denote the pushout
  of $Y$. Note that for any vertex $a\in\{0,1\}^n$, except $\vect{1}$,
  there is a continuous map
  \[
  \phi_a\co Y(a)\to \colim Y.
  \]
\end{defn}

Therefore, the restriction of a $k$-dimensional $\Codim{n}$-manifold
$X$ to $\CubeSubCat{n}$ produces a $(k-1)$-dimensional
$\Codim{n}$-boundary $Y$, with $\colim Y=\del X$.

\begin{lem}\label{lem:Ya-to-colim-inj}
  Let $Y$ be an $\Codim{n}$-boundary, and $a$ any vertex of
  $\CubeSubCat{n}$. Then $\phi_a\co Y(a)\to \colim Y$ is injective.
\end{lem}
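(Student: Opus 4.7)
The plan is to give an explicit description of the equivalence relation defining $\colim Y$, from which injectivity of each $\phi_a$ will be immediate. Specifically, I claim that for $y \in Y(a)$ and $z \in Y(b)$, the images $\phi_a(y)$ and $\phi_b(z)$ coincide in $\colim Y$ if and only if there exists $w \in Y(a \wedge b)$ whose images in $Y(a)$ and $Y(b)$ under the structure maps are $y$ and $z$ respectively. Here $a \wedge b$ denotes the component-wise minimum, which still lies in $\CubeSubCat{n}$ because $a \wedge b \leq a \neq \vect{1}$. Once this characterization is established, the lemma follows at once: if $\phi_a(y) = \phi_a(y')$, the common preimage must lie in $Y(a \wedge a) = Y(a)$ and map to both $y$ and $y'$ under the identity, forcing $y = y'$.

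The characterization rests on two preliminary observations. First, every structure map $Y(b) \to Y(a)$ in the diagram is injective: any vertex $a \in \CubeSubCat{n}$ lies below some maximal vertex $a_i$ (with $|a_i| = n-1$), and by hypothesis $Y$ restricted to the subcube $\{c \leq a_i\}$ is an $\Codim{n-1}$-manifold, in which every face inclusion is injective. Second, the candidate relation ``admits a common preimage in the meet'' is evidently reflexive and symmetric, contains every generating relation of the colimit (if $b \leq a$ then $a \wedge b = b$ and one takes $w$ to be the element itself), and is visibly realized in $\colim Y$ via commutativity of the colimit cocone; so it suffices to verify transitivity.

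Transitivity is the main technical point. Suppose $y \in Y(a)$, $z \in Y(b)$, $z' \in Y(c)$, with $w \in Y(a \wedge b)$ a common preimage of $y$ and $z$, and $w' \in Y(b \wedge c)$ a common preimage of $z$ and $z'$. Pick any maximal $a_i \geq b$; inside the $\Codim{n-1}$-manifold $Y(a_i)$ the intersection identity $Y(p) \cap Y(q) = Y(p \wedge q)$ holds for all $p, q \leq a_i$, as is immediate from the description $Y(p) = \bigcap_{j : p_j = 0} \bdy_j Y(a_i)$. Viewing $w$ inside $Y(b)$ via the injection $Y(a \wedge b) \hookrightarrow Y(b)$ shows that $z$ lies in $Y(a \wedge b) \subseteq Y(b)$, and symmetrically $z \in Y(b \wedge c)$; hence $z \in Y(a \wedge b \wedge c)$ inside $Y(a_i)$. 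Pushing this element forward via $a \wedge b \wedge c \leq a \wedge c$ produces an element $v \in Y(a \wedge c)$, and a short chase using functoriality together with the injectivity already established confirms that $v$ maps to $y$ in $Y(a)$ and to $z'$ in $Y(c)$. This completes the transitivity step, and with it the proof.
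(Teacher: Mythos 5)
Your proof is correct. The paper offers no argument here beyond ``immediate from the definitions,'' so there is no approach to compare against; what you have done is supply the details that the authors leave implicit, namely an explicit description of the equivalence relation defining $\colim Y$. Your key points are all sound: every structure map of the diagram is injective because every vertex of $\CubeSubCat{n}$ sits under some weight-$(n-1)$ vertex, where the restriction of $Y$ is an $\Codim{n-1}$-manifold and the maps are inclusions of intersections of faces; and within such a face the identity $Y(p)\cap Y(q)=Y(p\wedge q)$ holds, which is exactly what makes the transitivity of the ``common preimage in the meet'' relation go through. (The diagram chase you sketch at the end does work: the preimage $u\in Y(a\wedge b\wedge c)$ of $z$ maps to $w$ in $Y(a\wedge b)$ and to $w'$ in $Y(b\wedge c)$ by injectivity, and functoriality then identifies the images of $v$ in $Y(a)$ and $Y(c)$ with $y$ and $z'$.) One small bonus of your formulation: the same characterization of the colimit immediately gives \Lemma{colimit-intersection} ($Y(a)\cap Y(b)=Y(c)$ for $c$ the greatest lower bound), which the paper also dispatches as immediate, so your argument in fact underwrites both statements at once.
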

\begin{proof}
  This is immediate from the definitions.
\end{proof}

By \Lemma{Ya-to-colim-inj}, it makes sense to say that a point
$p\in \colim Y$ lies in $Y(a)$ if $p$ is in the image of $\phi_a$.

\begin{lem}\label{lem:colimit-intersection}
  Given vertices $a,b$ of $\CubeSubCat{n}$, let $c$ be the greatest
  lower bound of $a$ and $b$. (That is, $c\leq a$ and $c\leq b$, and
  $c$ is maximal with respect to this property; there is a unique such
  $c$.) Then $Y(a)\cap Y(b)=Y(c)$.
\end{lem}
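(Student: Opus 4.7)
The containment $Y(c) \subseteq Y(a) \cap Y(b)$ is formal: since $c \leq a$ and $c \leq b$, functoriality of $Y$ gives commuting triangles that factor $\phi_c$ through both $\phi_a$ and $\phi_b$, and by \Lemma{Ya-to-colim-inj} we may identify each $Y(v)$ with its image in $\colim Y$. So the image of $Y(c)$ in $\colim Y$ sits inside both $Y(a)$ and $Y(b)$.

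For the reverse inclusion $Y(a) \cap Y(b) \subseteq Y(c)$, the plan is to induct on $n$, splitting into two cases according to whether $a$ and $b$ share a common zero coordinate. If there is an index $j$ with $a_j = b_j = 0$, then $a$, $b$, and $c$ all lie in the face $F_j = \{v \mid v_j = 0\}$ of $\CubeSubCat{n}$. The restriction $Y|_{F_j}$ is itself an $\Codim{n-1}$-boundary (one of the maximal faces of $\CubeSubCat{n}$), so the inductive hypothesis gives the equation in $\colim(Y|_{F_j})$; it then remains to verify that the natural map $\colim(Y|_{F_j}) \to \colim Y$ is injective, which reduces to another instance of \Lemma{Ya-to-colim-inj} (applied to sub-boundaries obtained by restricting to downward-closed subposets). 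If instead $a$ and $b$ share no common zero coordinate, then $c = \vect{0}$, and I would realize $\colim Y$ as an iterated pushout along the injective structure maps, at each stage invoking the basic topological fact that in a pushout $X \fibercup_Z W$ along injections, the intersection of the images of $X$ and $W$ equals the image of $Z$.

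The main obstacle is the second case: doing the iterated-pushout bookkeeping cleanly and checking that all the requisite injectivity hypotheses are actually in place as one builds up $\colim Y$ one vertex at a time. An alternative, perhaps slicker, route is to analyze the equivalence relation generating $\colim Y$ directly: given $q \in Y(a)$ and $r \in Y(b)$ with $\phi_a(q) = \phi_b(r)$, there is a finite zigzag of structure-map identifications connecting $q$ to $r$; using injectivity of every structure map (again \Lemma{Ya-to-colim-inj}), one can progressively collapse this zigzag, pushing each identification down to the greatest lower bound of the vertices involved, until one exhibits a single common preimage $s \in Y(c)$ mapping to both $q$ and $r$. Either route boils down to the formal observation that the $\Codim{n}$-boundary structure endows $\colim Y$ with exactly the incidence pattern of a regular CW-style stratification indexed by $\CubeSubCat{n}$, under which intersections of strata are governed by greatest lower bounds.
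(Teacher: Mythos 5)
The main gap is in the step you lean on for both of your routes: when two points $z\in Y(d)$, $z'\in Y(d')$ have the same image in some $Y(w)$ with $d,d'\leq w$, you claim (via ``injectivity of every structure map, again \Lemma{Ya-to-colim-inj}'') that they admit a common preimage in $Y$ of the greatest lower bound of $d$ and $d'$. Injectivity alone does not give this. For example, over the truncated cube with $n=3$ take $d=(1,0,0)$, $d'=(0,1,0)$, $w=(1,1,0)$, and a diagram of injections in which a point of $Y(d)$ and a point of $Y(d')$ not coming from $Y(0,0,0)$ are sent to the same point of $Y(w)$: all structure maps are injective, yet the conclusion of the lemma fails. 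What rules this out is not \Lemma{Ya-to-colim-inj} but the defining condition of an $\Codim{n}$-boundary (\Definition{n-boundary}): every $w\neq\vect{1}$ has some zero coordinate, hence lies in a maximal face on which $Y$ restricts to an $\Codim{n-1}$-manifold, and by \Definition[s]{codim-n-mfld} and \Definition{n-diagrams} all the $Y(v)$ with $v\leq w$ are then literal subspaces (intersections of the $\del_i$'s) of one space, with $Y(d)\cap Y(d')=Y(\text{glb}(d,d'))$ holding on the nose. Once you cite this, your zigzag-collapsing (equivalently: the relation ``admit a common preimage at some common lower bound'' is already transitive, hence is the colimit's identification) closes the argument, and this is exactly why the paper can dismiss the lemma as immediate from the definition of the colimit together with uniqueness of greatest lower bounds in the cube; the paper gives no more detail than that one sentence.

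Two smaller points. In your second case the claim $c=\vect{0}$ is false in general (e.g.\ $a=(1,1,0)$, $b=(1,0,1)$ gives $c=(1,0,0)$); what is true, and worth saying instead, is that when $a$ and $b$ have no common zero coordinate they have no common upper bound in $\CubeSubCat{n}$, so identifications between $Y(a)$ and $Y(b)$ arise only from longer zigzags---precisely where the unjustified collapsing step is needed---while the iterated-pushout bookkeeping you flag as the obstacle requires injectivity statements at each stage that are essentially the lemma itself, so it is not a genuine reduction. In your first case, note also that $Y|_{F_j}$ is an $\Codim{n-1}$-manifold, not a boundary, so no induction is needed there: the intersection statement inside $F_j$ is definitional, and since $F_j$ has a maximal vertex its colimit is just $Y$ at that vertex, so \Lemma{Ya-to-colim-inj} does legitimately give the injectivity you want---that half of your argument is fine.
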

\begin{proof}
  This is immediate from the definition of the colimit (and the fact
  that, for the cube, any pair of vertices has a unique greatest lower
  bound).
\end{proof}

\begin{lem}\label{lem:depth-in-bdy}
  Let $Y$ be an $\Codim{n}$-boundary. The following conditions on a point
  $p\in \colim Y$ are equivalent:
  \begin{enumerate}
  \item\label{item:depth-1} There are exactly $k$ weight $(n-1)$
    vertices $a_1,\dots,a_k\in\{0,1\}^n$ so that $p\in Y(a_i)$,
    $i=1,\dots, k$.
  \item\label{item:depth-2} There is a unique weight $(n-k)$ vertex $b$
    so that $p\in Y(b)$.
  \item\label{item:depth-3} For each weight $(n-1)$ vertex $a$ so that
    $p\in Y(a)$, $p$ lies in the codimension-$(k-1)$ boundary of
    $Y(a)$.
  \end{enumerate}
  If any of these three equivalent conditions hold for $p$ then we say
  that $p$ has \emph{depth k}.
\end{lem}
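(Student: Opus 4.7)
The plan is to attach to $p$ a canonical minimum vertex of the diagram and read off $k$ from it. Let
\[
S(p) = \{a \in \{0,1\}^n \setminus \{\vect{1}\} \mid p \in Y(a)\}.
\]
Since $p \in \colim Y$ the set $S(p)$ is nonempty; it is upward closed, because for $b \leq a$ the diagram map $Y(b) \to Y(a)$ sends $p$ to $p$; and by \Lemma{colimit-intersection} it is closed under the meet operation $\wedge$ of the cube $\CubeSubCat{n}$. Therefore $S(p)$ has a unique minimum $b$; set $m := n - |b| \geq 1$. The strategy is to show each of the three conditions holds precisely when $k = m$.

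For \Item{depth-1}$\Leftrightarrow$\Item{depth-2}, I would observe that the weight-$(n-1)$ vertices in $S(p)$ are exactly the weight-$(n-1)$ vertices $a \geq b$. Such an $a$ is uniquely specified by choosing which of the $m$ zero-coordinates of $b$ to retain (promoting the other $m-1$ to $1$), so there are exactly $m$ of them. Moreover $b$ is the unique weight-$(n-m)$ element of $S(p)$: any other weight-$(n-m)$ element would dominate $b$ by minimality yet have the same number of zeros, hence equal $b$. Thus \Item{depth-1} holds with value $k$ iff $m = k$ iff \Item{depth-2} holds with value $k$.

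For \Item{depth-2}$\Leftrightarrow$\Item{depth-3}, I would fix any weight-$(n-1)$ vertex $a \geq b$ and analyze $Y(a)$ locally. By \Definition{n-diagrams}, $Y(a)$ is an $\Codim{n-1}$-manifold whose codimension-$1$ faces are the spaces $Y(a')$ with $a' < a$ and $|a'| = n-2$, one for each of the $n-1$ positions where $a$ equals $1$. A face $Y(a')$ of $Y(a)$ contains $p$ iff $a' \in S(p)$, equivalently iff $a' \geq b$, equivalently iff the flip occurs at one of the $m-1$ positions that are $1$ in $a$ but $0$ in $b$. Because $Y(a)$ is a manifold with faces, $p$ lies in exactly $c_{Y(a)}(p) = m-1$ connected faces, i.e.\ in the codimension-$(m-1)$ boundary of $Y(a)$. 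Hence \Item{depth-2} with $k = m$ implies \Item{depth-3} with the same value, and conversely \Item{depth-3} forces $m-1 = k-1$, pinning down $|b|$.

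The one mildly subtle point is the identification of the intrinsic $\Codim{n-1}$-face structure on $Y(a)$ with the sub-diagram restrictions $Y(a')$ for $a' < a$, $|a'| = n-2$; this identification is built into \Definition{n-diagrams}, so the entire argument reduces to the meet-closure of $S(p)$ provided by \Lemma{colimit-intersection} together with elementary combinatorics of the cube.
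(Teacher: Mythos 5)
Your proof is correct and rests on the same two ingredients as the paper's own argument---the meet-closure of the set of vertices containing $p$ supplied by \Lemma{colimit-intersection}, and the $\Codim{n-1}$-manifold structure of the maximal faces $Y(a)$ used to equate the codimension of $p$ in $Y(a)$ with the number of sub-faces $Y(a')$ through $p$---so it is essentially the paper's proof, repackaged around the canonical minimal vertex $b$ (with each condition shown equivalent to $k=n-|b|$) instead of a cycle of pairwise implications. The one step you leave implicit, that the number of indices $i$ with $p\in\del_iY(a)$ equals $c_{Y(a)}(p)$ because distinct connected faces through $p$ lie in distinct $\del_i$'s, is left equally implicit in the paper.
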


\begin{proof}
  \Item{depth-1}$\implies$\Item{depth-2}: This follows
  from \Lemma{colimit-intersection} by induction; the vertex
  $b$ is the greatest lower bound of $a_1,\dots,a_k$.

  \Item{depth-2}$\implies$\Item{depth-1}: Again, this is immediate
  from \Lemma{colimit-intersection}: if $a_i$ is a weight $(n-1)$
  vertex so that $b\leq a_i$ then $p\in Y(a_i)$. There are $k$ such
  vertices. If $p\in Y(a)$ for any other weight $(n-1)$ vertex $a$
  then $p$ would lie in $Y(c)$ for some weight $(n-k-1)$ vertex $c$
  (by the previous step), and hence in $Y(b')$ for another weight
  $(n-k)$ vertex $b'$.

  \Item{depth-3}$\implies$\Item{depth-2}: Since the face $\{c\mid
  c\leq a\}$ is an $\Codim{n-1}$-manifold and $p$ is in the
  codimension-$(k-1)$ boundary of $Y(a)$, there must be a weight
  $(n-k)$ vertex $b$ so that $p\in Y(b)$. Moreover, $b$ is unique,
  since if $b'$ is another such vertex and $c$ is the greatest lower
  bound of $b$ and $b'$ then $p\in Y(c)$ (by
  \Lemma{colimit-intersection}), and the weight of $c$ is less than
  $(n-k)$, so $p$ lies in the codimension $<(k-1)$ boundary of $Y(a)$.

  \Item{depth-2}$\implies$\Item{depth-3}: By
  \Lemma{colimit-intersection}, $b\leq a$. Since the face $\{c\mid
  c\leq a\}$ of $\CubeSubCat{n}$ is an $\Codim{n-1}$-manifold, and
  $p\in Y(b)$, it follows that $p$ lies in the codimension $\geq (k-1)$
  boundary of $Y(a)$. If $p$ lies in the codimension $\geq k$ boundary
  of $Y(a)$, then $p\in Y(c)$ for some vertex $c$ of weight less than
  $(n-k)$ (by the previous step), and hence $p\in Y(b')$ for another
  weight $(n-k)$ vertex $b'$.
\end{proof}

\begin{prop}\label{prop:n-bdy-cover}
  Let $Y$ and $X$ be $n$-boundaries so that $Y(a)$ and $X(a)$ are
  compact for each object $a$ of $\CubeSubCat{n}$. Let $F\co Y\to X$
  be a natural transformation such that the restriction of $F$ to each
  face of $Y$ is an $(n-1)$-map. If for every
  $a\in\Ob_{\CubeSubCat{n}}$ the map $F(a)\co Y(a)\to X(a)$ is a
  covering map then
  \[
  G=\colim(F)\co \colim(Y)\to\colim(X)
  \]
  is a covering map.
\end{prop}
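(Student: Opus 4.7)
My plan is to show $G$ is a covering map by producing, for each $p \in \colim(X)$, an open neighborhood $V$ of $p$ that is evenly covered by $G$. The key observation is that the local structure of $\colim(X)$ near $p$ is controlled by the single face $X(b)$ corresponding to the unique minimal vertex $b$ guaranteed by \Lemma{depth-in-bdy}, and the $(n-1)$-map hypothesis forces the entire covering structure near $p$ to be pulled back from the single covering $F(b)\co Y(b) \to X(b)$.

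First I would identify the fiber $G^{-1}(p)$. Let $p$ have depth $k$ and let $b$ be the unique weight $(n-k)$ vertex with $p \in X(b)$ from \Lemma{depth-in-bdy}. Take any $q \in \colim(Y)$ with $G(q) = p$, and let $b'$ be the minimal vertex for $q$ given by that same lemma applied in $Y$. Choose any weight $(n-1)$ vertex $a$ with $b' \leq a$; then $q \in Y(a)$ via the face inclusion, and $F(a)(q) = p$ lies in the face $X(b) \subseteq X(a)$. Since $F(a)\co Y(a) \to X(a)$ is an $(n-1)$-map and $X(b)$ is an intersection of codimension-$1$ faces of $X(a)$, the preimage $F(a)^{-1}(X(b))$ equals the face $Y(b) \subseteq Y(a)$, forcing $q \in Y(b)$. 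Thus, via the injection $\phi_b\co Y(b) \into \colim(Y)$ from \Lemma{Ya-to-colim-inj}, the fiber $G^{-1}(p)$ is identified with the finite discrete set $F(b)^{-1}(p) = \{q_1, \dots, q_m\}$.

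Next I would construct an evenly covered neighborhood $V$ of $p$. By \Lemma{depth-in-bdy}\Item{depth-3}, $p$ lies in the codimension-$(k-1)$ boundary of each weight $(n-1)$ face $X(a)$ with $a \geq b$. Using the local product (collar) structure of manifolds with corners, I would choose a small neighborhood $N$ of $p$ in $X(b)$ that is evenly covered by $F(b)$, with $F(b)^{-1}(N) = \bigsqcup_j U^j_b$ and $q_j \in U^j_b$, and extend it to evenly covered open neighborhoods $V_a \subseteq X(a)$ of $p$ for each $a$ with $b \leq a \neq \vect{1}$ (setting $V_b = N$), in such a way that the $V_a$ nest compatibly along face inclusions: the image of $X(a')$ in $X(a)$ meets $V_a$ exactly in $V_{a'}$ whenever $a' \leq a$. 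Setting $V = \bigcup_a \phi_a(V_a)$ then yields an open neighborhood of $p$ in $\colim(X)$, because under this nesting condition $\phi_a^{-1}(V) = V_a$ for each $a$, by \Lemma{colimit-intersection} combined with \Lemma{Ya-to-colim-inj}. For each $j$, the sheets of $F(a)^{-1}(V_a)$ through $q_j \in Y(a)$ similarly nest across face inclusions, again by the $(n-1)$-map property, so they glue to well-defined open sets $W_j \subseteq \colim(Y)$ with $G^{-1}(V) = \bigsqcup_j W_j$ and each $G|_{W_j}\co W_j \to V$ a homeomorphism.

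The main technical obstacle is the construction of the compatible nested family $\{V_a\}$. This relies on the manifold-with-corners collar structure of each $X(a)$ near $p$, which locally identifies a neighborhood of $p$ in $X(a)$ with $N \times [0,\epsilon)^{|a|-(n-k)}$; a single choice of evenly covered $N \subseteq X(b)$ together with a sufficiently small uniform $\epsilon$ produces the required compatible system. Once this nesting is arranged, continuity and bijectivity of each $G|_{W_j}$ are automatic from the colimit topology and the fact that each $F(a)|_{U^j_a}\co U^j_a \to V_a$ is already a homeomorphism.
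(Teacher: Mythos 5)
Your proposal is correct and follows essentially the same route as the paper: both verify the covering property locally at each $p\in\colim(X)$, using \Lemma{depth-in-bdy} together with the $(n-1)$-map hypothesis to pin down where the fiber sits among the faces through $p$, and then assemble an evenly covered neighborhood of $p$ from compatibly chosen small neighborhoods in those faces, pulled back via the coverings $F(a)$. The only real difference is bookkeeping: the paper indexes over the $k$ weight-$(n-1)$ faces through $p$ and takes the connected component of the preimage containing a chosen lift $q$, whereas you index the sheets by the fiber $F(b)^{-1}(p)$ over the minimal face $X(b)$ and exhibit the full sheet decomposition --- a slightly more explicit rendering of the same construction (your compatible nested family $\{V_a\}$ plays the role of the paper's condition $U_i\cap X(a_{i,j})=U_j\cap X(a_{i,j})$).
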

\begin{proof}
  Fix a point $p\in\colim(X)$ and a point $q\in
  G^{-1}(p)\subset\colim(Y)$. Let $k$ be the depth of $p$.

  We start by showing that $q$ has depth $k$. If $a\in\{0,1\}^n$ is a
  weight $(n-1)$ vertex so that $q\in Y(a)$ then $p=G(q)\in X(a)$.
  By \Part{depth-3} of \Lemma{depth-in-bdy}, $p$ lies in the
  codimension-$(k-1)$ boundary of $X(a)$. Since the restriction of $F$
  to the face containing $a$ is an $(n-1)$-map, $q$ lies in the
  codimension-$(k-1)$ boundary of $Y(a)$, so the depth of $q$ is $k$.

  Now, let $a_1,\dots,a_k$ be the weight $(n-1)$ vertices so that
  $q\in Y(a_i)$ (and $x\in X(a_i)$) for $i=1,\dots,k$. Let $a_{i,j}$
  be the unique weight $(n-2)$ vertex so that $a_{i,j}\leq a_i$ and
  $a_{i,j}\leq a_j$.  Choose a small open neighborhood $U_i$ of $p$ in
  $X(a_i)$ so that $U_i\cap X(a_{i,j})=U_j\cap X(a_{i,j})$ for each
  pair $i,j$. Then $U=U_1\cup\dots\cup U_k$ is an open neighborhood of
  $p$ in $\colim X$. Let $V_i=F^{-1}(U_i)\subset Y(a_i)$. Let $V$ be
  the connected component of $V_1\cup\dots\cup V_k$ containing $q$;
  then $V$ is an open neighborhood of $q$ in $\colim Y$, and $G|_V\co
  V\to U$ is a homeomorphism.
\end{proof}

\begin{defn}\label{def:neat-embedding}
Given an $(n+1)$-tuple $\TupV{d}=\tuple[0]{\Tup{d}}{n}\in\N^{n+1}$, let
  \[
  \Euclid{n}{\TupV{d}}=\R^{\Tup{d}_0}\times\R_+\times\R^{\Tup{d}_1}\times\R_+\times\dots\times\R_+\times\R^{\Tup{d}_n}.
  \]
  Treat
  $\Euclid{n}{\TupV{d}}$ as an $\Codim{n}$-manifold by defining
  \[
  \del_i(\Euclid{n}{\TupV{d}})=\R^{\Tup{d}_0}\times\dots\times\R^{\Tup{d}_{i-1}}\times\{0\}\times\R^{\Tup{d}_i}\times\dots\times\R^{\Tup{d}_n}.
  \]
  For
  $\TupV{d}\leq \TupV{d}'$ in $\N^{n+1}$ (with respect to the obvious
  partial order), view $\Euclid{n}{\TupV{d}}$ as the subspace
  \[
  \R^{\Tup{d}_0}\times\{0\}^{\Tup{d}'_0-\Tup{d}_0}\times\R_+\times\dots\times\R_+\times\R^{\Tup{d}_n}\times\{0\}^{\Tup{d}'_n-\Tup{d}_n}\subseteq \Euclid{n}{\TupV{d}'}.
  \]

  A \emph{neat immersion $\iota$} of an $\Codim{n}$-manifold is a
  smooth immersion $\iota\from X\imto \Euclid{n}{\TupV{d}}$ for some $\TupV{d}$,
  such that:
  \begin{enumerate}
  \item $\iota$ is an $n$-map, i.e.\ $\iota^{-1}(\del_i\Euclid{n}{\TupV{d}})=\del_iX$ for all $i$, and
  \item the intersection of $X(a)$ and $\Euclid{n}{\TupV{d}}(b)$ is
    perpendicular, for all $b<a$ in $\{0,1\}^n$.
  \end{enumerate}
  A \emph{neat embedding} is a neat immersion that is also an
  embedding. (See \Figure{neat-embed-triangle} for an example.)

  Given $\TupV{d}'\in\N^{n+1}$ and a neat immersion $\iota\from X
  \imto\Euclid{n}{\TupV{d}}$, let $\iota[\TupV{d}']\from X\imto
  \Euclid{n}{\TupV{d}+\TupV{d}'}$ be the induced neat immersion.
\end{defn}

\begin{figure}
  \centering
  \includegraphics[width=0.3\textwidth]{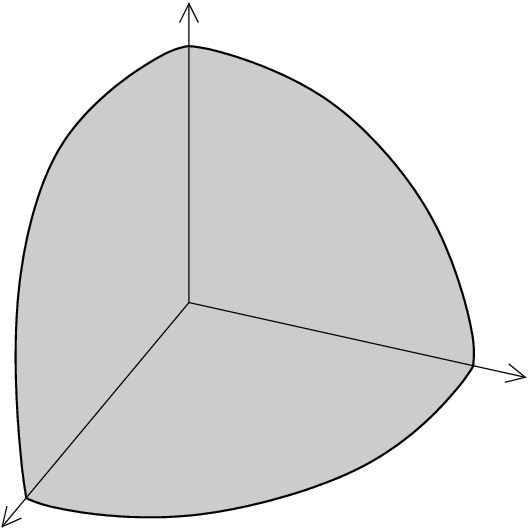}
  \caption[A neat embedding of the triangle.]{\textbf{A neat embedding of the $2$-dimensional
      $\Codim{3}$-manifold `triangle' in $(\R_+)^3$.}}\label{fig:neat-embed-triangle}
\end{figure}

The existence of neat embeddings is guaranteed by the following
version of the Whitney embedding theorem for manifolds with
corners. 
\begin{lem}\label{lem:extend-neat-embedding}
  Fix a compact $\Codim{n}$-manifold $X$. Let
  $\iota\from\del X\into\Euclid{n}{\TupV{d}}$ be an embedding such
  that $\iota^{-1}(\del_i\Euclid{n}{\TupV{d}})=\del_iX$ for all $i$,
  and the intersection of $X(a)$ and $\Euclid{n}{\TupV{d}}(b)$ is
  perpendicular, for all $b<a$ in $\{0,1\}^n\sm\{\vect{1}\}$. Then
  $\iota[\TupV{d}'-\TupV{d}]$ can be extended to a neat embedding of
  $X$ in $\Euclid{n}{\TupV{d}'}$ for some $\TupV{d}'\geq \TupV{d}$.
\end{lem}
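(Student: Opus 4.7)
The plan is to extend $\iota$ in two pieces: a \emph{collar piece} obtained by translating the boundary embedding outward along the ambient half-space directions, and a \emph{bulk piece} obtained from a classical Whitney embedding of $X$ into an extra Euclidean factor adjoined to $\Euclid{n}{\TupV{d}}$; the two are spliced together by a partition of unity.

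First, I would invoke a collar neighborhood theorem for compact $\Codim{n}$-manifolds, producing an open neighborhood $U$ of $\del X$ in $X$ together with a face-compatible diffeomorphism identifying $U$ with a corner-compatible collar of $\del X$. The neatness hypothesis says that near each face $\del_i X$, the image $\iota(\del_i X)$ meets the ambient hyperplane $\del_i\Euclid{n}{\TupV{d}}$ perpendicularly, so I can extend $\iota$ across the collar of $\del_i X$ simply by translating in the $i\th$ $\R_+$ coordinate. At a codimension-$k$ corner, the mutual perpendicularity of the $k$ participating faces in the ambient space ensures that these $k$ individual collar extensions are mutually compatible, and after possibly shrinking $U$ they assemble into a smooth neat embedding $\tilde\iota\from U\imto\Euclid{n}{\TupV{d}}$ extending $\iota$.

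Second, choose a smaller collar $U'$ with $\overline{U'}\subset U$, a smooth cutoff $\rho\from X\to[0,1]$ which is $1$ on $\del X$ and vanishes outside $U'$, an embedding $j\from X\into\R^N$ (for $N$ large, using the classical Whitney theorem), and an interior point $p_0\in\Euclid{n}{\TupV{d}}$. Set $\TupV{d}'=\TupV{d}+(N,0,\dots,0)$, so that $\Euclid{n}{\TupV{d}'}=\R^N\times\Euclid{n}{\TupV{d}}$ with the new $\R^N$ sitting inside the parallel-to-boundary $d_0$ slot, and define
\[
\hat\iota(x)=\bigl((1-\rho(x))\,j(x),\ \rho(x)\,\tilde\iota(x)+(1-\rho(x))\,p_0\bigr),
\]
with the convention that $\rho(x)\,\tilde\iota(x)$ is taken to be $0$ for $x\notin U$ (consistent since $\rho$ is supported in $U'\subset U$). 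On $\del X$ this reduces to $(0,\iota(x))$, which is exactly $\iota[\TupV{d}'-\TupV{d}]$, and the perpendicularity condition at the boundary strata is inherited from $\tilde\iota$ since $\rho\equiv 1$ in a neighborhood of $\del X$. Convexity of the half-spaces guarantees that $\hat\iota(X\sm\del X)$ lies in the topological interior of $\Euclid{n}{\TupV{d}'}$, so $\hat\iota^{-1}(\del_i\Euclid{n}{\TupV{d}'})=\del_iX$ for each $i$.

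The main obstacle is verifying that $\hat\iota$ is globally an embedding. Where $\rho=1$ it coincides with the neat embedding $\tilde\iota$; where $\rho<1$ the first coordinate $(1-\rho)\,j$ is an immersion and separates points since $j$ is an embedding; the delicate zone is the transition region $0<\rho<1$, where both factors vary simultaneously. A standard general-position argument---enlarging $N$ if necessary and perturbing $j$ generically---ensures that $d\hat\iota$ has full rank and that $\hat\iota$ is injective throughout $X$. This transversality step is routine but is the only non-formal ingredient in the proof; the rest is dictated by the collar construction and the definition of $\Euclid{n}{\TupV{d}'}$.
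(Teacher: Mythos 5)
Your outline is the standard argument that the paper itself only gestures at (it cites Laures's collar-neighborhood and neat-embedding results and calls the proof standard), so in approach you and the paper agree: face-compatible collars to extend $\iota$ perpendicularly off $\del X$, a Whitney embedding for the bulk, and a cutoff to splice. The one soft spot is the verification that the spliced map is an embedding. The claim that on $\{\rho<1\}$ the first coordinate $(1-\rho)j$ ``is an immersion and separates points since $j$ is an embedding'' is false as stated: multiplying by the varying scalar $1-\rho$ can destroy both injectivity and injectivity of the differential, since $d\bigl[(1-\rho)j\bigr]=(1-\rho)\,dj-d\rho\otimes j$. You then acknowledge the transition zone and lean entirely on an unspecified general-position perturbation of $j$; that can indeed be pushed through (multijet transversality in a high-dimensional target, using that $d\rho=0$ wherever $\rho=0$ or $1$), but it is the crux and deserves more than a sentence. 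The cleaner standard device, which removes all genericity, is to enlarge $\TupV{d}'$ by one more in the $d_0$ slot and adjoin the scalar $1-\rho(x)$ as an additional coordinate; it vanishes on $\del X$, so the restriction to $\del X$ is still $\iota[\TupV{d}'-\TupV{d}]$, and it is constant near $\del X$, so neatness and perpendicularity there are unaffected. With this coordinate, equal images force $\rho(x)=\rho(x')$, after which injectivity follows from $j$ when $\rho<1$ and from $\tilde\iota$ when $\rho=1$; the same case split shows the differential is injective, and an injective immersion of a compact manifold is an embedding. With that replacement (or with the transversality step actually written out), your proof is complete and is essentially the argument the paper intends.
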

\begin{proof}
  The proof is standard, and is similar to the proof
  of~\cite[Proposition 2.1.7]{Lau-top-cobordismcorners}, using the
  fact that faces of manifolds with corners admit collar
  neighborhoods~\cite[Proposition 2.1.6]{Lau-top-cobordismcorners}.
\end{proof}

\begin{defn}
  Let $X$ be a $k$-dimensional $\Codim{n}$-manifold. Given a neat
  immersion $\iota\from X\imto \Euclid{n}{\TupV{d}}$, the \emph{normal
    bundle $\nu_{\iota}$} is defined as follows. For $a\in\{0,1\}^n$
  and $x\in X(a)$, the bundle $\nu_{\iota}$ at $x$ is the normal
  bundle of the immersion $\restrict{\iota}{X(a)}\from X(a)\imto
  \Euclid{n}{\TupV{d}}(a)$.  

  Equivalently, after setting $r=|\TupV{d}|+n-k$, we can view the normal
  bundle as a map $\nu_{\iota}\from X\to
  \Grassmann{r,r+|a|}\to\Grassmann{r}= \BO(r)$, as
  follows. For $a\in\{0,1\}^n$ and $x\in X(a)$, $\nu_{\iota}(x)$ is
  the orthogonal complement of $T_x(X(a))$ inside
  $T_x(\Euclid{n}{\TupV{d}}(a))=\R^{r+|a|}$. The map is
  well-defined because for all $b\leq a$ in $\{0,1\}^n$, the following
  diagram commutes.
  \[
  \xymatrix{
    X(a)\ar[r]&\Grassmann{r,r+|a|}\\
    X(b)\ar[r]\ar@{^(->}[u]&\Grassmann{r,r+|b|}.\ar@{^(->}[u]}
  \]
  (Here, the vertical arrow on the right is induced from the inclusion
  $\Euclid{n}{\TupV{d}}(b)\into \Euclid{n}{\TupV{d}}(a)$.)
\end{defn}

\subsection{Flow categories}\label{sec:flow-cats}
In this subsection, we recall the basic definitions related to flow
categories and framed flow categories, and review briefly how they
arise in Morse theory. In the next subsection we will show how to
associate a CW complex to a framed flow category.
\begin{definition}\label{def:flow-cat}
  A \emph{flow category} is a pair $(\Cat,\gr)$ where $\Cat$ is a
  category with finitely many objects $\Ob=\Ob(\Cat)$ and
  $\gr\co \Ob\rightarrow\Z$ is a function, called the grading, satisfying the
  following additional conditions:
  \begin{enumerate}[series=flow-category,
    label=(M-\arabic*),ref=M-\arabic*]
  \item\label{condition:objects-graded}
    $\Hom(x,x)=\{\Id\}$ for all $x\in\Ob$, and for distinct
    $x,y\in\Ob$, $\Hom(x,y)$ is a compact
    $(\gr(x)-\gr(y)-1)$-dimensional $\Codim{\gr(x)-\gr(y)-1}$-manifold
    (with the understanding that negative dimensional manifolds are
    empty).
  \item\label{condition:maps-to-bdy} For distinct $x,y,z\in\Ob$ with
    $\gr(z)-\gr(y)=m$, the composition map
    \[
    \circ\co \Hom(z,y)\times\Hom(x,z)\to \Hom(x,y)
    \]
    is an embedding into $\del_m\Hom(x,y)$. Furthermore,
    \[
    \circ^{-1}(\del_i\Hom(x,y))=
    \begin{cases}
      \del_i\Hom(z,y)\times\Hom(x,z)\text{ for }i<m\\
      \Hom(z,y)\times\del_{i-m}\Hom(x,z)\text{ for }i>m
    \end{cases}
    \]
  \item\label{condition:boundary-is} For distinct $x,y\in\Ob$, $\circ$
    induces a diffeomorphism
    \begin{equation}\label{eq:del-i-part-of-bdy}
      \bdy_i\Hom(x,y)\cong\coprod_{\substack{z\\\gr(z)=\gr(y)+i}}\Hom(z,y)\times\Hom(x,z).
    \end{equation}
  \end{enumerate}

  Therefore, if $\mf{D}$ is the diagram whose vertices are the spaces
  $\Hom(z_m,y)\times \Hom(z_{m-1},z_m)\times\dots\times \Hom(x,z_1)$
  for $m\geq 1$ and distinct $z_1,\dots,z_m\in\Ob\sm\{x,y\}$, and
  whose arrows correspond to composing a single adjacent pair of
  $\Hom$'s, then
  \[
  \bdy\Hom(x,y)\cong\bigcup_i\bdy_i\Hom(x,y)\cong\colim\mf{D}.
  \]

  Given objects $x,y$ in a flow category $\Cat$, define the
  \emph{moduli space from $x$ to $y$} to be
  \[
  \Moduli(x,y)=
  \begin{cases}
    \emptyset\text{ if }x=y\\
    \Hom(x,y)\text{ otherwise.}
  \end{cases}
  \]

  Given a flow category $\Cat$ and an integer $n$, let $\Cat[n]$ be
  the flow category obtained from $\Cat$ by increasing the grading of
  each object by $n$.
\end{definition}

Key motivation for \Definition{flow-cat} comes from the following:
\begin{definition}\label{def:flow-cat-of-Morse}
  Given a Morse function $f\from M\to\RR$ with finitely many critical
  points and a gradient-like flow for $f$, one can construct the
  \emph{Morse flow category $\MorseFlowCat(f)$} as follows.

  The objects of $\MorseFlowCat(f)$ are the critical points of $f$;
  the grading of an object is the index of the critical point. For
  critical points $x,y$ of relative index $n+1$, let
  $\ParaModuli(x,y)$ be the \emph{parametrized moduli space} from $x$
  to $y$, which is the $(n+1)$-dimensional subspace of $M$ consisting
  of all the points that flow up to $x$ and flow down to $y$. Let
  $\interior{\Moduli}(x,y)$ be the quotient of $\ParaModuli(x,y)$ by
  the natural $\R$-action. Define $\Moduli(x,y)$, the space of
  morphisms from $x$ to $y$, to be the \emph{unparametrized moduli
    space} from $x$ to $y$, which is obtained by compactifying
  $\interior{\Moduli}(x,y)$ by adding all the broken flowlines from
  $x$ to $y$, cf.\ \cite[Lemma 2.6]{AB-gauge-morsebott}.
\end{definition}

\begin{definition}
  Let $\Cat$ be a flow category. Given an integer $i$, let $\Ob(i)$
  denote the set of all objects of $\Cat$ in grading $i$, topologized
  as a discrete space. Given integers $i$ and $j$, let
  \[
  \Moduli(i,j)=\coprod_{x\in\Ob(i),y\in\Ob(j)}\Moduli(x,y).
  \]
\end{definition}

There are obvious maps from $\Moduli(i,j)$ to $\Ob(i)$ and $\Ob(j)$.
For all $m,n,i$ with $1\leq i<m-n$, \Equation{del-i-part-of-bdy}
induces a diffeomorphism between the fiber product
$\Moduli(n+i,n)\displaystyle\fiberprod_{\Ob(n+i)}\Moduli(m,n+i)$ and
$\del_i\Moduli(m,n)$.

We turn next to (framed) neat embeddings of flow categories; such a framed embedding is part of the input to the Cohen-Jones-Segal construction. Because we will consider covering spaces later on, we will actually work with a slightly more general notion: neat immersions (as we did with $\Codim{n}$-manifolds); see also \Section{covers}. Still, the Cohen-Jones-Segal construction requires an embedding, so we will show in \Lemma{perturb-connect-framings} that any framed neat immersion can be perturbed to a framed neat embedding in an essentially unique way.

\begin{definition}\label{def:neat-immersion}
  For each integer $i$, fix a natural number $\Tup{d}_i$ and let
  $\TupV{d}$ denote this sequence. For $a<b$,
  let $\ESpace[\TupV{d}]{a}{b}=
  \Euclid{b-a-1}{\tuple[a]{\Tup{d}}{b-1}}$.

  A \emph{neat immersion (\respectively neat embedding) $\iota$} of a flow
  category $\Cat$ relative $\TupV{d}$ is a collection of neat
  immersions (\respectively neat embeddings) $\iota_{x,y}\from\allowbreak
  \Moduli(x,y)\allowbreak \imto \ESpace[\TupV{d}]{\gr(y)}{\gr(x)}$, defined for
  all objects $x,y$, such that the following conditions hold.
  \begin{enumerate}[series=neat-embed,label=(N-\arabic*),ref=N-\arabic*]
  \item For all integers $i,j$, $\iota$ induces a neat immersion
    (\respectively neat embedding) $\iota_{i,j}$ of
    $\Moduli(i,j)$. 
  \item For all objects $x,y,z$, and for all points $(p,q)\in\Moduli(x,z)\times\Moduli(z,y)$,
    \[
    \iota_{x,y}(q\circ p)=(\iota_{z,y}(q),0,\iota_{x,z}(p)).
    \]
  \end{enumerate}
  (See \Figure{emb-flow-cat} for an example.)

  Given a neat immersion $\iota$ of a flow category $\Cat$ relative
  $\TupV{d}$, and some other $\TupV{d}'$, let $\iota[\TupV{d}']$
  denote the neat immersion relative $\TupV{d}+\TupV{d}'$ induced by
  the inclusions
  $\ESpace[\TupV{d}]{a}{b}\into\ESpace[\TupV{d}+\TupV{d}']{a}{b}$,
  which in turn are induced by the canonical inclusions
  $\RR^{d_i}\cong \RR^{d_i}\times \{0\}\into \RR^{d_i+d'_i}$.
\end{definition}

\begin{lem}\label{lem:flow-has-embeddings}
  Any flow category admits a neat embedding relative to some $\TupV{d}$.
\end{lem}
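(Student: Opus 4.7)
The plan is to induct on $k = \gr(x) - \gr(y)$, constructing the immersions $\iota_{x,y}$ simultaneously for all pairs at level $k$ after increasing $\TupV{d}$ if necessary, and invoking \Lemma{extend-neat-embedding} at each stage. Since $\Cat$ has only finitely many objects, only finitely many values of $k$ and only finitely many pairs at each level arise, so the stabilizations of $\TupV{d}$ can be carried out one at a time without issue.

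For the base case $k=1$, each $\Moduli(x,y)$ is compact of dimension $0$, hence a finite set with empty boundary, and the target $\ESpace[\TupV{d}]{\gr(y)}{\gr(x)} = \RR^{\Tup{d}_{\gr(y)}}$ is a single Euclidean space. Choosing $\Tup{d}_{\gr(y)}$ large enough (it suffices that this component of $\TupV{d}$ exceeds $\log_2$ of the total number of points being embedded, but in fact any value $\geq 1$ works after stabilizing), one picks distinct target points and defines $\iota_{x,y}$ by sending each morphism to its chosen image, ensuring the combined map $\iota_{i,i+1}$ is an embedding of the disjoint union.

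For the inductive step, assume neat embeddings have been constructed for all pairs with grading difference less than $k$, relative to some $\TupV{d}$. Fix a pair $x, y$ with $\gr(x) - \gr(y) = k$. By \Condition{boundary-is}, $\del \Moduli(x,y)$ is the colimit of the diagram $\mf{D}$ whose vertices are products $\Moduli(z_m, y)\times\cdots\times\Moduli(x, z_1)$ over chains of intermediate objects. Condition~(N-2) of \Definition{neat-immersion} forces the value of the boundary embedding on such a product: the tuple $(q_m, \dots, q_1)$ must map to $(\iota_{z_m,y}(q_m), 0, \iota_{z_{m-1},z_m}(q_{m-1}), 0, \dots, 0, \iota_{x,z_1}(q_1))$ in the appropriate codimension face of $\ESpace[\TupV{d}]{\gr(y)}{\gr(x)}$. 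The compatibility of these assignments under the maps of $\mf{D}$ (coming from composing an adjacent pair of $\Hom$'s) follows directly from (N-2) applied inductively; combined with \Condition{maps-to-bdy} and \Condition{boundary-is}, this yields a well-defined embedding of $\del \Moduli(x,y)$ into $\del \ESpace[\TupV{d}]{\gr(y)}{\gr(x)}$ that meets each face perpendicularly (perpendicularity on intersections is guaranteed because the inductive embeddings are neat). One then applies \Lemma{extend-neat-embedding} to stabilize $\TupV{d}$ (say to $\TupV{d}'$) and extend this boundary embedding to a neat embedding of all of $\Moduli(x,y)$. Repeating this for each pair at level $k$ (stabilizing $\TupV{d}$ after each extension, and pulling previously constructed embeddings forward via the inclusion $\iota \mapsto \iota[\cdot]$) completes the inductive step.

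The main obstacle is the well-definedness of the prescribed boundary embedding: one must check that the naively defined map out of $\coprod_{z,i}\Moduli(z,y)\times\Moduli(x,z)$ descends to the colimit $\del \Moduli(x,y)$, which amounts to verifying that the two ways of embedding a doubly-broken flow $(q_m, q_{m-1}, q_{m-2})$ agree. This is precisely an associativity statement for the formula in (N-2) and reduces to the inductive hypothesis applied to $\Moduli(z, z'')$ appearing as a face of $\Moduli(z,y)$ or $\Moduli(x,z'')$; the ``$0$'' coordinates inserted by (N-2) land in the correct $\del_j$ faces of $\ESpace[\TupV{d}]{\gr(y)}{\gr(x)}$ by an index bookkeeping that mirrors \Condition{maps-to-bdy}. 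Once this compatibility is in hand, \Lemma{extend-neat-embedding} finishes the argument.
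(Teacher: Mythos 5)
Your proof is correct and follows essentially the same route as the paper, whose entire argument is ``this follows from \Lemma{extend-neat-embedding}, by induction'': you induct on the grading difference, use (N-2) to prescribe the boundary embedding from the lower-level data, and invoke \Lemma{extend-neat-embedding} with stabilization of $\TupV{d}$ at each stage. The only detail worth adding is that when handling several pairs $(x,y)$ with the same pair of gradings you should also arrange (by further stabilization or general position) that the combined map $\iota_{i,j}$ on the disjoint union remains injective, as required by (N-1).
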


\begin{proof}
  This follows from \Lemma{extend-neat-embedding}, by induction.
\end{proof}

\begin{lem}\label{lem:connect-neat-immersions}
  Let $\DD^k$ be a $k$-dimensional disk, and let $S^{k-1}$ denote its
  boundary.  If $\iota$ is a smooth $S^{k-1}$-parameter family of neat
  immersions of a flow category $\Cat$ relative to some fixed
  $\TupV{d}$, then for $\TupV{d}'$ sufficiently large,
  $\iota[\TupV{d}']$ can be extended to a smooth
  $\DD^k$-parameter family of neat immersions $\iota'$ of
  $\Cat$. Furthermore, if $\iota$ is a neat
  embedding for some point in $S^{k-1}$, then we can ensure that
  $\iota'$ is a neat embedding at all points in the interior of
  $\DD^k$.
\end{lem}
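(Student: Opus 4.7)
The plan is to extend the neat immersion one moduli space at a time, inductively on the grading difference $\gr(x)-\gr(y)$. For the base case ($\gr(x)-\gr(y)\leq 1$) each $\Moduli(x,y)$ is either empty or a finite set of points, so the $S^{k-1}$-family is essentially constant and extends over $\DD^k$ trivially.

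For the inductive step on a pair $(x,y)$, I first absorb any previous stabilizations so that the $\DD^k$-family is defined on all $\Moduli(x',y')$ with $\gr(x')-\gr(y')<\gr(x)-\gr(y)$, relative to a common $\TupV{d}$. By \Condition{boundary-is} and compatibility condition (N-2) of \Definition{neat-immersion}, the extended lower-level data rigidly prescribes $\iota_{x,y}$ on $\bdy\Moduli(x,y)\times\DD^k$ via compositions, and this prescribed map agrees with the given family on the overlap $\bdy\Moduli(x,y)\times S^{k-1}$. Combined with the given $S^{k-1}$-family on $\Moduli(x,y)\times S^{k-1}$, this assembles into a neat immersion of the relative boundary $\bdy(\Moduli(x,y)\times\DD^k)$. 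I would then extend into the interior by mimicking the proof of \Lemma{extend-neat-embedding}: choose collar neighborhoods of the faces (using \cite[Proposition 2.1.6]{Lau-top-cobordismcorners}) to extend a small distance inward while preserving the perpendicularity required by \Definition{neat-embedding}, then use a partition of unity to fill in the remaining interior after stabilizing $\TupV{d}$ with extra Euclidean factors. A small generic perturbation in the newly added coordinates makes the extension an immersion without disturbing (N-2), which only constrains pre-stabilization coordinates.

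For the embedding strengthening, assume $\iota_{p_0}$ is a neat embedding for some $p_0\in S^{k-1}$, and strengthen the inductive hypothesis so that each $\iota_{x',y'}$ is already a neat embedding on the interior of $\DD^k$. For the moduli space $\Moduli(x,y)$, consider the self-intersection locus
\[
\Delta_{x,y}=\{(p,q,t)\in\Moduli(x,y)^2\times\DD^k \mid p\neq q,\ \iota_{x,y}(p,t)=\iota_{x,y}(q,t)\}.
\]
Generically, $\Delta_{x,y}$ is a submanifold of codimension $|\TupV{d}|+\gr(x)-\gr(y)-1$ in $\Moduli(x,y)^2\times\DD^k$. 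Stabilizing $\TupV{d}$ further so that this codimension exceeds $2(\gr(x)-\gr(y)-1)+k$ makes $\Delta_{x,y}$ generically empty; since it is already empty in a neighborhood of $p_0$ by hypothesis, a generic perturbation supported in the interior of $\DD^k$ makes $\Delta_{x,y}$ empty over the entire interior. The same dimension count handles disjointness of the images of $\Moduli(x,y)$ and $\Moduli(x',y')$ whenever $(x,y)\neq(x',y')$ but $(\gr(y),\gr(x))=(\gr(y'),\gr(x'))$. The main obstacle is bookkeeping the successive stabilizations across the induction and arranging that all genericity perturbations live in coordinates invisible to (N-2), so that the rigid composition identity continues to hold exactly at every step.
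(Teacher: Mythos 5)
Your overall strategy---induct on $\gr(x)-\gr(y)$, let the already-extended lower moduli spaces prescribe $\iota_{x,y}$ on $\bdy\Moduli(x,y)\times\DD^k$ via condition (N-2), extend into the interior as in \Lemma{extend-neat-embedding}, then stabilize and perturb generically to regain immersivity and injectivity---could probably be pushed through, but as written it rests on a false claim at exactly the point you identify as the main obstacle. You assert that (N-2) ``only constrains pre-stabilization coordinates,'' so that generic perturbations in the newly added coordinates are harmless. This is not so: after stabilization the lower-level maps $\iota_{z,y}$ and $\iota_{x,z}$ also acquire new coordinates, and (N-2) prescribes \emph{every} coordinate of $\iota_{x,y}$ at a composite point, namely $\iota_{x,y}(q\circ p)=(\iota_{z,y}(q),0,\iota_{x,z}(p))$; there are no coordinates ``invisible to (N-2).'' Consequently any perturbation of $\iota_{x,y}$ must vanish identically on $\bdy\Moduli(x,y)\times\DD^k$ (and on $\Moduli(x,y)\times S^{k-1}$), and immersivity and injectivity along $\bdy\Moduli(x,y)$ have to be extracted from the collar structure and the strengthened inductive hypothesis (lower-level maps embedded and mutually disjoint at interior parameters; interior points of $\Moduli(x,y)$ cannot collide with boundary points because their $\R_+$-coordinates are strictly positive), not from genericity. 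A decaying-perturbation version of your transversality and dimension-count argument can likely be repaired along these lines, but since the mechanism you propose for preserving the composition identity is the part that fails, the proof is incomplete at its key step.

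For comparison, the paper's proof avoids both the induction and all general-position arguments with a single explicit formula in the style of Laures: stabilize by $\Tup{d}'_i=\Tup{d}_i+1$, write $\DD^k$ in polar coordinates $(r,\theta)$, fix $\theta_0\in S^{k-1}$ (chosen to be an embedding parameter if one exists), and define $\iota'(r,\theta)$ blockwise as the linear interpolation $(1-r)\iota(\theta_0)+r\iota(\theta)$ in the old coordinates together with $f(r)$ times the corresponding blocks of $\iota(\theta_0)$ in the new coordinates, where $f\geq 0$ vanishes exactly at $r=0,1$. Because the formula acts block by block, (N-2) and neatness hold automatically for the whole category at once; because $f(r)\neq 0$ for $0<r<1$, the new-coordinate block is a faithful copy of $\iota(\theta_0)$, so $\iota'(r,\theta)$ is an immersion everywhere and an embedding at all interior parameters whenever $\iota(\theta_0)$ is an embedding. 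This is essentially the idea your extra coordinates are groping toward, realized exactly and without any perturbation; if you want to salvage your argument, replacing your generic perturbations by a scaled copy of a fixed neat embedding (or of $\iota(\theta_0)$) in the new coordinates is the cleanest repair.
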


\begin{proof}
  We follow the proof of \cite[Proposition
  2.2.3]{Lau-top-cobordismcorners}. Define $\TupV{d}'$ by setting
  $\Tup{d}'_i=\Tup{d}_i+1$. Assume $\DD^k$ is the unit disk in
  $\R^k$. We use `spherical coordinates' $(r,\theta)$ to describe
  points on $\DD^k$, where $r$ is the distance from the origin and
  $\theta$ is a point on $S^{k-1}$.

  Fix objects $x,y$ with $\gr(x)=m$ and
  $\gr(y)=n$.  Recall that
  \begin{align*}
    \ESpace[\TupV{d}']{n}{m}&=\R^{d_n}\times\R^{d_n}\times\R\times\R_+\times\dots\times\R_+\times\R^{d_{m-1}}\times\R^{d_{m-1}}\times\R\\
    \intertext{Hence, write $\iota_{x,y}(\theta)[\TupV{d}']$
      in coordinate form as}
    \iota_{x,y}(\theta)[\TupV{d}']&=(\iota_n(\theta),0,0,\ol{\iota}_{n+1}(\theta),\dots,\ol{\iota}_{m-1}(\theta),\iota_{m-1}(\theta),0,0).
  \end{align*}

  Let $f\from[0,1]\to\R_+$ be defined as
  \[
  f(r)=
  \begin{cases}
    e^{-1/(r-r^2)}\text{ if }r\neq 0,1\\
    0\text{ otherwise.}
  \end{cases}
  \] 
  Choose some point $\theta_0\in S^{k-1}$; if possible, choose a
  $\theta_0$ such that $\iota(\theta_0)$ is a neat embedding.  Define
  the isotopy to be
  \begin{align*}
    \iota'_{x,y}(r,\theta)&=((1-r)\iota_n(\theta_0)+r\iota_n(\theta),f(r)\iota_n(\theta_0),f(r)\ol{\iota}_{n+1}(\theta_0),(1-r)\ol{\iota}_{n+1}(\theta_0)+r\ol{\iota}_{n+1}(\theta),\dots\\
    &\qquad\dots,(1-r)\ol{\iota}_{m-1}(\theta_0)+r\ol{\iota}_{m-1}(\theta),(1-r)\iota_{m-1}(\theta_0)+r\iota_{m-1}(\theta),f(r)\iota_{m-1}(\theta_0),0).\qedhere
  \end{align*}
\end{proof}

\begin{defn}\label{def:coherent-framing}
  Let $\iota$ be a neat immersion of a flow category $\Cat$ relative
  $\TupV{d}$. A \emph{coherent framing} $\Frame$ for $\iota$
  is a framing for $\nu_{\iota_{x,y}}$ for all objects $x,y$, such
  that the product framing of $\nu_{\iota_{z,y}}\times
  \nu_{\iota_{x,z}}$ equals the pullback framing of
  $\circ^*\nu_{\iota_{x,y}}$ for all $x,y,z$.  (Again, see \Figure{emb-flow-cat} for an example.)
\end{defn}

\begin{lem}\label{lem:framing-extension}
  Let $\DD^k$ be a $k$-dimensional disk and let $\DD^{k-1}$ be a
  hemisphere on its boundary. Assume $\iota$ is a smooth $\DD^k$-parameter
  family of neat immersions of a flow category $\Cat$ relative to some
  fixed $\TupV{d}$, and assume $\Frame$ is a smooth $\DD^{k-1}$-parameter
  family of coherent framings for $\restrict{\iota}{\DD^{k-1}}$. Then
  $\Frame$ can be extended to a smooth $\DD^k$-parameter family of coherent
  framings for $\iota$.
\end{lem}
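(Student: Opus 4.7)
The plan is to induct on the grading difference $m := \gr(x) - \gr(y)$, extending the coherent framings on all moduli spaces of a given grading difference from $\DD^{k-1}$-families to $\DD^k$-families simultaneously.

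For the base case $m = 1$, the moduli space $\Moduli(x,y)$ is a compact $0$-dimensional manifold and there are no composition constraints, so a framing is simply a continuous choice of orthonormal basis of the normal fibre at each point, i.e.\ a map from $\Moduli(x,y)$ into a Stiefel manifold. A $\DD^{k-1}$-family extends over $\DD^k\times\Moduli(x,y)$ by composing with any retraction $\DD^k \to \DD^{k-1}$, which exists because $\DD^{k-1}\into \DD^k$ is a deformation retract.

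For the inductive step, suppose coherent $\DD^k$-families extending the given $\DD^{k-1}$-families have already been produced on all moduli spaces of grading difference strictly less than $m$, and fix $x,y$ with $\gr(x)-\gr(y)=m$. By \ConditionCont{boundary-is},
\[
\bdy\Moduli(x,y) \cong \coprod_{\gr(y) < \gr(z) < \gr(x)} \Moduli(z,y)\times \Moduli(x,z),
\]
and the products of the inductive framings on the factors assemble into a $\DD^k$-family of framings $\Frame_\bdy$ on $\DD^k\times\bdy\Moduli(x,y)$. The inductive coherence over $\DD^{k-1}$ forces $\Frame_\bdy$ to match the restriction of the given $\Frame$ on $\DD^{k-1}\times\bdy\Moduli(x,y)$, so the two fit together into a single family of framings defined on
\[
S := \bigl(\DD^k\times\bdy\Moduli(x,y)\bigr) \cup \bigl(\DD^{k-1}\times\Moduli(x,y)\bigr).
\]
Because $\bdy\Moduli(x,y)\into\Moduli(x,y)$ is a cofibration (manifolds with corners admit collars along each face) and $\DD^{k-1}\into\DD^k$ is a trivial cofibration (both spaces contractible), the pushout-product inclusion $S\into \DD^k\times\Moduli(x,y)$ is itself a trivial cofibration, hence $S$ is a strong deformation retract. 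Pulling the framing on $S$ back along such a retraction yields the desired $\DD^k$-family of framings on $\Moduli(x,y)$, and coherence with all lower-grading-difference framings is automatic because the retraction fixes $S$ pointwise.

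The principal obstacle is technical: the abstract homotopy-theoretic retraction only produces a continuous framing, whereas the definition demands a smooth framing of the smooth normal bundle. To address this, one chooses a smooth collar of $\bdy\Moduli(x,y)$ and builds the deformation retraction $\DD^k\times\Moduli(x,y)\to S$ smoothly, modelled on the standard smooth retraction of $[0,1]\times[0,1]$ onto its two adjacent sides $\{0\}\times[0,1]\cup[0,1]\times\{0\}$ and smeared via the collar; the pulled-back framing is then smooth. A final $C^\infty$-small perturbation in the interior, keeping a neighbourhood of $S$ fixed, can correct any residual non-smoothness without disturbing coherence.
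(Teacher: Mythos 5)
Your overall strategy is the same as the paper's: induct on $\gr(x)-\gr(y)$, note that the inductively constructed framings on lower moduli spaces together with the given $\DD^{k-1}$-family determine the framing on
$S=\bigl(\bdy\Moduli(x,y)\times\DD^k\bigr)\cup\bigl(\Moduli(x,y)\times\DD^{k-1}\bigr)$,
and then extend from $S$ over all of $\Moduli(x,y)\times\DD^k$. Your pushout-product/trivial-cofibration observation plays the role of the paper's collar-neighborhood homeomorphism of pairs, and your remark that the given coherent $\Frame$ automatically agrees with the product framing over $\DD^{k-1}\times\bdy\Moduli(x,y)$ is correct and needed.

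The step that does not work as written is ``pulling the framing on $S$ back along such a retraction.'' A framing of $\nu_{\iota_{x,y}(t)}$ at a point $(t,p)$ is an $r$-frame spanning the normal fibre at $(t,p)$, and this subspace of the ambient Euclidean space varies with $(t,p)$; composing with a retraction $\rho\co \Moduli(x,y)\times\DD^k\to S$ produces at $(t,p)$ a frame spanning the normal fibre at $\rho(t,p)$, which is in general a different subspace, so the pullback is not a framing of the normal bundle. (There is no canonical identification of fibres over different points---if there were, the bundle would already be trivialized, which is exactly what one is trying to produce.) The repair is the homotopy-lifting argument that the paper uses: view the normal bundle as a map $\Moduli(x,y)\times\DD^k\to\BO(r)$ and a framing as a lift to $\EO(r)$ (equivalently, a section of the associated frame bundle); since $S\into\Moduli(x,y)\times\DD^k$ is an acyclic cofibration---your own pushout-product argument, or the paper's collar homeomorphism---and $\EO(r)\to\BO(r)$ (or the frame bundle) is a fibration, the lift defined on $S$ extends over $\Moduli(x,y)\times\DD^k$, compatibly with the map to $\BO(r)$. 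With that formulation your concluding paragraph about smoothing the retraction becomes unnecessary (one smooths the resulting section in the standard way, or simply works with continuous framings, as the paper implicitly does); the rest of your argument then goes through and coincides with the paper's proof.
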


\begin{proof}
  For any $x,y$, $\iota_{x,y}$ is a $\DD^k$-parameter family of neat
  immersions of $\Moduli(x,y)$ in
  $\ESpace[\TupV{d}]{\gr(y)}{\gr(x)}$. The normal bundle gives a map
  $\Moduli(x,y)\times \DD^k\to \BO(r)$, where we have set
  $r=\Tup{d}_{\gr(y)}+\dots+\Tup{d}_{\gr(x)-1}$. The framing
  $\Frame$ of $\restrict{\iota}{\DD^{k-1}}$ produces a map
  $\Moduli(x,y)\times\DD^{k-1}\to\EO(r)$ such
  that the following commutes.
  \[
  \xymatrix{
    \Moduli(x,y)\times\DD^{k-1}\ar[r]\ar@{^(->}[d]&\ar[d]\EO(r)\\
    \Moduli(x,y)\times\DD^k \ar[r]&\BO(r) 
  }
  \]
  We want to produce a map $\Moduli(x,y)\times \DD^k \to \EO(r)$ which
  will commute with the previous diagram, and will induce a coherent
  framing. We do this by an induction on $\gr(x)-\gr(y)$.

  The base case, when $\gr(x)-\gr(y)=0$, is vacuous. Now assume that
  we have framed $\nu_{\iota_{x',y'}(t)}$ for all $t\in\DD^k$ and all
  $x',y'$ with $\gr(x')-\gr(y')<\gr(x)-\gr(y)$. Since every point in
  the boundary of $\Moduli(x,y)$ lies in the product of lower
  dimensional moduli spaces, the normal bundle $\nu_{\iota_{x,y}(t)}$,
  restricted to $\del \Moduli(x,y)$, is already framed.
  Thus, we have the following commuting diagram; we want to produce
  the dashed arrow so that the diagram still commutes.
  \[
  \xymatrix{
    (\Moduli(x,y)\times\DD^{k-1})\cup(\del \Moduli(x,y)\times\DD^k)\ar[r]\ar@{^(->}[d]&\ar[d]\EO(r)\\
    \Moduli(x,y)\times\DD^k\ar[r]\ar@{-->}[ur]&\BO(r) 
  }
  \]
  By the collar neighborhood theorem, there is a homeomorphism of
  pairs 
  \[
  (\Moduli(x,y)\times\DD^k,(\Moduli(x,y)\times\DD^{k-1})\cup(\del
  \Moduli(x,y)\times\DD^k))\cong (\Moduli(x,y)\times\DD^k,\Moduli(x,y)\times\DD^{k-1}).
  \]
  So, since $\EO(r)\to \BO(r)$ is a fibration, the dashed arrow exists.
\end{proof}

\begin{definition}\label{def:framed-flow-cat}
  A \emph{framed flow category} is a neatly embedded flow category
  $\Cat$, along with a coherent framing for some
  neat immersion of $\Cat$ (relative to some $\TupV{d}$).

  To a framed flow category, one can associate a chain complex
  $C^*(\Cat)$ as follows.  The $n\th$ chain group $C^n$ is the
  $\Z$-module freely generated by $\Ob(n)$. The differential $\diff$
  is of degree one. For $x,y\in\Ob$ with $\gr(x)=\gr(y)+1$, the
  coefficient of $\diff y$ evaluated at $x$ is the number of points in
  $\Moduli(x,y)$, counted with sign (recall, $\Moduli(x,y)$ is a
  compact framed $0$-dimensional $\Codim{0}$-manifold).  We say a
  framed flow category \emph{refines} its associated chain complex.
\end{definition}

\begin{defn}\label{def:coherent-framing-to-framed}
  Let $\Frame$ be a coherent framing for some neat immersion $\iota$
  of a flow category $\Cat$. Using \Lemma{flow-has-embeddings}, choose
  some neat embedding $\iota'$ of $\Cat$. Using
  \Lemma{connect-neat-immersions}, connect $\iota[\TupV{d}]$ to
  $\iota'[\TupV{d}']$ for some $\TupV{d},\TupV{d}'$. The coherent
  framing $\Frame$ induces a coherent framing for $\iota[\TupV{d}]$,
  also denoted by $\Frame$. Therefore, using
  \Lemma{framing-extension}, produce a coherent framing $\Frame'$ for
  $\iota'[\TupV{d}']$. We say that the framed flow category
  $(\Cat,\iota'[\TupV{d}'],\Frame')$ is \emph{a perturbation} of
  $(\Cat,\iota,\Frame)$.
\end{defn}

\begin{lem}\label{lem:perturb-connect-framings}
  If $(\Cat,\iota_0,\Frame_0)$ and $(\Cat,\iota_1,\Frame_1)$ are two
  perturbations of $(\Cat,\iota,\Frame)$, then for some
  $\TupV{d}_0,\TupV{d}_1$, there exists a smooth $1$-parameter family of
  framings of $\Cat$ that connects $(\iota_0[\TupV{d}_0],\Frame_0)$ to
  $(\iota_1[\TupV{d}_1],\Frame_1)$.
\end{lem}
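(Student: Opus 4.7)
The plan is to concatenate, through the common reference $(\iota, \Frame)$ at the midpoint, the two $\DD^1$-parameter families of coherently framed neat immersions that witness the two perturbations.

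Unpacking \Definition{coherent-framing-to-framed}, for each $i \in \{0, 1\}$ the perturbation $(\Cat, \iota_i, \Frame_i)$ is produced by choosing (a) a $\DD^1$-parameter family of neat immersions $\Gamma_i$ of $\Cat$, supplied by \Lemma{connect-neat-immersions} with $k = 1$, that connects a stabilization of $\iota$ at $t = 0$ to a stabilization of $\iota_i$ at $t = 1$, and (b) a $\DD^1$-parameter family of coherent framings $\Phi_i$ along $\Gamma_i$, supplied by \Lemma{framing-extension} with $k = 1$ (the hemisphere $\DD^0$ being the single endpoint $t = 0$), that extends the canonical stabilization of $\Frame$ at $t = 0$ to $\Frame_i$ at $t = 1$. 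By stabilizing further---extending each neat immersion via the canonical Euclidean inclusion and each framing by the standard basis in the new coordinate directions---we may assume $\Gamma_0$ and $\Gamma_1$ take values in a common ambient $\ESpace[\TupV{D}]{\gr(y)}{\gr(x)}$ for every pair of objects $x, y$, that $\Gamma_0(0) = \Gamma_1(0)$ is a common stabilization of $\iota$, and that $\Phi_0(0) = \Phi_1(0)$ is the corresponding stabilization of $\Frame$.

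Now form the concatenation $\Psi \co [0, 1] \to \{\text{coherently framed neat immersions of } \Cat\}$ by
\[
\Psi(t) = \begin{cases} (\Gamma_0(1 - 2t),\, \Phi_0(1 - 2t)) & t \in [0, 1/2], \\ (\Gamma_1(2t - 1),\, \Phi_1(2t - 1)) & t \in [1/2, 1]. \end{cases}
\]
The two halves agree at $t = 1/2$ by the previous paragraph, so $\Psi$ is well-defined and continuous, and precomposing each half with a smooth weakly increasing endomorphism of its domain that is flat of infinite order at $t = 1/2$ makes $\Psi$ smooth without changing its endpoints. Then $\Psi(0) = (\iota_0[\TupV{d}_0], \Frame_0)$ and $\Psi(1) = (\iota_1[\TupV{d}_1], \Frame_1)$ for appropriate $\TupV{d}_0, \TupV{d}_1$, so $\Psi$ is the desired $1$-parameter family. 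The main (minor) subtlety is the bookkeeping of stabilizations ensuring both halves live in a common ambient space and that their framings agree at the junction; this uses only the canonical way to stabilize neat immersions and framings by appending trivial Euclidean factors, with no further appeal to embedding or extension results beyond \Lemma{connect-neat-immersions} and \Lemma{framing-extension}.
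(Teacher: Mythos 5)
Your concatenation does produce a continuous $1$-parameter family of coherently framed neat \emph{immersions} from $(\iota_0[\TupV{d}_0],\Frame_0)$ to $(\iota_1[\TupV{d}_1],\Frame_1)$, and your bookkeeping of stabilizations at the junction is fine. The gap is what happens at the midpoint: your family passes through (a stabilization of) $\iota$ itself, and $\iota$ is only a neat immersion. In the situation this lemma was built for, $\iota$ is the ``composition'' $\iota\circ\Funky$ of the framed embedding of $\CubeFlowCat(n)$ with the cover $\Funky\co\KhFlowCat(L)\to\CubeFlowCat(n)$, which has many double points and is nowhere near an embedding. What \Proposition{choice-independent} needs from this lemma---and what its proof explicitly invokes---is ``a one-parameter family of \emph{neat embeddings} and framings'': the CW complex of \Definition{flow-gives-space} is only defined for neat embeddings, and the isotopy argument of \Lemma{CW-indep-ep-R-framing} needs the boxes $\Cell[y]{x}$ to stay pairwise disjoint for all $t$. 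Along your path the realization is undefined (and the box-isotopy argument breaks) at $t=1/2$, and since $[0,1]\setminus\{1/2\}$ is disconnected you cannot conclude that the endpoint complexes are isomorphic. If one reads the statement's ``family of framings'' as allowing immersions, your argument proves that weaker assertion, but it is too weak for the invariance proof and is not what the paper's proof delivers.

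This is precisely why the paper's proof is two-dimensional rather than a concatenation: your concatenated path is exactly the union of the two edges $e_0\cup e_1$ of the triangle $\Delta$, while the third edge $e$ is chosen via \Lemma{connect-neat-immersions} to be a $1$-parameter family of neat \emph{embeddings} from a stabilization of $\iota_0$ to a stabilization of $\iota_1$. One then extends the neat immersions over all of $\Delta$ (\Lemma{connect-neat-immersions} again, now with $k=2$), extends the coherent framings from the hemisphere $e_0\cup e_1$ over $\Delta$ (\Lemma{framing-extension}), and restricts to $e$ to get the desired family of framed neat embeddings. To repair your argument you would have to add essentially this same step: cap off the loop formed by your concatenation together with an embedded connecting path from $\iota_0$ to $\iota_1$ by a disk of neat immersions, and push the framing across the disk onto the embedded edge.
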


\begin{proof}
  For $i\in\{0,1\}$, since $(\Cat,\iota_i,\Frame_i)$ is a perturbation
  of $(\Cat,\iota,\Frame)$, for some $\TupV{d}'_i$ there exists a
  $1$-parameter family $\iota_i(t)$ of neat immersions connecting
  $\iota[\TupV{d}'_i]$ to $\iota_i$, along with a $1$-parameter
  family of coherent framings $\Frame_i(t)$ for $\iota_i(t)$.

  Using \Lemma{connect-neat-immersions}, choose a $1$-parameter
  family of neat embeddings $\iota(t)$ connecting
  $\iota_0[\TupV{d}''_0]$ to $\iota_1[\TupV{d}''_1]$ for some
  $\TupV{d}''_i$. Consider a triangle $\Delta$, whose three vertices
  are $v,v_0,v_1$, with opposite edges $e,e_0,e_1$,
  respectively. The three $1$-parameter families of neat immersions
  $\iota(t)$, $\iota_0(t)[\TupV{d}''_0]$ and
  $\iota_1(t)[\TupV{d}''_1]$ together produce a $\del\Delta$-parameter
  family of neat immersions of $\Cat$: the restriction to $e$ is
  $\iota(t)$, and the restriction to $e_i$ is $\iota_i[\TupV{d}''_i]$.

  For $\TupV{d}$ sufficiently large, using
  \Lemma{connect-neat-immersions} again, choose a $\Delta$-parameter
  family $\ol{\iota}$ of neat immersions of $\Cat$, whose restriction
  to $e$ is $\iota(t)[\TupV{d}]$, and whose restriction to $e_i$ is
  $\iota_i(t)[\TupV{d}''_i+\TupV{d}]$.  Set
  $\TupV{d}_i=\TupV{d}''_i+\TupV{d}$. The framings $\phi_0(t)$ and
  $\phi_1(t)$ induce framings of $\Cat$ along the edges $e_0$ and
  $e_1$ of $\Delta$. By \Lemma{framing-extension} we can extend these
  framings to a $\Delta$-parameter family $\ol{\Frame}$ of coherent
  framings for $\ol{\iota}$. Then
  $(\restrict{\ol{\iota}}{e},\restrict{\ol{\Frame}}{e})$ is a
  $1$-parameter family of framings connecting $\iota_0[\TupV{d}_0]$ to
  $\iota_1[\TupV{d}_1]$.
\end{proof}

\subsection{Framed flow categories to CW
  complexes}\label{sec:flow-to-space}
We are interested in framed flow categories because one can build a CW
complex $|\Cat|$ from a framed flow category $\Cat$ in such a way that
if $\Cat$ refines a chain complex $C^*$ then $C^*$ is the cellular
cochain complex of $|\Cat|$. The construction of $|\Cat|$, which was
first given in~\cite{CJS-gauge-floerhomotopy} (partly inspired
by~\cite{Franks-top-Morse}), is not the same as the usual geometric
realization of a (topological) category. In this subsection we give a
slight reformulation of the construction
from~\cite{CJS-gauge-floerhomotopy}; our formulation is less elegant
but perhaps more concrete.

\begin{definition}\label{def:flow-gives-space}
  Let $(\Cat,\iota,\Frame)$ be a framed flow category, where $\Cat$ is
  a flow category, $\iota$ is a neat embedding relative some
  $\TupV{d}$, and $\Frame$ is a framing of the normal
  bundles to $\iota$. 
  Assume that all objects in $\Cat$ have grading in $[B,A]$ for some
  fixed $A,B\in\Z$.

   We associate a CW complex $\Realize[{\iota,\Frame}]{\Cat}$ to
  $(\Cat,\iota,\Frame)$ as follows, cf.\
  \Figure{space-constructor}. The CW complex $\Realize{\Cat}$ will
  have one $0$-cell, and one cell $\Cell{x}$ for each object $x$ of
  $\Cat$. If $x$ has grading $m$ then the dimension of $\Cell{x}$ will
  be $\Tup{d}_B+\dots+\Tup{d}_{A-1}-B+m$. For convenience, let
  $C=\Tup{d}_B+\dots+\Tup{d}_{A-1}-B$.

  Since $\iota$ is a neat embedding of $\Cat$, for all integers $i,j$,
  $\iota_{i,j}$ embeds $\Moduli(i,j)$ in $\ESpace[\TupV{d}]{j}{i}$. Choose
  $\ep>0$ sufficiently small so that for all $i,j$, $\iota_{i,j}$
  extends to an embedding of
  $\Moduli(i,j)\times[-\ep,\ep]^{\Tup{d}_j+\dots+\Tup{d}_{i-1}}$ via the
  framings of the normal bundles. Choose $R$ sufficiently large so
  that for all $i,j$,
  $\iota_{i,j}(\Moduli(i,j)\times[-\ep,\ep]^{\Tup{d}_j+\dots+\Tup{d}_{i-1}})$ lies
  in
  $[-R,R]^{\Tup{d}_j}\times[0,R]\times\dots\times[0,R]\times[-R,R]^{\Tup{d}_{i-1}}$. 

  Suppose that we have defined the $(C+m-1)$-skeleton
  $\Realize{\Cat}^{(C+m-1)}$ of $\Realize{\Cat}$, i.e., the part of
  $\Realize{\Cat}$ built from objects $y$ with grading $\gr(y)<m$. Fix
  an object $x$ with $\gr(x)=m$.  While describing $\Cell{x}$, we will
  define some related spaces $\Cell[i]{x}$.  Take a copy $\Cell[1]{x}$
  of $\R_+\times\R^{\Tup{d}_B}\times\dots\times
  \R_+\times\R^{\Tup{d}_{A-1}}$.\footnote{It is important that we take a
    copy. For distinct objects $x,y$, we will take different
    copies $\Cell[1]{x}$ and $\Cell[1]{y}$, and we will not, \emph{a
      priori}, identify the corresponding points in $\Cell[1]{x}$ and
    $\Cell[1]{y}$.} Define $\Cell{x}$ to be the following subset
  \begin{equation}\label{eq:C-of-x}
    \begin{split}
      \Cell{x}&= [0,R]\times[-R,R]^{\Tup{d}_B}\times\dots\times
      [0,R]\times[-R,R]^{\Tup{d}_{m-1}}\times
      \{0\}\times[-\ep,\ep]^{\Tup{d}_m}\\
      &\qquad\qquad{}\times\dots\times\{0\}\times [-\ep,\ep]^{\Tup{d}_{A-1}}
      \sbs\Cell[1]{x}.
    \end{split}
  \end{equation}

  It remains to define the attaching map $\bdy \Cell{x}\to
  \Realize{\Cat}^{(C+m-1)}$. 
  Recall that the neat embedding $\iota_{x,y}$, along with the framing
  $\Frame$ of $\nu_{\iota_{x,y}}$, identifies $\Moduli(x,y)\times
  [-\ep,\ep]^{\Tup{d}_n}\times\{0\}\times\dots\times\{0\}\times
  [-\ep,\ep]^{\Tup{d}_{m-1}}$ with a subset $\Cell[y,1]{x}$ of
  $[-R,R]^{\Tup{d}_n}\times[0,R]\times\dots\times[0,R]\times
  [-R,R]^{\Tup{d}_{m-1}}$.  
  Let
  \begin{multline*}
  \Cell[y]{x}=[0,R]\times[-R,R]^{\Tup{d}_B}\times\dots\times[0,R]\times[-R,R]^{\Tup{d}_{n-1}}
  \times \{0\}\times \Cell[y,1]{x}\\
  \times \{0\}\times[-\ep,\ep]^{\Tup{d}_m}\times\dots\times\{0\}\times
  [-\ep,\ep]^{\Tup{d}_{A-1}}
  \subset \bdy\Cell{x}.
  \end{multline*}

  Now, define the attaching map
  $\del\Cell{x}\to\Realize{\Cat}^{C+m-1}$ as follows: on
  $\Cell[y]{x}\cong \Moduli(x,y)\times\Cell{y}$, define it to be the
  projection map to $\Cell{y}$; and map $\del\Cell{x}\sm\bigcup_y\Cell[y]{x}$
  to the basepoint.
\end{definition}

\Figure{space-constructor} illustrates
Definition~\ref{def:flow-gives-space} for a chain complex with six
generators spread over three gradings. All the cells are subsets of
$\E=\R_+\times\R\times\R_+\times\R$. We do not draw $\E$, but we
attempt to draw its boundary $\del E=(\{0\}\times
\R\times\R_+\times\R)\cup(\R_+\times\R\times\{0\}\times\R)$. The space $\del\E$
is naturally a $3$-dimensional $\Codim{2}$-manifold. To draw it, we
flatten out the codimension-$1$ corner
$\{0\}\times\R\times\{0\}\times\R$, giving a homeomorphism (but not a
diffeomorphism) $\del\E\cong \R^3$. In the figures, this (flattened) corner is a
horizontal plane, the first $\R_+$-factor is drawn
below the plane, and the second $\R_+$-factor is drawn above the plane.

\captionsetup[subfloat]{width={0.45\textwidth}}
\begin{figure}
  \centering
    \subfloat[The chain complex associated to the given flow
    category.]{ \tiny \psfrag{0}{$0$} \psfrag{2}{$2$} \psfrag{1}{$1$}
      \psfrag{x1}{$x$} \psfrag{y1}{$z_1$} \psfrag{y2}{$z_2$}
      \psfrag{y3}{$z_3$} \psfrag{y4}{$z_4$} \psfrag{z1}{$y$}
      \psfrag{e1}{$\R^{d_0}$} \psfrag{e2}{$\R^{d_1}$}
      \psfrag{f}{$\R_+$} \psfrag{p}{$+$} \psfrag{m}{$-$}
      \includegraphics[width=0.45\textwidth]{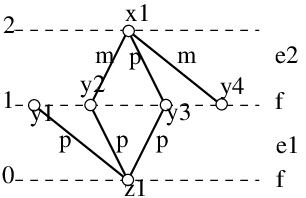}}
    \hspace{0.05\textwidth} \subfloat[A framed neat embedding of the flow
    category.]{\label{fig:emb-flow-cat} \tiny
      \psfrag{e1}{$\R^{d_0}$} \psfrag{e2}{$\R^{d_1}$}
      \psfrag{f}{$\R_+$} \psfrag{xz}{$\Moduli(x,y)$}
      \psfrag{xy2}{$\Moduli(x,z_2)$} \psfrag{xy3}{$\Moduli(x,z_3)$}
      \psfrag{xy4}{$\Moduli(x,z_4)$} \psfrag{y1z}{$\Moduli(z_1,y)$}
      \psfrag{y2z}{$\Moduli(z_2,y)$} \psfrag{y3z}{$\Moduli(z_3,y)$}
      \psfrag{xy2z}{$\Moduli(z_2,y)\times\Moduli(x,z_2)$}
      \psfrag{xy3z}{$\Moduli(z_3,y)\times\Moduli(x,z_3)$}
      \includegraphics[width=0.45\textwidth]{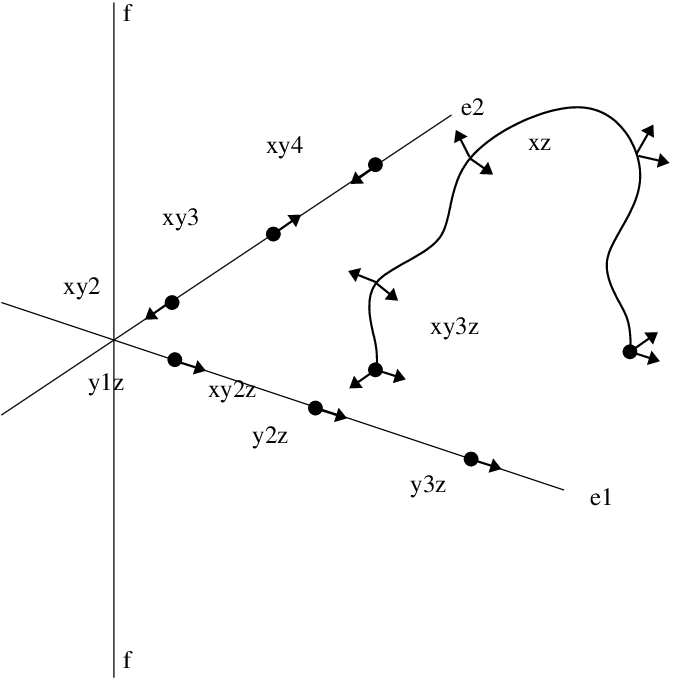}}\\
    \subfloat[$\Cell{y}$ is the shaded square with the boundary
    quotiented to the basepoint.]{ \tiny \psfrag{e1}{$\R^{d_0}$}
      \psfrag{e2}{$\R^{d_1}$} \psfrag{f}{$\R_+$}
      \includegraphics[width=0.45\textwidth]{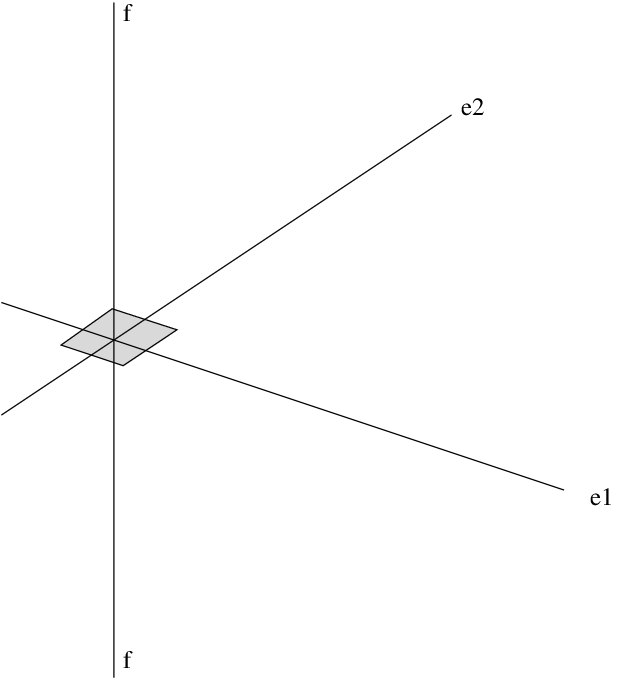}}
    \hspace{0.05\textwidth}\subfloat[$\Cell{z_1}$ is the deeply shaded
    slab with the lightly shaded square on the boundary identified
    with $\Cell{y}$ and the rest of the boundary quotiented to the
    basepoint.]{\label{fig:Cell-z-1} \tiny
      \psfrag{e1}{$\R^{d_0}$} \psfrag{e2}{$\R^{d_1}$}
      \psfrag{f}{$\R_+$}
      \includegraphics[width=0.45\textwidth]{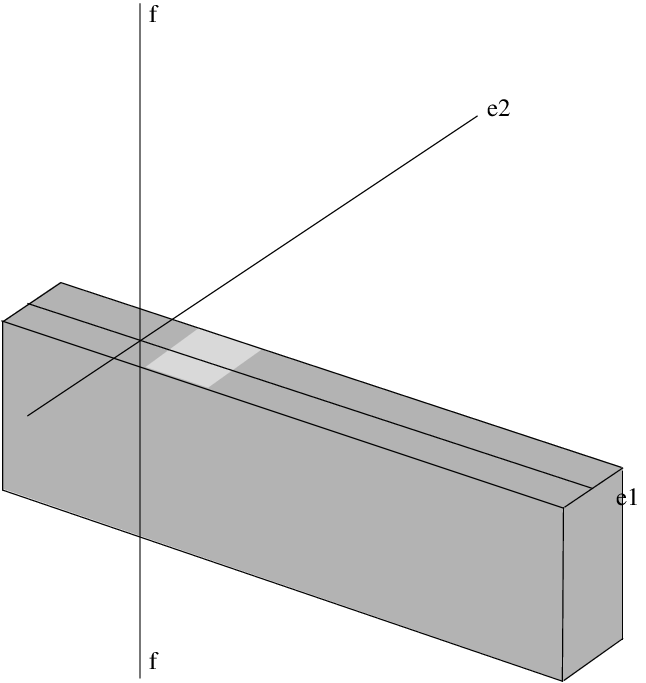}}
  \caption[From a framed flow category to a CW complex.]{\textbf{From a framed flow category to a CW complex.} (Part I)}\label{fig:space-constructor}
\end{figure}
\begin{figure}
  \centering
    \ContinuedFloat \subfloat[$\Cell{z_2}$.]{ \tiny
      \psfrag{e1}{$\R^{d_0}$} \psfrag{e2}{$\R^{d_1}$}
      \psfrag{f}{$\R_+$}
      \includegraphics[width=0.45\textwidth]{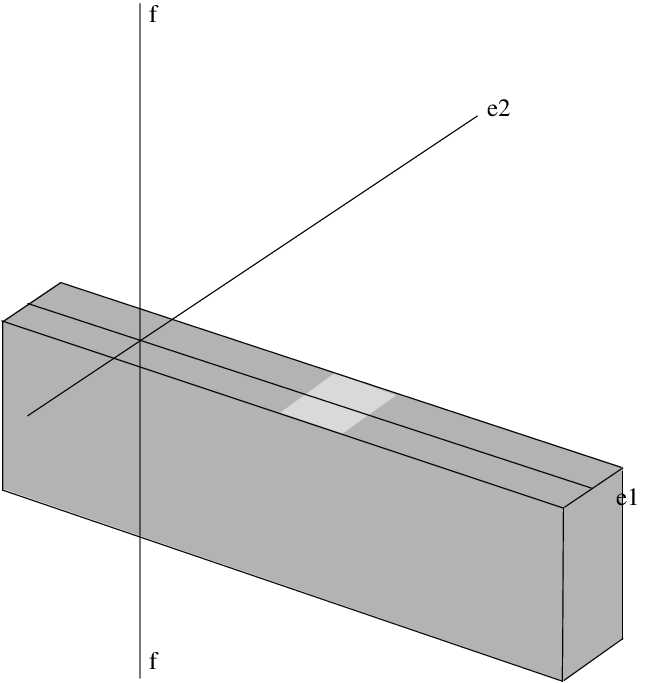}}
    \hspace{0.05\textwidth} \subfloat[$\Cell{z_3}$.]{ \tiny
      \psfrag{e1}{$\R^{d_0}$} \psfrag{e2}{$\R^{d_1}$}
      \psfrag{f}{$\R_+$}
      \includegraphics[width=0.45\textwidth]{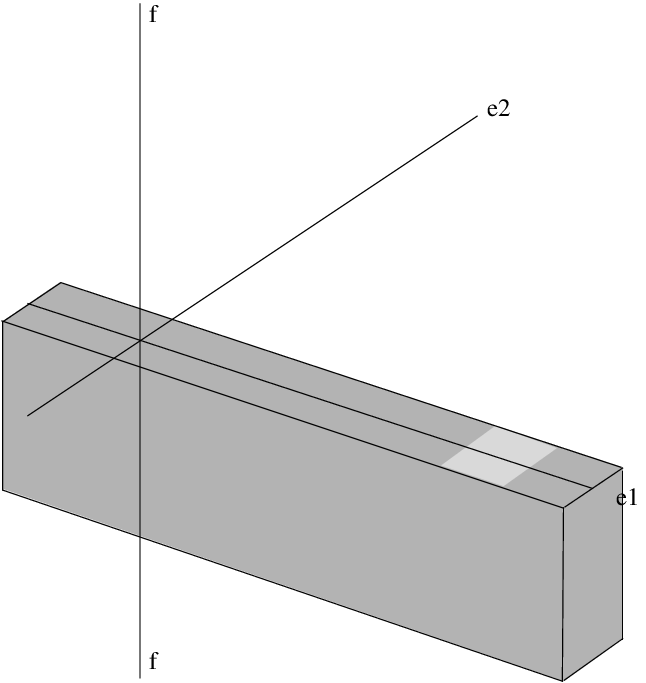}}\\
    \subfloat[$\Cell{z_4}$ (here the entire boundary is quotiented to
    the basepoint).]{ \tiny \psfrag{e1}{$\R^{d_0}$}
      \psfrag{e2}{$\R^{d_1}$} \psfrag{f}{$\R_+$}
      \includegraphics[width=0.45\textwidth]{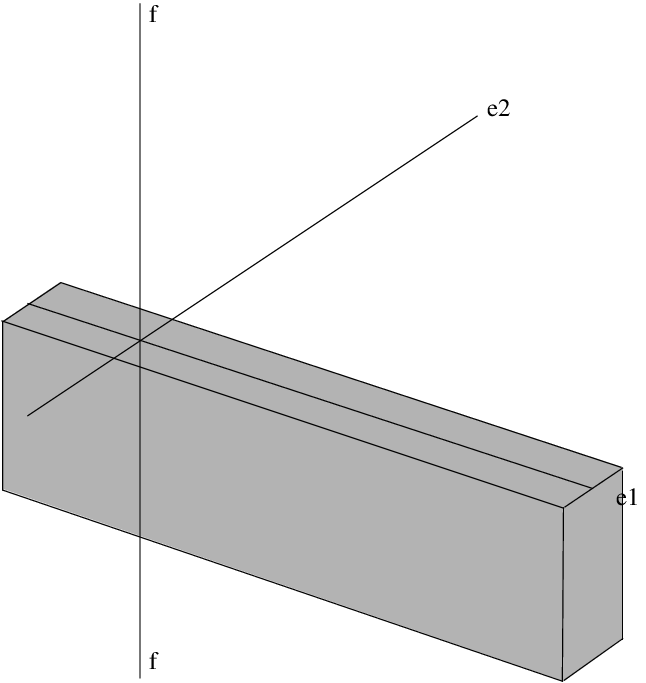}}
    \hspace{0.05\textwidth} \subfloat[\label{fig:Cell-x-bdy}A portion of $\del\Cell{x}$ is
    shown. The complement of the shaded region is quotiented to the
    basepoint; the three shaded slabs are identified with
    $\Cell{z_i}$, $i=2,3,4$; the lightly shaded region is quotiented
    to $\Cell{y}$.]{ \tiny \psfrag{e1}{$\R^{d_0}$}
      \psfrag{e2}{$\R^{d_1}$} \psfrag{f}{$\R_+$}
      \includegraphics[width=0.45\textwidth]{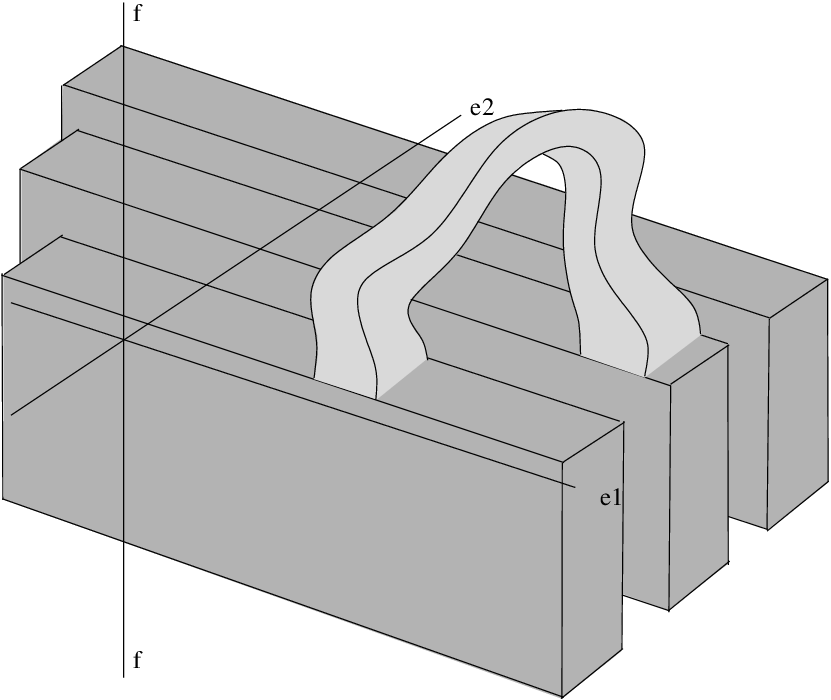}}
  \caption[]{\textbf{From a framed flow category to a CW complex.} (Part II)}
\end{figure}

\begin{lemma}\label{lem:realization-well-defined-refines-chain-complex}
  \Definition{flow-gives-space} produces a well-defined CW complex
  $\Realize\Cat$.  Furthermore, the chain complex associated to $\Cat$
  is isomorphic to $\wt{C}^*(\Realize[\iota,\Frame]{\Cat})[-C]$, where
  $\wt{C}^*$ is the reduced cellular cochain complex and $[\,]$ is the
  degree shift operator.  (For the signs in the cellular cochain
  complex, we orient the cells $\Cell{x}$ from \Equation{C-of-x} by
  the product orientations.)
\end{lemma}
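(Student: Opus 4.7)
The plan is to verify well-definedness of $\Realize[\iota,\Frame]{\Cat}$ by induction on the grading, and then identify $\wt{C}^*(\Realize[\iota,\Frame]{\Cat})[-C]$ with the chain complex of $\Cat$ via a local-degree computation at each attaching map.

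For well-definedness, suppose inductively that the $(C+m-1)$-skeleton is built unambiguously. Fix $x$ with $\gr(x)=m$. The attaching map $\bdy\Cell{x}\to\Realize{\Cat}^{(C+m-1)}$ is defined piecewise on the subsets $\Cell[y]{x}$ (by projection to the $\Cell{y}$ factor) and as the basepoint elsewhere; since $\bdy\Cell{x}\sm\bigcup_y\Cell[y]{x}$ is open and the individual pieces are continuous, continuity reduces to checking agreement on overlaps $\Cell[y]{x}\cap \Cell[z]{x}$ for $\gr(y)<\gr(z)<m$. Inspecting the explicit formula for $\Cell[y]{x}$, such an overlap corresponds to zeroing out the $\R_+$-coordinate at level $\gr(y)$ and additionally the $\R_+$-coordinate at level $\gr(z)$ appearing inside the $\Cell[y,1]{x}$ factor. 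By \Condition{maps-to-bdy} and \Condition{boundary-is}, this face of $\Moduli(x,y)$ is identified with $\Moduli(z,y)\times \Moduli(x,z)$, and (N-2) of the neat embedding gives $\iota_{x,y}(q\circ p)=(\iota_{z,y}(q),0,\iota_{x,z}(p))$, with the framings matching by coherence. Chasing these identifications, a point of the form $(q,p,r)\in \Moduli(z,y)\times\Moduli(x,z)\times\Cell{y}$ on the overlap is sent via $\Cell[y]{x}$ directly to $r\in\Cell{y}$, and via $\Cell[z]{x}$ first to $(q,r)\in\Cell[y]{z}\subset \bdy\Cell{z}$ and then (by the attaching map of $\Cell{z}$ already built) to the same $r\in\Cell{y}$. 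Hence the two routes agree and the attaching map is a well-defined continuous map.

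For the cochain complex identification, a direct dimension count from \Equation{C-of-x} gives $\dim\Cell{x}=C+\gr(x)$, so the $(C+m)$-cells are in canonical bijection with $\Ob(m)$, producing an isomorphism $\wt{C}^*(\Realize[\iota,\Frame]{\Cat})[-C]\cong C^*(\Cat)$ of graded $\Z$-modules. To match differentials, fix $x,y$ with $\gr(x)=m$ and $\gr(y)=m-1$; the coefficient of $x$ in the cellular coboundary of $y^*$ equals the degree of the composite
\[
\bdy\Cell{x}\to\Realize{\Cat}^{(C+m-1)}\twoheadrightarrow\Realize{\Cat}^{(C+m-1)}/\Realize{\Cat}^{(C+m-2)}\twoheadrightarrow\Cell{y}/\bdy\Cell{y}.
\]
Since $\dim\Moduli(x,y)=0$, the preimage of $\interior{\Cell{y}}$ is $\Moduli(x,y)\times\interior{\Cell{y}}\sbs\Cell[y]{x}$, mapping by projection to the second factor; each point of $\Moduli(x,y)$ contributes $\pm 1$ to the local degree according to the framing-induced orientation of the product embedding, which by construction agrees with the sign convention used to define $\diff$ in \Definition{framed-flow-cat}. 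Summing over $\Moduli(x,y)$ yields the equality of differentials.

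The main obstacle is the overlap check in the inductive step: multiply-broken faces of $\bdy\Cell{x}$ can be approached from several $\Cell[y]{x}$ simultaneously, and one must trace through the nested $\R_+$-factors of $\ESpace[\TupV{d}]{\gr(y)}{\gr(x)}$, iteratively applying (M-2), (M-3), and (N-2) together with coherence of the framings, to see that all ``projection paths'' land at the same point of the already-built skeleton. Once this coherence is in place, the degree computation giving the differential is essentially a restatement of the framing conventions.
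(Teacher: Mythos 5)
Your argument follows the paper's proof essentially verbatim: well-definedness is the same compatibility check on overlaps (which the paper packages as a commuting diagram relating $\Cell[y]{x}\cap\Cell[z]{x}$, $\Cell[z]{x}\to\Cell{z}$, and $\Cell[y]{z}\to\Cell{y}$), and the identification of the differentials is the same local-degree computation over the $0$-dimensional $\Moduli(x,y)$ inside $\Cell[y]{x}\cong\Moduli(x,y)\times\Cell{y}$. The only point you gloss over is the sign bookkeeping: with the product orientations the local degree equals the framed point count times a uniform factor $(-1)^{d_B+\cdots+d_{m-2}+m-B}$ (outward-normal-first convention plus commuting the relevant $[0,R]$ factor to the front), so the signs do not literally ``agree by construction''; this is harmless for the stated isomorphism since the factor depends only on the grading, but it should be recorded rather than asserted away.
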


\begin{proof}
  The first part follows from the following elementary check. For
  arbitrary $x,y,z$ with $\gr(x)\geq\gr(z)\geq\gr(y)$, the dashed
  arrow in the following diagram exists such that the diagram
  commutes:
    \[
    \xymatrix{
      \Cell[y]{x}\cap \Cell[z]{x} \ar@{^{(}->}[r]\ar@{^{(}->}[d] \ar@{-->}[ddrr]  & \Cell[z]{x} \ar[r] & \Cell{z}\\
     \Cell[y]{x} \ar[d]& & \del\Cell{z}\ar@{^{(}->}[u]\\
      \Cell{y} & & \Cell[y]{z} \ar[ll] \ar@{^{(}->}[u]
    }
    \]

    For the second part, fix $x,y$ with $\gr(y)=\gr(x)-1$. Then the
    map $\pi\co \bdy \Cell{x}\to \Cell{y}/\bdy \Cell{y}$ has degree
    $\#\pi^{-1}(p)$ for any $p$ in the interior of $\Cell{y}$.
    However, since $\Cell[y]{x}$ is canonically homeomorphic to
    $\Moduli(x,y)\times\Cell{y}$, the number of points in
    $\pi^{-1}(p)$ equals $(-1)^{\Tup{d}_B+\dots+\Tup{d}_{m-2}+m-B}$
    times the number of points in $\Moduli(x,y)$.  The sign
    $(-1)^{\Tup{d}_B+\dots+\Tup{d}_{m-2}+m-B}$ comes from the
    outward-normal first convention for the boundary orientation---the
    opposite of the orientation of $0\in\bdy[0,R]$---and commuting the
    factor $[0,R]$ in $\Cell{x}$ to the first position (which requires
    $\Tup{d}_B+\dots+\Tup{d}_{m-2}+m-B-1$ exchanges).
\end{proof}

\begin{lem}\label{lem:CW-indep-ep-R-framing}
  For a framed flow category $(\Cat,\iota,\Frame)$, the isomorphism type
  of the CW complex $\Realize[{\iota,\Frame}]{\Cat}$ from
  \Definition{flow-gives-space} is independent of the choice of real
  numbers $\ep,R$. In particular, different choices of $\ep, R$ give
  homeomorphic spaces.

  Furthermore, for $t\in[0,1]$, if $(\iota(t),\Frame(t))$ is a
  smooth $1$-parameter family of framings of a flow category $\Cat$, then the
  CW complexes $\Realize[{\iota(0),\Frame(0)}]{\Cat}$ and
  $\Realize[{\iota(1),\Frame(1)}]{\Cat}$ are isomorphic.
\end{lem}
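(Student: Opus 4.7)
The plan is to verify both claims by noting that the CW structure of $\Realize[\iota,\Frame]{\Cat}$ is combinatorially rigid---there is one cell $\Cell{x}$ per object $x$, of dimension $\Tup{d}_B+\dots+\Tup{d}_{A-1}-B+\gr(x)$---and then showing that its attaching maps deform continuously with the parameters.

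For independence of $\ep,R$, I first observe that the set of valid pairs is convex: if both $(\ep_0,R_0)$ and $(\ep_1,R_1)$ work, so does any $(\ep,R)$ with $\ep\leq\min(\ep_0,\ep_1)$ and $R\geq\max(R_0,R_1)$, since the constraints are ``$\ep$ small enough for the framing to give an embedded tubular neighborhood'' and ``$R$ large enough to contain all the extended embeddings''. It therefore suffices to produce a homeomorphism for any two valid pairs $(\ep,R)$ and $(\ep',R')$, and for this I would use an explicit piecewise-linear rescaling of each $\Cell[1]{x}$: linearly scale each $[-R,R]$ factor onto $[-R',R']$ and each $[-\ep,\ep]$ factor onto $[-\ep',\ep']$. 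Inspection of \Equation{C-of-x} shows that this rescaling carries the cell $\Cell{x}$ for $(\ep,R)$ to the cell $\Cell{x}$ for $(\ep',R')$, preserves the subsets $\Cell[y]{x}$ (which are precisely boxes of these sizes pushed forward by the framing), and commutes with the projections $\Cell[y]{x}\to\Cell{y}$. Assembling over all $x$ gives the desired homeomorphism of CW complexes.

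For invariance under a $1$-parameter family $(\iota(t),\Frame(t))$, by compactness of $[0,1]$ I can choose $\ep>0$ small and $R>0$ large so that the validity conditions hold for all $t$ simultaneously. Then the underlying boxes $\Cell{x}\sbs\Cell[1]{x}$ are identical for all $t$; only the attaching maps---i.e.\ the placement of $\Cell[y]{x}(t)\sbs\bdy\Cell{x}$ and the projections to $\Cell{y}(t)$---vary continuously in $t$. I then construct a homeomorphism $\Realize[{\iota(0),\Frame(0)}]{\Cat}\to\Realize[{\iota(1),\Frame(1)}]{\Cat}$ by induction on grading: assuming that the $(C+m-1)$-skeleta have been identified, for each object $x$ of grading $m$ the continuous family of subsets $\Cell[y]{x}(t)\sbs\bdy\Cell{x}$ produces, via a parametric isotopy extension theorem on the sphere $\bdy\Cell{x}$, an ambient isotopy carrying the attaching data at $t=0$ to that at $t=1$; coning this over the disk $\Cell{x}$ then extends the identification one skeleton higher.

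The main obstacle will be arranging the isotopies of $\bdy\Cell{x}$ to be compatible both with the inductive identifications of lower skeleta and with the product structures $\Cell[y]{x}\cong\Moduli(x,y)\times\Cell{y}$: the ambient isotopy of $\bdy\Cell{x}$ must restrict on each $\Cell[y]{x}(t)$ to the product of the previously-chosen isotopy of $\Cell{y}$ with a local tubular-neighborhood isotopy coming from the variation in $(\iota(t),\Frame(t))$. This is precisely the setting of the relative isotopy extension theorem, and applying it inductively (using the collar neighborhood structure on faces of manifolds with corners, cf.~\cite[Proposition 2.1.6]{Lau-top-cobordismcorners}) yields compatible isotopies throughout the construction.
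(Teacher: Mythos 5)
Your second half (the $1$-parameter family) is essentially the paper's argument made explicit: the paper's proof simply observes that deforming the framed embedding moves the boxes $\Cell{x}$ and sub-boxes $\Cell[y]{x}$ by ambient isotopies that are consistent across the different cells, and following these isotopies gives a cell-preserving homeomorphism; your skeleton-by-skeleton scheme using (relative, parametric) isotopy extension is a workable way to organize exactly that, with the compatibility issue you flag being the real content.

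The first half, however, has a genuine error. The subsets $\Cell[y]{x}$ are not coordinate boxes: in the gradings between $\gr(y)$ and $\gr(x)$ the relevant piece is $\Cell[y,1]{x}$, the image of $\Moduli(x,y)\times[-\ep,\ep]^{\cdots}$ under the \emph{fixed} neat embedding $\iota_{x,y}$ extended along the framing, i.e.\ an $\ep$-thickening, in the framing directions, of $\iota_{x,y}(\Moduli(x,y))$ sitting wherever $\iota$ placed it. Your piecewise-linear rescaling of the ambient $[-R,R]$-factors onto $[-R',R']$ (and of the $[-\ep,\ep]$-factors onto $[-\ep',\ep']$) moves and dilates this embedded moduli space, whereas the sub-box for the parameters $(\ep',R')$ is the $\ep'$-thickening of the \emph{same, unmoved} $\iota_{x,y}(\Moduli(x,y))$, thickened along framing directions that are in general not coordinate-aligned. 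So the rescaling does not carry $\Cell[y]{x}$ onto the corresponding sub-box for $(\ep',R')$ and does not intertwine the identifications $\Cell[y]{x}\cong\Moduli(x,y)\times\Cell{y}$ with the projections; consequently it does not descend to a map of the quotient CW complexes at all. The statement is of course still true, but the argument should use ambient isotopies rather than global coordinate rescalings: to change $R$ to $R'$, take an isotopy of each Euclidean factor that is the identity on a compact region containing all the framed images $\iota_{i,j}(\Moduli(i,j)\times[-\ep,\ep]^{\cdots})$ and carries the $R$-box onto the $R'$-box; to change $\ep$ to $\ep'$, take an isotopy supported in the tubular neighborhoods that rescales the normal parameter along the framing. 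These isotopies are consistent across all objects $x$ (this is the paper's one-line proof); alternatively, the isotopy-extension argument you use for the $1$-parameter family applies verbatim to this case as well.
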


\begin{proof}
  This is clear from the definitions. To wit, changing $\epsilon$ or
  $R$ or deforming the framing or framed embedding has the effect of
  changing the boxes $\Cell{x}\subset\Cell[1]{x}=\RR^N$ and sub-boxes
  $\Cell[y]{x}$ by ambient isotopies of $\RR^N$; and these isotopies
  are consistent across different $x$'s in an obvious sense. Following
  these ambient isotopies gives a homeomorphism between the CW
  complexes $\Realize{\Cat}$ constructed with the two collections of
  data, and this homeomorphism takes cells homeomorphically to cells.
\end{proof}

\begin{lem}\label{lem:CW-indep-A-B}
  Fix a framed flow category $(\Cat,\iota,\Frame)$, where $\iota$ is a
  neat embedding relative $\TupV{d}$. Let
  $\Realize[{\iota,\Frame,B,A}]{\Cat}$ be the CW complex defined in
  \Definition{flow-gives-space}, where the subscripts also indicate
  our choice of integers $A,B$. Let
  $C_{\TupV{d}}(B,A)=d_B+\dots+d_{A-1}-B$ denote the grading shift.

  For any $\TupV{d'}\geq \TupV{d}$, let
  $\iota'=\iota[\TupV{d}'-\TupV{d}]$ be the induced neat embedding relative
  $\TupV{d}'$; let $\Frame'$ denote the induced coherent framing for
  $\iota'$. For any $A'\geq A$ and $B'\leq B$, there is a homotopy equivalence
  \[
  \Realize[{\iota',\Frame', B',A'}]{\Cat}\sim
  \Sigma^{C_{\TupV{d}'}(B',A')-C_{\TupV{d}}(B,A)}\Realize[{\iota,\Frame,B,A}]{\Cat}.
  \]
\end{lem}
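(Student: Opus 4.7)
The plan is to factor the passage from $(\iota,\Frame,B,A)$ to $(\iota',\Frame',B',A')$ into a finite sequence of three elementary stabilization moves: (i) $A\mapsto A+1$; (ii) $B\mapsto B-1$; and (iii) $\Tup{d}_k\mapsto \Tup{d}_k+1$ for a single $k\in[B',A'-1]$, holding the other data fixed. Since the shift $C_{\TupV{d}}(B,A)$ is additive over these moves, it suffices to verify that each elementary move produces a suspension of the correct degree.

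For moves (i) and (ii), I would argue directly from \Definition{flow-gives-space}. Since no object lies in the newly-added grading, no new cells and no new attaching maps arise. Every existing $\Cell{x}$ simply gains a trivial extra factor at the new slot: $\{0\}\times[-\ep,\ep]^{d_A}$ for move (i), and $[0,R]\times[-R,R]^{d_{B-1}}$ for move (ii). Each sub-cell $\Cell[y]{x}$ gains the same factor, and the attaching map $\Cell[y]{x}\to\Cell{y}$ becomes the product of the old attaching map with the identity on the new factor. The remainder of the new boundary face is sent to the basepoint, yielding smash products with $S^{d_A}$ in case (i) and with $S^{1+d_{B-1}}$ in case (ii). In both cases, the exponent agrees with the shift in $C_{\TupV{d}}(B,A)$.

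Move (iii) is the subtlest, and is where I expect the real work. The extra coordinate at slot $k$ enters differently in different cells: for $\Cell{x}$ with $\gr(x)>k$ it appears as a full $[-R,R]$-factor adjoined to the large box $[-R,R]^{d_k}$, while for $\gr(x)\leq k$ it appears as a small $[-\ep,\ep]$-factor adjoined to the tubular box $[-\ep,\ep]^{d_k}$. Using that $\iota'=\iota[\TupV{d}'-\TupV{d}]$ zero-pads in the new coordinate and that $\Frame'$ extends $\Frame$ by the standard unit vector there, I would check that each sub-cell $\Cell[y]{x}$ gains a single extra one-dimensional factor matching the one in $\Cell{y}$: namely $[-R,R]$ when $\gr(y)>k$ and $[-\ep,\ep]$ when $\gr(y)\leq k$. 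Consequently the attaching $\Cell[y]{x}\to\Cell{y}$ becomes its old version times the identity on the new factor, and the part of $\bdy\Cell{x}$ lying outside every $\Cell[y]{x}$ — which now includes the $\pm R$ or $\pm\ep$ faces at the new coordinate, together with the annular region between radius $\ep$ and $R$ when $\gr(x)>k$ — still maps to the basepoint.

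The cosmetic mismatch between cells using $[-R,R]$ and cells using $[-\ep,\ep]$ as their extra factor is resolved by the rescaling argument underlying \Lemma{CW-indep-ep-R-framing}, which shows that the realization is unaffected by the choice of $\ep$ and $R$; this lets one take the extra factors to a common length. The result is that every cell in the new complex is the smash of the corresponding old cell with one copy of $S^1$, giving $\Realize[{\iota',\Frame',B,A}]{\Cat}\cong\Sigma\,\Realize[{\iota,\Frame,B,A}]{\Cat}$, matching the shift $d'_k-d_k=1$. The main obstacle is the bookkeeping in case (iii); the other cases are direct unwindings of the construction.
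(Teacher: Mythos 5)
Your proposal is correct and follows essentially the same route as the paper: the same factorization into the three elementary moves (extend $A$, extend $B$, increment a single $\Tup{d}_k$), with the first two handled by direct unwinding of \Definition{flow-gives-space} and the third by the rescaling/isotopy argument underlying \Lemma{CW-indep-ep-R-framing} to reconcile the $[-\ep,\ep]$ versus $[-R,R]$ factors. The only cosmetic difference is that you treat moves (i) and (ii) with arbitrary $d_A$, $d_{B-1}$ (smashing with $S^{d_A}$, $S^{1+d_{B-1}}$), whereas the paper takes the newly added slots to have $d=0$ and absorbs the rest into repeated applications of move (iii).
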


\begin{proof}
  For notational convenience, if we denote some object in
  $\Realize[{\iota,\Frame,B,A}]{\Cat}$ by some symbol $\ast$, then we
  will denote the corresponding object in
  $\Realize[{\iota',\Frame',B',A'}]{\Cat}$ by the symbol
  $\ast'$. It is enough to consider the following three cases.
  \begin{enumerate}
  \item $A'=A+1;B'=B;\TupV{d}'=\TupV{d};d_A=0$. 
    In this case, from
    \Equation{C-of-x} it is clear that $\Cell{x}'=\Cell{x}$ for
    all objects $x$ in $\Cat$; furthermore, this identification respects
    the attaching maps. Therefore, we have a homeomorphism
    \[
    \Realize[{\iota',\Frame', B',A'}]{\Cat}\cong
    \Realize[{\iota,\Frame,B,A}]{\Cat}=
    \Sigma^{C_{\TupV{d}'}(B',A')-C_{\TupV{d}}(B,A)}\Realize[{\iota,\Frame,B,A}]{\Cat}.
    \]
  \item\label{case:change-B-suspend}
    $A'=A;B'=B-1;\TupV{d}'=\TupV{d};d_{B-1}=0$. In this case, it
    follows from \Equation{C-of-x} that for all objects $x$,
    $\Cell{x}'=[0,R]\times\Cell{x}$; and for all objects $x,y$, with
    $\Cell[y]{x}'=[0,R]\times\Cell[y]{x}$. Furthermore, in the
    attaching maps, the whole of $\{0,R\}\times\Cell{x}$ is
    quotiented to the basepoint. Therefore, we have a homeomorphism
    \[
    \Realize[{\iota',\Frame', B',A'}]{\Cat}\cong
    S^1\wedge\Realize[{\iota,\Frame,B,A}]{\Cat}=
    \Sigma^{C_{\TupV{d}'}(B',A')-C_{\TupV{d}}(B,A)}\Realize[{\iota,\Frame,B,A}]{\Cat}.
    \]
  \item $A'=A;B'=B;\Tup{d}'_m=\Tup{d}_m+1$ for some $m\in[B,A-1]$ and
    $d'_i=d_i$ otherwise. In this case,
    $C_{\TupV{d}'}(B',A')-C_{\TupV{d}}(B,A)=1$, and we want to produce
    a homotopy equivalence between $\Realize[{\iota',\Frame',
      B',A'}]{\Cat}$ and $\Sigma\Realize[{\iota,\Frame,B,A}]{\Cat}$.

    For all objects $x$ with $\gr(x)\leq m$, $\Cell{x}'$ can be
    identified with $[-\ep,\ep]\times\Cell{x}$, and for all objects
    $x$ with $\gr(x)>m$, $\Cell{x}'$ can be identified with
    $[-R,R]\times\Cell{x}$. Furthermore, for all objects $x,y$ with
    $\gr(y)\leq m$, $\Cell[y]{x}'$ can be identified with
    $[-\ep,\ep]\times\Cell[y]{x}$, and for all objects $x,y$ with
    $\gr(y)>m$, $\Cell[y]{x}'$ can be identified with
    $[-R,R]\times\Cell[y]{x}$.

    Isotoping the cells $\Cell{x}\subset\RR^N$ and the subsets
    $\Cell[y]{x}\subset\Cell{x}$ (in a consistent way) has the effect
    of homotoping the attaching maps in $\Realize{\Cat}$ (in a
    consistent way), and hence does not change the homotopy type of
    $\Realize{\Cat}$. So, for $t\in[\ep,R]$, consider the cells
    $\Cell{x}^t$ and subsets $\Cell[y]{x}^t$ defined as follows:
    For all objects $x$ with $\gr(x)\leq m$, set
    $\Cell{x}^t=\Cell{x}'$, and for all objects $x$
    with $\gr(x)>m$, set $\Cell{x}^t=
    [-t,t]\times\Cell{x}$. Similarly, for all objects $x,y$ with
    $\gr(y)\leq m$, set
    $\Cell[y]{x}^t=\Cell[y]{x}'$, and for all
    objects $x,y$ with $\gr(y)>m$, set
    $\Cell[y]{x}^t=[-t,t]\times\Cell[y]{x}$. This produces a
    homotopy equivalence between $\Realize{\Cat}^{\ep}$ and
    $\Realize{\Cat}^R=\Realize[{\iota',\Frame',B',A'}]{\Cat}$.

    Therefore, in $\Realize{\Cat}^{\ep}$, $\Cell{x}^{\ep}$ is identified with
    $[-\ep,\ep]\times\Cell{x}$, and $\Cell[y]{x}^{\ep}$ is identified with
    $[-\ep,\ep]\times\Cell[y]{x}$ for all $x,y$. As in
    \Case{change-B-suspend}, we get the homotopy equivalence
    \[
    \Realize[{\iota',\Frame',B',A'}]{\Cat}=\Realize{\Cat}^R\simeq\Realize{\Cat}^{\ep}\cong
    S^1\wedge\Realize[{\iota,\Frame,B,A}]{\Cat}.\qedhere
    \]
  \end{enumerate}
\end{proof}

\begin{prop}
  This construction from \Definition{flow-gives-space} agrees with the construction
  from \cite{CJS-gauge-floerhomotopy}.
\end{prop}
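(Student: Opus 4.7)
My plan is to exhibit a cell-by-cell homeomorphism between the CW complex $\Realize{\Cat}$ of \Definition{flow-gives-space} and the Cohen--Jones--Segal realization, arguing inductively on the grading. Both constructions produce one cell per object of $\Cat$ and prescribe the same overall dimension shift $C$, so all the content lies in matching the attaching maps.

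First I would recall the CJS procedure for the attaching map of the cell corresponding to an object $x$ of grading $m$: starting inside the ambient Euclidean space $\ESpace[\TupV{d}]{\gr(y)}{\gr(x)}$ in which $\iota_{x,y}$ lives, one picks small framed tubular neighborhoods of the images $\iota_{x,y}(\Moduli(x,y))$, and defines the attaching map by Pontrjagin--Thom collapse: on each tube one uses the framing to identify it with $\Moduli(x,y)\times [-\ep,\ep]^{\Tup{d}_{\gr(y)}+\cdots+\Tup{d}_{m-1}}$ and then projects onto the cell $\Cell{y}$ of the lower skeleton, while everything outside the union of tubes is sent to the basepoint.

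Then I would observe that the box $\Cell{x}$ defined by \Equation{C-of-x} is precisely a coordinate model for this ambient Euclidean disk: the $[0,R]\times[-R,R]^{\Tup{d}_i}$ factors are the ``lower-grading side'' of the ambient $\mathbb{E}$, and the $\{0\}\times[-\ep,\ep]^{\Tup{d}_i}$ factors are the framing tubes contributed by $\iota_{x,y}$ itself, for indices with $\gr(y)\leq m\leq \gr(x)$. Under this identification, the subsets $\Cell[y]{x}$ are exactly the framed tubular neighborhoods of $\iota_{x,y}(\Moduli(x,y))$ used by CJS. Hence the attaching map in \Definition{flow-gives-space} --- the product-projection $\Cell[y]{x}\cong \Moduli(x,y)\times\Cell{y}\to\Cell{y}$ on each tube, and the constant basepoint map on the complement --- is literally the CJS Pontrjagin--Thom collapse expressed in these coordinates.

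The main obstacle is the inductive coherence check: I need to verify that, after composing with the already-constructed attaching map of $\Cell{y}$, one recovers the iterated CJS collapse coming from broken flows $x\to z\to y$. This is where the axioms do all the work. The neat-immersion relation $\iota_{x,y}(q\circ p) = (\iota_{z,y}(q),0,\iota_{x,z}(p))$ from \Definition{neat-immersion} together with the coherent framing condition of \Definition{framed-flow-cat} imply that the nested intersection $\Cell[z]{x}\cap\Cell[y]{x}\sbs\Cell[y]{x}$ matches, via the product decomposition, the subset $\Moduli(x,y)\times\Cell[z]{y}\sbs\Moduli(x,y)\times\Cell{y}$. Consequently the composition of the two attaching maps factors through $\Moduli(z,y)\times\Moduli(x,z)\times\Cell{z}$ in exactly the way CJS prescribe, and assembling these cell-level identifications across all gradings yields the desired isomorphism of CW complexes.
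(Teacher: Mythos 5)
Your proposal is correct and follows essentially the same route as the paper: both unwind the Cohen--Jones--Segal structure maps as umkehr (Pontrjagin--Thom collapse) maps, identify the box $\Cell{x}$ with coordinates in the ambient $\ESpace[\TupV{d}]{\gr(y)}{\gr(x)}$ and the sub-boxes $\Cell[y]{x}$ with the framed tubular neighborhoods of $\iota_{x,y}(\Moduli(x,y))$, and observe that the collapse-and-project maps coincide with the attaching maps of \Definition{flow-gives-space}, with coherence under composition coming from the neat-embedding relation and the coherent framing. The only cosmetic difference is that the paper phrases the comparison through the categorical realization formula (identifying one-point compactifications of the half-open boxes with the smash products $\Hom_{\JCat}(m,B-1)\wedge Z(m)$ and checking the identifications agree), whereas you phrase it as an induction over skeleta matching attaching maps.
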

\begin{proof}
  To start, we briefly review the Cohen-Jones-Segal formulation,
  from~\cite[pp.~309--312]{CJS-gauge-floerhomotopy}. For any
  topological space $X$, let $X^+$ denote its one-point
  compactification (with the understanding that $X^+=X\amalg\pt$ if
  $X$ is already compact). Once again, choose integers $A,B$, such
  that all objects of $\Cat$ have grading in $[B,A]$. Consider the
  topological category $\JCat_{B-1}^A$ with $\Ob_{\JCat}=\{n\in\ZZ\mid
  B-1\leq n\leq A\}$ and morphisms
  \[
  \Hom_\JCat(m,n)=
  \begin{cases}
    \emptyset &  m<n\\
    \{\Id\} & m=n\\
    (\RR^{m-n-1})^+ & m>n.
  \end{cases}
  \]
  Composition is given by the map 
  \begin{align*}
    \RR^{n-p-1}\times\RR^{m-n-1}&\to \RR^{m-p-1}\\
    (t_{p+1},\dots,t_{n-1})\times(t_{n+1},\dots,t_{m-1})&\mapsto (t_{p+1},\dots,t_{n-1},0,t_{n+1},\dots,t_{m-1}).
  \end{align*}

  Given a continuous, basepoint-preserving functor $Z$ from $\JCat_{B-1}^A$ to the
  category of based topological spaces, one can form the geometric
  realization $\Realize{Z}$ of $Z$, which is obtained from
  \begin{equation}\label{eq:CJS-realize-functor}
  \coprod_{B-1\leq m\leq A} \Hom_{\JCat}(m,B-1)\wedge Z(m)/\sim,
  \end{equation}
  where
  $
  (s\circ t,x)\sim (s,Z(t)(x))
  $
  for any $s\in \Hom_\JCat(n,B-1)$, $t\in\Hom_{\JCat}(m,n)$ and $x\in
  Z(m)$.

  So, to produce a space, it suffices to produce a functor from $\JCat$
  to based topological spaces. Cohen-Jones-Segal do this as
  follows. Define
  \[
  Z(m)=\bigl(\Ob_{\Cat}(m)\times\RR^{d_B}\times\dots\times\RR^{d_{m-1}}\times(-\epsilon,\epsilon)^{d_m}\times\dots\times(-\epsilon,\epsilon)^{d_{A-1}}\bigr)^+.
  \]
  The map $\Hom_{\JCat}(m,n)\times Z(m)\to Z(n)$ is given as
  follows. There is a proper projection
  \begin{align*}
    \RR^{d_B}&\times\dots\times\RR^{d_{n-1}}\times
    \Moduli(m,n)\times(-\epsilon,\epsilon)^{d_n}\times\dots
    \times(-\epsilon,\epsilon)^{d_{A-1}}\\
    &\to \Ob(n)\times\RR^{d_B}\times\dots\times\RR^{d_{n-1}}
    \times(-\epsilon,\epsilon)^{d_n} \times\dots\times
    (-\epsilon,\epsilon)^{d_{A-1}}.\\
    \shortintertext{The framing identifies the normal bundle to
      $\Moduli_{\Cat}(m,n)$ in
      $\R^{d_n}\times\R_+\times\dots\times\R_+\times\R^{d_{m-1}}\cong
      \R_+^{m-n-1}\times\R^{d_n}\times\dots\times\R^{d_{m-1}}$ with
      $\Moduli_{\Cat}(m,n)\times(-\epsilon,\epsilon)^{d_n}\times\dots\times(-\epsilon,\epsilon)^{d_{m-1}}$.     
      So, there is an open inclusion}
    \RR^{d_B}&\times\dots\times\RR^{d_{n-1}}\times
    \Moduli(m,n)\times(-\epsilon,\epsilon)^{d_n}\times\dots\times
    \times(-\epsilon,\epsilon)^{d_{A-1}}\\
    & \into \RR_+^{m-n-1}\times\Ob(m)\times\RR^{d_B}
    \times\dots\times\RR^{d_m-1} \times(-\epsilon,\epsilon)^{d_m}
    \times\dots\times (-\epsilon,\epsilon)^{d_{A-1}}.
  \end{align*}
  Collapsing everything outside the image of the second map
  gives an umkehr map the other direction of one-point
  compactifications. Composing this with the first map gives a map
  $\Hom_{\JCat}(m,n)\wedge  Z(m)\to Z(n)$, as desired.

  It remains to identify the realization $\Realize{Z}$ of $Z$ with the
  space $\Realize{\Cat}$ of \Definition{flow-gives-space}. Consider
  the subset
  \begin{multline*}
    \CellPrime{x}= [0,R)\times (-R,R)^{\Tup{d}_B}\times[0,R)\times(-R,R)^{d_{B+1}}
    \times\dots\times[0,R)\times (-R,R)^{\Tup{d}_{m-1}}\\
    \times \{0\}\times(-\ep,\ep)^{\Tup{d}_m} \times\dots\times\{0\}\times
    (-\ep,\ep)^{\Tup{d}_{A-1}} \subset \Cell{x}.
  \end{multline*}
  Then $\bigl(\amalg_{\gr(x)=m}\CellPrime{x}\bigr)^+$ is exactly the
  space $\Hom_\JCat(m,B)\wedge Z(m)$ appearing in
  \Equation{CJS-realize-functor} (except that we have used $R$ instead
  of $\infty$).

  Under this identification, the umkehr map is the identification of
  $\CellPrime[y]{x}\subset \bdy\CellPrime{x}$ with $\Moduli(x,y)\times
  \CellPrime{y}$ from \Definition{flow-gives-space}. The proper projection
  corresponds to the projection $\Moduli(x,y)\times \CellPrime{y}\to
  \CellPrime{y}$. 
  Thus, the map $\Hom_\JCat(n,B)\wedge
  \Hom_\JCat(m,n)\wedge Z(m)\to \Hom_\JCat(n,B)\wedge Z(n)$ is the
  projection of $\CellPrime[y]{x}$ to $\CellPrime{y}$, while the map
  $\Hom_\JCat(n,B)\wedge \Hom_\JCat(m,n)\wedge Z(m)\to
  \Hom_{\JCat}(m,B)\wedge Z(m)$ is the inclusion of $\CellPrime[y]{x}$ in
  $\bdy\CellPrime{x}$. So, the identification $\sim$ in
  \Equation{CJS-realize-functor} agrees with the identification in the
  CW complex from \Definition{flow-gives-space}.
\end{proof}

\subsection{Covers, subcomplexes and quotient complexes}\label{sec:cover-sub-quot-flow}
We will construct the Khovanov flow category as
a kind of cover of the flow category for a cube. In the proof of
invariance of the Khovanov homotopy type we will consider certain
subcategories of the flow category, corresponding to subcomplexes and
quotient complexes of the Khovanov chain complex. We introduce these
three notions, of a cover, a downward closed subcategory and an
upward closed subcategory of a flow category here.

\subsubsection{Covers}\label{sec:covers}
\begin{defn}\label{def:flow-cat-cover}
  A grading-preserving functor $\Funky$ from a flow category
  $\hat\Cat$ to a flow category $\Cat$ is said to be a \emph{cover} if
  for all $x,y$ in $\Ob_{\hat\Cat}$, either
  $\Moduli_{\hat\Cat}(x,y)=\emptyset$, or $\Funky\from
  \Moduli_{\hat\Cat}(x,y)\to \Moduli_{\Cat}(\Funky(x),\Funky(y))$ is a
  $(\gr(x)-\gr(y)-1)$-map, a local diffeomorphism, and a covering map.

  We say the cover is \emph{trivial} if for all $x,y$ in
  $\Ob_{\hat\Cat}$, $\Funky\from \Moduli_{\hat\Cat}(x,y)\to
  \Moduli_{\Cat}(\Funky(x),\Funky(y))$, restricted to each component
  of $\Moduli_{\hat\Cat}(x,y)$, is a homeomorphism onto its image.
\end{defn}

If $\Funky\from\hat\Cat\to\Cat$ is a cover, and if $\iota$
is a neat immersion of $\Cat$ relative some $\TupV{d}$, the
`composition' $\iota\circ\Funky$ is a neat immersion of
$\hat\Cat$.
Furthermore, a coherent framing for $\iota$ induces a
coherent framing for $\iota\circ\Funky$; in such a situation, one can
frame $\hat\Cat$ by a perturbation (\Definition{coherent-framing-to-framed}).

\begin{remark}
  In a cover $\Funky$, the cardinality of the fiber of $\Funky\from
  \Moduli_{\hat\Cat}(x,y)\to \Moduli_{\Cat}(\Funky(x),\Funky(y))$ may depend on $x$ and $y$. This is even true for trivial covers, which perhaps should be called ``topologically trivial covers'', as they can have interesting combinatorics.
\end{remark}

\subsubsection{Subcomplexes and quotient complexes}\label{sec:sub-quot}

\begin{defn}\label{def:downward-closed}
  A full subcategory $\Cat'$ of a flow category $\Cat$ is said to be a
  \emph{downward closed subcategory} (\respectively an \emph{upward
    closed subcategory}) if for all $x,y\in\Ob_{\Cat}$ with
  $\Moduli_{\Cat}(x,y)\neq\emptyset$, $x\in\Ob_{\Cat'}$ implies
  $y\in\Ob_{\Cat'}$ (\respectively $y\in\Ob_{\Cat'}$ implies
  $x\in\Ob_{\Cat'}$). It is clear that downward closed subcategories
  and upward closed subcategories of a flow category are flow
  categories.
\end{defn}

\begin{lem}\label{lem:down-sub-up-quot}
  Let $\Cat'$ be a downward closed subcategory (\respectively an upward
  closed subcategory) of a flow category $\Cat$. Let $\Cat$ be framed,
  with $(\iota,\Frame)$ being the neat embedding and the coherent
  framing. Let $(\iota',\Frame')$ be the induced neat embedding and
  coherent framing for $\Cat'$. Let $\Realize[(\iota,\Frame)]{\Cat}'$
  be the subcomplex (\respectively the quotient complex) of
  $\Realize[(\iota,\Frame)]{\Cat}$ containing exactly the cells that correspond
  to the objects that are in $\Cat'$. Then
  \[
   \Realize[(\iota',\Frame')]{\Cat'}= \Realize[(\iota,\Frame)]{\Cat}'.
  \]
  (Here, we assume the integers $A,B$ of \Definition{flow-gives-space}
  are chosen to be the same for the two sides.)
\end{lem}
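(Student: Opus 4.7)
The plan is to unpack \Definition{flow-gives-space} cell by cell on each side and verify that they assemble in the same way. First I would treat the downward closed case. If $x\in\Ob_{\Cat'}$, then by downward closedness every $y$ with $\Moduli(x,y)\neq\emptyset$ also lies in $\Ob_{\Cat'}$. Consequently, in the attaching map $\del\Cell{x}\to\Realize[(\iota,\Frame)]{\Cat}^{(C+\gr(x)-1)}$ the only nonempty subsets $\Cell[y]{x}\sbs\del\Cell{x}$ are indexed by $y\in\Ob_{\Cat'}$, while everything in $\del\Cell{x}\sm\bigcup_y\Cell[y]{x}$ is sent to the basepoint. In particular, the union of the cells $\Cell{x}$ with $x\in\Ob_{\Cat'}$ is genuinely a CW subcomplex of $\Realize[(\iota,\Frame)]{\Cat}$, so the notation $\Realize[(\iota,\Frame)]{\Cat}'$ is legitimate.

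Next I would check that this subcomplex is canonically identified with $\Realize[(\iota',\Frame')]{\Cat'}$. For each $x\in\Ob_{\Cat'}$ the cell $\Cell{x}$ built from $(\iota,\Frame)$ is literally the same product of boxes described in \Equation{C-of-x} as the one built from the induced data $(\iota',\Frame')$, because the ambient Euclidean factors and the constants $\ep,R$ are determined by $\TupV{d}$ and by $A,B$, which by hypothesis agree on the two sides. For $y\in\Ob_{\Cat'}$ the subsets $\Cell[y]{x}$ also agree, since $\iota'_{x,y}=\iota_{x,y}$ and $\Frame'_{x,y}$ is the restriction of $\Frame_{x,y}$. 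Since only $y\in\Ob_{\Cat'}$ contribute non-basepoint data on either side, the attaching maps coincide, and a straightforward induction on $\gr(x)$ produces the claimed equality of CW complexes.

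For the upward closed case, I would begin by observing that the complement $\Cat\sm\Cat'$ is itself a downward closed subcategory: if $x\notin\Ob_{\Cat'}$ and $\Moduli(x,y)\neq\emptyset$, then the upward closed condition on $\Cat'$ forces $y\notin\Ob_{\Cat'}$ by contrapositive. Applying the downward closed case already established, the union of cells $\Cell{z}$ with $z\notin\Ob_{\Cat'}$ forms a subcomplex of $\Realize[(\iota,\Frame)]{\Cat}$, so the quotient complex $\Realize[(\iota,\Frame)]{\Cat}'$ is well defined. In this quotient, the attaching map of $\Cell{x}$ for $x\in\Ob_{\Cat'}$ sends every $\Cell[y]{x}$ with $y\notin\Ob_{\Cat'}$ to the basepoint, while the pieces with $y\in\Ob_{\Cat'}$ attach exactly as they do in $\Realize[(\iota',\Frame')]{\Cat'}$. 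This recovers \Definition{flow-gives-space} applied to $\Cat'$ on the nose.

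The main thing to be careful about, and essentially the only subtlety, is to confirm that collapsing the complement of $\bigcup_{y\in\Ob_{\Cat'}}\Cell[y]{x}$ in $\del\Cell{x}$ directly to the basepoint (which is what the $\Cat'$-construction does) produces the same identifications as first performing all the $\Cat$-attachings and then quotienting by the $\Cat\sm\Cat'$-subcomplex. This is immediate since in either order exactly the same locus in $\del\Cell{x}$ gets identified with the basepoint. The hypothesis that $A,B$ agree on the two sides is essential: otherwise the two realizations would differ by the suspension shift of \Lemma{CW-indep-A-B} rather than being equal.
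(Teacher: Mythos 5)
Your proof is correct and follows the same route as the paper, whose proof simply declares the statement immediate from \Definition{flow-gives-space}; you have just spelled out the cell-by-cell verification (downward closedness keeps attaching maps inside the subcomplex, the complement of an upward closed subcategory is downward closed, and collapsing versus attaching-then-quotienting identify the same locus with the basepoint). No gaps to report.
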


\begin{proof}
  This follows immediately from \Definition{flow-gives-space}.
\end{proof}

\begin{lem}\label{lem:wedge-cat-implies-wedge-space}
  Let $\Cat'$ and $\Cat''$ be downward closed subcategories of a
  framed flow category $\Cat$, such that any object of $\Cat$ is in
  exactly one of $\Cat'$ or $\Cat''$. Assume that $\Cat'$ and $\Cat''$
  are framed by the induced framing. Then
  $\Realize{\Cat}=\Realize{\Cat'}\vee \Realize{\Cat''}$.
\end{lem}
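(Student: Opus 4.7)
The plan is to show that the partition hypothesis rules out any moduli spaces connecting the two subcategories, and then to combine this with \Lemma{down-sub-up-quot} to identify $\Realize{\Cat'}$ and $\Realize{\Cat''}$ as subcomplexes of $\Realize{\Cat}$ that intersect only at the basepoint and together exhaust all cells.

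First I would observe: if $x\in\Ob_{\Cat'}$ and $y\in\Ob_{\Cat''}$, then $\Moduli_{\Cat}(x,y)=\emptyset$. Indeed, if $\Moduli_{\Cat}(x,y)$ were nonempty, downward closedness of $\Cat'$ would force $y\in\Ob_{\Cat'}$, contradicting the hypothesis that each object lies in exactly one of $\Cat'$ or $\Cat''$. By symmetry, $\Moduli_{\Cat}(y,x)=\emptyset$ as well. So the only moduli spaces in $\Cat$ are internal to one of the two subcategories.

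Next I would invoke \Lemma{down-sub-up-quot}, applied to each of $\Cat'$ and $\Cat''$ separately (using the induced neat embeddings and framings), to identify $\Realize{\Cat'}$ and $\Realize{\Cat''}$ with the subcomplexes of $\Realize{\Cat}$ spanned by cells $\Cell{x}$ with $x\in\Ob_{\Cat'}$ and $x\in\Ob_{\Cat''}$, respectively. Since every object of $\Cat$ lies in exactly one of the two subcategories, these subcomplexes together contain every non-basepoint cell of $\Realize{\Cat}$, and they share no cells other than the common $0$-cell basepoint.

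Finally, I would check that the attaching maps respect this decomposition. For $x\in\Ob_{\Cat'}$, the attaching map $\del\Cell{x}\to\Realize{\Cat}^{(C+\gr(x)-1)}$ from \Definition{flow-gives-space} sends the region $\Cell[y]{x}\subset\del\Cell{x}$ to $\Cell{y}$ when $\Moduli(x,y)\neq\emptyset$, and otherwise collapses it to the basepoint. By the first paragraph, all such $y$ lie in $\Ob_{\Cat'}$, so the attaching map for $\Cell{x}$ factors through $\Realize{\Cat'}\subset\Realize{\Cat}$; symmetrically for $x\in\Ob_{\Cat''}$. Hence $\Realize{\Cat}$ is built by gluing $\Realize{\Cat'}$ and $\Realize{\Cat''}$ along the common basepoint, which gives $\Realize{\Cat}=\Realize{\Cat'}\vee\Realize{\Cat''}$. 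There is no real obstacle here; the entire content of the lemma is the observation that the downward-closed partition prevents any attaching map from crossing between the two pieces.
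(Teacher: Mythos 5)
Your proof is correct and follows essentially the same route as the paper: identify $\Realize{\Cat'}$ and $\Realize{\Cat''}$ with the subcomplexes of $\Realize{\Cat}$ spanned by the corresponding cells via \Lemma{down-sub-up-quot}, and observe that they cover $\Realize{\Cat}$ and meet only in the basepoint. Your extra paragraphs spelling out the vanishing of cross moduli spaces and the behavior of the attaching maps just make explicit what the paper leaves implicit in the phrase ``subcomplex.''
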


\begin{proof}
  Let $\Realize{\Cat}'$ (\respectively $\Realize{\Cat}''$) be the subcomplex
  of $\Realize{\Cat}$ containing precisely the cells that correspond
  to objects that are in $\Cat'$ (\respectively $\Cat''$). Since every object
  of $\Cat$ is in exactly one of $\Cat'$ or $\Cat''$,
  \begin{align*}
    \Realize{\Cat}'\cup\Realize{\Cat}''&=\Realize{\Cat}\text{ and}\\
    \Realize{\Cat}'\cap\Realize{\Cat}''&=\pt.
  \end{align*}
  Therefore,
  $\Realize{\Cat}=\Realize{\Cat}'\vee\Realize{\Cat}''$. However, from
  \Lemma{down-sub-up-quot}, $\Realize{\Cat'}=\Realize{\Cat}'$
  and $\Realize{\Cat''}=\Realize{\Cat}''$.
\end{proof}

\begin{lem}\label{lem:exact-sequence-space}
  Let $\Cat'$ be a downward closed subcategory of a framed flow
  category $\Cat$, and let $\Cat''$ be the complementary upward closed
  subcategory. Assume $\Cat'$ and $\Cat''$ are framed by the induced
  framings. Let $C^*$ denote the chain complex
  associated to a framed flow category (\Definition{framed-flow-cat}). Then the
  following hold:
  \begin{enumerate}
  \item\label{case:exact-sequence-1} If $C^*(\Cat'')$ is acyclic then the inclusion
    $\Realize{\Cat'}\into \Realize{\Cat}$ induces a
    stable homotopy equivalence.
  \item If $C^*(\Cat')$ is acyclic then the quotient map
    $\Realize{\Cat}\to\Realize{\Cat''}$ induces a
    stable homotopy equivalence.
  \item If $C^*(\Cat)$ is acyclic then the Puppe
    map $\Realize{\Cat''}\to \Sigma\Realize{\Cat'}$
    induces a stable homotopy equivalence.
  \end{enumerate}
\end{lem}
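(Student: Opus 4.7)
The plan is to combine \Lemma{down-sub-up-quot}, which identifies $\Realize{\Cat'}$ with a subcomplex of $\Realize{\Cat}$ and $\Realize{\Cat''}$ with the corresponding quotient, together with \Lemma{realization-well-defined-refines-chain-complex}, which identifies the reduced cellular cochain complex of $\Realize{\Cat}$ with $C^*(\Cat)$ up to a common degree shift.

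The first step is to observe that the decomposition of $\Ob_{\Cat}$ into $\Ob_{\Cat'}$ and $\Ob_{\Cat''}$ yields a short exact sequence of cochain complexes
\begin{equation*}
  0\to C^*(\Cat'')\to C^*(\Cat)\to C^*(\Cat')\to 0.
\end{equation*}
Indeed, for $y\in\Ob_{\Cat''}$ any $x$ with $\gr(x)=\gr(y)+1$ and $\Moduli(x,y)\neq\emptyset$ must also lie in $\Ob_{\Cat''}$, since otherwise downward closedness of $\Cat'$ would force $y\in\Ob_{\Cat'}$. Hence $\diff y\in C^*(\Cat'')$, making $C^*(\Cat'')$ a subcomplex with quotient $C^*(\Cat')$.

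Next, by \Lemma{down-sub-up-quot} the inclusion $\Realize{\Cat'}\into\Realize{\Cat}$ is a CW pair with quotient $\Realize{\Cat''}$, giving a cofiber sequence $\Realize{\Cat'}\to\Realize{\Cat}\to\Realize{\Cat''}$ and its Puppe extension $\Realize{\Cat''}\to\Sigma\Realize{\Cat'}\to\Sigma\Realize{\Cat}\to\cdots$. The associated long exact sequence in reduced cohomology agrees, up to the common degree shift from \Lemma{realization-well-defined-refines-chain-complex}, with the cohomology long exact sequence of the short exact sequence of cochain complexes above.

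With these identifications in place, each of the three statements is a direct diagram chase. In the first case, acyclicity of $C^*(\Cat'')$ forces $\Realize{\Cat'}\into\Realize{\Cat}$ to induce isomorphisms on reduced cohomology; in the second, acyclicity of $C^*(\Cat')$ does the same for the quotient map; and in the third, acyclicity of $C^*(\Cat)$ forces the connecting homomorphism, which up to sign is induced by the Puppe map, to be an isomorphism. To promote each of these cohomology isomorphisms to a stable homotopy equivalence, I would invoke the (stable) Whitehead theorem, using \Lemma{CW-indep-A-B} to increase $A$, $B$ and $\TupV{d}$ until the relevant CW complexes become simply connected. The only mildly subtle point, and the place to be careful, is the sign check in part~(3) identifying the cohomology connecting map with the Puppe-induced map; this is standard but needs to be confirmed against the orientation convention fixed in \Lemma{realization-well-defined-refines-chain-complex}.
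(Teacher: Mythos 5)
Your proposal is correct and follows essentially the same route as the paper: identify $\Realize{\Cat'}\subset\Realize{\Cat}$ with quotient $\Realize{\Cat''}$ via \Lemma{down-sub-up-quot}, use the resulting short exact sequence of reduced cellular cochain complexes (identified with $C^*(\Cat'')\to C^*(\Cat)\to C^*(\Cat')$ by \Lemma{realization-well-defined-refines-chain-complex}), run the long exact sequence, and promote the cohomology isomorphism to a stable equivalence by Whitehead. The extra maneuvers you mention (suspending via \Lemma{CW-indep-A-B} to gain simple connectivity, the sign check for the connecting map) are harmless but not needed for the stable statement.
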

  
\begin{proof}
  By \Lemma{down-sub-up-quot}, $\Realize{\Cat'}$ is a
  subcomplex of $\Realize{\Cat}$ and $\Realize{\Cat''}$ is the
  corresponding quotient complex. Therefore, there is a short exact
  sequence of reduced cellular cochain complexes
  \[
  0\to \wt{C}^*(\Realize{\Cat''})\to \wt{C}^*(\Realize{\Cat})\to
  \wt{C}^*(\Realize{\Cat'})\to 0,
  \]
  leading to the long exact sequence of reduced cohomology groups
  \[
  \dots\to \wt{H}^{i-1}(\Realize{\Cat'})\to
  \wt{H}^i(\Realize{\Cat''})\to \wt{H}^i(\Realize{\Cat})\to
  \wt{H}^i(\Realize{\Cat'})\to \wt{H}^{i+1}(\Realize{\Cat''})\to\cdots.
  \]

  In the long exact sequence, the maps $\wt{H}^*(\Realize{\Cat})\to
  \wt{H}^*(\Realize{\Cat'})$, $\wt{H}^*(\Realize{\Cat''})\to
  \wt{H}^*(\Realize{\Cat})$ and $\wt{H}^{*-1}(\Realize{\Cat'})\to
  \wt{H}^*(\Realize{\Cat''})$ are induced by the inclusion
  $\Realize{\Cat'}\into \Realize{\Cat}$, the quotient map
  $\Realize{\Cat}\to\Realize{\Cat''}$ and the Puppe map
  $\Realize{\Cat''}\to\Sigma \Realize{\Cat'}$, respectively.

  Therefore, in \Case{exact-sequence-1} if $C^*(\Cat'')$ is acyclic,
  \Lemma{realization-well-defined-refines-chain-complex} implies that
  $\wt{H}^*(\Realize{\Cat''})=0$; thus the inclusion
  $\Realize{\Cat'}\into \Realize{\Cat}$ induces an isomorphism at the
  level of homology, and hence by Whitehead's theorem is a stable
  homotopy equivalence. The other two cases can be dealt with
  similarly.
\end{proof}

\section{A framed flow category for the cube}\label{sec:cube-flow}
In \Section{Kh-flow}, we will construct a framed flow category
refining the Khovanov chain complex. Before doing so, however, we
construct a simpler framed flow category which will be used in the
construction of the category of interest.

\subsection{The cube flow category}

\begin{definition}\label{def:cube-flow-category}
  Let $f_1\from\R\to\R$ be a Morse function with a single index zero
  critical point at $0$ and a single index one critical point at
  $1$. (For concreteness, we may choose $f_1(x)=3x^2-2x^3$.)  Let
  $f_n\from\R^n\to \R$ be the Morse function
  \[
  f_n(x_1,\dots,x_n)=f_1(x_1)+\dots+f_1(x_n).
  \]
  The \emph{$n$-dimensional cube flow category} $\CubeFlowCat(n)$ is
  the Morse flow category $\MorseFlowCat(f_n)$ of $f_n$.
\end{definition}

Let $\Cube(n)=[0,1]^n$ be the cube with the obvious CW complex
structure. The set of vertices of $\Cube(n)$ is $\{0,1\}^n$. We will
re-use the notations from \Definition{n-diagrams}. There is a grading
function $\gr$ on $\{0,1\}^n$, defined as $\gr(u)=|u|=\sum_i u_i$.
Write $v\leq_i u$ if $v\leq u$ and $\gr(u)-\gr(v)=i$.

We will use the following observations about $\CubeFlowCat(n)$.
\begin{lemma}\label{lem:cube-cat-emptyness}
  There is a grading preserving correspondence between the
  objects of $\CubeFlowCat(n)$ and the vertices of $\Cube(n)$. Furthermore:
  \begin{enumerate}
  \item For $u,v\in\{0,1\}^n$, $\Moduli_{\CubeFlowCat(n)}(u,v)$ is empty
    unless $v< u$.
  \item If $v<u$, then $\Moduli_{\CubeFlowCat(n)}(u,v)$ is
    canonically diffeomorphic to
    $\Moduli_{\CubeFlowCat(\gr(u)-\gr(v))}(\vect{1},\vect{0})$.
  \end{enumerate}
\end{lemma}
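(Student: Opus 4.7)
The plan is to exploit the product structure of $f_n$ directly. Since $f_n(x_1,\dots,x_n)=\sum_i f_1(x_i)$, the negative gradient vector field $-\nabla f_n=(-f_1'(x_1),\dots,-f_1'(x_n))$ decouples, so a gradient trajectory of $f_n$ is precisely an $n$-tuple of independent gradient trajectories of $f_1$. In particular, critical points of $f_n$ are products of critical points of $f_1$, and the Morse index at such a product is the sum of the component indices. Since $f_1'(x)=6x(1-x)$ vanishes only at $0$ (a nondegenerate minimum) and $1$ (a nondegenerate maximum), this gives a bijection $\Ob(\CubeFlowCat(n))\leftrightarrow\{0,1\}^n$ under which the grading $\gr(v)=\mathrm{ind}(v)$ matches $|v|=\sum_i v_i$. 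This establishes the first sentence of the lemma.

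For part (1), I would analyze the $1$-dimensional flow in each coordinate. The sign of $f_1'$ shows that $-f_1'$ vanishes at $\{0,1\}$, is negative on $(0,1)$, and is positive on $(-\infty,0)\cup(1,\infty)$. Hence the only gradient trajectory of $f_1$ bounded between critical points is the one on $(0,1)$ descending from $1$ to $0$; every other nonconstant trajectory escapes to $\pm\infty$. Consequently a parametrized flow line of $f_n$ from $u$ to $v$ must, coordinatewise, either be constant (so $u_i=v_i$) or descend from $1$ to $0$ (so $u_i=1$, $v_i=0$). Either way $v_i\leq u_i$ for all $i$, so $\Moduli_{\CubeFlowCat(n)}(u,v)=\emptyset$ unless $v<u$.

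For part (2), fix $v<u$ with $k=|u|-|v|$ and set $I=\{i_1<\dots<i_k\}=\{i\mid u_i=1,\ v_i=0\}$. By the analysis above, a point of the parametrized moduli space $\ParaModuli_{\CubeFlowCat(n)}(u,v)$ is uniquely determined by its projection to the coordinates indexed by $I$, the remaining coordinates being constantly $0$ or $1$. Thus projection to these coordinates gives a canonical diffeomorphism $\ParaModuli_{\CubeFlowCat(n)}(u,v)\cong\prod_{i\in I}\ParaModuli_{f_1}(1,0)$, equivariant with respect to the diagonal time-translation $\R$-action. The same argument applied to $(\vect{1},\vect{0})$ in $\CubeFlowCat(k)$ produces an analogous identification $\ParaModuli_{\CubeFlowCat(k)}(\vect{1},\vect{0})\cong\prod_{j=1}^k\ParaModuli_{f_1}(1,0)$. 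The order-preserving bijection $I\leftrightarrow\{1,\dots,k\}$, $i_j\mapsto j$, intertwines these two product decompositions and is $\R$-equivariant, so it descends to a canonical diffeomorphism of the (open) unparametrized moduli spaces $\interior{\Moduli}_{\CubeFlowCat(n)}(u,v)\cong\interior{\Moduli}_{\CubeFlowCat(k)}(\vect{1},\vect{0})$.

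The main thing that still requires a little care is extending this diffeomorphism across the compactification by broken flow lines. Here the argument is essentially formal: a broken flow line from $u$ to $v$ consists of an ordered sequence of critical points $u=w_0>w_1>\dots>w_\ell=v$ together with an honest flow line in each $\interior{\Moduli}(w_{j-1},w_j)$, and the coordinatewise decomposition respects such breaking (the intermediate critical points have $0$/$1$ components dictated by the subset of $I$ that has finished descending). So the canonical identification of interiors extends stratum by stratum to the compactifications, giving a diffeomorphism of $\Codim{k-1}$-manifolds as desired. I expect this bookkeeping at the boundary to be the only nontrivial check; once one writes out the recursive definition of the compactification it unwinds directly from the product structure.
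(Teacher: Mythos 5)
Your proof is correct and takes the same route the paper intends: the paper's own proof is just the one-line remark that the claim is clear from the form of $f_n$, and your coordinatewise analysis of $f_1$ and the product/compactification bookkeeping is precisely the elaboration of that remark.
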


\begin{proof}
  This is clear from the form of the Morse function $f_n$.
\end{proof}

\begin{figure}
  \centering
  \includegraphics[height=2in]{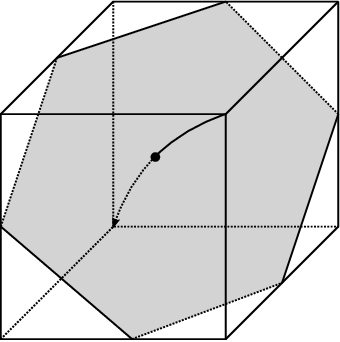}
  \caption[The cube moduli space
  $\Moduli_{\CubeFlowCat(3)}(\vect{1},\vect{0})$.]{\textbf{The cube
      moduli space $\Moduli_{\CubeFlowCat(3)}(\vect{1},\vect{0})$.}}
  \label{fig:low-d-cube}
\end{figure}

\begin{lemma}\label{lem:cube-cat-struct}
  The moduli space $\Moduli_{\CubeFlowCat(n)}(\vect{1},\vect{0})$ is
  diffeomorphic to a single point, an interval and a hexagon, for
  $n=1,2,3$, respectively.

  In general, the moduli space
  $\Moduli_{\CubeFlowCat(n+1)}(\vect{1},\vect{0})$ is homeomorphic to
  $\DD^n$ (and $\del\Moduli_{\CubeFlowCat(n+1)}(\vect{1},\vect{0})$ is
  homeomorphic to $S^{n-1}$).
\end{lemma}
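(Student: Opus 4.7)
The plan is to identify $\Moduli_{\CubeFlowCat(n+1)}(\vect{1},\vect{0})$ with the permutohedron $\Permu{n+1}$ of \Example{permutohedron}; since the permutohedron is a convex $n$-polytope, this immediately yields a homeomorphism to $\DD^n$ and identifies the boundary with $S^{n-1}$. I would proceed by induction on $n$, using \Lemma{cube-cat-emptyness} and the boundary description in \Definition{flow-cat}.

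Because $f_{n+1}$ is a sum of copies of $f_1$, the descending manifold of $\vect{1}$ and the ascending manifold of $\vect{0}$ both factor, so $\ParaModuli(\vect{1},\vect{0}) = (0,1)^{n+1}$. Using the time parametrization $\psi\co (0,1)\to\R$ of the $1$-dimensional negative gradient flow, normalized so that $\psi(1/2)=0$, the map $(x_i)\mapsto (t_i)=(\psi(x_i))$ identifies $\ParaModuli$ with $\R^{n+1}$, and the $\R$-action becomes the diagonal translation $(t_i)\mapsto(t_i+s)$. Thus the interior of the unparametrized moduli space is $\R^{n+1}/\R\cong\R^n$. The base cases $n=0,1,2$ follow immediately: a point, an interval, and a hexagon whose $6$ vertices correspond to the $6$ strict orderings of $(t_1,t_2,t_3)$ (equivalently, maximal chains $\vect{1}>w_1>w_2>\vect{0}$) and whose $6$ edges correspond to the $6$ proper nonempty subsets of $\{1,2,3\}$.

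For the inductive step, Condition~\ConditionCont{boundary-is} of \Definition{flow-cat} together with \Lemma{cube-cat-emptyness} identifies each codimension-$1$ face of $\Moduli_{\CubeFlowCat(n+1)}(\vect{1},\vect{0})$ with a product
\[
\Moduli(\vect{1},u_S)\times\Moduli(u_S,\vect{0}) \cong \Permu{n+1-|S|}\times\Permu{|S|},
\]
where $u_S\in\{0,1\}^{n+1}$ is the vertex with $(u_S)_i=1$ iff $i\in S$, indexed by proper nonempty subsets $S\subsetneq\{1,\dots,n+1\}$, and the second isomorphism comes from the inductive hypothesis. This matches exactly the facet decomposition of $\Permu{n+1}$, whose facets are $\Permu{|S|}\times\Permu{n+1-|S|}$ indexed by the same subsets; the same description applies recursively to the higher-codimension corners, since both face lattices are governed by the poset of nested subsets of $\{1,\dots,n+1\}$.

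To promote this face-lattice match into an actual homeomorphism, I would construct an explicit comparison map $\Moduli\to\Permu{n+1}$ by sending a (possibly broken) trajectory with crossing times $(t_i)$ to the point of $\Permu{n+1}\subset\R^{n+1}$ whose $i$-th coordinate is a fixed smooth monotonic function of the rank of $t_i$ among $(t_1,\dots,t_{n+1})$, arranged so that strata at infinity map correctly to the polytope boundary. The main obstacle is verifying that this map extends continuously and bijectively across every corner stratum in a manner compatible with the inductive identification of facets; this reduces to a careful combinatorial check matching the flow-category corner structure (iterated products of moduli spaces along chains in $\{0,1\}^{n+1}$) with the iterated product facet structure of $\Permu{n+1}$, and one must also check that the corner smoothings at high codimension glue consistently.
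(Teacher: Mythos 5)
Your setup is sound and agrees with the paper's: the parametrized moduli space factors as a product of $(n+1)$ copies of the one-dimensional flow, so the interior of $\Moduli_{\CubeFlowCat(n+1)}(\vect{1},\vect{0})$ is the space of $(n+1)$ real numbers up to overall translation, the base cases are right, and \ConditionCont{boundary-is} does identify the codimension-$1$ faces with $\Moduli(u_S,\vect{0})\times\Moduli(\vect{1},u_S)$ indexed by proper nonempty subsets $S$, matching the facets of $\Permu{n+1}$. The gap is in the step that is supposed to turn this into an actual homeomorphism. The explicit comparison map you propose---send a configuration with crossing times $(t_i)$ to the point whose $i$-th coordinate is a function of the \emph{rank} of $t_i$---cannot work: rank is a discrete invariant, constant on each open chamber $\{t_{\si(1)}<\dots<t_{\si(n+1)}\}$, so your map collapses every open chamber of the interior $\R^n$ to the single vertex $(\si(1),\dots,\si(n+1))$ of the permutohedron (and degenerate configurations to lower faces); it is nowhere close to injective, and no choice of ``smooth monotone function of the rank'' repairs this. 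The fallback you offer---that both stratifications are governed by the poset of nested subsets, so ``a careful combinatorial check'' finishes the argument---is also not enough by itself: an isomorphism of face posets, even with each face inductively known to be a ball, does not produce a homeomorphism of the total spaces without constructing compatible identifications face by face and then extending over the top cell, and knowing only that the interior is $\R^n$ and the boundary strata assemble like $\bdy\Permu{n+1}$ does not identify the compactification with $\DD^n$ without substantially more work.

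What the paper does instead is to use the continuous coordinates you discarded. Within the closed chamber $\{a_{\si(1)}\leq\dots\leq a_{\si(n+1)}\}$ a configuration (mod translation) is recorded by the consecutive gaps $(a_{\si(2)}-a_{\si(1)},\dots,a_{\si(n+1)}-a_{\si(n)})\in[0,\infty)^n$, and broken trajectories correspond exactly to letting gaps become infinite; so each closed chamber compactifies to the cube $[0,\infty]^n$, and $\Moduli_{\CubeFlowCat(n+1)}(\vect{1},\vect{0})$ becomes a cubical complex with one $n$-cube per permutation. This cubical complex is then identified with the cubical subdivision of $\Permu{n+1}$ (the paper cites Bloom for this), and since the permutohedron is a convex $n$-polytope the homeomorphisms with $\DD^n$ and $S^{n-1}$ follow. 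If you want to salvage your explicit-map idea, the map should be built from these gap coordinates (interpolating between the vertices of $\Permu{n+1}$ according to the gaps), not from ranks alone.
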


\begin{proof}
  The structure of $\Moduli_{\CubeFlowCat(n)}(\vect{1},\vect{0})$ for
  small values of $n$ can be deduced from simple model computations; see
  \Figure{low-d-cube} for the case $n=3$.

  For the general case, recall from \Definition{flow-cat-of-Morse}
  that $\Moduli_{\CubeFlowCat(n+1)}(\vect{1},\vect{0})$ is a certain
  compactification of the quotient of the parametrized moduli space
  $\ParaModuli_{\CubeFlowCat(n+1)}(\vect{1},\vect{0})$ by a natural
  $\R$-action. From the form of the Morse function $f_{n+1}$, it is clear
  that $\ParaModuli_{\CubeFlowCat(n+1)}(\vect{1},\vect{0})=
  \prod_{i=1}^{n+1}\ParaModuli_{\CubeFlowCat(1)}((1),(0))$; and
  $\ParaModuli_{\CubeFlowCat(1)}((1),(0))=\R$.

  Therefore, $\Moduli_{\CubeFlowCat(n+1)}(\vect{1},\vect{0})$ is the
  compactification of the configuration space of $(n+1)$ points
  $a_1,\dots,a_{n+1}$ (not necessarily distinct) in $\R$, up to an
  overall translation in $\R$. This moduli space naturally breaks up
  into cubical chambers indexed by permutations of $\{1,\dots,n+1\}$
  as follows: For $\si$ is a permutation of $\{1,\dots,n+1\}$,
  consider the subspace of
  $\Moduli_{\CubeFlowCat(n+1)}(\vect{1},\vect{0})$ corresponding to
  configurations satisfying $a_{\si(1)}\leq\dots\leq
  a_{\si(n+1)}$. Since everything is up to an overall translation,
  such configurations are uniquely specified by the consecutive
  distances
  $(a_{\si(2)}-a_{\si(1)},a_{\si(3)}-a_{\si(2)},\dots,a_{\si(n+1)}-a_{\si(n)})$,
  and hence the compactification of such a chamber is the cube
  $[0,\infty]^n$.

  This identifies $\Moduli_{\CubeFlowCat(n+1)}(\vect{1},\vect{0})$
  with a cubical complex. However, this cubical complex can also be
  identified with the cubical subdivision of the permutohedron
  $\Permu{n+1}$---see \cite[Section 2 and Figure
  4]{Blo-gauge-spectralsequence}---thus completing the proof. (Although
  we do not need it here, but it is not hard to see that
  $\Moduli_{\CubeFlowCat(n+1)}(\vect{1},\vect{0})$ is diffeomorphic to
  $\Permu{n+1}$ as $\Codim{n}$-manifolds.)
\end{proof}

Given a pair of vertices $v\leq_m u$ of $\Cube(n)$, let
$\Cube_{u,v}=\set{x\in[0,1]^n}{\forall i\co v_i\leq x_i\leq u_i}$
be the corresponding $m$-cell of $\Cube(n)$.  It will be convenient to describe
the `faces' of $\CubeFlowCat(n)$ as follows:
\begin{definition}\label{def:CInt}
  Assume $\Cube_{u,v}$ is an $m$-cell of $\Cube(n)$. There is a
  corresponding inclusion functor
  \[
  \CInc_{u,v}\from\CubeFlowCat(m)\into \CubeFlowCat(n)
  \]
  defined as follows. Suppose $j_1<j_2<\dots<j_m$ are the $m$ indices
  where $u$ and $v$ differ.  Given an object $\ob{x}\in\{0,1\}^{m}$ in
  $\CubeFlowCat(m)$, define a new object $\ob{x}'\in \{0,1\}^n$ by
  setting the $i\th$ coordinate $x'_i$ of $\ob{x}'$ to be $1$ if
  $v_i=1$ and $0$ if $u_i=0$. For the rest, define
  $x'_{j_i}=x_i$.

  Then the full subcategory of $\CubeFlowCat(n)$ with objects
  $\{\ob{x}'\mid\ob{x}\in\Ob_{\CubeFlowCat(m)}\}$ is canonically
  isomorphic to $\CubeFlowCat(m)$ (cf.\ \Lemma{cube-cat-emptyness}),
  and $\CInc_{u,v}$ is defined to be this isomorphism.
\end{definition}

\subsection{Framing the cube flow category}\label{sec:framing}
The main goal of this section is to produce a framing of the cube flow category
$\CubeFlowCat(n)$. This is accomplished in
\Proposition{cube-can-be-framed}, by an obstruction theory
argument. (An alternate approach would be to use the fact that
any Morse flow category is framed.)

Consider $C^*(\Cube(n),\Z)$, the (cellular) cochain complex of
$\Cube(n)$.  Let $\Cube^{u,v}$ denote the cochain that sends the cell
$\Cube_{u,v}$ to $1$, and the rest of the cells to $0$. The
differentials in $C^*(\Cube(n),\F_2)$ are fairly easy to write down,
viz.\
\[
\diff\Cube^{u,v}=\sum_{v'\in\set{v'}{v'\imprec v}}\Cube^{u,v'}+\sum_{u'\in\set{u'}{u\imprec
    u'}}\Cube^{u',v}.
\]
To define $C^*(\Cube(n),\Z)$, we need to sign-refine the above cochain
complex. Towards this end, we will need the standard sign assignment.

\begin{defn}\label{def:sign-assignment}
  Let $1_i\in C^i(\Cube(n),\F_2)$ be the $i$-cocycle which assigns $1$
  to each $i$-cell.  A \emph{sign assignment} is a $1$-cochain $s\in
  C^1(\Cube(n),\F_2)$, such that $\diff s=1_2$. 

  Since $H^2(\Cube(n),\F_2)=0$, such a sign assignment exists. Since
  $H^1(\Cube(n),\F_2)=0$, any two such sign assignments $s,s'$ are
  related by a coboundary, i.e.\ $s-s'=\diff t$ for some $t\in
  C^0(\Cube(n),\F_2)$, or in other words, any two sign assignments are
  \emph{gauge equivalent}.

  One often works with the sign assignment $s_0$, called the
  \emph{standard sign assignment}, which assigns to the edge joining
  $(\ep_1,\dots,\ep_{i-1},0,\ep_{i+1},\dots,\ep_n)$ and
  $(\ep_1,\dots,\ep_{i-1},1,\ep_{i+1},\dots,\ep_n)$, the number
  $(\ep_1+\dots+\ep_{i-1})\pmod 2\in\F_2$; see also
  \Definition{knot-diag-to-kh-chain-complex}.
\end{defn}

\begin{definition}\label{def:cube-cochain-cx}
  Given a sign assignment $s$ for $\Cube(n)$, the
  \emph{$n$-dimensional cube complex} $C^*_s(n)$ is defined as
  follows. The group $C^*_s(n)$ is freely generated by $\{0,1\}^n$,
  the vertices of $\Cube(n)$. The differential is given by
  \begin{equation}\label{eq:d-on-cube}
  \diff v=\sum_{u\in\set{u}{v\imprec u}}(-1)^{s(\Cube_{u,v})}u.
  \end{equation}
\end{definition}

We will frame the cube flow category so that it refines the
cube complex $C^*_s(n)$. To get started, we will need to orient the
cube moduli spaces. (The orientations we choose here may not be the
ones induced from the framing we will produce.) To do so, first orient
each cell of $\Cube(n)$ by the product orientation. It is easy to see
that the differential in $C^*(\Cube(n),\Z)$ can be written as
\begin{equation}\label{eq:cubical-co-d}
\diff\Cube^{u,v}=\sum_{v'\in\set{v'}{v'\imprec
    v}}(-1)^{s_0(\Cube_{u-v',u-v})}\Cube^{u,v'}-\sum_{u'\in\set{u'}{u\imprec
    u'}}(-1)^{s_0(\Cube_{u'-v,u-v})}\Cube^{u',v}
\end{equation}
where $s_0$ is the standard sign assignment.

\begin{definition}
  A \emph{coherent orientation} for $\CubeFlowCat(n)$ is a choice of
  orientations for all the moduli space $\Moduli(u,v)$, satisfying the
  following conditions.
  \begin{enumerate}
  \item If $u\imprec u'$, then the orientation of $\Moduli(u,v)$,
    viewed as the subspace
    $\Moduli(u,v)\times\Moduli(u',u)\sbs\del\Moduli(u',v)$, is
    $-(-1)^{s_0(\Cube_{u'-v,u-v})}$ times the boundary orientation.
  \item If $v'\imprec v$, then the orientation of $\Moduli(u,v)$,
    viewed as the subspace
    $\Moduli(v,v')\times\Moduli(u,v)\sbs\del\Moduli(u,v')$, is
    $(-1)^{s_0(\Cube_{u-v',u-v})}$ times the boundary orientation.
  \end{enumerate}
\end{definition}

\begin{lem}\label{lem:coherent-orient-moduli-cube}
  There exists a coherent orientation for $\CubeFlowCat(n)$.
\end{lem}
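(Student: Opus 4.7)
I would argue by induction on the grading difference $m=\gr(u)-\gr(v)$, constructing orientations on all moduli spaces in grading difference $m$ once those in lower grading differences have been fixed. For the base case $m=1$, each $\Moduli(u,v)$ is a single point, and I would orient it as $+1$; the coherence conditions are vacuous at this stage since they involve the boundary of a higher-dimensional moduli space.

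For the inductive step with $m\geq 2$, assume coherent orientations have been chosen for all moduli spaces of grading difference less than $m$, and fix a pair $v<u$ with $\gr(u)-\gr(v)=m$. By \Lemma{cube-cat-struct}, $\Moduli(u,v)$ is homeomorphic to $\DD^{m-1}$, so its boundary is a sphere $S^{m-2}$ stratified by codimension-$1$ strata of the form $\Moduli(z,v)\times\Moduli(u,z)$ for intermediate $v<z<u$. On each such stratum take the product of the inductively chosen orientations, multiplied by a sign: on a top-split stratum ($z\imprec u$) use the sign $-(-1)^{s_0(\Cube_{u-v,z-v})}$ required by condition~(1); on a bottom-split stratum ($v\imprec z$) use the sign $(-1)^{s_0(\Cube_{u-v,u-z})}$ required by condition~(2); and on any intermediate stratum (present when $m\geq 4$) take the unsigned product orientation. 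These prescriptions need to agree at every codimension-$2$ corner, after which they assemble into an orientation of $\del\Moduli(u,v)\cong S^{m-2}$ that extends uniquely across the disk $\Moduli(u,v)$; conditions~(1) and~(2) at the new level then hold by construction.

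The main obstacle is verifying consistency at each codimension-$2$ corner $\Moduli(z_2,v)\times\Moduli(z_1,z_2)\times\Moduli(u,z_1)$ for $v<z_2<z_1<u$. Such a corner lies in the two codimension-$1$ strata indexed by $z_1$ and $z_2$, and the discrepancy between the two inherited orientations unwinds to the parity of a sum of $s_0$-values on the four edges of a single $2$-cell of $\Cube(n)$---the $2$-face running from $z_2$ up to $z_1$ via the two intermediate vertices, with appropriate modifications when $z_1$ or $z_2$ lies at a top- or bottom-split position. The cocycle identity $\diff s_0=1_2$ says precisely that any such edge sum equals $1\bmod 2$, and when combined with the outward-normal-first sign conventions for boundary orientations this forces the discrepancy to vanish. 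Everything else is a standard extension-of-orientation argument on a disk, so once the combinatorial bookkeeping is discharged the induction closes.
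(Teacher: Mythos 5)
Your overall strategy---induct on the index, prescribe the boundary orientation stratum by stratum, and extend over the disk---is genuinely different from the paper's proof (which transports the product orientations of the cells of $\Cube(n)$ through the Morse-theoretic identification $\interior{\Cube}_{u,v}\cong\interior{\Moduli}(u,v)\times\R$, so that coherence is immediate from \Equation{cubical-co-d}), and it could be made to work; but as written it has a genuine gap. The two conditions in the definition of a coherent orientation only constrain the strata $\Moduli(z,v)\times\Moduli(u,z)$ in which one factor is a point, so your choice of the \emph{unsigned} product orientation on the intermediate strata is an extra prescription, and it is in fact inconsistent with the signs you have put on the split strata. Concretely, take $u-v=1111$, $z_1-v=1100$, $z_2-v=0100$. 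Your level-$2$ and level-$3$ steps force the boundary orientation of $\Moduli(z_1,v)$ at its endpoint indexed by $z_2$ to be $(-1)^{s_0(\Cube_{1100,1000})}=-1$, and that of $\Moduli(u,z_2)$ along its stratum indexed by $z_1$ to be $(-1)^{s_0(\Cube_{1011,0011})}=+1$ times the product orientation; since your sign on the bottom-split stratum indexed by $z_2$ is $(-1)^{s_0(\Cube_{1111,1011})}=-1$, the intermediate stratum $\Moduli(z_1,v)\times\Moduli(u,z_1)$ (taken with sign $+1$) and the stratum of $z_2$ induce the \emph{same}, rather than opposite, orientations on their common corner, so they do not glue to an orientation of $\del\Moduli(u,v)\cong S^{2}$ and the induction stops at index $4$.

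The second problem is the stated mechanism for the corner checks. The appeal to $\diff s_0=1_2$ is correct exactly at corners where a top-split stratum meets a bottom-split one: there the four relevant signs are the values of $s_0$ on the four edges of the (translated) $2$-face $\Cube_{u-v,\,z_1-z_2}$, and the cocycle identity gives precisely the needed parity. It does not cover the other verifications. At index $2$ each boundary point receives one prescription from each of the two conditions, and their agreement is the statement that $s_0$ sums to $1$ over the two edges of $\Cube_{u-v,\vect{0}}$ adjacent to the top vertex; this uses the additional fact that the standard sign assignment vanishes on edges containing $\vect{0}$, not just $\diff s_0=1_2$ (the two edges do not form a cycle, so the statement is not gauge invariant). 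And at corners touching an intermediate stratum the relevant edges again do not form a closed cycle, so no cocycle-type identity is available---which is why the counterexample above can occur. A clean repair is to prescribe boundary orientations only on the constrained (split) strata, verify the top/bottom corner identity as above, and note that the union of those strata is connected through such corners, so the prescriptions glue to an orientation of that region, which extends over the boundary sphere and bounds a unique orientation of the disk; alternatively one can avoid all of this bookkeeping by quoting the Morse-theoretic orientation, as the paper does.
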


\begin{proof}
  Orient all the cells of $\Cube(n)$ so that the
  differential in $C^*(\Cube(n),\Z)$ satisfies
  \Equation{cubical-co-d}. Recall from
  \Definition{flow-cat-of-Morse} that the interior of the cell
  $\Cube_{u,v}$ is diffeomorphic to $\interior{\Moduli}(u,v)\times\R$,
  where $\R$ is oriented along the flowlines, i.e.\ the value of the
  Morse function decreases along $\R$. 
  Now orient
  $\Moduli(u,v)$ so that the product orientation on
  $\interior{\Moduli}(u,v)\times\R$ agrees with the orientation of
  $\interior{\Cube}_{u,v}$.
\end{proof}

We digress briefly to talk about orthogonal groups. Treat the
orthogonal group $O(n)$ as a subgroup of $O(n+1)$, by identifying
$\R^n$ with the subspace $\R^n\times\{0\}$ in $\R^{n+1}$. Let
$O=\colim O(n)$. Since $O(n)$ is a topological group, the action of
$\pi_1(O(n),\Id)$ on $\pi_i(O(n),\Id)$ is trivial;
or in
other words, we can omit the basepoint while talking about the
homotopy groups of $O(n)$. For $n>i+1$, the map
$\pi_i(O(n))\to\pi_i(O)$ is an isomorphism.

Given an embedding
$(\DD^k,\del\DD^k)\into(\R^m\times\R_+,\R^m\times\{0\})$, a framing of
the normal bundle of $\del\DD^k$ is said to be \emph{null-concordant}
if it extends to a framing of the normal bundle of $\DD^k$.

\begin{defn}\label{def:obstruction-class}
  Fix a coherent orientation for $\CubeFlowCat(n)$ and a neat
  embedding $\iota$ of $\CubeFlowCat(n)$ relative some $\TupV{d}$. A
  coherent framing of all moduli spaces of dimension less than $k$
  produces an \emph{obstruction class} $\mf{o}\in
  C^{k+1}(\Cube(n),\pi_{k-1}(O))$ as follows.

  Consider some $(k+1)$-cell $\Cube_{u,v}$. The boundary
  $\del(\iota_{u,v}\Moduli(u,v))$ is an oriented $(k-1)$-sphere $S^{k-1}$ embedded
  in $\del\ESpace[\TupV{d}]{\gr(v)}{\gr(u)}$. Furthermore, since 
  $\del(\iota_{u,v}\Moduli(u,v))$ is identified with products of
  lower-dimensional moduli spaces, which are already framed, 
  $\del(\iota_{u,v}\Moduli(u,v))$ is framed by the product framing,
  which we denote $\Frame_{u,v}$.  Choose some
  null-concordant framing $\Frame^0_{u,v}$ of $S^{k-1}$. Thus we get
  the following map from $S^{k-1}$ to
  $O(d_{\gr(v)}+\dots+d_{\gr(u)-1})$: send a point $x\in S^{k-1}$ to
  the element of the orthogonal group that maps $\Frame^0_{u,v}(x)$ to
  $\Frame_{u,v}(x)$. Different choices for $\Frame^0_{u,v}$ produce
  homotopic maps;\footnote{Two different null-concordant frames
    produce a map from $S^{k-1}$ to the orthogonal group. Their
    null-concordances produce the null-homotopy of this map.}
  therefore, we get a well-defined element in
  $\pi_{k-1}(O(d_{\gr(v)}+\dots+d_{\gr(u)-1}))$, and hence an element
  $\mf{o}(\Cube_{v,w})\in\pi_{k-1}(O)$. Collecting all the terms, we
  get a cochain $\mf{o}\in C^{k+1}(\Cube(n),\pi_{k-1}(O))$. 
\end{defn}

By definition, $\mf{o}$ vanishes if and only if the framing of the
moduli spaces of dimension less than $k$ extends to a framing of the
$k$-dimensional moduli spaces.

Again, note that the framings of the moduli spaces need not be related to
the orientations of the moduli spaces. The framings are needed to
construct maps from $(k-1)$-spheres to $O$, while the orientations are
needed to orient the spheres, and thereby interpret those maps as
elements of $\pi_{k-1}(O)$.

\begin{lem}\label{lem:obstruction-is-cocycle}
  The obstruction class $\mf{o}$ from \Definition{obstruction-class}
  is a cocycle.
\end{lem}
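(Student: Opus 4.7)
The plan is to verify $\delta\mathfrak{o}(\Cube_{u,v})=0\in\pi_{k-1}(O)$ for each $(k+2)$-cell $\Cube_{u,v}$ via a cell-by-cell obstruction-theoretic argument on the $(k+1)$-dimensional moduli space $\Moduli(u,v)$. By \Lemma{cube-cat-struct}, $\Moduli(u,v)\cong\DD^{k+1}$, so $\partial\Moduli(u,v)\cong S^k$; by \EquationCont{del-i-part-of-bdy} its codim-$1$ faces are the products $\Moduli(z,v)\times\Moduli(u,z)$ for $v<z<u$. I call such a face \emph{extreme} when one factor is $0$-dimensional---equivalently $z=u'$ for some $u'\imprec u$ or $z=v'$ for some $v\imprec v'$---and \emph{intermediate} otherwise. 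The extreme faces are in natural bijection with the $(k+1)$-cells of $\Cube(n)$ on the boundary of $\Cube_{u,v}$, that is, with the cells whose $\mathfrak{o}$-values enter $\delta\mathfrak{o}(\Cube_{u,v})$ via \Equation{cubical-co-d}.

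On every intermediate codim-$1$ face and on every codim-$\geq 2$ corner, both moduli-space factors have dimension at most $k-1$ and so are already coherently framed by hypothesis; the product framings glue into a framing $\Phi$ of the subcomplex $\Sigma\subset\partial\Moduli(u,v)$ obtained by removing the interiors of the extreme faces $F_j$. Since $\Moduli(u,v)$ is a disk, its normal bundle in the ambient Euclidean space is trivializable, so there is a framing $\tilde\Phi$ of $\Moduli(u,v)$ whose restriction $\Phi^0$ to $\partial\Moduli(u,v)\cong S^k$ is a null-concordant framing of the $k$-sphere. For each extreme face $F_j$, the restriction $\Phi^0|_{F_j}$ extends $\Phi^0|_{\partial F_j}$, so $\Phi^0|_{\partial F_j}$ is null-concordant with respect to the disk $F_j$; by the definition of $\mathfrak{o}$, the homotopy class $[\Phi|_{\partial F_j}\cdot(\Phi^0|_{\partial F_j})^{-1}]\in\pi_{k-1}(O)$ therefore equals $\mathfrak{o}$ of the corresponding $(k+1)$-cell, up to a sign dictated by the coherent orientation of \Lemma{coherent-orient-moduli-cube}.

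The cocycle identity then follows from a geometric cancellation. Each codim-$2$ corner of $\Moduli(u,v)$ is shared by exactly two codim-$1$ faces; when the two adjacent faces are both extreme, their induced boundary orientations on the shared corner are opposite, so in the signed sum $\sum_j\pm\partial F_j$ these contributions cancel, leaving only the codim-$2$ corners shared between an extreme and an intermediate face. These remaining corners assemble into the topological boundary $\partial\Sigma$ of the intermediate subcomplex. The map $g=\Phi\cdot(\Phi^0)^{-1}\colon\Sigma\to O$ is defined on the whole $k$-manifold-with-corners $\Sigma$, and since $O$ is an H-space with $\pi_{k-1}(O)$ abelian, the standard argument that a boundary class $[g|_{\partial\Sigma}]$ of a map on a $k$-manifold vanishes in $\pi_{k-1}(O)$ applies, giving $\sum_j\pm[g|_{\partial F_j}]=[g|_{\partial\Sigma}]=0$ and hence $\sum_j\pm\mathfrak{o}(F_j)=0$.

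The main obstacle is sign reconciliation: precisely matching the geometric signs coming from the induced boundary orientations of extreme faces $F_j$ in $\partial\Moduli(u,v)$, together with the cancellation signs on shared codim-$2$ corners, with the algebraic signs $(-1)^{s_0}$ appearing in the differential of \Equation{cubical-co-d}. This is the essential bookkeeping content of the proof and relies crucially on the coherent orientation of \Lemma{coherent-orient-moduli-cube}.
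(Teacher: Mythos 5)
Your skeleton is the paper's: for a $(k+2)$-cell $\Cube_{u,v}$ you pass to the $k$-sphere $\del\Moduli(u,v)$, note that the product framing is defined away from the faces with a $0$-dimensional factor, compare it there with the restriction of a global framing of the disk $\Moduli(u,v)$, identify each per-face comparison class with $\pm\mf{o}$ of the corresponding $(k+1)$-cell via null-concordance, and sum. The genuine gap is in how you prove the sum vanishes. Your reduction ``corners shared by two extreme faces cancel by opposite orientations, the rest assemble into $\del\Sigma$, and $[g|_{\del\Sigma}]=0$'' does not parse in $\pi_{k-1}(O)$: $\del\Sigma$ is not a sphere, so $[g|_{\del\Sigma}]$ is undefined; two extreme faces can share a codimension-$2$ corner (take $w$ with $v\imprec w$ and $w'$ with $w'\imprec u$ on a common maximal chain), so the spheres $\del F_j$ are not disjoint and $\Sigma$ is not a $k$-manifold with boundary; and homotopy classes of sphere maps do not decompose as signed sums over corner strata the way cellular chains do---your cancellation step is homological reasoning transplanted into homotopy groups. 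In the correct statement there is nothing to cancel: with each $\del F_j$ oriented as the boundary of $F_j\subset S^k$, one has $\sum_j[g|_{\del F_j}]=0$ with all coefficients $+1$, for an arbitrary map $g$ on the complement of the open extreme faces. The paper gets this by shrinking each extreme face to a concentric ball so the closures become disjoint, extending the map over the collars, and joining the small boundary spheres to a basepoint by disjoint embedded arcs whose complement is a ball; some such disjointification is needed to make your step rigorous.

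The second issue is the one you flag yourself: the signs. Matching each class $[g|_{\del F_j}]$ with $\mf{o}$ of the corresponding boundary cell, with the signs of \Equation{cubical-co-d}, is not incidental bookkeeping---it is exactly what upgrades ``some signed sum of $\mf{o}$-values vanishes'' to $\langle\diff\mf{o},\Cube_{u,v}\rangle=0$, and it is where the coherent orientation is actually used. The paper carries it out by comparing the orientation of $\del F_w$ as a boundary inside $S^k$ with the coherent orientation of $\del\Moduli(u,w)$ (respectively $\del\Moduli(w,v)$), obtaining the class $(-1)^{s_0(\Cube_{u-v,u-w})}\mf{o}(\Cube_{u,w})$ when $v\imprec w<u$ and $-(-1)^{s_0(\Cube_{u-v,w-v})}\mf{o}(\Cube_{w,v})$ when $v<w\imprec u$. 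Your null-concordance mechanism (both $\Phi^0|_{\del F_j}$ and the product of the lower framings with a null-concordant one extend over $F_j$) is the right one, but until those two signed identifications are actually computed, the proposal is an outline of the paper's argument rather than a proof.
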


\begin{proof}
  It is a standard fact from algebraic topology that obstruction
  classes are cocycles. The proof adapts easily to our setting; we
  include it here for completeness.

  Fix any $(k+2)$-cell $\Cube_{u,v}$. We want to show
  $\langle\diff\mf{o},\Cube_{u,v}\rangle=0$. 

  Let $S^k$ be the oriented $k$-sphere
  $\del(\iota_{u,v}\Moduli(u,v))$. Let $W=\set{w}{v\imprec w<
    u\text{ or }v< w\imprec u}$. For $w\in W$,
  let $\DD^k_w\sbs S^k$ be the $k$-cell
  $\iota_{u,v}(\Moduli(w,v)\times\Moduli(u,w))=\iota_{w,v}\Moduli(w,v)\times
  \iota_{u,w}\Moduli(u,w)$, oriented as a subspace of $S^k$; let
  $S^{k-1}_w=\del\DD^k_w$ be the oriented boundary. Let
  $B^k=S^k\sm(\bigcup_{w\in W} \interior{\DD}^k_w)$. Given any map
  $\Phi\from B^k\to O$, let $\Phi_w\in\pi_{k-1}(O)$ be
  the element induced by $\restrict{\Phi}{S^{k-1}_w}$. 

  We claim that $\sum_{w\in W} \Phi_w=0$ in $\pi_{k-1}(O)$. To see
  this, let $\DD'_w$ denote a slightly smaller ball concentric with
  $\DD^k_w$ and let $S'_w=\bdy \DD'_w$. Let $B'=S^k\sm(\bigcup_{w\in
    W} \interior{\DD}'_w)$, so $B'$ is the complement of a collection
  of balls in $S^k$ with disjoint closures.  The map $\Phi$ extends to
  a map $\Phi'\co B'\to O$. Let $p$ be a point in the interior of $B'$
  and let $\gamma_w$ be an embedded path from $p$ to $S'_w$ in $B'$
  (so that $\gamma_w$ is disjoint from $\gamma_{w'}$ if $w\neq w'$). Then
  $B'\setminus \bigcup_w \gamma_w$ is homeomorphic to a ball. It
  follows that $\sum_{w\in W}\restrict{\Phi}{S'_w}=0$ in
  $\pi_{k-1}(O)$. But $\sum_{w\in W} \Phi_w=\sum_{w\in
    W}\restrict{\Phi}{S'_w}$.

  Next, since all moduli spaces of dimension less than $k$ are framed, we
  get the product framing $\Frame$ of the bundle $\nu_{\iota_{u,v}}$
  restricted to $B^k$.  Choose some framing $\Frame^0$ of the
  whole normal bundle $\nu_{\iota_{u,v}}$. (We do not impose any
  coherence conditions; since $\Moduli(u,v)$ is a disk, such a framing
  exists.)  We construct our map $\Phi\from B^k\to O$ as follows: a
  point $x\in B^k$ is mapped to the element of the orthogonal group
  that sends $\Frame^0(x)$ to $\Frame(x)$. From the earlier
  observation, $\sum_{w\in W}\Phi_w=0$.

  Now fix some $w\in W$; we will analyze $\Phi_w$. Let $\Frame^0_w$
  denote the restriction of $\Frame^0$ to $S^{k-1}_w$.   We have the following two cases:
  \begin{enumerate}
  \item $v\imprec w< u$. In this case, $\iota_{w,v}\Moduli(w,v)=\pt$,
    and the restriction of $\Frame$ to
    $S^{k-1}_w=\pt\times\del(\iota_{u,w}\Moduli(u,w))\sbs\R^{\Tup{d}_{\gr(v)}}\times
    \del\ESpace[\TupV{d}]{\gr(w)}{\gr(v)}$ is a product framing, say
    $\Frame_{w,v}\wedge\Frame_{u,w}$. 

    Since the moduli spaces are coherently oriented, the orientation
    of $\del(\iota_{u,w}\Moduli(u,w))$ is
    $(-1)^{s_0(\Cube_{u-v,u-w})}$ times the orientation
    of $S^{k-1}_w$. Choose some null-concordant framing
    $\Frame^0_{u,w}$ of $\del\Moduli(u,w)$; $\mf{o}(\Cube_{u,w})$
    relates $\Frame^0_{u,w}$ to $\Frame_{u,w}$ on
    $\del(\iota_{u,w}\Moduli(u,w))$. Therefore,
    $(-1)^{s_0(\Cube_{u-v,u-w})}\mf{o}(\Cube_{u,w})$
    relates $\Frame_{w,v}\wedge\Frame^0_{u,w}$ to
    $\Frame_{w,v}\wedge\Frame_{u,w}$ on $S^{k-1}_w$.

    However, $\Phi_w$ relates $\Frame^0_w$ to
    $\Frame_{w,v}\wedge\Frame_{u,w}$ on $S^{k-1}_w$. Since both
    $\Frame^0_w$ and $\Frame_{w,v}\wedge\Frame^0_{u,w}$ are
    null-concordant, i.e.\ they extend to $\DD^k_w$, we have
    $\Phi_w=(-1)^{s_0(\Cube_{u-v,u-w})}\mf{o}(\Cube_{u,w})$.
  \item $v< w\imprec u$. An argument similar to the one in the
    previous case shows
    $\Phi_w=-(-1)^{s_0(\Cube_{u-v,w-v})}\mf{o}(\Cube_{w,v})$.
  \end{enumerate}
  
  Since $\sum_{w\in W}\Phi_w=0$, we get
  \[
  \sum_{v\imprec
    w<u}(-1)^{s_0(\Cube_{u-v,u-w})}\mf{o}(\Cube_{u,w})-
  \sum_{v<w\imprec u}(-1)^{s_0(\Cube_{u-v,w-v})}\mf{o}(\Cube_{w,v})=0.
  \]
  Using \Equation{cubical-co-d} we conclude $\langle\diff\mf{o},\Cube_{u,v}\rangle=0$. 
\end{proof}

\begin{lem}\label{lem:modify-frame-by-cocycle}
  Fix a coherent orientation for $\CubeFlowCat(n)$, a neat embedding
  $\iota$ of $\CubeFlowCat(n)$ relative some large enough $\TupV{d}$,
  a coherent framing of all moduli spaces of dimension less than $k$,
  and a cochain $\mf{p}\in C^k(\Cube(n),\pi_{k-1}(O))$.

  For each $k$-cell $\Cube_{u,v}$, choose a small oriented disk
  $\DD^{k-1}$ in the interior of the oriented manifold
  $\iota_{u,v}\Moduli(u,v)$, and a map
  $f_{u,v}\from(\DD^{k-1},\del\DD^{k-1})\allowbreak\to
  (O(d_{\gr(v)}+\dots+d_{\gr(u)-1}),\Id)$ that induces
  $\mf{p}(\Cube_{u,v})\in\pi_{k-1}(O)$; then change the framing on the
  disk $\DD^{k-1}$ of the framed manifold
  $\iota_{u,v}\Moduli(u,v)\sbs\ESpace[\TupV{d}]{\gr(v)}{\gr(u)}$ by
  $f_{u,v}$.

  Let $\mf{o}$ (\respectively $\mf{o}'$) be the obstruction class from
  \Definition{obstruction-class} before (\respectively after) this
  modification. Then, $\mf{o}'=\mf{o}+\diff\mf{p}$
\end{lem}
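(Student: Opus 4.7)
\smallskip
The plan is to compute $\mf{o}'(\Cube_{u,v})-\mf{o}(\Cube_{u,v})$ for an arbitrary $(k+1)$-cell $\Cube_{u,v}$ and match it term-by-term with $\diff\mf{p}(\Cube_{u,v})$ as written in \Equation{cubical-co-d}. Since $\mf{o}$ and $\mf{o}'$ both use the same framings on moduli spaces of dimension less than $k-1$, their difference on $\Cube_{u,v}$ can be read off from a single $(k-1)$-sphere, namely $S^{k-1}=\del(\iota_{u,v}\Moduli(u,v))$, oriented from the coherent orientation of $\Lemma{coherent-orient-moduli-cube}$.

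First, I would fix a common null-concordant reference framing $\Frame^0_{u,v}$ on $S^{k-1}$ so that $\mf{o}(\Cube_{u,v})$ and $\mf{o}'(\Cube_{u,v})$ are both represented by the maps $\Psi,\Psi'\co S^{k-1}\to O$ that carry $\Frame^0_{u,v}$ to the respective product framings $\Frame_{u,v}$ and $\Frame'_{u,v}$. The map $\Psi'\Psi^{-1}\co S^{k-1}\to O$ then represents $\mf{o}'(\Cube_{u,v})-\mf{o}(\Cube_{u,v})\in\pi_{k-1}(O)$. Next, I would decompose $S^{k-1}$ into the closed faces $\DD^{k-1}_w=\iota_{w,v}\Moduli(w,v)\times\iota_{u,w}\Moduli(u,w)$, indexed by $w\in W=\{w\mid v\imprec w<u\text{ or }v<w\imprec u\}$. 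On any face where the vertex $w$ satisfies neither $\gr(u)-\gr(w)=k$ nor $\gr(w)-\gr(v)=k$, no modification has occurred and $\Psi'\Psi^{-1}$ is the constant $\Id$.

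The interesting faces are the two extreme types. If $v\imprec w<u$, then $\Moduli(w,v)$ is a point and $\DD^{k-1}_w=\iota_{u,w}\Moduli(u,w)$; the framing change $f_{u,w}$ is supported in an interior disk of this factor and, by the product structure of the framing, the restriction of $\Psi'\Psi^{-1}$ to $\DD^{k-1}_w$ is exactly $f_{u,w}$ extended by $\Id$. Choosing an orientation-preserving identification of the disk supporting $f_{u,w}$ with a subdisk of $S^{k-1}$, the contribution to $\pi_{k-1}(O)$ is $\mf{p}(\Cube_{u,w})$ multiplied by the sign comparing the orientations of $\DD^{k-1}_w$ as a subspace of $S^{k-1}$ and as the oriented manifold $\iota_{u,w}\Moduli(u,w)$; by the coherent orientation convention, this sign is $(-1)^{s_0(\Cube_{u-v,u-w})}$. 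The symmetric case $v<w\imprec u$ gives a contribution $-(-1)^{s_0(\Cube_{u-v,w-v})}\mf{p}(\Cube_{w,v})$, the extra minus sign coming from the second clause of the coherent orientation.

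Summing over $W$ via the same argument used in the proof of \Lemma{obstruction-is-cocycle} (where the classes assigned by an arbitrary map $B^k\to O$ on the complementary spheres add to zero), I get
\[
\mf{o}'(\Cube_{u,v})-\mf{o}(\Cube_{u,v})=\!\!\!\sum_{v\imprec w<u}\!\!(-1)^{s_0(\Cube_{u-v,u-w})}\mf{p}(\Cube_{u,w})-\!\!\!\sum_{v<w\imprec u}\!\!(-1)^{s_0(\Cube_{u-v,w-v})}\mf{p}(\Cube_{w,v}),
\]
which by \Equation{cubical-co-d} equals $(\diff\mf{p})(\Cube_{u,v})$. The main obstacle is purely a sign check: one must verify that the orientation comparison between $\DD^{k-1}_w$ viewed inside $S^{k-1}=\del\Moduli(u,v)$ and viewed as the oriented moduli space factor produces precisely the $s_0$-signs dictated by the coherent orientation convention, without any discrepancy from the outward-normal-first rule or the order of the two factors in the product.
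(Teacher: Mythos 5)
Your proposal is correct and follows exactly the argument the paper intends: its own proof of this lemma is just the remark that the claim is clear with the signs working out as in the proof of \Lemma{obstruction-is-cocycle}, and your write-up is a faithful expansion of that — localizing the difference $\mf{o}'-\mf{o}$ to the modified disks on the extreme faces of $\del(\iota_{u,v}\Moduli(u,v))$ and extracting the $s_0$-signs from the coherent orientation conditions, matching \Equation{cubical-co-d}.
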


\begin{proof}
  This is clear. The signs work out the same way as in the proof
  of \Lemma{obstruction-is-cocycle}.
\end{proof}

\begin{prop}\label{prop:cube-can-be-framed}
  Given a sign assignment $s$ for $\Cube(n)$, the cube flow category
  $\CubeFlowCat(n)$ can be framed in a way so that it refines the cube
  complex $C^*_s(n)$.
\end{prop}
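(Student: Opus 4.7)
The plan is a standard obstruction-theoretic induction on the dimension of moduli spaces, using the machinery already assembled above. First I would fix a coherent orientation on $\CubeFlowCat(n)$ by \Lemma{coherent-orient-moduli-cube}, pick a neat embedding $\iota$ via \Lemma{flow-has-embeddings}, and stabilize $\TupV{d}$ (\Definition{neat-immersion}) enough that every relevant orthogonal group lies in the stable range; in particular $\pi_{k-1}(O(d_{\gr(v)}+\dots+d_{\gr(u)-1}))=\pi_{k-1}(O)$ throughout.

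Next I would frame the $0$-dimensional moduli spaces $\Moduli(u,v)$, one for each edge $v\imprec u$, by choosing the $\pm 1$ framing so that the signed count of \Definition{framed-flow-cat} reproduces the coefficient $(-1)^{s(\Cube_{u,v})}$ from \Equation{d-on-cube}. This guarantees that once the rest of the framing is built, the associated chain complex will be $C^*_s(n)$ on the nose.

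Then I would induct on the dimension $k$ of the moduli spaces. Given coherent framings of all moduli spaces of dimension $<k$, consider the obstruction cocycle $\mf{o}\in C^{k+1}(\Cube(n),\pi_{k-1}(O))$ of \Definition{obstruction-class}; it is closed by \Lemma{obstruction-is-cocycle}, and its vanishing is equivalent to the existence of a coherent extension over all $k$-dimensional moduli spaces. For $k\geq 2$, contractibility of $\Cube(n)$ forces $H^{k+1}(\Cube(n),\pi_{k-1}(O))=0$, so $\mf{o}=\diff\mf{p}$ for some $\mf{p}\in C^k$; applying \Lemma{modify-frame-by-cocycle} with the cochain $-\mf{p}$ modifies only the dimension-$(k-1)$ framings (and, crucially, does not touch the dimension-$0$ framings, since $k-1\geq 1$), replacing $\mf{o}$ by $\mf{o}-\diff\mf{p}=0$. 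The framing then extends to dimension $k$, and the induction terminates once $k=n$ since no moduli space in $\CubeFlowCat(n)$ has dimension exceeding $n-1$.

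The main obstacle is the $k=1$ step, because there \Lemma{modify-frame-by-cocycle} would have to act on the dimension-$0$ framings themselves and thereby spoil the base-case matching with $s$. The key observation is that $\mf{o}$ is \emph{automatically} zero under our setup: on each $2$-cell $\Cube_{u,v}$, unwinding \Definition{obstruction-class} identifies $\mf{o}(\Cube_{u,v})\in\F_2$ with the mod-$2$ coefficient of $\diff^2 v$ at $u$ in $C^*_s(n)$, obtained by comparing the two product framings on $\bdy\Moduli(u,v)$ coming from the two intermediate vertices of $\Cube_{u,v}$. Since $s$ is a sign assignment, $C^*_s(n)$ is an honest chain complex and $\diff^2=0$, so $\mf{o}\equiv 0$ and no modification is needed. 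With this in hand, the induction goes through, and the resulting framed flow category refines $C^*_s(n)$ by construction.
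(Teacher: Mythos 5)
Your proposal is correct and follows essentially the same route as the paper: frame the $0$-dimensional moduli spaces by $s$, then induct on dimension using the obstruction cocycle of \Definition{obstruction-class}, \Lemma{obstruction-is-cocycle}, acyclicity of $\Cube(n)$, and \Lemma{modify-frame-by-cocycle} (which for $k\geq 2$ only alters framings in dimensions $\geq 1$). The paper disposes of the $k=1$ step by observing directly that the sign-assignment condition makes the product framing on $S^0=\bdy\Moduli(u,v)$ null-concordant, which is the same observation you make---though note the integer coefficient of $\diff^2 v$ at $u$ is always even, so the correct statement is that this coefficient \emph{vanishes} (equivalently, the two endpoint framings compare trivially in $\pi_0(O)$ after accounting for boundary orientations), rather than that $\mf{o}(\Cube_{u,v})$ equals its mod-$2$ reduction.
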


\begin{proof}
  Fix a coherent orientation of $\CubeFlowCat(n)$ and a neat embedding
  $\iota$ of $\CubeFlowCat(n)$ relative to some large enough
  $\TupV{d}$. We will produce a coherent framing $\Frame$ for $\iota$
  so that the framed flow category $(\Cat,\iota,\Frame)$ refines the
  cube complex $C^*_s(n)$.

  First, we frame the $0$-dimensional moduli spaces. Given $v\imprec
  u$, frame the point
  $\iota_{u,v}\Moduli(u,v)\in\ESpace[\TupV{d}]{\gr(v)}{\gr(u)}=\R^{d_{\gr(v)}}$
  so that the frame is positive in the tangent space $T\R^{d_{\gr(v)}}$ if and only
  if $s(\Cube_{u,v})=0$.

  Next, we frame the $1$-dimensional moduli spaces. Fix $v\leq_2
  u$. We are trying to frame the normal bundle of
  $\iota_{u,v}\Moduli(u,v)\sbs\ESpace[\TupV{d}]{\gr(v)}{\gr(u)}$. Observe
  that its boundary sphere $S^0$ is already framed by the product
  framing. Furthermore, since $s$ is a sign assignment, the framing on
  $S^0$ is null-concordant. Therefore, we can extend it to a framing of
  $\Moduli(u,v)$. Choose some extension (which we might need to change
  later).

  We do the rest inductively. For some $k\geq 2$, assume that we have
  framed all moduli spaces of dimension less than $k$. Further assume
  that the framings of the zero dimensional moduli spaces come from
  the sign assignment $s$. After changing the framings in the interior
  of the $(k-1)$-dimensional moduli spaces if necessary, we will
  extend the framings to all $k$-dimensional moduli spaces.

  \Definition{obstruction-class} furnishes us with an obstruction
  class $\mf{o}\in
  C^{k+1}(\Cube(n),\pi_{k-1}(O))$ to extending the framing. We know from \Lemma{obstruction-is-cocycle}
  that $\mf{o}$ is a cocycle. Since $\Cube(n)$ is acyclic,
  $\mf{o}$ is a coboundary as well. Choose a cocycle $\mf{p}\in
  C^k(\Cube(n),\pi_{k-1}(O))$ such that $\diff \mf{p}=-
  \mf{o}$. Modify the framings in the interior of the
  $(k-1)$-dimensional moduli spaces using the cocycle $\mf{p}$, as in
  \Lemma{modify-frame-by-cocycle}. After the modification, the new
  obstruction class vanishes, and hence we can extend the framings to
  the $k$-dimensional moduli spaces.
\end{proof}

\begin{lem}\label{lem:change-framing-cube}
  For any fixed sign assignment $s$ for $\Cube(n)$, let
  $(\iota_0,\Frame_0)$ and $(\iota_1,\Frame_1)$ be two framings of
  $\CubeFlowCat(n)$ that refine the cube complex $C^*_s(n)$. Then for
  some $\TupV{d}_0,\TupV{d}_1$, $(\iota_0[\TupV{d}_0],\Frame_0)$ can be
  connected to $(\iota_1[\TupV{d}_1],\Frame_1)$ by a smooth $1$-parameter
  family of framings.
\end{lem}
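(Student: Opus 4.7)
The plan is to run an obstruction-theoretic argument that parallels the proof of \Proposition{cube-can-be-framed}, but one dimension higher, with the interval $[0,1]$ playing the role of an extra parameter. Throughout, we will feel free to stabilize by replacing $(\iota_i, \Frame_i)$ by $(\iota_i[\TupV{d}_i], \Frame_i)$ whenever it is convenient; this is needed both to connect the embeddings and to ensure that $\pi_{j}(O(r)) = \pi_j(O)$ in the relevant range.

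First, apply \Lemma{connect-neat-immersions} to obtain (after suitable stabilization) a $[0,1]$-parameter family $\iota(t)$ of neat embeddings of $\CubeFlowCat(n)$ with $\iota(0) = \iota_0$ and $\iota(1) = \iota_1$. The task is then to construct a $[0,1]$-parameter family of coherent framings $\Frame(t)$ of $\iota(t)$ that restricts to $\Frame_0$ at $t=0$ and to $\Frame_1$ at $t=1$. We build $\Frame(t)$ inductively on the dimension of the moduli spaces. For the $0$-dimensional moduli spaces $\Moduli(u,v)$ with $v\imprec u$, the framings $\Frame_0$ and $\Frame_1$ each give a point of $O(d_{\gr(v)})$ whose component is determined by the sign $(-1)^{s(\Cube_{u,v})}$, because both $(\iota_0,\Frame_0)$ and $(\iota_1,\Frame_1)$ refine $C^*_s(n)$. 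After stabilization $SO(d_{\gr(v)})$ is path-connected, so $\Frame_0$ and $\Frame_1$ can be joined by an arbitrary path in the correct component; do so for every $0$-dimensional moduli space.

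For the inductive step, suppose coherent framings $\Frame(t)$ have been defined, extending $\Frame_0$ and $\Frame_1$, on every moduli space of dimension less than $k$. Fix a moduli space $\Moduli(u,v)$ of dimension $k$; by \Lemma{cube-cat-struct}, $\Moduli(u,v) \cong \DD^k$, so $\Moduli(u,v) \times [0,1]$ is a $(k+1)$-disk. Its boundary already carries a framing: on $\Moduli(u,v) \times \{0,1\}$ it comes from $\Frame_0$ and $\Frame_1$, while on $\del\Moduli(u,v) \times [0,1]$ it comes, via the inductive hypothesis together with coherence, as a product of previously defined framings. Comparing this boundary framing to any null-concordant framing of the $k$-sphere boundary (as in \Definition{obstruction-class}) yields an obstruction element of $\pi_k(O)$, and collecting over all $(k+1)$-cells gives a cochain $\mf{o} \in C^{k+1}(\Cube(n); \pi_k(O))$. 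An argument formally identical to \Lemma{obstruction-is-cocycle} — the only change being that the normal $k$-sphere is replaced by the double $\del(\Moduli(u,v) \times [0,1])$, with contributions organized by the faces $\Moduli(w,v) \times \Moduli(u,w) \times [0,1]$ — shows that $\mf{o}$ is a cocycle. Since $\Cube(n)$ is acyclic, there is a cochain $\mf{p} \in C^k(\Cube(n); \pi_{k-1}(O))$ with $\diff \mf{p} = -\mf{o}$. Modify the framings $\Frame(t)$ on the $(k-1)$-dimensional moduli spaces inside small disks located in the \emph{interior} (both in the moduli-space and in the $t$-direction, so that $\Frame_0$ and $\Frame_1$ are undisturbed) by elements of $O$ realizing $\mf{p}$, as in \Lemma{modify-frame-by-cocycle}. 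After this modification the new obstruction class vanishes, so the framing extends to the $k$-dimensional moduli spaces, completing the inductive step.

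The main technical point is the cocycle property of $\mf{o}$: the verification requires tracking how boundary framings on $\del(\Moduli(u,v)\times[0,1])$ for a $(k+2)$-cell $\Cube_{u,v}$ decompose across the lower-strata products, keeping careful control of the signs coming from the coherent orientations and from the standard sign assignment. This is essentially a bookkeeping exercise parallel to \Lemma{obstruction-is-cocycle}, but one must also check that the contributions from the ends $t=0$ and $t=1$ cancel — and they do, precisely because $(\iota_0,\Frame_0)$ and $(\iota_1,\Frame_1)$ are themselves coherent framings (so their restrictions give closed pieces in the obstruction calculation). The rest of the argument is formal, so once this cocycle verification is in hand the induction terminates after finitely many steps, producing the desired $1$-parameter family of framings.
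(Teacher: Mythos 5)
Your proposal is correct and takes essentially the same route as the paper: connect the embeddings via \Lemma{connect-neat-immersions}, induct on the dimension of the moduli spaces with the base case secured by the shared sign assignment $s$, and at each stage kill the obstruction cochain in $C^{k+1}(\Cube(n),\pi_k(O))$ built from the framed sphere $\del(\Moduli(u,v)\times[0,1])$ using the cocycle argument of \Lemma{obstruction-is-cocycle} and acyclicity of the cube, modifying framings only in the interior of $\Moduli(u,v)\times[0,1]$ so the ends are untouched. (One small slip: the correcting cochain $\mf{p}$ should take values in $\pi_k(O)$, matching the coefficients of $\mf{o}$, not $\pi_{k-1}(O)$.)
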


\begin{proof}
  Choose $\TupV{d}_0$ and $\TupV{d}_1$ large enough; and then by using
  \Lemma{connect-neat-immersions}, for $t\in[0,1]$, choose a
  $1$-parameter family $\iota(t)$ of neat embeddings of
  $\CubeFlowCat(n)$ with $\iota(0)=\iota_0[\TupV{d}_0]$ and
  $\iota(1)=\iota_1[\TupV{d}_1]$. We would like to extend $\Frame_0$
  and $\Frame_1$ to a
  $1$-parameter family of coherent framings $\Frame_t$ for $\iota(t)$.

  We will frame the moduli spaces $\iota_{u,v}(t)(\Moduli(u,v))$
  inductively on $|u-v|$. When $v\imprec u$, both the points
  $\iota_{u,v}(0)(\Moduli(u,v))$ and $\iota_{u,v}(1)(\Moduli(u,v))$
  are framed with the same sign: positively if $s(\Cube_{u,v})=0$ and
  negatively if $s(\Cube_{u,v})=1$. Therefore, there exists a
  $1$-parameter families of framings of $\iota_{u,v}(t)$ connecting the
  given framings at the endpoints. This takes care of the base case.

  Now assume that for some $k\geq 1$, we have produced $1$-parameter
  family of framings $\Frame_t$ of $\iota_{u,v}(t)$ for all $u,v$ with
  $|u-v|\leq k$. After changing $\Frame_t$ relative endpoints for the
  $(k-1)$-dimensional moduli spaces if necessary, we will extend
  $\Frame_t$ to all $k$-dimensional moduli spaces.

  The proof is similar to the proof of
  \Proposition{cube-can-be-framed}. Fix a coherent orientation of
  $\CubeFlowCat(n)$. For any $(k+1)$-cell $\Cube_{u,v}$, consider the
  $k$-sphere $S^k=\del(\Moduli(u,v)\times[0,1])$, oriented as the
  boundary of a product. We already have a framing on $S^k$: the
  framing on $\Moduli(u,v)\times\{0\}$ is the pullback of the framing
  $\Frame_0$ on $\iota_{u,v}$; the framing on
  $\Moduli(u,v)\times\{1\}$ is the pullback of the framing $\Frame_1$
  on $\iota_{u,v}$; and the framing on $\del\Moduli(u,v)\times[0,1]$
  is the pullback of the product of framings of lower dimensional
  moduli spaces. Comparing this framing with any other null-concordant
  framing of $S^k$ produces an element of $\pi_k(O)$. Collecting all
  these elements, we get a cochain $\mf{o}\in
  C^{k+1}(\Cube(n),\pi_k(O))$.

  Arguments similar to the ones in the proof of
  \Lemma{obstruction-is-cocycle} show that this obstruction class
  $\mf{o}$ is a cocycle, and hence a coboundary. Choose $\mf{p}\in
  C^k(\Cube(n),\pi_k(O))$ such that $\diff\mf{p}=-\mf{o}$. For
  $k$-cells $\Cube_{u,v}$, modify the framings for
  $\Moduli(u,v)\times[0,1]$ in the interior by
  $\mf{p}(\Cube_{u,v})$. After the modification, the new obstruction
  class vanishes, thus completing the proof.
\end{proof}

\section{A flow category for Khovanov homology}\label{sec:Kh-flow}
We return to the notations from \Section{res-config}, where $D$
is a link diagram. The goal of this section is to associate a framed
flow category to $D$; feeding this framed flow category into the
Cohen-Jones-Segal construction, as described
in \Section{flow-to-space}, then gives a suspension spectrum.

\subsection{Moduli spaces for decorated resolution configurations}
\label{sec:mod-dec-res-config}
The main work in this section will be to associate to each index $n$
basic decorated resolution configuration $(D,\gen{x},\gen{y})$ an
$(n-1)$-dimensional $\Codim{n-1}$-manifold
$\Moduli(D,\gen{x},\gen{y})$ together with an $(n-1)$-map
\[
\Forget\co \Moduli(D,\gen{x},\gen{y})\to \Moduli_{\CubeFlowCat(n)}(\vect{1},\vect{0}).
\]
These spaces and maps will be built inductively, satisfying the following
properties:
\begin{enumerate}[label=(RM-\arabic*),ref=RM-\arabic*]
\item\label{condition:res-moduli-compose} Let $(D,\gen{x},\gen{y})$ be
  a basic decorated resolution configuration, and $(E,\gen{z})\in
  P(D,\gen{x},\gen{y})$. Let $\gen{x}|$ and $\gen{y}|$ denote the
  induced labelings on $s(E\sm s(D))=s(D)\sm E$\footnote{$s(D)=s(E)$;
    therefore, $s(E\sm s(D))=s(D)\sm E$ using
    \Lemma{res-config-surgery}.} and $D\sm E$, respectively; by an
  abuse of notation, let $\gen{z}|$ denote the induced labelings on
  both $s(D\sm E)=E\sm D$ and $E\sm s(D)$. Then there is a
  \emph{composition map}
  \[
  \circ \co \Moduli(D\sm E,\gen{z}|,\gen{y}|)\times
  \Moduli(E\sm s(D),\gen{x}|,\gen{z}|)\to \Moduli(D,\gen{x},\gen{y}),
  \]
  respecting the map $\Forget$ in the sense that the following diagram
  commutes:
  \begin{equation}\label{eq:Forget-res}
    \begin{split}
  \xymatrix{
    \Moduli(D\sm E,\gen{z}|,\gen{y}|)\times
  \Moduli(E\sm s(D),\gen{x}|,\gen{z}|)
  \ar[r]^-\circ\ar[d]_{\Forget\times\Forget} &
  \Moduli(D,\gen{x},\gen{y})\ar[dd]^{\Forget}\\
  \Moduli_{\CubeFlowCat(n-m)}(\vect{1},\vect{0})\times
  \Moduli_{\CubeFlowCat(m)}(\vect{1},\vect{0})
  \ar[d]_{\CInc_{{v},\vect{0}}\times \CInc_{\vect{1},{v}}}&
  \\
  \Moduli_{\CubeFlowCat(n)}({v},\vect{0}) \times
  \Moduli_{\CubeFlowCat(n)}(\vect{1},{v})\ar[r]^-\circ & 
  \Moduli_{\CubeFlowCat(n)}(\vect{1},\vect{0}).
  }
  \end{split}
  \end{equation}
  Here, $n$ is the index of $D$; $m$ is the index of $E$; and the
  vector ${v}=(v_1,\dots,v_n)$ has $v_i=0$ if the $i\th$ arc in
  the totally ordered set $A(D)$ is in $A(E)$, and $v_i=1$ otherwise.
  (The maps $\CInc$ are given in \Definition{CInt}.)
\item The faces of $\Moduli(D,\gen{x},\gen{y})$ are given by
  \begin{equation}\label{eq:res-expected-bdy}
    \begin{split}
    \bdy_i \Moduli(D,\gen{x},\gen{y})&=
    \bdy_{\expect,i}\Moduli(D,\gen{x},\gen{y})\coloneqq\\
    &\coprod_{\substack{(E,\gen{z})\in
        P(D,\gen{x},\gen{y})\\ \ind(D\sm E)=i}} \circ\bigl(\Moduli(D\sm E,\gen{z}|,\gen{y}|)\times
    \Moduli(E\sm
    s(D),\gen{x}|,\gen{z}|)\bigr).
    \end{split}
\end{equation}
(Here, $\expect$ stands for ``expected.'')
\item\label{condition:res-moduli-covers} The map $\Forget$ is a covering
  map, and a local diffeomorphism. In fact:
\item\label{condition:res-moduli-cover-trivial} The covering map
  $\Forget$ is trivial on each component of $\Moduli(D,\gen{x},\gen{y})$.
\end{enumerate}

\Condition[s]{res-moduli-compose}--\ConditionCont{res-moduli-covers} together imply analogues of
\Condition[s]{maps-to-bdy}
and~\ConditionCont{boundary-is} of
\Definition{flow-cat}.
In particular, the boundary of $\Moduli(D,\gen{x},\gen{y})$ is given by
a pushout, similarly to \Condition{boundary-is}. Let $\mf{D}$ denote the diagram whose
vertices are the spaces
\[
\Moduli(D\sm E_k,\gen{z}_k|,\gen{y}|)\times \Moduli(E_k\sm
E_{k-1},\gen{z}_{k-1}|,\gen{z}_k|)\times\dots \times \Moduli(E_1\sm
s(D),\gen{x}|,\gen{z}_1|)
\]
where $k\geq 1$ and $(D,\gen{y})\prec (E_k,\gen{z}_k)\prec\dots\prec
(E_1,\gen{z}_1)\prec (s(D),\gen{y})$ is a chain in
$P(D,\gen{x},\gen{y})$; and whose edges correspond to applying the
composition map $\circ$ to an adjacent pair of spaces. Let
\[
\bdy_{\expect}\Moduli(D,\gen{x},\gen{y})=\colim \mf{D}=\bigcup_i\bdy_{\expect,i}\Moduli(D,\gen{x},\gen{y}).
\]
Note that $\bdy_\expect\Moduli(D,\gen{x},\gen{y})$ is naturally an
$\Codim{n-1}$-boundary (\Definition{n-boundary}), whose
top-dimensional faces are exactly the
$\bdy_{\expect,i}\Moduli(D,\gen{x},\gen{y})$.

\begin{lemma}\label{lem:bdy-is-bdy-expect}
  $\del_{\expect}\Moduli(D,\gen{x},\gen{y})=\del\Moduli(D,\gen{x},\gen{y})$.
\end{lemma}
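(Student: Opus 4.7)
I would proceed by induction on $n = \ind(D)$. The base case $n = 1$ is trivial: $\Moduli(D,\gen{x},\gen{y})$ is zero-dimensional, so both $\del$ and $\del_\expect$ are empty.

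For the inductive step, the inclusion $\del_\expect\Moduli(D,\gen{x},\gen{y}) \subseteq \del\Moduli(D,\gen{x},\gen{y})$ is immediate from \eqref{eq:res-expected-bdy}, since each $\del_{\expect,i}$ sits inside $\del_i\Moduli(D,\gen{x},\gen{y})$. For the reverse, I would use that $\Forget$ is an $(n-1)$-map, so $\del\Moduli(D,\gen{x},\gen{y}) = \Forget^{-1}(\del\Moduli_{\CubeFlowCat(n)}(\vect{1},\vect{0}))$. The corresponding identity $\del\Moduli_{\CubeFlowCat(n)}(\vect{1},\vect{0}) = \del_\expect\Moduli_{\CubeFlowCat(n)}(\vect{1},\vect{0})$ for the cube flow category is standard from Morse theory---broken flowlines of $f_n$ from $\vect{1}$ to $\vect{0}$ decompose as chains of flowlines through intermediate cube vertices---and is also directly visible from the permutohedral description in Lemma~\ref{lem:cube-cat-struct}.

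The plan is then to show that both $\del\Moduli(D,\gen{x},\gen{y})$ and $\del_\expect\Moduli(D,\gen{x},\gen{y})$ are covering spaces of $\del_\expect\Moduli_{\CubeFlowCat(n)}(\vect{1},\vect{0})$ via $\Forget$, with matching sheet counts. For $\del\Moduli$, this follows from (RM-3) together with the $(n-1)$-map property, while (RM-4) identifies the number of sheets over a given point with the number of components of $\Moduli(D,\gen{x},\gen{y})$ meeting the appropriate face. For $\del_\expect\Moduli$, I would apply Proposition~\ref{prop:n-bdy-cover} to the natural transformation from the colimit diagram defining $\del_\expect\Moduli(D,\gen{x},\gen{y})$ to the corresponding diagram for the cube; the faces of this diagram are products of lower-index decorated moduli spaces, on which $\Forget$ is a covering by the inductive hypothesis together with (RM-1) and \eqref{eq:Forget-res}. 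Since $\del_\expect\Moduli \subseteq \del\Moduli$ and both cover the same space with the same degree (over each component), they must coincide.

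The main obstacle will be verifying the hypotheses of Proposition~\ref{prop:n-bdy-cover}, in particular that the colimit diagram for $\del_\expect\Moduli(D,\gen{x},\gen{y})$ has the structure of an $\Codim{n-1}$-boundary whose faces match up correctly with those of the cube under $\Forget$. This in turn requires carefully enumerating the pairs $(E,\gen{z}) \in P(D,\gen{x},\gen{y})$ giving rise to a prescribed cube vertex $v \in \{0,1\}^n \setminus \{\vect{0},\vect{1}\}$---via the combinatorics of the partial order $\prec$ and the compatibility of the labelings $\gen{x},\gen{y},\gen{z}$---and using (RM-4) to pin down the correct sheet count component-by-component, so that no preimage in $\del\Moduli(D,\gen{x},\gen{y})$ escapes the expected boundary.
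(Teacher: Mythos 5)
Your plan misses that \Equation{res-expected-bdy} (the second of the conditions imposed on the resolution moduli spaces) is an \emph{equality} of faces, $\del_i\Moduli(D,\gen{x},\gen{y})=\del_{\expect,i}\Moduli(D,\gen{x},\gen{y})$, not merely the inclusion $\del_{\expect,i}\Moduli(D,\gen{x},\gen{y})\sbseq\del_i\Moduli(D,\gen{x},\gen{y})$ that you use. Once this is read correctly the lemma is a one-liner, which is exactly the paper's proof: since $\Moduli(D,\gen{x},\gen{y})$ is an $(n-1)$-dimensional $\Codim{n-1}$-manifold (where $n=\ind(D)$), its boundary is the union of its faces $\bigcup_i\del_i\Moduli(D,\gen{x},\gen{y})$ (\Definition{codim-n-mfld}), while $\del_\expect\Moduli(D,\gen{x},\gen{y})$ is by definition $\bigcup_i\del_{\expect,i}\Moduli(D,\gen{x},\gen{y})$; no induction on $\ind(D)$ and no covering-space argument is needed.

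Beyond the overcomplication, the decisive step of your argument has a genuine gap: you never establish that the covering $\del_\expect\Moduli(D,\gen{x},\gen{y})\to\del\Moduli_{\CubeFlowCat(n)}(\vect{1},\vect{0})$ has the same number of sheets (over each component of the base) as the restriction of $\Forget$ to $\del\Moduli(D,\gen{x},\gen{y})$. For $\del\Moduli(D,\gen{x},\gen{y})$ the sheet count is the number of components of $\Moduli(D,\gen{x},\gen{y})$, by \Condition{res-moduli-cover-trivial}; for $\del_\expect\Moduli(D,\gen{x},\gen{y})$ it is a sum over $(E,\gen{z})\in P(D,\gen{x},\gen{y})$ of products of component counts of lower-index moduli spaces. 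The equality of these two numbers is not a consequence of \Condition[s]{res-moduli-compose}, \ConditionCont{res-moduli-covers} and \ConditionCont{res-moduli-cover-trivial}: it is equivalent, given the inclusion you already have, to the equality in \Equation{res-expected-bdy} that you are trying to reprove. Indeed, if that equation were weakened to an inclusion, the statement would be false --- adjoining to $\Moduli(D,\gen{x},\gen{y})$ an extra component mapped by the identity to $\Moduli_{\CubeFlowCat(n)}(\vect{1},\vect{0})$ is compatible with the remaining conditions, enlarges $\del\Moduli(D,\gen{x},\gen{y})$, and leaves $\del_\expect\Moduli(D,\gen{x},\gen{y})$ unchanged. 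So the sheet-count matching is either circular or unobtainable, and the covering-space machinery (which in any case duplicates \Proposition{extend-moduli}) should be dropped in favor of reading the equality directly off \Equation{res-expected-bdy}.
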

\begin{proof}
  This is immediate from \Equation{res-expected-bdy} and the
  fact that $\Moduli(D,\gen{x},\gen{y})$ is an $\Codim{n}$-manifold.
\end{proof}

In the inductive construction of the $\Moduli(D,\gen{x},\gen{y})$ we will
start by constructing
$\Moduli(D,\gen{x},\gen{y})$ when $\ind(D)=1$; this moduli space will
always be a single point. Next, suppose that
$\Moduli(D,\gen{x},\gen{y})$ has been defined whenever $\ind(D)<k$. We
will build the moduli spaces $\Moduli(D,\gen{x},\gen{y})$ for
$\ind(D)=k$ by filling in $\bdy_{\expect}\Moduli(D,\gen{x},\gen{y})$
(which has already been defined) subject to
\Condition[s]{res-moduli-compose}--\ConditionCont{res-moduli-cover-trivial}. The
main work will be in checking that the restriction of $\Forget$ to
$\bdy_{\expect}\Moduli(D,\gen{x},\gen{y})$ is a trivial covering.

More formally:
\begin{prop}\label{prop:extend-moduli}
  Suppose that the moduli spaces $\Moduli(D,\gen{x},\gen{y})$ and maps
  $\Forget\from \Moduli(D,\gen{x},\gen{y})\to\allowbreak
  \Moduli_{\CubeFlowCat(\ind(D))}(\vect{1},\vect{0})$ have already been
  constructed for each basic decorated resolution configuration
  $(D,\gen{x},\gen{y})$ with $\ind(D)\leq n$, satisfying
  \Condition[s]{res-moduli-compose}--\ConditionCont{res-moduli-cover-trivial}.
  Then, given a decorated resolution configuration
  $(D,\gen{x},\gen{y})$ with $\ind(D)=n+1$:
  \begin{enumerate}[label=(E-\arabic*),ref=E-\arabic*]
  \item\label{item:extend-assemble} The maps $\Forget$ already defined
    assemble to give a continuous map
    \[
    \Forget|_{\bdy}\co \bdy_{\expect}\Moduli(D,\gen{x},\gen{y})\to \bdy\Moduli_{\CubeFlowCat(n+1)}(\vect{1},\vect{0}).
    \]
    Furthermore, this map respects the $\Codim{n}$-boundary structure
    of the two sides, i.e.\
    $\Forget|_{\del}^{-1}(\del_i\Moduli(\vect{1},\vect{0}))=\del_{\expect,i}\Moduli(D,\gen{x},\gen{y})$
    and $\Forget|_{\bdy_{\expect,i}\Moduli(D,\gen{x},\gen{y})}$ is an
    $n$-map.
  \item\label{item:extend-cover} The map $\Forget|_\bdy$ is a covering
    map.
  \item\label{item:extend-exist} There is an $n$-dimensional
    $\Codim{n}$-manifold $\Moduli(D,\gen{x},\gen{y})$ and an $n$-map
    $\Forget\co \Moduli(D,\gen{x},\gen{y})\to
    \Moduli_{\CubeFlowCat(n+1)}(\vect{1},\vect{0})$ satisfying
    \Condition[s]{res-moduli-compose}--\ConditionCont{res-moduli-cover-trivial}
    if and only if the map $\Forget|_{\bdy}$ is a trivial covering
    map.

    In particular, if $n\geq 3$ then the space
    $\Moduli(D,\gen{x},\gen{y})$ and map $\Forget$ necessarily exist.
  \item\label{item:extend-unique} If the space
    $\Moduli(D,\gen{x},\gen{y})$ and map $\Forget\co
    \Moduli(D,\gen{x},\gen{y})\to
    \Moduli_{\CubeFlowCat(n+1)}(\vect{1},\vect{0})$ exist, and if $n\geq 2$,
    then $\Moduli(D,\gen{x},\gen{y})$ and $\Forget$ are unique up to
    diffeomorphism (fixing the boundary).
  \end{enumerate}
\end{prop}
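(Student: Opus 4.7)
The plan is to handle the four parts in sequence, leveraging \Lemma{cube-cat-struct}, which identifies the target $\Moduli_{\CubeFlowCat(n+1)}(\vect{1},\vect{0})$ with $\DD^n$.  For (E-1), I would use the colimit definition of $\bdy_\expect\Moduli(D,\gen{x},\gen{y})$: on each top-dimensional face $\Moduli(D\sm E,\gen{z}|,\gen{y}|)\times\Moduli(E\sm s(D),\gen{x}|,\gen{z}|)$ the map is prescribed by $\Forget\times\Forget$ followed by $\CInc_{v,\vect{0}}\times\CInc_{\vect{1},v}$ and the cube composition, all available inductively by (RM-1).  The commuting squares \Equation{Forget-res} at lower index ensure these maps agree on overlaps (which are themselves products of still smaller moduli spaces), so they descend to a continuous $\Forget|_\bdy$ on the colimit.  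Compatibility with the $\Codim{n}$-boundary structure is then a direct unwinding of \Equation{res-expected-bdy}.

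For (E-2), I would invoke \Proposition{n-bdy-cover}.  Face-wise, $\Forget|_\bdy$ is a product of two lower-index forget maps --- each a covering and a local diffeomorphism by (RM-3) --- composed with the cube inclusions $\CInc$.  So the restriction to each face is a covering $(n-1)$-map, and \Proposition{n-bdy-cover} converts this face-wise statement into the global covering property.

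For (E-3), a covering of the contractible space $\DD^n$ is automatically trivial, so the restriction to $\bdy\DD^n=S^{n-1}$ is trivial whenever an extension exists.  Conversely, from a trivial boundary cover $\bdy_\expect\Moduli=\coprod_i S^{n-1}_i$, with each $S^{n-1}_i$ mapping homeomorphically to $S^{n-1}$, I would simply set $\Moduli(D,\gen{x},\gen{y})=\coprod_i \Moduli_{\CubeFlowCat(n+1)}(\vect{1},\vect{0})_i$ and take $\Forget$ to be the projection, pulling back the $\Codim{n}$-manifold structure from the target; properties (RM-1)--(RM-4) are then formal.  When $n\geq 3$, $\pi_1(S^{n-1})=0$, so every cover is trivial and the hypothesis holds automatically.

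For (E-4), any extension is a trivial cover of $\DD^n$ by (E-3) --- hence a disjoint union of copies of $\DD^n$.  Each component is compact and must meet the prescribed $\bdy_\expect\Moduli$; for $n\geq 2$, $\bdy\DD^n=S^{n-1}$ is connected, so components of the extension biject canonically with components of $\bdy_\expect\Moduli$.  Matching corresponding $\DD^n$-components gives a boundary-fixing homeomorphism between any two extensions, which is automatically a diffeomorphism of $\Codim{n}$-manifolds since $\Forget$ is a local diffeomorphism on both sides.  The bound $n\geq 2$ is sharp: for $n=1$, distinct pairings of $S^0$-fibres produce non-diffeomorphic interval extensions.  I expect the main obstacle to be (E-2) --- face-wise coveringness is clear from induction, but the global assembly requires the careful colimit machinery of \Proposition{n-bdy-cover}, which is precisely what that proposition was engineered to handle.
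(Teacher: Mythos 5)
Your proposal is correct and follows essentially the same route as the paper: (E-1) by assembling the inductively defined maps over the colimit diagram using the commutativity of \Diagram{Forget-res}, (E-2) by \Proposition{n-bdy-cover}, (E-3) by observing that $\Moduli_{\CubeFlowCat(n+1)}(\vect{1},\vect{0})$ is a ball so covers of it are trivial (and covers of $S^{n-1}$ are automatically trivial for $n\geq 3$), and (E-4) by matching the disk components over a common boundary via the covering maps. The only cosmetic difference is that you make explicit the component-counting argument for uniqueness and the sharpness of $n\geq 2$, which the paper leaves implicit.
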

\begin{proof}
  For notational convenience, let $\CubeFlowCat=\CubeFlowCat(n+1)$.

  We start with \Part{extend-assemble}. The fact that
  $\Forget|_\bdy$ is well-defined follows from commutativity of
  \Diagram{Forget-res}. In more detail, let $\mf{E}$ denote
  the diagram whose vertices are the spaces
  \[
  \Moduli_{\CubeFlowCat}({v}_k,\vect{0})\times \Moduli_{\CubeFlowCat}({v}_{k-1},{v}_k)\times\dots\times\Moduli_{\CubeFlowCat}(\vect{1},{v}_1),
  \]
  where ${v}_i\in\{0,1\}^{n+1}$, $\vect{0}<{v}_k<\dots<{v}_1<\vect{1}$,
  and $k\geq 1$; and whose edges correspond to the composition map
  $\circ$ for $\CubeFlowCat$, applied to an adjacent pair of
  spaces. Since $\CubeFlowCat$ is a flow category, it follows from
  \Condition{boundary-is} of
  \Definition{flow-cat} that composition induces a
  homeomorphism
  \[
  \colim \mf{E} \cong \bdy\Moduli_{\CubeFlowCat}(\vect{1},\vect{0}).
  \]
  
  The maps $\Forget$ already defined induce a map of diagrams
  $\mathcal{G}\co \mf{D}\to \mf{E}$, where $\mathcal{G}$ is given on $\Moduli(D\sm E_k,\gen{z}_k|,\gen{y}|)\times \Moduli(E_k\sm
  E_{k-1},\gen{z}_{k-1}|,\gen{z}_k|)\times\dots \times \Moduli(E_1\sm
  s(D),\gen{x}|,\gen{z}_1|)
  $ by 
  \[
  (\CInc_{v_k,\vect{0}}\times \CInc_{v_{k-1},v_k}\times\dots\times\CInc_{\vect{1},v_1})\circ (\Forget\times\dots\times\Forget)
  \]
  where $v_m$ has $i\th$ entry $0$ if the $i\th$ arc of $A(D)$ is in
  $A(E_m)$ and $1$ otherwise. It follows from
  \Diagram{Forget-res} that $\mathcal{G}$ defines a map of
  diagrams.  Let $\Forget|_\bdy=\colim\mathcal{G}$.  Since
  $\bdy_i\Moduli(\vect{1},\vect{0})=\coprod_{|v|=i}\Moduli(v,\vect{0})\times\Moduli(\vect{1},v)$,
  we have
  $\bdy_{\expect,i}\Moduli(D,\gen{x},\gen{y})=\Forget|_\bdy^{-1}(\bdy_i\Moduli(\vect{1},\vect{0}))$;
  and the map $\Forget|_{\bdy_{\expect,i}\Moduli(D,\gen{x},\gen{y})}$
  is an $n$-map by the inductive hypothesis.

  \Part{extend-cover} follows from the previous part,
  \Proposition{n-bdy-cover} and the inductive hypothesis that
  $\Forget\co \Moduli(D',\gen{x}',\gen{y}')\to
  \Moduli_{\CubeFlowCat(\ind(D'))}(\vect{1},\vect{0})$ is a covering
  map whenever $\ind(D')\leq n$.

  For \Part{extend-exist}, recall from
  \Lemma{cube-cat-struct} that
  $\Moduli_{\CubeFlowCat}(\vect{1},\vect{0})$ is topologically an
  $n$-ball, so $\bdy \Moduli_{\CubeFlowCat}(\vect{1},\vect{0})$
  is topologically an $(n-1)$-sphere. Hence, by the previous part,
  $\bdy_\expect\Moduli(D,x,y)$ is a disjoint union of
  $(n-1)$-spheres. If the covering map $\Forget|_\bdy\co \amalg_{i=1}^m
  S^{n-1}\to S^{n-1}$ is a homeomorphism on each component then it
  extends in an obvious way to a trivial covering map $\Forget\co
  \amalg_{i=1}^m \DD^{n}\to \DD^{n}$. Conversely, if
  $\Forget|_\bdy$ extends then, since any covering space of a disk is
  trivial, the covering space $\Forget|_\bdy$ must have been trivial.

  If $\Forget|_\bdy$ extends to a covering map $\Forget$ then
  requiring $\Forget$ to be an $n$-map and a local diffeomorphism
  defines an $\Codim{n}$-manifold structure on
  $\Moduli(D,\gen{x},\gen{y})$. \Condition[s]{res-moduli-covers}
  and~\ConditionCont{res-moduli-cover-trivial} are vacuously satisfied
  by the extension $\Forget$. The composition map $\circ$ from
  \Condition{res-moduli-compose} is induced by the
  canonical maps
  \[
  \Moduli(D\sm E,\gen{z}|,\gen{y}|)\times \Moduli(E\sm
  s(D),\gen{x}|,\gen{z}|)\to \colim \mf{D}=\bdy_\expect\Moduli(D,\gen{x},\gen{y})=\bdy\Moduli(D,\gen{x},\gen{y})
  \]
  (since the left hand side is a vertex in $\mf{D}$). Since the
  restriction of $\Forget$ to $\bdy\Moduli(D,\gen{x},\gen{y})$ is by
  definition $\Forget|_\bdy$, \Diagram{Forget-res} commutes.
  
  Note that if $n\geq 3$ then any cover of $S^{n-1}$ is trivial, so
  the hypothesis is vacuously satisfied.

  For \Part{extend-unique}, it follows from
  \Lemma{bdy-is-bdy-expect} that for any other
  $\Moduli'(D,\gen{x},\gen{y})$ satisfying
  \Condition[s]{res-moduli-compose}--\ConditionCont{res-moduli-cover-trivial},
  $\bdy\Moduli'(D,\gen{x},\gen{y})$ is identified with
  $\bdy\Moduli(D,\gen{x},\gen{y})$. So, by the argument in
  \Part{extend-exist}, $\Moduli(D,\gen{x},\gen{y})$ and
  $\Moduli'(D,\gen{x},\gen{y})$ are disjoint unions of disks with the
  same boundary, and hence diffeomorphic (via a diffeomorphism
  commuting with $\Forget$ and fixing the boundary).
\end{proof}

\subsection{From resolution moduli spaces to the Khovanov flow category}
Suppose we have spaces $\Moduli(D,\gen{x},\gen{y})$ and maps $\Forget$
as above, satisfying
\Condition[s]{res-moduli-compose}--\ConditionCont{res-moduli-covers}.
Fix an oriented link diagram $L$ with $n$ crossings, and an ordering
of the crossings of $L$.  We will use this data to construct the
Khovanov flow category $\KhFlowCat(L)$ as a cover of the cube flow
category $\CubeFlowCat(n)$.

\begin{defn}\label{def:khovanov-flow-category}
  The \emph{Khovanov flow category $\KhFlowCat(L)$} has one object for
  each of the standard generators of Khovanov homology, cf.\
  \Definition{knot-diag-to-kh-chain-complex}. That is, an object of
  $\KhFlowCat(L)$ is a labeled resolution configuration of the form
  $\ob{x}=(\AssRes{L}{u},\gen{x})$ with $u\in\{0,1\}^n$. The grading
  on the objects is the homological grading $\homgr$ from
  \Definition{knot-diag-to-kh-chain-complex}; the quantum grading
  $\intgr$ is an additional grading on the objects. We need the
  orientation of $L$ in order to define these gradings, but the rest of
  the construction of $\KhFlowCat(L)$ is independent of the
  orientation.

  Consider objects $\ob{x}=(\AssRes{L}{u},\gen{x})$ and
  $\ob{y}=(\AssRes{L}{v},\gen{y})$ of $\KhFlowCat(L)$.  The space
  $\Moduli_{\KhFlowCat(L)}(\ob{x},\ob{y})$ is defined to be empty unless
  $\ob{y}\prec \ob{x}$ with respect to the partial order from
  \Definition{prec}. So, assume that $\ob{y}\prec\ob{x}$.
  Let $\gen{x}|$ denote the restriction of $\gen{x}$ to
  $s(\AssRes{L}{v}\sm \AssRes{L}{u})=\AssRes{L}{u}\setminus
  \AssRes{L}{v}$ and let $\gen{y}|$ denote the restriction of $\gen{y}$
  to $\AssRes{L}{v}\setminus \AssRes{L}{u}$.  Therefore,
  $(\AssRes{L}{v}\sm\AssRes{L}{u}, \gen{x}|,\gen{y}|)$ is a basic
  decorated resolution configuration. Define
  \[
  \Moduli_{\KhFlowCat(L)}(\ob{x},\ob{y})=\Moduli(\AssRes{L}{v}\setminus \AssRes{L}{u},\gen{x}|,\gen{y}|)
  \]
  as smooth manifolds with corners.  The composition maps for the
  resolution configuration moduli spaces (see
  \Condition{res-moduli-compose}, above) induce
  composition maps
  \[
  \Moduli_{\KhFlowCat(L)}(\ob{z},\ob{y})\times
  \Moduli_{\KhFlowCat(L)}(\ob{x},\ob{z})\to \Moduli_{\KhFlowCat(L)}(\ob{x},\ob{y}).
  \]

  The Khovanov flow category $\KhFlowCat(L)$ is equipped with a
  functor $\Funky$ to $\CubeFlowCat(n)[-n_-]$,\footnote{Henceforth, we
    will usually suppress the grading shifts from the notation.}  
  which is a cover in the sense of
  \Definition{flow-cat-cover}. On the objects, $\Funky\co
  \Ob_{\KhFlowCat(L)}\to \Ob_{\CubeFlowCat(n)}$ is defined as
  \[
  \Funky(\AssRes{L}{u},\gen{x})=u.
  \]
  For the morphisms,
  \[
  \Funky\from
  \Moduli_{\KhFlowCat(L)}((\AssRes{L}{u},x),(\AssRes{L}{v},y))\to
  \Moduli_{\CubeFlowCat(n)}(u,v)
  \]
  is defined to be composition 
  \[
  \xymatrix{
    \Moduli(\AssRes{L}{v}\setminus \AssRes{L}{u},\gen{x}|,\gen{y}|)\ar[r]^-\Forget&\Moduli_{\CubeFlowCat(|u|-|v|)}(\vect{1},\vect{0})\ar[r]^-{\CInc_{u,v}}&\Moduli_{\CubeFlowCat(n)}(u,v).}
  \]
\end{defn}

\begin{lemma}\label{lem:mod-q-gr}
  The space $\Moduli(\ob{x},\ob{y})$ is empty unless
  $\intgr(\ob{x})=\intgr(\ob{y})$ and $\homgr(\ob{y})<\homgr(\ob{x})$.
\end{lemma}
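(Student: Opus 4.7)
The plan is to reduce to the two defining conditions of $\prec$ and argue by induction on the length of a chain in the partial order. By \Definition{khovanov-flow-category}, $\Moduli(\ob{x},\ob{y})$ is empty unless $\ob{y}\prec\ob{x}$, so I may assume this. Since $\prec$ is the transitive closure of the one-step relation from \Definition{prec}, it suffices to check: (i) $\homgr$ strictly increases under one step of $\prec$, and (ii) $\intgr$ is preserved under one step of $\prec$. Then induction on the length of a chain $(\AssRes{L}{v},\gen{y})\prec\cdots\prec(\AssRes{L}{u},\gen{x})$ yields the conclusion.

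For (i), each one-step relation takes $(\AssRes{L}{v},\gen{y})$ to $(\AssRes{L}{u},\gen{x})$ with $|u|=|v|+1$, so from the formula $\homgr((\AssRes{L}{w},\gen{z}))=-n_-+|w|$ in \Definition{knot-diag-to-kh-chain-complex}, $\homgr$ jumps by $+1$ under one step, and hence is strictly increasing under $\prec$.

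For (ii), I would do a short case analysis using \Definition{prec}. Since $|u|=|v|+1$, the $|\cdot|$-contribution to $\intgr$ goes up by $1$, so I need to show that the labeling contribution $\#\{Z:\gen{z}(Z)=x_+\}-\#\{Z:\gen{z}(Z)=x_-\}$ drops by exactly $1$ in every allowed case. In \Case{prec-split}, one circle $Z_i$ is replaced by two circles $Z_j,Z_k$: the allowed labelings are $(\gen{y}(Z_i),\gen{x}(Z_j),\gen{x}(Z_k))\in\{(x_-,x_-,x_-),\,(x_+,x_+,x_-),\,(x_+,x_-,x_+)\}$, for which the labeling contribution changes by $-2-(-1)=-1$, $0-1=-1$, $0-1=-1$ respectively. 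In \Case{prec-merge}, two circles $Z_i,Z_j$ are replaced by one circle $Z_k$: the allowed labelings give changes $1-2=-1$, $-1-0=-1$, $-1-0=-1$ respectively. All other circles are unchanged by the single surgery, so their contributions cancel. Hence the net change in $\intgr$ is $+1-1=0$ in every case.

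The only potential subtlety is making sure the labelings on the unchanged circles really do cancel, which is immediate from Condition~(1) of \Definition{prec} (the labelings $\gen{x}$ and $\gen{y}$ agree on $D\cap E=E\cap D$). No geometric input is needed beyond the definitions and the formula for $\intgr$ and $\homgr$ from \Definition{knot-diag-to-kh-chain-complex}; this is essentially the same computation that shows the Khovanov differential is bigraded.
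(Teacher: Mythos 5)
Your proposal is correct and follows the same route as the paper: the paper's proof simply observes that $\Moduli(\ob{x},\ob{y})$ is empty unless $\ob{y}\prec\ob{x}$ and that the claim is then clear from \Definition{prec} and \Definition{knot-diag-to-kh-chain-complex}, which is exactly the one-step case analysis and induction you spell out. Your detailed verification of the grading changes in the split and merge cases is the content the paper leaves implicit.
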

\begin{proof}
  The moduli space $\Moduli(\ob{x},\ob{y})$ is empty unless
  $\ob{y}\prec\ob{x}$; however, it is quite clear from \Definition{prec} and
  \Definition{knot-diag-to-kh-chain-complex} that the latter happens
  only if $\intgr(\ob{x})=\intgr(\ob{y})$ and
  $\homgr(\ob{y})<\homgr(\ob{x})$.
\end{proof}

\begin{defn}\label{def:Kh-space}
  Fix an oriented link diagram $L$ with $n$ crossings and an ordering
  of the crossings in $L$. The Khovanov flow category $\KhFlowCat(L)$
  (\Definition{khovanov-flow-category}) is a cover
  (\Definition{flow-cat-cover}) of the cube flow category
  $\CubeFlowCat(n)$ (\Definition{cube-flow-category}). Choose a sign
  assignment $s$ for $\CubeFlowCat(n)$ (\Definition{sign-assignment})
  and framing of $\CubeFlowCat(n)$ relative $s$
  (\Proposition{cube-can-be-framed}).  The cover $\Funky$ and the
  framing of $\CubeFlowCat(n)$ together produce a neat immersion of
  $\KhFlowCat(L)$ along with a coherent framing for the immersion
  (\Section{covers}).  Therefore, after choosing a perturbation
  (\Definition{coherent-framing-to-framed}), we get a framing of
  $\KhFlowCat(L)$. The \emph{Khovanov space} is the CW complex
  $\Realize{\KhFlowCat(L)}$ (\Definition{flow-gives-space}).

  The Khovanov homology (\Definition{knot-diag-to-kh-chain-complex})
  is the reduced cohomology of the Khovanov space shifted by $(-C)$
  for some positive integer $C$
  (\Lemma{realization-well-defined-refines-chain-complex}). The
  \emph{Khovanov spectrum} $\KhSpace(L)$ is the suspension spectrum of
  the Khovanov space, de-suspended $C$ times.

  By \Lemma{mod-q-gr} and \Lemma{wedge-cat-implies-wedge-space}, the
  Khovanov spectrum $\KhSpace(L)$ decomposes as a wedge sum over the
  quantum gradings,
  \[
  \KhSpace(L)=\bigvee_j \KhSpace^j(L).
  \]
\end{defn}

So, all that remains is to define the moduli spaces associated to
resolution configurations; we turn to this next.

\subsection{\texorpdfstring{$0$}{0}-dimensional moduli spaces}\label{sec:0D}
In this subsection we define $\Moduli(D,x,y)$ where $(D,x,y)$ is an index
$1$ basic decorated resolution configuration. Unsurprisingly, we
always define $\Moduli(D,x,y)$ to consist of a single point.
Since $\Moduli_{\CubeFlowCat(1)}(\vect{1},\vect{0})$ also consists of a single
point, there is an obvious map
\[
\Forget\co \Moduli(D,x,y) \to \Moduli_{\CubeFlowCat(1)}(\vect{1},\vect{0}).
\]

So far,
\Condition[s]{res-moduli-compose}--\ConditionCont{res-moduli-cover-trivial}
are vacuously satisfied.

\subsection{\texorpdfstring{$1$}{1}-dimensional moduli spaces}\label{sec:1D}
In this subsection we define $\Moduli(D,\gen{x},\gen{y})$ where
$(D,\gen{x},\gen{y})$ is an index $2$ basic decorated resolution
configuration. As we will see, this depends on a (global) choice.

\begin{defn}
  An index $2$ basic resolution configuration $D$ is said to be a
  \emph{ladybug configuration} if the following conditions are
  satisfied.
  \begin{itemize}
  \item $Z(D)$ consists of a single circle, which we will abbreviate as $Z$;
  \item The endpoints of the two arcs in $A(D)$, say $A_1$ and $A_2$,
    alternate around $Z$ (i.e., $\bdy A_1$ and $\bdy A_2$ are linked
    in $Z$).
  \end{itemize}
  See \Figure{ladybug-define}. (Ladybug configurations are the same as
  ``configurations of types $X$ and $Y$'' from
  \cite{OSzR-kh-oddkhovanov}.) 

  An index $2$ basic decorated resolution configuration $(D,x,y)$ is
  said to be a ladybug configuration if $D$ is a ladybug
  configuration. For grading reasons, in such configurations,
  $\gen{y}(Z)=x_+$, and the label in $\gen{x}$ of the (unique) circle
  in $Z(s(D))$ is $x_-$.
\end{defn}

Now, fix an index $2$ basic decorated resolution configuration
$(D,x,y)$.  Write $A(D)=\{A_1,A_2\}$, and assume that $A_1$ precedes
$A_2$ in the total order on $A(D)$.  Let
$(D_1,\gen{z}_1),\dots,\allowbreak(D_m,\gen{z}_m)$ be the labeled
resolution configurations such that $(D,\gen{y})\prec
(D_i,\gen{z_i})\allowbreak\prec\allowbreak (s(D),\gen{x})$. It is
clear that $\bdy_\expect\Moduli(D,\gen{x},\gen{y})$ consists of $m$
points.

\begin{lemma}\label{lem:ind-2-res-config}
  For any index $2$ basic decorated resolution configuration
  $(D,x,y)$, the number $m$ of labeled resolution configurations
  between $(D,\gen{y})$ and $(s(D),\gen{x})$ is either $2$ or $4$.
  Moreover, $m$ equals $4$ if and only if $(D,x,y)$ is a ladybug
  configuration. 
\end{lemma}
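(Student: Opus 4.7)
An element $(E,\gen{z})$ strictly between $(D,\gen{y})$ and $(s(D),\gen{x})$ corresponds to choosing one of the two arcs $A_i\in A(D)$ to surger first---so $E=s_{A_i}(D)$---together with a labeling $\gen{z}$ compatible with both halves of \Definition{prec}: the relation $(D,\gen{y})\prec(E,\gen{z})$ and the relation $(E,\gen{z})\prec(s(D),\gen{x})$. The plan is to classify index-$2$ basic configurations topologically and count $m$ in each type, reducing as many cases as possible to the index-$1$ setting via the leaf and co-leaf techniques.

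If $D$ has a leaf then \Lemma{technical-res-config-leaf} gives $P(D,\gen{x},\gen{y})\cong P(D',\gen{x}',\gen{y}')\times\{0,1\}$ with $\ind(D')=1$; the index-$1$ poset has exactly $2$ elements, so $|P(D,\gen{x},\gen{y})|=4$, consisting of the two endpoints and $m=2$ strict intermediates. If instead $D^*$ has a leaf---equivalently, $D$ has a co-leaf---then by \Lemma{res-config-dual} the poset $P(D,\gen{x},\gen{y})$ is the order reversal of $P(D^*,\gen{y}^*,\gen{x}^*)$, and the leaf case applied to the dual again yields $m=2$.

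It remains to classify index-$2$ basic configurations in which neither $D$ nor $D^*$ has a leaf. Since $G(D)$ has two edges and every vertex has degree $\geq 2$, the total degree is $4$, forcing $|Z(D)|\in\{1,2\}$. The resulting topological types are: (a) one circle with two self-loops whose endpoints interleave (the ladybug); (b) one circle with two self-loops whose endpoints do not interleave; (c) two circles joined by two arcs; and (d) two circles each carrying a self-loop. A direct check shows that $D^*$ has a leaf in types (b) and (d)---in (b), $G(D^*)$ is a length-$2$ path on three vertices; in (d), it is two disjoint edges on four vertices---so these are already handled by the previous paragraph. The remaining types are (a) and (c).

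For the ladybug (a), grading considerations force $\gen{y}(Z)=x_+$ and $\gen{x}$ to label the unique circle of $s(D)$ by $x_-$; for each of the two choices $E=s_{A_i}(D)$ the intermediate has two circles, the split from $\gen{y}=x_+$ gives two labelings $\gen{z}$, and both are compatible with the subsequent merge to $\gen{x}=x_-$, yielding $m=4$. For (c), each intermediate $E=s_{A_i}(D)$ has a single circle $Z'$ whose label is forced by the merge rule applied to $\gen{y}$ (and is consistent with the subsequent split to $\gen{x}$); each choice of $A_i$ contributes exactly one intermediate, so $m=2$. The main obstacle is the classification in the third paragraph; once it is done, the contrast between ladybug and non-ladybug reduces to whether the intermediate labeling is determined by $\gen{x}$ and $\gen{y}$ or admits a free choice of which intermediate circle carries the $+$ label.
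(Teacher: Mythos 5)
Your proof is correct, and its skeleton---dispose of the leaf case via \Lemma{technical-res-config-leaf}, the co-leaf case via \Lemma{res-config-dual}, then classify what remains by a degree count on $G(D)$---is the same as the paper's. The genuine difference lies in the leftover classification: the paper's proof handles it in one line by asserting that the only index $2$ basic configuration with neither a leaf nor a co-leaf is the ladybug, whereas your degree count also turns up your type (c), two circles joined by two arcs. Your treatment of (c) is right and is not redundant: $s(D)$ there has two circles and each dual arc runs between them, so $G(D^*)$ is a doubled edge with no degree-one vertex; hence (c) has no leaf and no co-leaf and cannot be reached by the duality reduction, yet it is not a ladybug. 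Your direct count for it---one intermediate per arc, with the intermediate label forced by the merge rule applied to $\gen{y}$, and compatibility with the final split depending only on the label values and hence following from the existence of the decoration (which also rules out $\gen{y}$ labeling both circles by $x_-$)---gives $m=2$, which is exactly what the lemma requires, so the statement is unaffected. The other ingredients check out: (b) and (d) do have co-leaves (for (b) one should note that all planar placements of the two non-interleaved arcs, nested or not, on the same or opposite sides of the circle, give a length-two path for $G(D^*)$, so the reduction applies in every embedding), and in the ladybug the grading-forced labels $\gen{y}(Z)=x_+$, $\gen{x}=x_-$ yield two admissible labelings over each of the two intermediate configurations, hence $m=4$.
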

\begin{proof}
  If $D$ has a leaf or a co-leaf then
  \Lemma{technical-res-config-leaf}, in conjunction with
  \Lemma{res-config-dual}, implies that $P(D,x,y)$ is isomorphic to
  $\{0,1\}^2$, and hence $m=2$.

  However, it follows from a straightforward case analysis that the
  only index $2$ basic decorated resolution configuration without
  leaves or co-leaves is a ladybug configuration, in which case $m=4$;
  compare~\cite[Figure 2]{OSzR-kh-oddkhovanov}.
\end{proof}

\subsubsection{\texorpdfstring{$m=2$}{m=2}}The map 
\[
\Forget|_{\bdy}\co \bdy_{\expect}\Moduli(D,\gen{x},\gen{y})\to \bdy\Moduli_{\CubeFlowCat(2)}(\vect{1},\vect{0})
\]
is a trivial covering space, so by \Part{extend-exist} of
\Proposition{extend-moduli}, we can define
$\Moduli(D,\gen{x},\gen{y})$ and $\Forget\co
\Moduli(D,\gen{x},\gen{y})\to
\Moduli_{\CubeFlowCat(2)}(\vect{1},\vect{0})$ satisfying
\Condition[s]{res-moduli-compose}--\ConditionCont{res-moduli-cover-trivial}. More
concretely, 
$\Moduli(D,\gen{x},\gen{y})$ is defined to be a
single interval with boundary
$\bdy_{\expect}\Moduli(D,\gen{x},\gen{y})$. 
Assume without loss of generality that $D_i$ contains the arc $A_i$. 
Then the diffeomorphism $\Forget\co
\Moduli(D,\gen{x},\gen{y})\to
\Moduli_{\CubeFlowCat(2)}(\vect{1},\vect{0})$ is characterized (up to
isotopy) by the condition that
\begin{align*}
\Forget(\Moduli(D\sm D_1,\gen{z}_1,\gen{y})\times
\Moduli(D_1,\gen{x},\gen{z}_1))&=
\Moduli_{\CubeFlowCat(2)}((0,1),(0,0)) \circ
\Moduli_{\CubeFlowCat(2)}((1,1),(0,1))\\
\shortintertext{and}
\Forget(\Moduli(D\sm D_2,\gen{z}_2,\gen{y})\times
\Moduli(D_2,\gen{x},\gen{z}_2))&=
\Moduli_{\CubeFlowCat(2)}((1,0),(0,0)) \circ
\Moduli_{\CubeFlowCat(2)}((1,1),(1,0)).
\end{align*}
(This is equivalent to \Diagram{Forget-res}.) Note that, in
this case, $\Moduli(D,\gen{x},\gen{y})$ and the map $\Forget$ were
uniquely determined, even though \Part{extend-unique} of
\Proposition{extend-moduli} did not apply.

\subsubsection{\texorpdfstring{$m=4$}{m=4}} We need to decide which pairs of points in
$\bdy_{\expect}\Moduli(D,\gen{x},\gen{y})$ bound intervals, and for
this we need some more notation. 

Let $Z$ denote the unique circle in $Z(D)$.  The surgery $s_{A_1}(D)$
(\respectively $s_{A_2}(D)$) consists of two circles; denote these $Z_{1,1}$
and $Z_{1,2}$ (\respectively $Z_{2,1}$ and $Z_{2,2}$); i.e.,
$Z(s_{A_i}(D))=\{Z_{i,1},Z_{i,2}\}.$ Our main goal is to find a
bijection between $\{Z_{1,1},Z_{1,2}\}$ and $\{Z_{2,1},Z_{2,2}\}$;
this bijection will then tell us which points in
$\bdy_\expect\Moduli(\ob{x},\ob{y})$ to identify.

As an intermediate step, we distinguish two of the four arcs in
$Z\setminus(\bdy A_1\cup\bdy A_2)$.  Assume that the point $\infty\in
S^2$ is not in $D$, and view $D$ as lying in the plane
$S^2\setminus\{\infty\}\cong\RR^2$.  Then one of $A_1$ or $A_2$ lies
outside $Z$ (in the plane) while the other lies inside $Z$. Let $A_i$
be the inside arc and $A_o$ the outside arc. The circle $Z$ inherits
an orientation from the disk it bounds in $\RR^2$. With respect to
this orientation, each component of $Z\setminus(\bdy A_1\cup\bdy A_2)$
either runs from the outside arc $A_o$ to an inside arc $A_i$ or
vice-versa. The \emph{\OutIn{} pair}
is the pair of components of $Z\setminus(\bdy
A_1\cup\bdy A_2)$ which run from the outside arc $A_o$ to the inside
arc $A_i$. The other pair of components is the \emph{\InOut{}
  pair}. See \Figure{ladybug}.

\captionsetup[subfloat]{width={0.4\textwidth}}
\begin{figure}
  \centering 
  \subfloat[A ladybug configuration. The \OutIn{} pair is
  marked with small, numbered squares. (The numbering is
  arbitrary.)]{\label{fig:ladybug-define}\makebox[0.4\textwidth][c]{\includegraphics{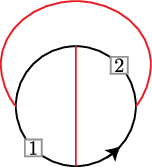}}} \hspace{0.1\textwidth}
  \subfloat[The pairs of circles $Z_{1,i}$ and $Z_{2,i}$; the
  bijection induced by the
  \OutIn{} pair is indicated.]{\includegraphics{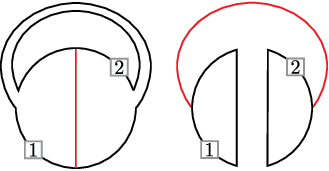}}\\
  \captionsetup[subfloat]{width={0.6\textwidth}}
  \subfloat[The image of $Z$ under three
  isotopies in $S^2$; note that the old \OutIn{} pair maps to the new
  \OutIn{} pair.]{\label{fig:ladybug-isotopy-in-s2}\includegraphics{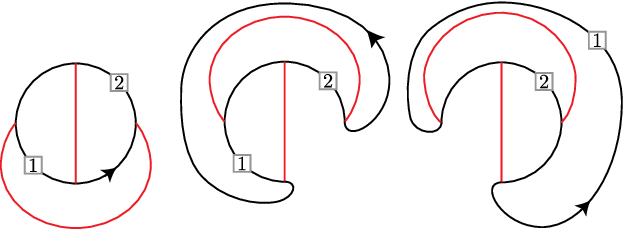}}
  \caption[A ladybug configuration.]{\textbf{A ladybug configuration.}}
  \label{fig:ladybug}
\end{figure}

\begin{lemma}\label{lem:in-out-preserved}
  Any isotopy of $Z\cup A_1\cup A_2$ in $S^2$ takes the \OutIn{} pair to
  the \OutIn{} pair.
\end{lemma}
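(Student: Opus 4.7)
The plan is to show first that the \OutIn{} pair is intrinsic to the embedded configuration $D \subset S^2$, depending only on $D$ as an oriented surface embedding and not on the auxiliary choice of $\infty \in S^2 \setminus D$; the isotopy invariance will then be essentially automatic by a continuity argument.

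For the independence of $\infty$, I would compare two choices $\infty, \infty' \in S^2 \setminus D$. If they lie in the same component of $S^2 \setminus Z$, then the notions of ``inside'' and ``outside'' disk of $Z$ coincide for the two choices, so the labeled arcs $A_i, A_o$ and the induced orientation on $Z$ all agree, and the \OutIn{} pair is manifestly unchanged. If $\infty, \infty'$ lie in opposite components of $S^2 \setminus Z$, then the roles of which arc is $A_i$ versus $A_o$ are interchanged, and simultaneously the induced orientation on $Z$ reverses (since $S^2$ is oriented and the boundary orientation of the ``inside'' disk is opposite for the two choices of inside disk). For any arc-component of $Z \setminus (\partial A_1 \cup \partial A_2)$, the swap of endpoint labels and the reversal of direction cancel out: a component that ran from $\partial A_o$ to $\partial A_i$ in the old setup still runs from the new $\partial A_o$ to the new $\partial A_i$. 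Hence the \OutIn{} pair depends only on the embedding.

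Given this, consider an isotopy $\{D_t\}_{t \in [0,1]}$ of $D$ in $S^2$, extended to an ambient isotopy $\{\phi_t \colon S^2 \to S^2\}$ via the standard extension theorem. The diffeomorphism $\phi_t$ identifies $\pi_0(Z_0 \setminus \partial A_0)$ with $\pi_0(Z_t \setminus \partial A_t)$, continuously in $t$. Under this identification the \OutIn{} pair at time $t$, which is well-defined by the previous step (applied with any $\infty_t \in S^2 \setminus D_t$), pulls back to a subset of $\pi_0(Z_0 \setminus \partial A_0)$ varying continuously in $t$. Since this subset lies in a finite discrete set, it is constant, so $\phi_1$ sends the \OutIn{} pair of $D_0$ to that of $D_1$.

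The main hurdle is the case in the first step where $\infty$ and $\infty'$ lie on opposite sides of $Z$: one must verify carefully that the swap of endpoint labels and the reversal of the induced orientation on $Z$ exactly cancel. This uses the orientability of $S^2$ in an essential way; everything else in the argument is formal.
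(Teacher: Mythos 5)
Your argument is correct, but it proceeds differently from the paper's proof. The paper disposes of the lemma by giving a local, manifestly intrinsic characterization of the \OutIn{} pair: its two arcs are exactly those reached by traveling along $A_1$ or $A_2$ to $Z$ and then turning right, a rule that visibly depends only on the orientation of $S^2$ and the embedded configuration, so invariance under isotopy (illustrated in \Figure{ladybug-isotopy-in-s2}) is immediate. You instead keep the global definition via a choice of $\infty$ and show directly that it is independent of that choice: if $\infty$ and $\infty'$ lie in the same component of $S^2\setminus Z$ nothing changes, and if they lie in opposite components then the labels $A_i\leftrightarrow A_o$ swap at the same time as the induced orientation of $Z$ reverses, and these two reversals cancel on each component of $Z\setminus(\bdy A_1\cup\bdy A_2)$ — a computation I checked and which is right, and which is where the orientability of $S^2$ enters, exactly as in the paper's right-turn rule. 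Your finishing step (isotopy extension plus the observation that a continuously varying element of a finite discrete set is constant) is also fine, though slightly heavier than needed: once the pair is known to be intrinsic, one can simply note that the time-one map of an ambient isotopy is an orientation-preserving diffeomorphism carrying inside disk, inside arc, and induced orientation for $\infty$ to the corresponding data for $\phi_1(\infty)$. The trade-off is that the paper's characterization is shorter and reusable (it is the cleanest way to see that the ladybug matching is locally determined), while your argument spells out the cancellation that the figure and the right-turn rule leave implicit, so it is, if anything, more self-contained than the paper's two-sentence proof.
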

\begin{proof}
  See \Figure{ladybug-isotopy-in-s2}. This also follows from the
  following local description of the \OutIn{} pair: The two arcs in the
  \OutIn{} pair are precisely the arcs that can be reached by traveling
  along $A_1$ or $A_2$ until $Z$, and then taking a right turn.
\end{proof}

The rest of the construction of the flow category is based on choosing
either the \OutIn{} pair or the \InOut{} pair.  Both are equally
natural choices and we will choose the \OutIn{} one.  To keep notation
simple, the moduli spaces that we will construct while working with
the \OutIn{} pair will be denoted
$\Moduli(D,\gen{x},\gen{y})$. Replacing the \OutIn{} pair by the
\InOut{} pair throughout gives a different set of moduli spaces, which
we denote $\Moduli_{\inout}(D,\gen{x},\gen{y})$.

So, let $\{P,Q\}$ be the \OutIn{} pair. After
renumbering, we can assume that 
\[
P\subset Z_{1,1},\quad Q\subset Z_{1,2},\quad P\subset Z_{2,1},\quad
Q\subset Z_{2,2}.
\]
Thus, $P,Q$ determine a bijection between $\{Z_{1,1},Z_{1,2}\}$ and
$\{Z_{2,1},Z_{2,2}\}$ by
\[
Z_{1,1}\longleftrightarrow Z_{2,1}\qquad\qquad Z_{1,2}\longleftrightarrow Z_{2,2}.
\]
We call this bijection the \emph{ladybug matching}.

The four points in $\Moduli(D,\gen{x},\gen{y})$ are given by the four
maximal chains in $P(D,x,y)$:
\begin{align*}
  a&=\bigl[(D,x_+)\prec (s_{A_1}(D),x_-x_+)\prec (s(D),x_-)\bigr]\\
  b&=\bigl[(D,x_+)\prec (s_{A_1}(D),x_+x_-)\prec (s(D),x_-)\bigr]\\
  c&=\bigl[(D,x_+)\prec (s_{A_2}(D),x_-x_+)\prec (s(D),x_-)\bigr]\\
  d&=\bigl[(D,x_+)\prec (s_{A_2}(D),x_+x_-)\prec (s(D),x_-)\bigr]
\end{align*}
Here, the expression $x_-x_+$ in $a$, say, means that $Z_{1,1}$ is
labeled by $x_-$ and $Z_{1,2}$ is labeled by $x_+$.  So, define
$\Moduli(D,\gen{x},\gen{y})$ to consist of two intervals, one with
boundary $ a\amalg c$ and the other with boundary $b\amalg d$. Note
that this depends on the bijection between $\{Z_{1,1},Z_{1,2}\}$ and
$\{Z_{2,1},Z_{2,2}\}$, but not the ordering between $Z_{1,1}$ and
$Z_{1,2}$.

Next we verify that we can define the map $\Forget\co
\Moduli(D,\gen{x},\gen{y})\to
\Moduli_{\CubeFlowCat(2)}(\vect{1},\vect{0})$ compatibly with our
choices, in an essentially unique way. The space
$\Moduli_{\CubeFlowCat(2)}(\vect{1},\vect{0})$ is a single interval.
\Diagram{Forget-res} implies that, under the map
$\Forget|_{\bdy}$, the points $a$ and $b$ in
$\bdy\Moduli(D,\gen{x},\gen{y})$ map to one endpoint of
$\Moduli_{\CubeFlowCat(2)}(\vect{1},\vect{0})$, while $c$ and $d$ map
to the other endpoint. Thus, $\Forget|_{\bdy}$ extends to a covering
map $\Forget\co \Moduli(D,\gen{x},\gen{y})\to
\Moduli_{\CubeFlowCat(2)}(\vect{1},\vect{0})$, uniquely up to isotopy.
(This is also why we could not simply match $a$ with $b$, and $c$ with
$d$.)

In summary:
\begin{prop}\label{prop:ind-2-exist}
  There are spaces $\Moduli(D,\gen{x},\gen{y})$ for $\ind(D)\leq 2$
  and maps $\Forget$ satisfying
  \Condition[s]{res-moduli-compose}--\ConditionCont{res-moduli-cover-trivial}.
\end{prop}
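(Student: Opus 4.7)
The plan is to verify conditions \Condition{res-moduli-compose} through \Condition{res-moduli-cover-trivial} for the spaces explicitly constructed in \Section{0D} and \Section{1D}, by casework on $\ind(D)$. For $\ind(D)=1$, the moduli space $\Moduli(D,\gen{x},\gen{y})$ is a single point and the target $\Moduli_{\CubeFlowCat(1)}(\vect{1},\vect{0})$ is also a single point, so $\Forget$ is tautologically a trivial covering map and a local diffeomorphism. All four conditions are satisfied vacuously, since there are no labeled resolution configurations $(E,\gen{z})$ strictly between $(D,\gen{y})$ and $(s(D),\gen{x})$.

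For $\ind(D)=2$, by \Lemma{ind-2-res-config}, the number $m$ of labeled resolution configurations strictly between $(D,\gen{y})$ and $(s(D),\gen{x})$ is either $2$ or $4$. When $m=2$, the space $\bdy_\expect\Moduli(D,\gen{x},\gen{y})$ consists of exactly two points, and \Diagram{Forget-res} dictates that $\Forget|_{\bdy}$ send one point to each endpoint of $\bdy\Moduli_{\CubeFlowCat(2)}(\vect{1},\vect{0})$; this is manifestly a trivial covering. By \Part{extend-exist} of \Proposition{extend-moduli}, there exists a unique (up to diffeomorphism fixing the boundary) extension to a single interval together with a cover $\Forget$, and conditions \Condition{res-moduli-compose}--\Condition{res-moduli-cover-trivial} follow automatically from that proposition.

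The $m=4$ (ladybug) case requires more care, since here $\Forget|_{\bdy}$ is a genuine $2$-to-$1$ covering, and there is real choice in how to partition its four points into two pairs of interval boundaries. I would verify that the ladybug matching based on the \OutIn{} pair produces a valid extension: by \Diagram{Forget-res}, the chains $a$ and $b$ (which first surger along $A_1$) must map to one endpoint of $\Moduli_{\CubeFlowCat(2)}(\vect{1},\vect{0})$, while $c$ and $d$ (first surgering along $A_2$) must map to the other. The ladybug matching pairs $\{a,c\}$ and $\{b,d\}$, so each of the two resulting intervals has one endpoint over each endpoint of the base; hence $\Forget$ extends (uniquely up to isotopy) to a map which is a homeomorphism on each connected component of $\Moduli(D,\gen{x},\gen{y})$. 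This verifies \Condition{res-moduli-covers} and \Condition{res-moduli-cover-trivial}. The composition maps of \Condition{res-moduli-compose} are then induced by the canonical identification of endpoints with products of $0$-dimensional moduli spaces, and \Condition{res-expected-bdy} holds tautologically by construction.

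The only substantive point, and the reason the construction is canonical rather than ad hoc, is that the \OutIn{} pair depends only on the ambient isotopy class of $Z \cup A_1 \cup A_2$ in $S^2$ and not on any auxiliary planar embedding; this is precisely \Lemma{in-out-preserved}. (Pairing via the \InOut{} pair would produce an equally valid but \emph{a priori} different system of moduli spaces, which is why these are labeled $\Moduli_\inout$.) With this in hand, all four conditions are verified in every index-$\leq 2$ case, completing the proof.
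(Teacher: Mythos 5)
Your proposal is correct and takes essentially the same route as the paper, whose proof simply cites \Proposition{extend-moduli} together with the constructions of \Section{0D} and \Section{1D} --- the index-$1$ point moduli spaces, the $m=2$/$m=4$ dichotomy of \Lemma{ind-2-res-config}, and the ladybug matching pairing $a$ with $c$ and $b$ with $d$ so that $\Forget$ restricts to a homeomorphism on each component, with compatibility enforced by \Diagram{Forget-res}. One small slip: in the $m=2$ case you attribute uniqueness up to diffeomorphism to \Part{extend-exist} of \Proposition{extend-moduli}, but that part gives only existence (and \Part{extend-unique} requires $n\geq 2$); uniqueness there is instead an immediate direct observation, and in any case is not needed for the proposition.
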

\begin{proof}
  This follows from \Proposition{extend-moduli} and the discussion
  in \Section{0D} and the rest of this subsection.
\end{proof}

\subsection{\texorpdfstring{$2$}{2}-dimensional moduli spaces}
\label{sec:ind-3-bdy}
In \Section{1D}, we constructed the spaces
$\Moduli(D,\gen{x},\gen{y})$ when $\ind(D)=2$ or, in other words,
$\Moduli(D,\gen{x},\gen{y})$ is $1$-dimensional.  The goal of this
subsection is to analyze
\[
\Forget|_\bdy\co \bdy_{\expect}\Moduli(D,\gen{x},\gen{y})\to \bdy\Moduli_{\CubeFlowCat(3)}(\vect{1},\vect{0})
\]
when $(D,x,y)$ is a basic index $3$ decorated resolution configuration. By
\Part{extend-cover} of \Proposition{extend-moduli}, we know
that $\Forget|_\bdy$ is a covering map. By
\Part{extend-exist} of \Proposition{extend-moduli}, in order
to construct $\Moduli(D,\gen{x},\gen{y})$, it suffices to show that
the map $\Forget|_{\bdy}$ is a trivial covering map on each
component. The spaces $\del_{\expect}\Moduli(D,x,y)$ and
$\bdy\Moduli_{\CubeFlowCat(3)}(\vect{1},\vect{0})$ are $2$-boundaries,
so we treat them as graphs.  From \Lemma{cube-cat-struct},
$\bdy\Moduli_{\CubeFlowCat(3)}(\vect{1},\vect{0})$ is a
$6$-cycle. Since the map $\Forget|_\bdy$ respects the graph structure,
it suffices to show that $\bdy_{\expect}\Moduli(D,\gen{x},\gen{y})$ is
a disjoint union of $6$-cycles.

The proof that $\bdy_{\expect}\Moduli(D,\gen{x},\gen{y})$ is a
disjoint union of $6$-cycles is a somewhat involved case analysis.  To
reduce the number of cases, we will use the notion of dual resolution
configurations (\Definition{dual-res-config}
and~\Definition{decorated-res-config-dual}) as well as the spaces
$\Moduli_{\inout}(D,\gen{x},\gen{y})$ constructed analogously to
$\Moduli(D,\gen{x},\gen{y})$ but using the \InOut{} pair in the ladybug
matching (see \Section{1D}).

\begin{lem}\label{lem:6-vertex-easy-case}
  The graph $\bdy_\expect\Moduli(D,x,y)$ is a disjoint union of
  cycles, and the number of vertices in each cycle is divisible by
  $6$. In particular, if $\del_\expect\Moduli(D,x,y)$ has exactly $6$
  vertices, then it is a $6$-cycle.
\end{lem}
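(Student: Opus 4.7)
The plan is to combine \Part{extend-cover} of \Proposition{extend-moduli} with the explicit description of the cube moduli space in \Lemma{cube-cat-struct}. Recall that $\Moduli_{\CubeFlowCat(3)}(\vect{1},\vect{0})$ is diffeomorphic to a hexagon, so its boundary $\bdy\Moduli_{\CubeFlowCat(3)}(\vect{1},\vect{0})$ is, as a $\Codim{2}$-boundary, a combinatorial $6$-cycle: six codimension-$2$ corners (vertices) alternating with six codimension-$1$ edges. By \Part{extend-cover} of \Proposition{extend-moduli} applied with $n+1=3$, the map
\[
\Forget|_\bdy\co \bdy_{\expect}\Moduli(D,\gen{x},\gen{y})\to\bdy\Moduli_{\CubeFlowCat(3)}(\vect{1},\vect{0})
\]
is a covering map of $\Codim{2}$-boundaries.

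Next, I would observe that $\Forget|_\bdy$ is compatible with the codimension stratification: by \Part{extend-assemble} of \Proposition{extend-moduli} its restriction to each codimension-$1$ face of $\bdy_\expect\Moduli(D,\gen{x},\gen{y})$ is an $n$-map, so vertices (codimension-$2$ points, i.e.\ the $0$-dimensional moduli spaces appearing in the colimit defining $\bdy_{\expect,2}\Moduli(D,\gen{x},\gen{y})$) map to vertices and edges map to edges. Thus $\bdy_\expect\Moduli(D,\gen{x},\gen{y})$ is, as a graph, a finite covering space of the combinatorial $6$-cycle.

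Finally, any covering of a $6$-cycle is a disjoint union of cycles, each of length a positive multiple of $6$ (one goes around the base hexagon an integer number of times before returning to the starting vertex). This gives both conclusions: $\bdy_\expect\Moduli(D,\gen{x},\gen{y})$ is a disjoint union of cycles, each with vertex count divisible by $6$; and if the total number of vertices is exactly $6$, there can only be a single component, which is then a $6$-cycle. No step here looks like a real obstacle --- the lemma is essentially a bookkeeping consequence of the already-established fact that $\Forget|_\bdy$ is a covering map together with the shape of the cube moduli space; the genuine work in this subsection lies in the subsequent case analysis that establishes the hypothesis of \Part{extend-exist}, not in \Lemma{6-vertex-easy-case} itself.
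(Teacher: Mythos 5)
Your argument is correct and is essentially the paper's own proof: the paper likewise deduces the lemma immediately from the fact that $\bdy_\expect\Moduli(D,\gen{x},\gen{y})$ covers $\bdy\Moduli_{\CubeFlowCat(3)}(\vect{1},\vect{0})$, which is a $6$-cycle, so that each component is a cycle of length a multiple of $6$. Your extra remarks spelling out why the covering respects the graph structure are just a more detailed unpacking of the same step.
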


\begin{proof}
  This is an immediate consequence of the fact that
  $\bdy_\expect\Moduli(D,x,y)$ is a cover of
  $\bdy\Moduli_{\CubeFlowCat(3)}(\vect{1},\vect{0})$, which is a
  $6$-cycle.
\end{proof}

\begin{lemma}\label{lem:ind3-dual-matching-dual-config}
  The graphs $\bdy_\expect\Moduli(D,\gen{x},\gen{y})$ and
  $\bdy_\expect\Moduli_{\inout}(D^*,\gen{y}^*,\gen{x}^*)$ are
  isomorphic.
\end{lemma}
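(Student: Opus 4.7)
The plan is to produce an explicit isomorphism of graphs
\[
\Psi\co \bdy_\expect\Moduli(D,\gen{x},\gen{y})\to \bdy_\expect\Moduli_\inout(D^*,\gen{y}^*,\gen{x}^*)
\]
coming from the duality of decorated resolution configurations. On vertices, the $0$-cells of $\bdy_\expect\Moduli(D,\gen{x},\gen{y})$ are in bijection with the maximal chains of $P(D,\gen{x},\gen{y})$, and by \Lemma{res-config-dual}, reversing and dualizing gives a bijection between such chains and the maximal chains of $P(D^*,\gen{y}^*,\gen{x}^*)$; this defines $\Psi$ on $0$-cells.

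For edges, by \Equation{res-expected-bdy} each edge of $\bdy_\expect\Moduli(D,\gen{x},\gen{y})$ lies in a $1$-dimensional stratum $\Moduli(D\sm E,\gen{z}|,\gen{y}|)\times \Moduli(E\sm s(D),\gen{x}|,\gen{z}|)$ in which one factor is a single point and the other is the $1$-dimensional moduli space of an index-$2$ basic decorated sub-configuration $(D',\gen{x}',\gen{y}')$. Under $\Psi$ this stratum should correspond to the dualized stratum (with the two factors interchanged) in $\bdy_\expect\Moduli_\inout(D^*,\gen{y}^*,\gen{x}^*)$ determined by $(D'^*,\gen{y}'^*,\gen{x}'^*)$. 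In the $m=2$ case of \Lemma{ind-2-res-config}---i.e.\ when $(D',\gen{x}',\gen{y}')$ has a leaf or co-leaf---the $1$-dimensional moduli space is uniquely determined, so the edge correspondence is forced by the vertex bijection. In the $m=4$ (ladybug) case, I first need to verify that $(D'^*,\gen{y}'^*,\gen{x}'^*)$ is again a ladybug: since duality preserves the property of having neither leaves nor co-leaves, this is immediate from \Lemma{ind-2-res-config}. Then I need to establish that the \OutIn{} ladybug matching on $(D',\gen{x}',\gen{y}')$ corresponds, under the vertex bijection, to the \InOut{} ladybug matching on $(D'^*,\gen{y}'^*,\gen{x}'^*)$.

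This last claim is the heart of the argument, and its geometric verification is the main obstacle. The plan is to exploit the ``turn right'' characterization from the proof of \Lemma{in-out-preserved}: the \OutIn{} pair of a ladybug $D'$ consists of the two components of $Z\setminus(\bdy A_1\cup\bdy A_2)$ reached by a right turn when travelling along $A_1$ or $A_2$ into $Z$. For a ladybug, simultaneous surgery along both arcs produces a single circle, so $Z(D'^*)=Z(s(D'))$ is a single circle $Z^*$, and the dual arcs $A_j^*$ sit as short transverse segments across the original $A_j$. A local inspection near each intersection $\bdy A_j\cap Z$ then shows that turning right onto $Z$ at an endpoint of $A_j$ corresponds to turning left onto $Z^*$ at the dual endpoint of $A_j^*$, which is exactly the distinction between \OutIn{} and \InOut{}. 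Transporting this local identification along the duality bijections between the circles of $s_{A_i}(D')$ and of $s_{A_{3-i}^*}(D'^*)$ matches the two ladybug pairings of maximal chains, and assembling this edge-by-edge correspondence with the vertex bijection yields the desired graph isomorphism $\Psi$.
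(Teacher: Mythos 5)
Your proposal is correct and takes essentially the same route as the paper: the vertex bijection comes from \Lemma{res-config-dual}, and the edges reduce to the claim that for an index $2$ face the \OutIn{} (right-pair) matching of a ladybug agrees, under duality, with the \InOut{} (left-pair) matching of the dual ladybug---a claim the paper simply asserts as easy to see, while you correctly isolate it as the heart of the matter (and correctly note that the non-ladybug faces are forced). One small caution about your sketch: the endpoints of $A_j^*$ lie on $Z(s(D'))$ near the middle of $A_j$ (on the two surgery push-offs), not near $\bdy A_j\cap Z$, so the ``right turn becomes left turn'' comparison is not a pointwise statement at the original endpoints but is most cleanly verified by tracing the standard model ladybug, where one finds that the right matching on $D'$ does indeed coincide with the left matching on $(D')^*$, as needed.
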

\begin{proof}
  For any decorated resolution configuration $(E,u,v)$,
  \Lemma{res-config-dual} gives a natural isomorphism between
  $P(E,u,v)$ and the reverse of
  $P(E^*,v^*,u^*)$. Let $(f_{E}(F),w^*)\in
  P(E^*,v^*,u^*)$ be the labeled resolution configuration
  corresponding to the labeled resolution configuration $(F,w)\in
  P(E,u,v)$.

  It is easy to see that for any basic index $2$ decorated resolution
  configuration $(E,u,v)$, the vertices
  $\bigl[(E,v)\prec(F_1,w_1)\prec(s(E),u)\bigr]$ and
  $\bigl[(E,v)\prec(F_2,w_2)\prec(s(E),u)\bigr]$ are matched (i.e.\
  bound an interval) in $\Moduli(E,u,v)$ if and only if corresponding
  vertices
  $\bigl[(f_E(s(E)),u^*)\prec(f_E(F_1),w^*_1)\prec(f_E(E),v^*)\bigr]$ and
  $\bigl[(f_E(s(E)),u^*)\prec(f_E(F_2),w^*_2)\prec(f_E(E),v^*)\bigr]$ are
  matched in the dual $\Moduli_{\inout}(E^*,v^*,u^*)$.

  Now, in the graphs $\bdy_\expect\Moduli(D,\gen{x},\gen{y})$ and
  $\bdy_\expect\Moduli_{\inout}(D^*,\gen{y}^*,\gen{x}^*)$, the
  vertices correspond to the maximal chains of $P(D,x,y)$ and
  $P(D^*,y^*,x^*)$, respectively. Therefore, $f_D$ induces a bijection
  of the vertices.
  The edges correspond to the matchings of the maximal chains in index
  $2$ resolution configurations. By the above
  discussion $f_D$ induces a bijection of the edges as well.
\end{proof}

\begin{figure}
  \centering
  \subfloat{
    \xymatrix@C=0.1\textwidth{
      \vcenter{\hbox{\includegraphics[height=0.1\textheight]{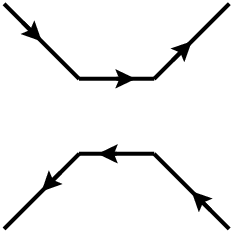}}}\ar[r]& \vcenter{\hbox{\includegraphics[height=0.1\textheight]{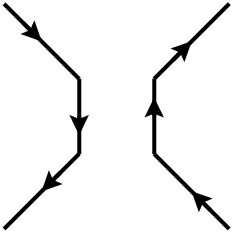}}}
    }
  }
  \caption[An oriented saddle move.]{\textbf{An oriented saddle move.}
  Changing the ladybug matching on exactly one face corresponds to a
  single oriented saddle move for the underlying graphs.}
  \label{fig:saddle-oriented-graphs}
\end{figure}

\begin{lemma}\label{lem:ind3-dual-matching}
  The graphs $\bdy_\expect\Moduli(D,\gen{x},\gen{y})$ and
  $\bdy_\expect\Moduli_{\inout}(D,\gen{x},\gen{y})$ have the same
  number of vertices and same parity of the number of cycles. In
  particular, if $\bdy_\expect\Moduli(D,x,y)$ has at most $12$
  vertices, then the two graphs are isomorphic.
\end{lemma}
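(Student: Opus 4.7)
The vertex sets of both graphs are canonically identified with the set of maximal chains in $P(D, \gen{x}, \gen{y})$, which depends only on the decorated configuration and not on the ladybug convention; hence the vertex counts agree.

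For the parity of the cycle count, the plan is to observe that the edge sets of the two graphs differ precisely at those index-$2$ sub-configurations of $(D, \gen{x}, \gen{y})$ which are ladybugs. At each such face, with four maximal through-chains $a, b, c, d$ (in the notation of \Section{1D}), the \OutIn{} rule gives the matching $\{(a,c), (b,d)\}$ while the \InOut{} rule gives $\{(a,d), (b,c)\}$. As illustrated in \Figure{saddle-oriented-graphs}, this local modification is an oriented saddle move on the oriented $1$-manifold $\bdy_\expect\Moduli(D, \gen{x}, \gen{y})$ (with orientations coming from the product orientations on the moduli factors). Any oriented saddle move on a closed oriented $1$-manifold changes its number of connected components by exactly $\pm 1$; consequently each such swap flips the parity of the cycle count, so the total parity difference between the two graphs equals the number of ladybug index-$2$ sub-configurations of $(D, \gen{x}, \gen{y})$ modulo $2$.

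The main obstacle is therefore to establish that this count of ladybug sub-configurations is always even. I would prove this by a case analysis on the combinatorial type of the index-$3$ basic resolution configuration $D$: the arcs in a resolution configuration are pairwise disjoint embedded arcs in $S^2$, and planarity (for instance, the impossibility of three pairwise-alternating arcs on a single circle) severely restricts which index-$2$ faces of $D$ can be ladybugs. The duality \Lemma{ind3-dual-matching-dual-config} can be used to halve the case analysis. A careful enumeration should show that ladybug faces always appear in pairs.

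For the final clause, suppose $\bdy_\expect\Moduli(D, \gen{x}, \gen{y})$ has at most $12$ vertices. By \Lemma{6-vertex-easy-case} each component is a $6$-cycle, so at most two components appear in either graph. The shared vertex count and the parity statement just proved then force equal numbers of components, and any two disjoint unions of $6$-cycles with the same number of components are isomorphic.
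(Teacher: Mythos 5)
Your proposal follows essentially the same route as the paper: the vertex sets are identified with the maximal chains of $P(D,\gen{x},\gen{y})$, switching the ladybug matching on a single ladybug face is an oriented saddle move that changes the number of components of the oriented $1$-manifold $\bdy_\expect\Moduli(D,\gen{x},\gen{y})$ by one, and the parity claim therefore reduces to the evenness of the number of ladybug faces. The one place you stop short is exactly that crux: you assert that ``a careful enumeration should show that ladybug faces always appear in pairs'' without carrying it out. The paper closes this step by citing \cite[Lemma 2.1]{OSzR-kh-oddkhovanov}, and remarks that its own case analyses (the proofs of \Lemma{ind3-leaf} and \Lemma{ind3-hard-cases}) show this even number is in fact $0$ or $2$; so your plan is sound but incomplete as written, and any complete write-up must either invoke that reference or actually perform the enumeration you sketch. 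One small inaccuracy in your final paragraph: \Lemma{6-vertex-easy-case} only says that each component has length divisible by $6$, not that each component is a $6$-cycle; with $12$ vertices a graph could a priori be a single $12$-cycle, and it is precisely the equal vertex count together with the equal parity of the number of components that rules out one graph being a $12$-cycle while the other is two $6$-cycles.
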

\begin{proof}
  The vertices in either graph correspond to the maximal chains in
  $P(D,x,y)$. Therefore, the only thing that we have to prove is that
  the parity of the number of cycles in the two graphs is the same.

  By~\cite[Lemma 2.1]{OSzR-kh-oddkhovanov}, or a straightforward case
  analysis of configurations, it follows that an even number of faces
  in the cube of resolutions corresponding to $(D,x,y)$ come from
  ladybug configurations. (We in fact carry out this case analysis:
  the proofs of \Lemma{ind3-leaf} and \Lemma{ind3-hard-cases} imply
  that this even number is either $0$ or $2$.)

  Now orient the $6$-cycle
  $\bdy\Moduli_{\CubeFlowCat(3)}(\vect{1},\vect{0})$ arbitrarily. This
  induces orientations on the graphs
  $\bdy_\expect\Moduli(D,\gen{x},\gen{y})$ and
  $\bdy_\expect\Moduli_{\inout}(D,\gen{x},\gen{y})$, and hence we can
  think of them as oriented $1$-manifolds. If a face in the cube of
  resolutions of $(D,x,y)$ comes from a ladybug configuration, then
  changing the ladybug matching just for that face corresponds to a
  single oriented local saddle move for the underlying graphs; see
  \Figure{saddle-oriented-graphs}. Each such oriented saddle move
  changes the number of components of the graph by $1$. Since an even
  number of faces in the cube of resolutions of $(D,x,y)$ come from
  ladybug configurations, we are done.
\end{proof}

\begin{corollary}\label{cor:ind3-dual-config}
  The graph $\bdy_\expect\Moduli(D^*,\gen{y}^*,\gen{x}^*)$ is a
  $6$-cycle (\respectively a disjoint union of two $6$-cycles) if and only if 
  the graph $\bdy_\expect\Moduli(D,\gen{x},\gen{y})$ is a $6$-cycle (\respectively
  a disjoint union of two $6$-cycles).
\end{corollary}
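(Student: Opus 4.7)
The plan is to derive the corollary as an immediate consequence of the two preceding lemmas, \Lemma{ind3-dual-matching-dual-config} and \Lemma{ind3-dual-matching}, plus the divisibility statement in \Lemma{6-vertex-easy-case}.

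First I would apply \Lemma{ind3-dual-matching-dual-config} not to $(D,\gen{x},\gen{y})$ directly, but to its dual $(D^*,\gen{y}^*,\gen{x}^*)$. Since $(D^*)^*=D$, $(\gen{y}^*)^*=\gen{y}$, and $(\gen{x}^*)^*=\gen{x}$, this produces an isomorphism of graphs
\[
\bdy_\expect\Moduli(D^*,\gen{y}^*,\gen{x}^*)\cong\bdy_\expect\Moduli_{\inout}(D,\gen{x},\gen{y}).
\]
So the corollary reduces to comparing $\bdy_\expect\Moduli(D,\gen{x},\gen{y})$ with $\bdy_\expect\Moduli_{\inout}(D,\gen{x},\gen{y})$ in the two relevant cases.

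Next I would invoke \Lemma{ind3-dual-matching}. In either the $6$-cycle case (exactly $6$ vertices) or the two-$6$-cycle case (exactly $12$ vertices), the hypothesis ``at most $12$ vertices'' is satisfied, so the two graphs $\bdy_\expect\Moduli(D,\gen{x},\gen{y})$ and $\bdy_\expect\Moduli_{\inout}(D,\gen{x},\gen{y})$ are isomorphic. Chaining with the isomorphism from the first step gives
\[
\bdy_\expect\Moduli(D,\gen{x},\gen{y})\cong\bdy_\expect\Moduli(D^*,\gen{y}^*,\gen{x}^*),
\]
whenever the left-hand side has at most $12$ vertices. This yields one direction of the ``if and only if''; the other direction is obtained by running the same argument starting from $(D^*,\gen{y}^*,\gen{x}^*)$, using that duality is involutive.

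There is no real obstacle here since both lemmas have already been established; the only subtlety is making sure that the number-of-vertices hypothesis in \Lemma{ind3-dual-matching} is actually met. This is automatic: in both cases of the corollary the graph $\bdy_\expect\Moduli(D,\gen{x},\gen{y})$ has at most $12$ vertices by assumption, and by \Lemma{6-vertex-easy-case} its isomorphic companion $\bdy_\expect\Moduli_{\inout}(D,\gen{x},\gen{y})$ is likewise a disjoint union of cycles whose lengths are multiples of $6$, so matching vertex counts forces it to be either a single $6$-cycle or a disjoint union of two $6$-cycles, exactly as desired.
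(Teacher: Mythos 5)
Your proposal is correct and follows essentially the same route as the paper, which deduces the corollary directly from \Lemma{ind3-dual-matching-dual-config} and \Lemma{ind3-dual-matching}; you simply spell out the chaining of the two isomorphisms and the (automatic) verification of the ``at most $12$ vertices'' hypothesis, with the final appeal to \Lemma{6-vertex-easy-case} being harmless but redundant once the graphs are known to be isomorphic.
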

\begin{proof}
  This is immediate from \Lemma{ind3-dual-matching-dual-config} and
  \Lemma{ind3-dual-matching}.
\end{proof}

\begin{lemma}\label{lem:ind3-leaf}
  Suppose that $D$ has a leaf. Then $\bdy_\expect\Moduli(D,x,y)$ is
  either a $6$-cycle or a disjoint union of two $6$-cycles.
\end{lemma}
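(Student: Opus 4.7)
My plan is to apply \Lemma{technical-res-config-leaf} to decompose $P(D,x,y)$ as a product and then split into two cases by counting maximal chains. Let $Z_1$ be a leaf of $D$ with attached arc $A_1$; \Lemma{technical-res-config-leaf} yields a poset isomorphism
\[
P(D,x,y)\cong P(D',x',y')\times\{0,1\},
\]
where $D'$ has index $2$ (obtained by deleting $Z_1$ and $A_1$ from $D$). Each maximal chain of $P(D',x',y')$ has length $2$ and lifts to three maximal chains of the product, one for each position of the $0\prec 1$ transition. Hence by \Lemma{ind-2-res-config} (applied to the core of $D'$, which is index-$2$ basic), $\bdy_\expect\Moduli(D,x,y)$ has either $6$ or $12$ vertices. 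In the $6$-vertex case, \Lemma{6-vertex-easy-case} immediately identifies the graph as a single $6$-cycle.

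In the $12$-vertex case, the core of $D'$ is a ladybug configuration: two arcs $B_1,B_2$ alternate on a single circle $W$ of $D$, and $A_1$ has one endpoint on the leaf $Z_1$ and the other on some circle of $D$. I would first observe that among the six $2$-dimensional faces of the cube of resolutions of $(D,x,y)$, only the two faces corresponding to $\{B_1,B_2\}$ (before and after the $A_1$ surgery) can be ladybug configurations: any face involving $A_1$ has an arc with an endpoint on the leaf $Z_1$ and hence sits on two distinct circles, so it is not a ladybug. The key technical step will be to show that these two ladybug matchings agree under the natural identification of circles. This reduces to the local ``right turn'' characterization of the out-in pair in \Lemma{in-out-preserved}: the matching is determined by the planar picture in arbitrarily small neighborhoods of the endpoints of $B_1,B_2$, and these neighborhoods are unaffected by the $A_1$ surgery, which is localized near $A_1$.

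With this compatibility in hand, the ladybug matching of $D'$ partitions its $4$ maximal chains into $2$ pairs, and via the product decomposition this lifts to a partition of the $12$ maximal chains of $P(D,x,y)$ into two groups of $6$ (each pair of $D'$-chains contributing $2\times 3=6$ lifts). A direct edge analysis then shows that no edge of $\bdy_\expect\Moduli(D,x,y)$ crosses between the two groups: every edge coming from a non-ladybug face connects two lifts of a common $D'$-chain that differ only in the position of the $A_1$ surgery, while every edge from one of the two ladybug $\{B_1,B_2\}$-faces connects $D'$-chains that are ladybug-matched in $D'$ by the compatibility above. Therefore the graph splits as a disjoint union of two $2$-regular subgraphs of $6$ vertices each, and each such subgraph is a single $6$-cycle by \Lemma{6-vertex-easy-case} applied to its connected components.

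The hardest step in this plan is verifying the compatibility of the two ladybug matchings---equivalently, showing that the out-in pair on $\{B_1,B_2\}$ is preserved by the $A_1$ surgery. This is a local geometric check using the right-turn description from \Lemma{in-out-preserved}, whose applicability hinges on the $B_i$'s endpoints being disjoint from the support of the $A_1$ surgery. Once this local observation is established, the remaining combinatorial bookkeeping with the product structure $P(D,x,y)\cong P(D',x',y')\times\{0,1\}$ and the edge-type classification of $\bdy_\expect\Moduli(D,x,y)$ is routine.
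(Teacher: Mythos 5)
Your proposal is correct, and its skeleton is the same as the paper's: apply \Lemma{technical-res-config-leaf} to get $P(D,x,y)\cong P(D',x',y')\times\{0,1\}$, count maximal chains ($6$ or $12$ according to whether the core of $D'$ is a ladybug, via \Lemma{ind-2-res-config}), dispose of the $6$-vertex case with \Lemma{6-vertex-easy-case}, and in the $12$-vertex case observe that the only ladybug faces are the two $\{B_1,B_2\}$-faces (before and after the $A_1$ surgery) and that their matchings agree, so the edges split the $12$ chains into two $6$-cycles. The one place you diverge is how the agreement of the two ladybug matchings is established: the paper enumerates the three possible basic index-$3$ configurations with a leaf and a ladybug core (Figure~\ref*{fig:ind-3-res-config}\Object{fig:conf-a}--\Object{fig:conf-c}), notes that grading forces the leaf to be labeled $x_+$ in $y$, and then simply lists the $12$ chains and their edges; you instead argue locality --- the right-turn characterization from \Lemma{in-out-preserved} depends only on neighborhoods of $\del B_1\cup\del B_2$, which the $A_1$ surgery does not touch, so the \OutIn{} pair (hence the matching) corresponds under the natural identification of circles. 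Your route avoids the configuration enumeration and is arguably more conceptual; the paper's route is more explicit and, along the way, records the label constraint. One small point you should make explicit: in the $12$-chain case the leaf is necessarily labeled $x_+$ in $\gen{y}$ (otherwise the rule in \Lemma{technical-res-config-leaf} would force $\gen{x}'(Z_2^*)=x_+$, incompatible with the ladybug labels), and it is exactly this that makes the label transfer in that lemma the ``natural'' one, so that your circle-by-circle identification of the chains through the $A_1{=}1$ face with those through the $A_1{=}0$ face is the one under which the matchings are being compared.
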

\begin{proof}
  By \Lemma{technical-res-config-leaf}, there is a naturally
  associated index $2$ decorated resolution configuration $(D',x',y')$
  such that $P(D,x,y)$ is naturally isomorphic to
  $P(D',x',y')\times\{0,1\}$. If $(D',x',y')$ is not a ladybug
  configuration, then $P(D',x',y')$ is isomorphic to $\{0,1\}^2$; in
  that case, $P(D,x,y)$ has exactly $6$ maximal chains, and hence by
  \Lemma{6-vertex-easy-case}, $\bdy_\expect\Moduli(D,x,y)$ is a
  $6$-cycle.

  Now assume that $(D',x',y')$ is a ladybug
  configuration. An easy enumeration of the possible graphs $G(D)$
  (\Definition{graph-from-res-config}) tells us that, after
  reordering the arcs in $A(D)$ if necessary, $D$ is isotopic to one
  of the three resolution configurations
  \Object{fig:conf-a}--\Object{fig:conf-c} of
  \Figure{ind-3-res-config}. Furthermore, due to grading reasons, in
  configurations \Object{fig:conf-a} and \Object{fig:conf-b} (\respectively in
  configuration \Object{fig:conf-c}), the label in $y$ of the leaf
  (\respectively one of the leaves) is $x_+$.

  Assume the four maximal chains in $P(D',x',y')$ are
  $d_i=\bigl[b\prec c_i\prec a\bigr]$
  for $1\leq i\leq 4$.
  Assume further that the ladybug matching is $d_1\longleftrightarrow d_2$ and
  $d_3\longleftrightarrow d_4$.  The $12$ vertices in
  $P(D,x,y)=P(D',x',y')\times\{0,1\}$ can be represented by the
  following maximal chains:
  \begin{align*}
    u_i&=\bigl[(b,0)\prec(c_i,0)\prec(a,0)\prec(a,1)\bigr]\\    
    v_i&=\bigl[(b,0)\prec(c_i,0)\prec(c_i,1)\prec(a,1)\bigr]\\    
    w_i&=\bigl[(b,0)\prec(b,1)\prec(c_i,1)\prec(a,1)\bigr]
  \end{align*}
  for $1\leq i\leq 4$.  The following pairs are not parts of ladybug
  configurations, and so the following edges exist independently of
  our choice of ladybug matching:
  \begin{align*}
    v_1&\matching u_1 & v_1&\matching w_1 &
    v_2&\matching u_2 & v_2&\matching w_2 &
    v_3&\matching u_3 & v_3&\matching w_3 \\
    v_4&\matching u_4 & v_4&\matching w_4.     
    \shortintertext{Since $D$ contains a leaf where the label in $y$
      is $x_+$, it is clear that if the ladybug matching matches
      $d_i$ with $d_j$, then there are edges joining $u_i$ to $u_j$
      and $w_i$ to $w_j$. Therefore, the following edges come from the
      ladybug matching:}
    u_1&\matching u_2 & u_3 &\matching u_4 &
    w_1&\matching w_2 &  w_3&\matching w_4.
  \end{align*}
  So, the components of $\bdy_\expect\Moduli(D,\gen{x},\gen{y})$ are:
  \[
  v_1\matching u_1 \matching u_2 \matching
  v_2 \matching w_2 \matching w_1 \matching v_1
  \quad\text{ and }\quad 
  v_3\matching u_3 \matching u_4 \matching v_4
  \matching w_4 \matching w_3 \matching v_3.\qedhere
  \]
\end{proof}

\begin{corollary}\label{cor:ind3-coleaf}
  Suppose that $D$ has a co-leaf. Then
  $\bdy_\expect\Moduli(D,\gen{x},\gen{y})$ is a disjoint union of
  $6$-cycles.
\end{corollary}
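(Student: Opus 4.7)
The plan is to derive this corollary from \Lemma{ind3-leaf} via duality. By \Definition{graph-from-res-config}, an arc $A\in A(D)$ is a co-leaf of $D$ precisely when the circle of $Z(D^*)$ containing an endpoint of the dual arc $A^*$ is a leaf of the dual configuration $D^*$. Hence, if $D$ has a co-leaf, then $D^*$ has a leaf.

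Next, I would observe that the dual $(D^*,\gen{y}^*,\gen{x}^*)$ is itself an index $3$ basic decorated resolution configuration: $\ind(D^*)=\ind(D)=3$ by \Definition{dual-res-config}, and $D^*$ is basic because $D$ is (a circle of $Z(D^*)=Z(s(D))$ disjoint from every dual arc would correspond to a circle of $Z(D)$ disjoint from every arc of $A(D)$). So \Lemma{ind3-leaf} applies to $(D^*,\gen{y}^*,\gen{x}^*)$, yielding that $\bdy_\expect\Moduli(D^*,\gen{y}^*,\gen{x}^*)$ is either a single $6$-cycle or a disjoint union of two $6$-cycles.

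Finally, I would transfer this conclusion back to $D$ using \Corollary{ind3-dual-config}, which asserts that $\bdy_\expect\Moduli(D,\gen{x},\gen{y})$ has the same number of $6$-cycle components (one or two) as $\bdy_\expect\Moduli(D^*,\gen{y}^*,\gen{x}^*)$. This gives the claim immediately. There is essentially no obstacle here; the substantive case analysis was already carried out for the leaf case in \Lemma{ind3-leaf}, and \Corollary{ind3-dual-config} is precisely designed to let us bypass a parallel case analysis for co-leaves.
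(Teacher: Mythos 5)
Your proposal is correct and is exactly the paper's argument: the paper proves this corollary by citing \Lemma{ind3-leaf} together with \Corollary{ind3-dual-config}, which is precisely the duality transfer you spell out (your extra verifications that $D^*$ is basic of index $3$ and has a leaf are the implicit content of "immediate").
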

\begin{proof}
  This is immediate from \Corollary{ind3-dual-config}
  and \Lemma{ind3-leaf}.
\end{proof}

\begin{lemma}\label{lem:no-leaves-or-coleaves}
  Up to isotopy in $S^2$ and reordering of the arcs, the four
  resolution configurations \Object{fig:conf-d}--\Object{fig:conf-g} of
  \Figure{ind-3-res-config} are the only ones with no leaves or
  co-leaves (\Definition{graph-from-res-config}).  Moreover,
  configurations \Object{fig:conf-d} and \Object{fig:conf-f} are dual, and
  configurations \Object{fig:conf-e} and \Object{fig:conf-g} are dual.
\end{lemma}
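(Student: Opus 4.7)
The strategy is to reduce the classification to a short enumeration of the underlying graph $G(D)$ (\Definition{graph-from-res-config}), exploit duality to cut the cases in half, and then finish by drawing the pictures.

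First, since $D$ is basic of index $3$ and has no leaves, every vertex of $G(D)$ has degree at least $2$, while $\sum_v \deg(v) = 2|A(D)| = 6$. Hence $|Z(D)|\leq 3$, and a direct enumeration of all multigraphs with three edges and minimum degree $2$ yields exactly five types: a single vertex with three self-loops; two vertices joined by three parallel edges (the theta graph); two vertices joined by a single edge carrying one self-loop at each end (the dumbbell); the triangle $K_3$; and the disjoint union of a vertex with two self-loops and a vertex with one self-loop.

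Next, I would use duality. By \Definition{dual-res-config}, $\ind(D^*)=3$, and by \Definition{graph-from-res-config} the no-co-leaf hypothesis on $D$ is exactly the no-leaf hypothesis on $D^*$. Consequently $G(D^*)$ lies in the same list, and dualization is an involution on the set of configurations satisfying the hypotheses of the lemma. A useful immediate consequence is the bound $|Z(D^*)|=|Z(s(D))|\leq 3$, which eliminates several embeddings at a stroke.

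For each of the five graph types, I would enumerate the embeddings $Z(D)\cup A(D)\subset S^2$ up to ambient isotopy and reordering of arcs; this data amounts, circle by circle, to the cyclic order of incident arc-endpoints together with the choice of which complementary region of $S^2\setminus Z(D)$ each arc traverses. For each embedding I would compute $s(D)$ directly by performing the surgeries, which determines $G(D^*)$. Embeddings that produce $|Z(s(D))|\geq 4$ or that produce a degree-$1$ vertex in $G(D^*)$ are ruled out by Step~2; the remaining embeddings will be seen to be exactly the four configurations d--g. To finish, I would compute $D^*$ for each survivor via \Definition{dual-res-config} and identify it up to isotopy and arc-reordering with the claimed partner; by \Lemma{res-config-dual} duality is an involution, so it suffices to verify two of the four duality identifications, say $D^*\cong F$ and $E^*\cong G$.

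The main obstacle is the third step: the bookkeeping of planar embeddings and their surgeries is somewhat delicate, because a single abstract graph can support several inequivalent embeddings in $S^2$. However, with only three arcs and the sphere (rather than the plane) as ambient space -- so that many superficially distinct embeddings become equivalent by moving the point at infinity -- the number of cases is small, and each is resolved by drawing the picture and computing $s(D)$ explicitly.
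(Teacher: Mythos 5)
Your overall strategy---enumerate the possible multigraphs $G(D)$, use the duality between ``no co-leaves for $D$'' and ``no leaves for $D^*$'' (which is valid, since $D$ basic implies $D^*$ basic), and then sort out planar embeddings---is the same enumeration the paper has in mind. But there is a concrete gap in your first step: the list of multigraphs with three edges and minimum degree at least $2$ is not ``exactly five types.'' You have omitted the connected multigraph consisting of two vertices joined by a double edge together with a self-loop at one of them (degree sequence $(4,2)$), as well as two disconnected types (a double edge disjoint from a single-loop vertex, and three vertices each carrying one self-loop). The disconnected omissions are harmless in the end, since any index-$1$ self-loop component forces a co-leaf; but the connected omission is fatal to the plan as written, because it is precisely the underlying graph of one of the four configurations you are trying to classify. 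Indeed, for configuration \Object{fig:conf-e} (one circle with three self-loop arcs), the circle counts in its cube of resolutions give $|Z(s(D))|=2$, while $s_{\{A_2,A_3\}}(D)$, $s_{\{A_1,A_3\}}(D)$, $s_{\{A_1,A_2\}}(D)$ have $1$, $3$, $1$ circles respectively; so in the dual configuration \Object{fig:conf-g} two of the dual arcs join the two circles and the third is a self-loop---exactly the missing graph type. Running your third step over only the five listed types would therefore never produce \Object{fig:conf-g}, and the ``exactly four'' conclusion could not be reached.

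The fix is routine: redo the enumeration from $\sum_v \deg(v)=6$ with $\deg\geq 2$ and no isolated vertices (basicness), which gives five connected types (rose with three petals, theta, dumbbell, triangle, double-edge-plus-loop) plus the disconnected ones, and then proceed as you describe---discard the disconnected types and the dumbbell-type and other embeddings via the co-leaf/duality criterion, and match the survivors with \Object{fig:conf-d}--\Object{fig:conf-g}. With that correction your argument aligns with the paper's (very terse) proof, which likewise reduces the lemma to ``an easy enumeration of the possible graphs $G(D)$''; your use of duality to halve the verification of the pairing of \Object{fig:conf-d} with \Object{fig:conf-f} and \Object{fig:conf-e} with \Object{fig:conf-g} is fine, by \Lemma{res-config-dual}.
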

\begin{proof}
  This again follows from an easy enumeration of the possible graphs
  $G(D)$.  Compare~\cite[Figure
  4]{OSzR-kh-oddkhovanov}.
\end{proof}

\captionsetup[subfloat]{width=0.05\textwidth}
\begin{figure}
  \centering 
  \subfloat[]{\label{fig:conf-a}\includegraphics{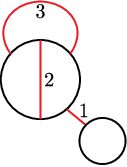}}\hspace{0.07\textwidth}
  \subfloat[]{\label{fig:conf-b}\includegraphics{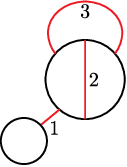}}\hspace{0.07\textwidth}
  \subfloat[]{\label{fig:conf-c}\includegraphics{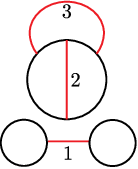}}\\
  \parbox{0.8\textwidth}{Up to isotopy in $S^2$ and reordering of the
    arcs, these are the only basic index $3$ resolution configurations
    with
    leaves and ladybugs.}\vspace{5pt}\\
  \subfloat[]{\label{fig:conf-d}\includegraphics{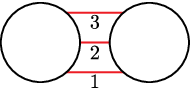}}\hspace{0.07\textwidth}
  \subfloat[]{\label{fig:conf-e}\includegraphics{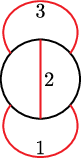}}\hspace{0.07\textwidth}
  \subfloat[]{\label{fig:conf-f}\includegraphics{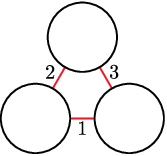}}\hspace{0.07\textwidth}
  \subfloat[]{\label{fig:conf-g}\includegraphics{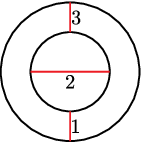}}\\
  \parbox{0.8\textwidth}{Up to isotopy in $S^2$ and reordering of the
    arcs, these are the only basic index $3$ resolution configurations
    without leaves or co-leaves. Configurations \Object{fig:conf-f} and
    \Object{fig:conf-g} are dual to configurations \Object{fig:conf-d} and
    \Object{fig:conf-e}, respectively.}
\caption[Some index $3$ resolution configurations.]{\textbf{Some index $3$ resolution configurations.}}
\label{fig:ind-3-res-config}    
\end{figure}

Next we check two cases directly:
\begin{lemma}\label{lem:ind3-hard-cases}
  If $D$ is of the type~\Object{fig:conf-d}, then
  $\bdy_\expect\Moduli(D,x,y)$ is a $6$-cycle; and if $D$ is of the
  type~\Object{fig:conf-e}, then $\bdy_\expect\Moduli(D,x,y)$ is a
  disjoint union of two $6$-cycles.
\end{lemma}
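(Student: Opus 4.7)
My plan is to verify both claims by a direct, case-by-case enumeration, since each of configurations \Object{fig:conf-d} and \Object{fig:conf-e} is a single explicit resolution configuration, and the required data (maximal chains of $P(D,\gen{x},\gen{y})$ and the ladybug matching on each index-$2$ face) is finite.

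First I would enumerate $P(D,\gen{x},\gen{y})$ for each configuration. Since $\ind(D)=3$, the possible labelings are constrained by grading: $\gen{y}$ must label every circle of $Z(D)$ by $x_+$ if $|Z(s(D))|>|Z(D)|$-compensating arithmetic works out, and similarly $\gen{x}$ is determined on $Z(s(D))$. I would list the eight vertices $s_{A'}(D)$ of the subcube (for $A'\sbseq A(D)$), compute the number of circles at each vertex, and then write down every maximal chain $(D,\gen{y})\prec (E_1,\gen{z}_1)\prec(E_2,\gen{z}_2)\prec(s(D),\gen{x})$ consistent with \Definition{prec}; by \Lemma{6-vertex-easy-case} these counts must be $6$ for \Object{fig:conf-d} and $12$ for \Object{fig:conf-e} if my expected answer is correct.

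Next, for each of the six index-$2$ faces of the subcube (i.e.\ each pair of arcs), I would examine the intermediate resolution configuration and decide whether it is a ladybug configuration. By \cite[Lemma 2.1]{OSzR-kh-oddkhovanov} (and as noted in the proof of \Lemma{ind3-dual-matching}) the number of ladybug faces is even, and indeed a closer look at the enumeration in \Lemma{no-leaves-or-coleaves} shows that the only index-$3$ basic configurations without leaves or co-leaves admitting any ladybug faces contribute exactly two such faces each. For the non-ladybug faces, the matching of the four maximal chains through that face (collapsed to two because $m=2$) is forced by \Section{1D}. For each ladybug face, I would draw the circle $Z$ together with the pair of arcs as embedded in $S^2\sm\{\infty\}$, identify the inside arc $A_i$ and the outside arc $A_o$, and compute the \OutIn{} pair by the ``turn-right'' rule from \Lemma{in-out-preserved}. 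This determines the ladybug matching between $\{Z_{1,1},Z_{1,2}\}$ and $\{Z_{2,1},Z_{2,2}\}$ on that face and hence the two edges in $\bdy_\expect\Moduli(D,\gen{x},\gen{y})$ contributed by that face.

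Finally, with all edges listed I would trace the components of the resulting graph. For \Object{fig:conf-d}, I expect the twelve ``half-edges'' to assemble into a single $6$-cycle; for \Object{fig:conf-e}, into two disjoint $6$-cycles. The expected obstacle is the bookkeeping for the ladybug matching: one has to keep track of which circles in $Z(s_{A_i}(D))$ persist up to the full surgery $s(D)$ in order to compare the matching on one face with the matching on an adjacent face, so that one correctly concatenates the edges. Once these identifications are made carefully, the cycles can be read off directly. (As a sanity check, the dual computations for \Object{fig:conf-f} and \Object{fig:conf-g} are then forced by \Corollary{ind3-dual-config} and must agree, which gives a convenient cross-check on the ladybug matching.)
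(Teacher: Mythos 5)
Your proposal is correct and follows essentially the same route as the paper: for configuration \Object{fig:conf-d} one counts exactly $6$ maximal chains (the first step is forced to be a merge, so the intermediate labeling is determined) and invokes \Lemma{6-vertex-easy-case}, while for configuration \Object{fig:conf-e} one enumerates the $12$ maximal chains, identifies the two ladybug faces and their matchings, and traces out the two $6$-cycles. The only difference is that you stop at the plan rather than carrying out the explicit enumeration and matching bookkeeping, which is exactly what the paper's proof does in detail.
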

\begin{proof}
  For the configuration~\Object{fig:conf-d}, it is fairly easy to see
  that $P(D,x,y)$ has exactly $6$ maximal chains: In any maximal chain
  \[
  v=\bigl[(D,y)\prec(E_2,z_2)\prec(E_1,z_1)\prec(s(D),x)\bigr],
  \]
  $E_2$ is obtained from $D$ by a merge; therefore, the labeling $z_2$
  is determined entirely by $D$, $E_2$ and $y$ (cf.\
  \Definition{prec}), and hence, there are exactly three choices for
  $(E_2,z_2)$. However, it is easy to see that for each of those
  three choices, $E_2\sm s(D)$ is not a ladybug configurations, and hence
  there are exactly two maximal chains in $P(E_2\sm s(D),x,z_2)$.
  Therefore, $\bdy_\expect\Moduli(D,x,y)$ has exactly $6$
  vertices, and hence by \Lemma{6-vertex-easy-case}, we are done. 
  
  \begin{figure}
    \centering
    \includegraphics{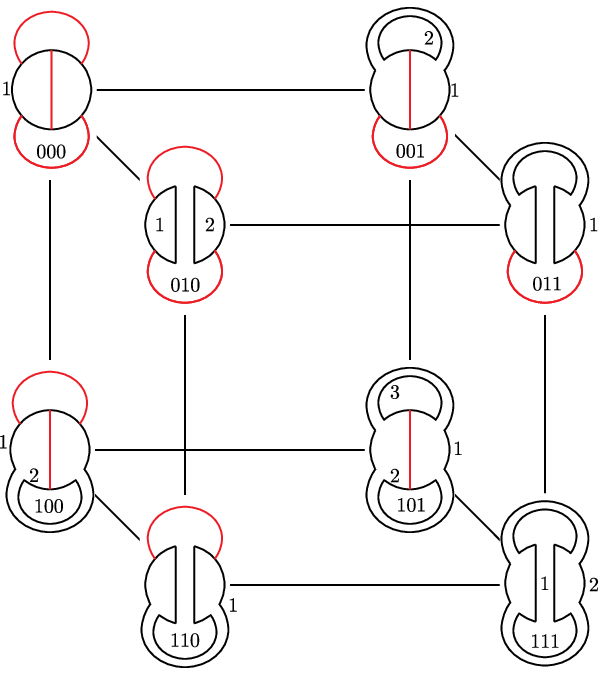}
    \caption{\textbf{The cube of resolutions for
        configuration \Object{fig:conf-e}.} The individual circles at
      each vertex are numbered.}
    \label{fig:cube-res-conf-a}
  \end{figure}

  For the configuration~\Object{fig:conf-e}, for grading reasons,
  $x=x_-x_-$ and $y=x_+$. The corresponding cube of resolutions is
  shown in \Figure{cube-res-conf-a}. The vertices of
  $\bdy_\expect\Moduli(D,\gen{x},\gen{y})$ are the maximal chains of
  $P(D,x,y)$:
  \begin{align*}
    v_1&=\bigl[ (D,x_+)\prec (s_{\{A_1\}}(D),x_+x_-) \prec (s_{\{A_1,A_2\}}(D), x_-)\prec (s(D),x_-x_-)  \bigr]\\
    v_2&=\bigl[ (D,x_+)\prec (s_{\{A_1\}}(D),x_+x_-) \prec (s_{\{A_1,A_3\}}(D),x_-x_-x_+)\prec (s(D),x_-x_-) \bigr]\\
    v_3&=\bigl[ (D,x_+)\prec (s_{\{A_1\}}(D),x_-x_+) \prec (s_{\{A_1,A_2\}}(D), x_-)\prec (s(D),x_-x_-)  \bigr]\displaybreak[0]\\
    v_4&=\bigl[ (D,x_+)\prec (s_{\{A_1\}}(D),x_-x_+) \prec (s_{\{A_1,A_3\}}(D), x_-x_+x_-)\prec (s(D),x_-x_-)  \bigr]\displaybreak[0]\\
    v_5&=\bigl[(D,x_+)\prec (s_{\{A_2\}}(D),x_+x_-)\prec (s_{\{A_1,A_2\}}(D), x_-)\prec (s(D),x_-x_-)  \bigr]\displaybreak[0]\\
    v_6&=\bigl[(D,x_+)\prec (s_{\{A_2\}}(D),x_+x_-)\prec (s_{\{A_2,A_3\}}(D), x_-)\prec (s(D),x_-x_-)   \bigr]\displaybreak[0]\\
    v_7&=\bigl[(D,x_+)\prec (s_{\{A_2\}}(D),x_-x_+)\prec (s_{\{A_1,A_2\}}(D), x_-)\prec (s(D),x_-x_-)  \bigr]\displaybreak[0]\\
    v_8&=\bigl[(D,x_+)\prec (s_{\{A_2\}}(D),x_-x_+)\prec (s_{\{A_2,A_3\}}(D), x_-)\prec  (s(D),x_-x_-)  \bigr]\displaybreak[0]\\
    v_9&=\bigl[ (D,x_+)\prec (s_{\{A_3\}}(D),x_+x_-) \prec (s_{\{A_2,A_3\}}(D), x_-)\prec (s(D),x_-x_-)  \bigr]\displaybreak[0]\\
    v_{10}&=\bigl[ (D,x_+)\prec (s_{\{A_3\}}(D),x_+x_-) \prec (s_{\{A_1,A_3\}}(D),x_-x_+x_-)\prec (s(D),x_-x_-) \bigr]\\
    v_{11}&=\bigl[ (D,x_+)\prec (s_{\{A_3\}}(D),x_-x_+) \prec (s_{\{A_2,A_3\}}(D), x_-)\prec (s(D),x_-x_-)  \bigr]\\
    v_{12}&=\bigl[ (D,x_+)\prec (s_{\{A_3\}}(D),x_-x_+) \prec (s_{\{A_1,A_3\}}(D), x_-x_-x_+)\prec (s(D),x_-x_-)  \bigr].
  \end{align*}
  The faces of the cube with vertices $(000,100,010,110)$ and
  $(000,001,010,011)$ come from ladybug configurations. The ladybug
  matchings for these arrangements are:
  \begin{center}
    \begin{tabular}{ccc}
      \toprule
      Face &\qquad  & Circle matching\\
      \midrule
      $(000,100,010,110)$ & & $1_{100}\longleftrightarrow 2_{010}$,
      $2_{100}\longleftrightarrow 1_{010}$\\ 
      $(000,010,001,011)$ & &$1_{010}\longleftrightarrow 1_{001}$, 
      $2_{010}\longleftrightarrow 2_{001}$\\
      \bottomrule
    \end{tabular}
  \end{center}
  The
  following pairs are not parts of ladybug configurations, and so
  the following edges exist independently of our choice of ladybug matching:
  \begin{align*}
    v_1&\matching v_2 & v_3&\matching v_4 &
    v_5&\matching v_6 & v_7&\matching v_8 &
    v_9&\matching v_{10} & v_{11}&\matching v_{12}\\
    v_2&\matching v_{12} & v_4&\matching v_{10}.
    \shortintertext{The following edges do depend on our choice of ladybug matching:}
    v_1&\matching v_7 & v_3 &\matching v_5 &
    v_6&\matching v_9 &  v_8&\matching v_{11}.
  \end{align*}
  So, the components of $\bdy_\expect\Moduli(D,\gen{x},\gen{y})$ are:
  \[
  v_1\matching v_2 \matching v_{12} \matching
  v_{11} \matching v_8 \matching v_7 \matching v_1
  \quad\text{ and }\quad 
  v_3\matching v_4 \matching v_{10} \matching v_9
  \matching v_6 \matching v_5 \matching v_3.\qedhere
  \]
\end{proof}

\begin{corollary}\label{cor:ind3-dual-hard-cases}
  In cases \Object{fig:conf-f} and \Object{fig:conf-g} of
  \Figure{ind-3-res-config}, the graph
  $\bdy_\expect\Moduli(D,\gen{x},\gen{y})$ is a disjoint union of
  $6$-cycles.
\end{corollary}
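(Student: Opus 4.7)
The plan is to deduce the corollary directly from the duality results already established. By \Lemma{no-leaves-or-coleaves}, configuration \Object{fig:conf-f} is the dual of \Object{fig:conf-d}, and configuration \Object{fig:conf-g} is the dual of \Object{fig:conf-e}. So, any decorated resolution configuration $(D,\gen{x},\gen{y})$ whose underlying resolution configuration is of type \Object{fig:conf-f} (respectively, \Object{fig:conf-g}) has dual $(D^*,\gen{y}^*,\gen{x}^*)$ whose underlying resolution configuration is of type \Object{fig:conf-d} (respectively, \Object{fig:conf-e}).

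By \Lemma{ind3-hard-cases}, $\bdy_\expect\Moduli(D^*,\gen{y}^*,\gen{x}^*)$ is a $6$-cycle in the first case and a disjoint union of two $6$-cycles in the second case. Applying \Corollary{ind3-dual-config} (which exchanges these two possibilities for $D$ and $D^*$), it follows that $\bdy_\expect\Moduli(D,\gen{x},\gen{y})$ is likewise a disjoint union of $6$-cycles in both cases, and in particular is a disjoint union of $6$-cycles, as claimed.

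There is no real obstacle here: the hard combinatorial work has been absorbed into the two inputs, the duality identification of \Corollary{ind3-dual-config} (which rests on \Lemma{ind3-dual-matching-dual-config} and \Lemma{ind3-dual-matching}) and the explicit enumeration in \Lemma{ind3-hard-cases}. The only thing to verify is that the duality input is applicable, i.e.\ that the labelings $\gen{x}$ and $\gen{y}$ that occur in configurations \Object{fig:conf-f} and \Object{fig:conf-g} are indeed the duals of labelings that occur in the statement of \Lemma{ind3-hard-cases}, but \Corollary{ind3-dual-config} is stated for arbitrary decorated resolution configurations, so this is automatic.
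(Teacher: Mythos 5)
Your proposal is correct and follows exactly the paper's argument: the paper also deduces the corollary immediately from \Lemma{ind3-hard-cases} together with \Corollary{ind3-dual-config}, using the duality identifications of \Lemma{no-leaves-or-coleaves}. Nothing further is needed.
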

\begin{proof}
  This is immediate from \Lemma{ind3-hard-cases}
  and \Corollary{ind3-dual-config}.
\end{proof}

In summary:
\begin{prop}\label{prop:ind-3-exist}
  There are spaces $\Moduli(D,\gen{x},\gen{y})$ for $\ind(D)\leq 3$
  and maps $\Forget$ satisfying
  \Condition[s]{res-moduli-compose}--\ConditionCont{res-moduli-cover-trivial}.
\end{prop}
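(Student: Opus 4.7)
The case $\ind(D) \leq 2$ is exactly \Proposition{ind-2-exist}, so it suffices to extend the construction to index $3$ basic decorated resolution configurations. My plan is to apply \Part{extend-exist} of \Proposition{extend-moduli}: given that the moduli spaces of index $\leq 2$ have already been constructed satisfying \Condition[s]{res-moduli-compose}--\ConditionCont{res-moduli-cover-trivial}, the existence of $\Moduli(D,\gen{x},\gen{y})$ and $\Forget$ for each index $3$ basic decorated resolution configuration $(D,\gen{x},\gen{y})$ is equivalent to the assertion that the induced covering map
\[
\Forget|_{\bdy}\co \bdy_\expect\Moduli(D,\gen{x},\gen{y})\to \bdy\Moduli_{\CubeFlowCat(3)}(\vect{1},\vect{0})
\]
is trivial on each connected component.

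By \Lemma{cube-cat-struct}, the target $\bdy\Moduli_{\CubeFlowCat(3)}(\vect{1},\vect{0})$ is a hexagon, i.e.~a $6$-cycle viewed as a graph. Since $\Forget|_{\bdy}$ is a covering map of graphs (respecting the $\Codim{2}$-boundary structures by \Part{extend-assemble} of \Proposition{extend-moduli}), triviality on each component is equivalent to the statement that every component of $\bdy_\expect\Moduli(D,\gen{x},\gen{y})$ is itself a $6$-cycle. So the task reduces to verifying this combinatorial property for each index $3$ basic decorated resolution configuration $(D,\gen{x},\gen{y})$.

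To handle all cases, I would split based on whether $D$ has a leaf or co-leaf. If $D$ has a leaf, \Lemma{ind3-leaf} shows $\bdy_\expect\Moduli(D,\gen{x},\gen{y})$ is a $6$-cycle or a disjoint union of two $6$-cycles; the dual case is handled by \Corollary{ind3-coleaf}. If $D$ has neither leaves nor co-leaves, \Lemma{no-leaves-or-coleaves} enumerates the only four possibilities up to isotopy and reordering: configurations \Object{fig:conf-d}--\Object{fig:conf-g} of \Figure{ind-3-res-config}. Configurations \Object{fig:conf-d} and \Object{fig:conf-e} are handled directly by \Lemma{ind3-hard-cases}, and their duals \Object{fig:conf-f} and \Object{fig:conf-g} by \Corollary{ind3-dual-hard-cases}. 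Together these cases exhaust every index $3$ basic decorated resolution configuration, so $\Forget|_\bdy$ is a trivial covering in all cases and the required moduli spaces exist.

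The main obstacle is not present in writing this proof itself, since the case analysis has already been carried out in the preceding lemmas; the substantive work lay in \Lemma{ind3-leaf} and \Lemma{ind3-hard-cases}, where the ladybug matching rule had to combine correctly with the rest of the cube structure to close up into $6$-cycles rather than, say, a single $12$-cycle. This is also where the choice of the \OutIn{} pairing (as opposed to an arbitrary matching) becomes essential, so the only thing left to emphasize in the writeup is that the cases above are exhaustive.
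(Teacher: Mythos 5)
Your proposal is correct and follows essentially the same route as the paper: reduce via \Part{extend-exist} of \Proposition{extend-moduli} to showing each component of $\bdy_\expect\Moduli(D,\gen{x},\gen{y})$ is a $6$-cycle, then quote \Lemma{ind3-leaf}, \Corollary{ind3-coleaf}, \Lemma{no-leaves-or-coleaves}, \Lemma{ind3-hard-cases} and \Corollary{ind3-dual-hard-cases} to cover all cases. No gaps.
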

\begin{proof}
  This follows from
  \Proposition{extend-moduli}, \Proposition{ind-2-exist} and
  the discussion in the rest of this
  subsection. \Proposition{ind-2-exist} takes care of the case that
  $\ind(D)\leq 2$. For the case $\ind(D)=3$, by
  \Proposition{extend-moduli} it suffices to verify that the map
  $\bdy_\expect \Moduli(D,x,y)\to
  \bdy\Moduli_{\CubeFlowCat(3)}(\vect{1},\vect{0})$ is a trivial
  covering map. As discussed in the beginning of the section, this is
  equivalent to verifying that each component of $\bdy_\expect
  \Moduli(D,x,y)$ is a $6$-cycle. \Lemma{ind3-leaf} and
  \Corollary{ind3-coleaf} take care of the cases where $D$ has a leaf
  or co-leaf. By \Lemma{no-leaves-or-coleaves}, there are four
  remaining cases. \Lemma{ind3-hard-cases}
  takes care of two of these four cases and
  \Corollary{ind3-dual-hard-cases} polishes off the last two cases.
\end{proof}

\subsection{\texorpdfstring{$n$}{n}-dimensional moduli spaces, \texorpdfstring{$n\geq 3$}{n>=3}}\label{sec:nd-moduli-spaces}
\begin{prop}
  There are spaces $\Moduli(D,\gen{x},\gen{y})$ and maps $\Forget$
  satisfying
  \Condition[s]{res-moduli-compose}--\ConditionCont{res-moduli-cover-trivial}.
\end{prop}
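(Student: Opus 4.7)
The plan is to proceed by induction on $\ind(D)$, invoking the machinery already set up in \Proposition{extend-moduli} together with the low-index base cases.

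For the base case, \Proposition{ind-3-exist} already provides $\Moduli(D,\gen{x},\gen{y})$ and $\Forget$ satisfying \Condition[s]{res-moduli-compose}--\ConditionCont{res-moduli-cover-trivial} for every basic decorated resolution configuration $(D,\gen{x},\gen{y})$ with $\ind(D)\leq 3$. So I would take this as the start of the induction.

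For the inductive step, suppose the moduli spaces and forgetful maps have been constructed for every basic decorated resolution configuration of index at most $n$, where $n\geq 3$, and let $(D,\gen{x},\gen{y})$ be a basic decorated resolution configuration with $\ind(D)=n+1$. Since the inductive hypothesis provides all the data needed to apply \Proposition{extend-moduli}, the conclusion~\Part{extend-exist} of that proposition directly gives us $\Moduli(D,\gen{x},\gen{y})$ and $\Forget$ satisfying \Condition[s]{res-moduli-compose}--\ConditionCont{res-moduli-cover-trivial}, provided that $\Forget|_{\bdy}\co \bdy_\expect\Moduli(D,\gen{x},\gen{y})\to \bdy\Moduli_{\CubeFlowCat(n+1)}(\vect{1},\vect{0})$ is a trivial covering map. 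But by \Lemma{cube-cat-struct}, $\bdy\Moduli_{\CubeFlowCat(n+1)}(\vect{1},\vect{0})$ is homeomorphic to $S^{n-1}$, which is simply connected for $n\geq 3$; so the covering map $\Forget|_{\bdy}$, being a cover of a simply connected space, is automatically trivial on each component. This is exactly the content of the last sentence of~\Part{extend-exist}, and completes the inductive step.

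There is no main obstacle beyond the low-index verifications, which are already done: the hard combinatorial work (the ladybug matching in \Section{1D}, and the $6$-cycle case analysis in \Section{ind-3-bdy}) occurs only at indices $2$ and $3$, precisely because $S^0$ and $S^1$ admit nontrivial covers. Once $n\geq 3$, the relevant boundary sphere $S^{n-1}$ is simply connected and the extension is automatic, so the induction runs with no further input. Thus applying \Proposition{extend-moduli} inductively completes the construction at all indices.
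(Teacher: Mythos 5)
Your proposal is correct and follows essentially the same route as the paper: the base case $\ind(D)\leq 3$ is \Proposition{ind-3-exist}, and the higher-index cases follow inductively from \Part{extend-exist} of \Proposition{extend-moduli}, using that any cover of the sphere $S^{n-1}$ is trivial once $n\geq 3$. The paper states this more tersely, but the inductive structure and the key input are identical.
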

\begin{proof}
  \Proposition{ind-3-exist} takes care of the case that $\ind(D)\leq
  3$. The case $\ind(D)\geq 4$ follows from
  \Part{extend-exist} of \Proposition{extend-moduli}.
\end{proof}

\section{Invariance of the Khovanov spectrum}
\label{sec:invariance}
We devote this section to proving our main theorem,
\Theorem{kh-space}. The main work is in proving \Part{thm-invariant}. In the
construction of $\KhSpace^j(L)$ we made the following choices:
\begin{enumerate}
\item A choice of ladybug matching: left or right.
\item An oriented link diagram $L$ with $n$ crossings.
\item\label{item:choice:crossings} An ordering of the crossings of $L$.
\item A sign assignment $s$ of the cube $\Cube(n)$.
\item A neat embedding $\iota$ and a framing $\Frame$ for the cube
  flow category $\CubeFlowCat(n)$ relative $s$.
\item A framed neat embedding of the Khovanov flow category $\KhFlowCat(L)$
  relative to some $\TupV{d}$. This framed neat embedding is a perturbation of $(\iota,\Frame)$.
\item\label{item:choice:ABepR} Integers $A,B$ and real
  numbers $\ep,R$ used in the construction of the CW complex (\Definition{flow-gives-space}).
\end{enumerate}

We will prove independence of these choices in reverse order.

\begin{prop}\label{prop:choice-independent}
  Modulo the choice of a ladybug matching, the stable homotopy type
  of $\KhSpace^j(L)$ is an invariant of the link diagram.
\end{prop}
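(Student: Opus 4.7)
The plan is to address the choices \Item{choice:ABepR} through \Item{choice:crossings} in reverse order, reducing each to earlier lemmas of Sections~\ref{sec:cjs-puppe}, \ref{sec:cube-flow} and~\ref{sec:Kh-flow}. For brevity write $\Cat=\KhFlowCat(L)$ throughout.

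First, Choice~\Item{choice:ABepR} is immediate from \Lemma{CW-indep-ep-R-framing} (independence from $\ep,R$ and from $1$-parameter families of framings) together with \Lemma{CW-indep-A-B} (changing $A,B$ or stabilizing $\TupV{d}$ only re-suspends $\Realize{\Cat}$, which is invisible in the desuspended spectrum $\KhSpace^j(L)$). Next, Choice~(vi) is handled by \Lemma{perturb-connect-framings}: any two perturbations of a given coherent framing of $\Cat$ can be connected by a $1$-parameter family of framings after a suitable stabilization, and \Lemma{CW-indep-ep-R-framing} then yields a homotopy equivalence of the resulting CW complexes.

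For Choice~(v), fix a sign assignment $s$. Given two framings $(\iota_0,\Frame_0)$ and $(\iota_1,\Frame_1)$ of $\CubeFlowCat(n)$ refining $C^*_s(n)$, \Lemma{change-framing-cube} supplies a $1$-parameter family of framings of $\CubeFlowCat(n)$ connecting them (after stabilization). Since the functor $\Funky\co \Cat\to\CubeFlowCat(n)$ is a cover in the sense of \Definition{flow-cat-cover}, composing the family with $\Funky$ pulls back to a $1$-parameter family of coherent framings of neat immersions of $\Cat$, and after a further perturbation (and appealing again to \Lemma{perturb-connect-framings}), this gives a $1$-parameter family of framings of $\Cat$ itself. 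A final application of \Lemma{CW-indep-ep-R-framing} concludes independence of Choice~(v).

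For Choice~(iv), any two sign assignments $s$ and $s'$ differ by a coboundary $\diff t$ with $t\in C^0(\Cube(n),\F_2)$ (\Definition{sign-assignment}). The self-map of $C^*_s(n)$ that negates the generator at each vertex $v$ with $t(v)=1$ gives a chain isomorphism $C^*_s(n)\cong C^*_{s'}(n)$; at the level of a framing of $\CubeFlowCat(n)$ this is realized by reversing the $0$-dimensional framings at precisely those vertices and extending to higher-dimensional moduli spaces via the obstruction-theoretic argument of \Proposition{cube-can-be-framed}. Pulling back through the cover $\Funky$ produces a framing of $\Cat$ refining the Khovanov complex with respect to $s'$. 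Since the induced CW complexes differ only by renaming cells and (consistent) sign changes on attaching maps, they are homeomorphic; invariance then reduces to Choice~(v). Finally, Choice~\Item{choice:crossings} is handled by observing that a permutation $\sigma$ of the crossings induces an isomorphism of cube flow categories $\CubeFlowCat(n)\to\CubeFlowCat(n)$ by permuting coordinates, which lifts through the cover $\Funky$ to an isomorphism of Khovanov flow categories and hence an isomorphism of CW complexes (after transporting the sign assignment and framing through $\sigma$, which is covered by the previous steps).

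The main obstacle is Choice~(iv): one must carefully track how a gauge transformation of the sign assignment propagates from $0$-dimensional framings up through all higher-dimensional moduli spaces of $\CubeFlowCat(n)$, and then through the cover to $\Cat$, in a way that is compatible with the perturbation needed to frame $\Cat$. Everything else is essentially a bookkeeping assembly of the lemmas already proved.
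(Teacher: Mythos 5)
Most of your reductions (choices of $\ep,R$, of $A,B,\TupV{d}$, of the perturbation, of the framed neat embedding of $\CubeFlowCat(n)$ relative to a fixed $s$, and of the ordering of the crossings) match the paper's proof, which likewise invokes \Lemma{CW-indep-ep-R-framing}, \Lemma{CW-indep-A-B}, \Lemma{perturb-connect-framings} and \Lemma{change-framing-cube}, and treats reordering of crossings as pulling back $s$ and $\iota$ by an automorphism of $\CubeFlowCat(n)$. The genuine gap is in your treatment of the sign assignment, which is exactly the step you flag as the ``main obstacle.'' Your argument produces, via \Proposition{cube-can-be-framed}, \emph{some} framing of $\CubeFlowCat(n)$ refining $C^*_{s'}(n)$ (reversing the $0$-dimensional framings on the edges $\Cube_{u,v}$ with $t(u)+t(v)=1$ --- note these are indexed by edges, not vertices), but the higher-dimensional framings for $s$ and for $s'$ are then obtained by independent obstruction-theoretic choices, and nothing you have said compares them. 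The assertion that the two resulting CW complexes ``differ only by renaming cells and (consistent) sign changes on attaching maps, hence are homeomorphic'' is unsubstantiated: the attaching maps of the higher cells are determined by the framed embeddings of the higher-dimensional moduli spaces, a gauge change at a vertex is not realized by any global reflection of the ambient $\ESpace[\TupV{d}]{a}{b}$ (which would flip an entire grading level, not the chosen set of vertices), and the framework gives no cell-by-cell identification of the two realizations. So as written this step does not go through.

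The paper avoids this comparison entirely. Given $s_0,s_1$ on $\Cube(n)$ it chooses a sign assignment $s$ on $\Cube(n+1)$ restricting to $s_i$ on the two faces $\{i\}\times\Cube(n)$, forms $L'=L\amalg U$ for a one-crossing unknot $U$ whose $0$-resolution has two circles, and frames $\KhFlowCat(L')$ relative to $s$. The full subcategory $\Cat$ of $\KhFlowCat(L')$ on objects whose $U$-circles are labeled $x_+$ has no morphisms into or out of it, its realization is a wedge summand with trivial reduced cohomology, $\Realize{\KhFlowCat(L,s_0)}$ is a subcomplex of $\Realize{\Cat}$ with quotient $\Sigma\Realize{\KhFlowCat(L,s_1)}$, and \Lemma{exact-sequence-space} then gives $\Sigma\Realize{\KhFlowCat(L,s_0)}\simeq\Sigma\Realize{\KhFlowCat(L,s_1)}$. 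Note this yields only a \emph{stable} homotopy equivalence, which is all the proposition claims, whereas your argument asserts a homeomorphism --- a strictly stronger statement you would have to prove. If you want to keep a direct gauge-change argument you would need to actually construct a framing for $s'$ agreeing with the $s$-framing away from the affected cells and exhibit a cellular homotopy equivalence; otherwise, replace this step with the stabilization argument above (which your reduction of choice~\Item{choice:crossings} also relies on).
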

\begin{proof}
  We must verify independence of
  choices~\Item{choice:crossings}--\Item{choice:ABepR}. Throughout the
  proof we will suppress the decomposition along quantum gradings, but
  it will be clear that this decomposition is preserved.

  Independence of $\epsilon$ and $R$ is \Lemma{CW-indep-ep-R-framing},
  and independence of $A$, $B$ and $\TupV{d}$ is
  \Lemma{CW-indep-A-B}. (Note that this does introduce some
  suspensions and canceling de-suspensions.) Next, for a given neat
  embedding $\iota$ and
  framing $\Frame$ for the cube flow category $\CubeFlowCat(n)$, if we
  consider two framed neat embeddings $(\iota_0,\Frame_0)$ and
  $(\iota_1,\Frame_1)$ of $\KhFlowCat(L)$ which are perturbations of
  $(\iota,\Frame)$, by \Lemma{perturb-connect-framings}, after
  increasing $\TupV{d}$ if necessary we can find a one-parameter
  family of neat embeddings and framings $(\iota_t,\Frame_t)$
  connecting $(\iota_0,\Frame_0)$ and $(\iota_1,\Frame_1)$. Then, it
  again follows from \Lemma{CW-indep-ep-R-framing} that the Khovanov
  spaces $\Realize[{\iota_0,\Frame_0}]{\KhFlowCat}$ and
  $\Realize[{\iota_1,\Frame_1}]{\KhFlowCat}$ are homotopy equivalent.
  By \Lemma{change-framing-cube}, after increasing $\TupV{d}$ if
  necessary, any two choices of framed neat embeddings of the cube
  flow category $\CubeFlowCat(n)$ relative to $s$ can be connected by
  a $1$-parameter family of framed neat embeddings, and so lead to homotopy
  equivalent space $\Realize{\KhFlowCat}$, as before.

  Next we turn to independence of the sign assignment. Given two
  different sign assignments $s_0$ and $s_1$ for $\Cube(n)$ there is a
  sign assignment $s$ for $\Cube(n+1)$ whose restriction to the
  codimension-$1$ face $\{i\}\times\Cube(n)$ is $s_i$ for
  $i\in\{0,1\}$. Consider the link diagram $L'$ obtained by taking the
  disjoint union of $L$ with a $1$-crossing unknot $U$, drawn so that
  the $0$-resolution of $U$ consists of two circles (and hence the
  $1$-resolution of $U$ consists of a single circle). Order the
  crossings of $L'$ so that the crossing in $U$ is the first crossing
  and the other crossings are ordered as in $L$.  Frame $\Cube(n+1)$
  relative to the sign assignment $s$, and choose a framed neat embedding of
  $\KhFlowCat(L')$ which is a perturbation.

  Consider the full subcategory $\Cat$ of $\KhFlowCat(L')$ generated
  by objects where the (one or two) circles coming from $U$ are
  labeled $x_+$. In $\KhFlowCat(L')$, there are no arrows into or out
  of $\Cat$, so $\Realize{\Cat}$ is a summand of
  $\Realize{\KhFlowCat(L')}$. Moreover, $\Realize{\KhFlowCat(L,s_0)}$
  is a subcomplex of $\Realize{\Cat}$, and the suspension
  $\Sigma\Realize{\KhFlowCat(L,s_1)}$ is the quotient
  $\Realize{\Cat}/\Realize{\KhFlowCat(L,s_0)}$. The reduced cohomology
  of $\Realize{\Cat}$ is clearly trivial (compare \Proposition{RI}). Therefore, using \Lemma{exact-sequence-space},
  $\Sigma\Realize{\KhFlowCat(L,s_0)}\simeq
  \Sigma\Realize{\KhFlowCat(L,s_1)}$, as desired.

  Finally, for independence of the ordering of crossings, observe that
  changing the ordering of the crossings has the effect of composing
  the functor $\Forget\co \KhFlowCat(L)\to \CubeFlowCat(n)$ with an
  automorphism $\phi$ of $\CubeFlowCat(n)$. This is equivalent to
  replacing the sign assignment $s$ and framed neat embedding $\iota$
  of $\CubeFlowCat(n)$ with $\phi^*(s)=s\circ\phi$ and
  $\phi^*(\iota)=\iota\circ \phi$. So, independence of the ordering of
  the crossings follows from independence of the sign assignment and
  neat embedding.
\end{proof}

\captionsetup[subfloat]{width=0.2\textwidth}
\begin{figure}
  \centering
  \subfloat[RI.]{
    \xymatrix@C=0.1\textwidth{
      \vcenter{\hbox{\includegraphics[height=0.3\textheight]{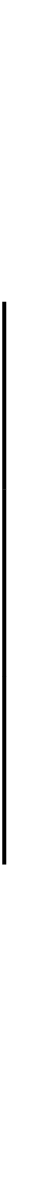}}}\ar@{->}[r]& \vcenter{\hbox{\includegraphics[height=0.3\textheight]{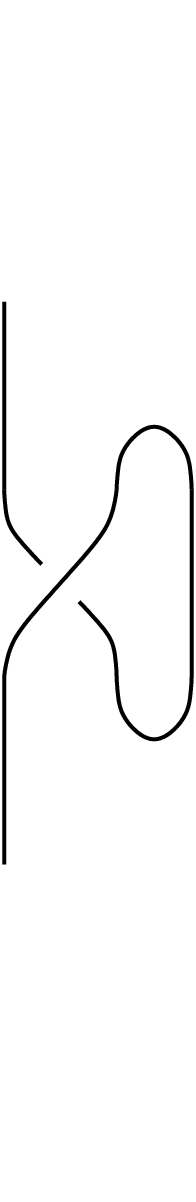}}}
    }
  }
  \hspace{0.03\textwidth}
  \subfloat[RII.]{
    \xymatrix@C=0.1\textwidth{
      \vcenter{\hbox{\includegraphics[height=0.3\textheight]{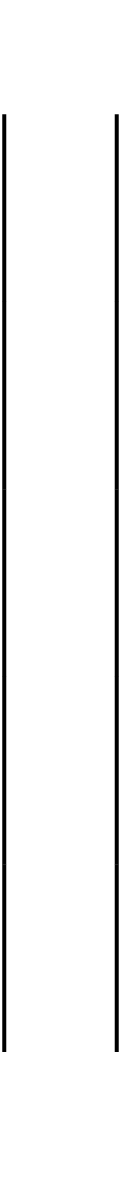}}}\ar@{->}[r]& \vcenter{\hbox{\includegraphics[height=0.3\textheight]{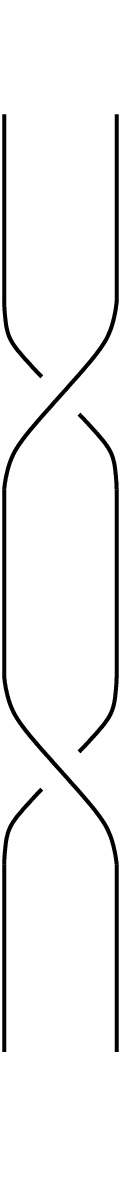}}}
    }
  }
  \hspace{0.03\textwidth}
  \subfloat[RIII.]{
    \xymatrix@C=0.1\textwidth{
      \vcenter{\hbox{\includegraphics[height=0.3\textheight]{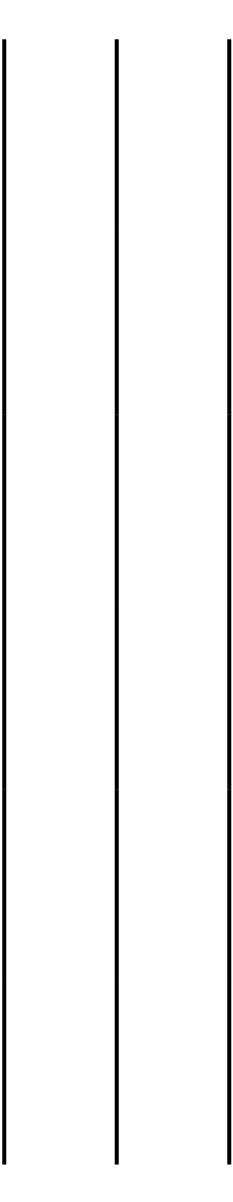}}}\ar@{->}[r]& \vcenter{\hbox{\includegraphics[height=0.3\textheight]{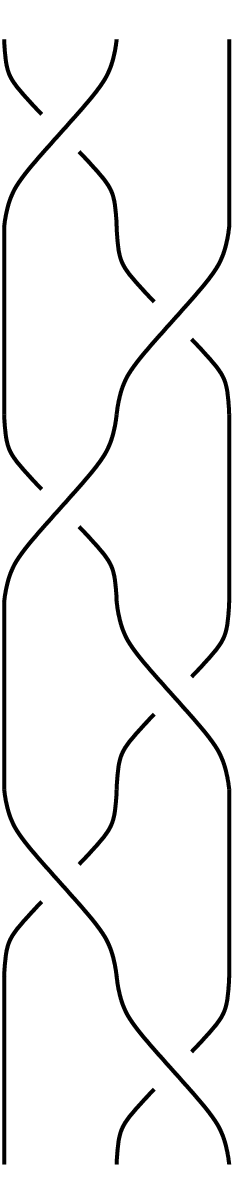}}}
    }
  }
  \caption{\textbf{The three Reidemeister moves.} The orientations of
    the strands are arbitrary.}\label{fig:Reid-moves}
\end{figure}

We now consider the three Reidemeister moves RI, RII and RIII of
\Figure{Reid-moves}. For unoriented link diagrams, it is enough to
consider just these three moves (and their inverses): The other type
of Reidemeister I move can be generated by RI and RII; and the usual
Reidemeister III move can be generated by this braid-like RIII and RII
(see \cite[Section 7.3]{Baldwin-hf-s-seq}). 

For each of RI, RII and RIII, the strands are oriented in the same
way before and after the move, but are otherwise oriented arbitrarily. The orientations of the
strands are only needed in order to count the number of positive and
negative crossings, which in turn are only needed in the definition of
the gradings. For all consistent orientations, in RI, RII and RIII,
$(n_+,n_-)$ increases by $(1,0)$, $(1,1)$ and $(3,3)$, respectively.

\begin{figure}
  \centering
  \captionsetup[subfloat]{width=0.4\textwidth}
  \subfloat[The subcomplex $C_1$.]{\label{fig:RI-sub}
    \makebox[0.4\textwidth][c]{
      \xymatrix@C=0.1\textwidth{
        \vcenter{\hbox{\includegraphics[height=0.15\textheight]{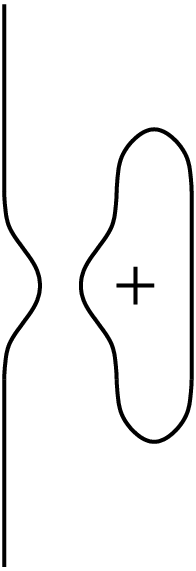}}}\ar@{->}[r]&
        \vcenter{\hbox{\includegraphics[height=0.15\textheight]{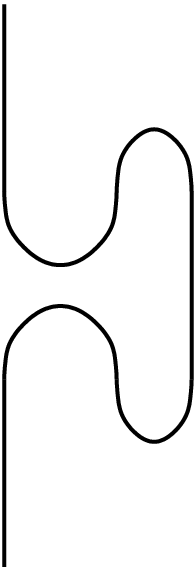}}}
      }
    }
  } 
  \hspace{0.03\textwidth}
  \subfloat[The quotient complex $C_2$.]{\label{fig:RI-quotient}
    \makebox[0.4\textwidth][c]{
      \xymatrix@C=0.1\textwidth{
        \vcenter{\hbox{\includegraphics[height=0.15\textheight]{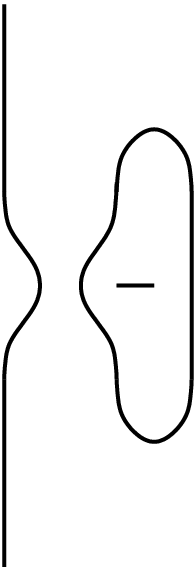}}}
      }
    }
  }
  \caption{\textbf{Invariance under the Reidemeister moves RI.} The
    relevant subcomplexes and quotient complexes are indicated.}
  \label{fig:Reidemeister-I}
\end{figure}

\begin{prop}\label{prop:RI}
  Let $L$ be a link diagram and suppose that $L'$ is obtained from $L$ by
  performing the Reidemeister move RI. Then $\KhSpace^j(L)$ is
  stably homotopy equivalent to $\KhSpace^j(L')$.
\end{prop}
\begin{proof}
  We will frame the two flow categories appropriately so that
  $\Realize{\KhFlowCat(L)}$ will be homotopy equivalent to
  $\Realize{\KhFlowCat(L')}$. It will be clear that the bi-grading
  shifts will work out so that the homotopy equivalence will induce a
  homotopy equivalence between $\KhSpace(L)$ and $\KhSpace(L')$, and
  the latter will respect the decomposition along the quantum
  gradings.
  
  Following~\cite[Section 3.5.1]{Bar-kh-khovanovs}, there is a
  contractible subcomplex $C_1$ of $\KhCx(L')$ so that the
  corresponding quotient complex $C_2=\KhCx(L')/C_1$ is isomorphic to
  $\KhCx(L)$; to develop some notation, we spell this out. The
  subcomplex $C_1$ and the quotient complex $C_2$ are shown in
  \Figure{RI-sub} and \Figure{RI-quotient}, respectively. That is, the
  $0$-resolution of the new crossing has a small circle; denote it
  $U$. Then $C_1$ consists of all $0$-resolutions at the new crossing
  where $U$ is labeled by $x_+$, as well as all $1$-resolutions at the
  new crossing (with any labeling of the components); while
  $C_2$ is generated by all $0$-resolutions at the new
  crossing where $U$ is labeled by $x_-$.

  It is immediate from the definitions that $C_1$ corresponds to a
  subcomplex of $\KhCx(L')$, and that $C_2$ is isomorphic to
  $\KhCx(L)$. In fact, $C_1$ corresponds to an upward closed
  subcategory $\Cat_1$ of $\KhFlowCat(L')$. The complementary downward
  closed subcategory $\Cat_2$ (corresponding to $C_2$) is obviously
  isomorphic to $\KhFlowCat(L)$. Moreover, the chain complex $C_1$ is
  acyclic (the horizontal arrow in \Figure{RI-sub} is an
  isomorphism). So, by \Lemma{exact-sequence-space}, for compatible
  choices of framings, the inclusion map $\Realize{\KhFlowCat(L)}\into
  \Realize{\KhFlowCat(L')}$ is a homotopy equivalence.
\end{proof}

\begin{figure}
  \centering
  \captionsetup[subfloat]{width=0.2\textwidth}
  \subfloat[RII: $C_1$.]{
    \makebox[0.2\textwidth][c]{
      \xymatrix{
        \vcenter{\hbox{\includegraphics[height=0.15\textheight]{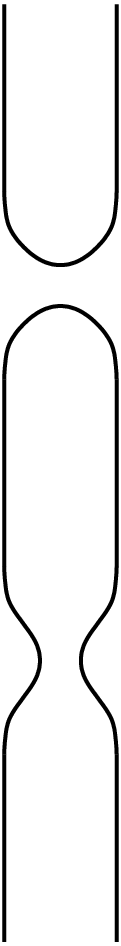}}}\\
        \vcenter{\hbox{\includegraphics[height=0.15\textheight]{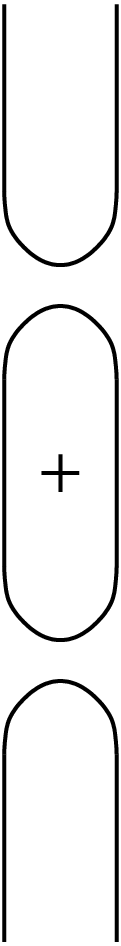}}}\ar[u]
      }
    }
  }
  \hspace{0.02\textwidth}
  \subfloat[RII: $C_2$.]{
    \makebox[0.2\textwidth][c]{
      \xymatrix{
        \vcenter{\hbox{\includegraphics[height=0.15\textheight]{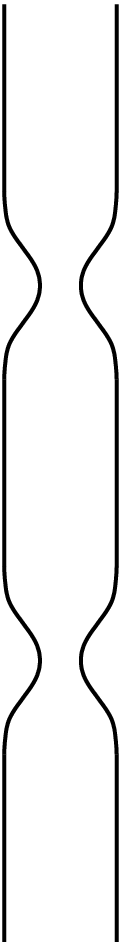}}}
        &  \vcenter{\hbox{\includegraphics[height=0.15\textheight]{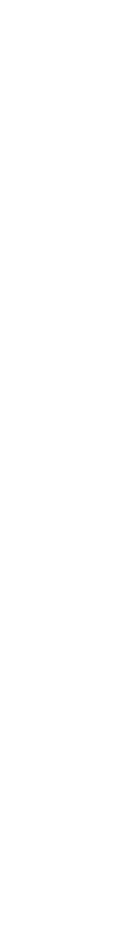}}}\\
        \vcenter{\hbox{\includegraphics[height=0.15\textheight]{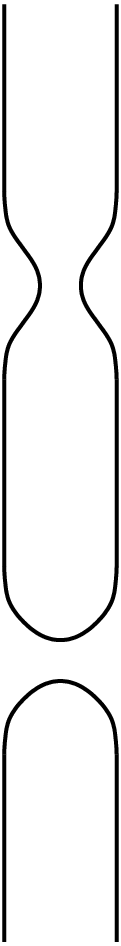}}}\ar[r]\ar[u]
        & \vcenter{\hbox{\includegraphics[height=0.15\textheight]{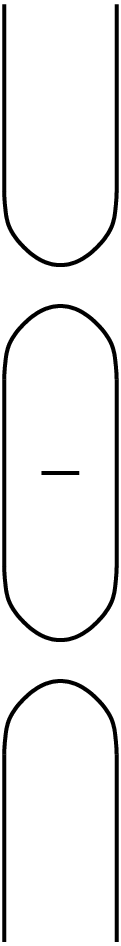}}}
      }
    }
  }
  \hspace{0.02\textwidth}
  \subfloat[RII: $C_3$.]{
    \makebox[0.2\textwidth][c]{
      \xymatrix{
        \vcenter{\hbox{\includegraphics[height=0.15\textheight]{RII-placeholder}}}
        &  \vcenter{\hbox{\includegraphics[height=0.15\textheight]{RII-placeholder}}}\\
        \vcenter{\hbox{\includegraphics[height=0.15\textheight]{RII-C00}}}\ar[r]
        & \vcenter{\hbox{\includegraphics[height=0.15\textheight]{RII-C01m}}}
      }
    }
  }
  \hspace{0.02\textwidth}
  \subfloat[RII: $C_4$.]{
    \makebox[0.2\textwidth][c]{
      \xymatrix{
        \vcenter{\hbox{\includegraphics[height=0.15\textheight]{RII-C10}}}\\
        \vcenter{\hbox{\includegraphics[height=0.15\textheight]{RII-placeholder}}}
      }
    }
  }
  \caption{\textbf{Invariance under the Reidemeister moves RII.} The
    relevant subcomplexes and quotient complexes are indicated.}
  \label{fig:Reidemeister-II}
\end{figure}

\begin{prop}\label{prop:RII}
  Let $L$ be a link diagram and suppose $L'$ is obtained from $L$ by
  performing the Reidemeister move RII. Then $\KhSpace^j(L)$ is
  stably homotopy equivalent to $\KhSpace^j(L')$.
\end{prop}
\begin{proof}
  As in the proof of RI invariance, we will suppress the bi-grading
  information. We will once again collapse acyclic subcomplexes and
  quotient complexes of $\KhCx(L')$ until we are left with
  $\KhCx(L)$. One sequence of such subcomplexes and quotient complexes
  is given in~\cite[Section 3.5.2]{Bar-kh-khovanovs}, which we repeat
  in \Figure{Reidemeister-II} for the reader's
  convenience. Specifically, the $(01)$-resolution (of the two new
  crossings) has a closed circle $U$. The $(01)$-resolutions in which
  $U$ is labeled by $x_+$ together with all $(11)$-resolutions
  generate an acyclic subcomplex $C_1$ of $\KhCx(L')$, and a
  corresponding upward closed subcategory $\Cat_1$ of
  $\KhFlowCat(L')$.  The objects not in $C_1$ (\respectively $\Cat_1$) form a
  quotient complex $C_2$ (\respectively downward-closed subcategory
  $\Cat_2$), whose objects correspond to the $(01)$-resolutions in which $U$ is
  labeled by $x_-$, the $(00)$-resolutions, and the
  $(10)$-resolutions. The complex $C_2$ has an acyclic quotient complex
  $C_3$ (corresponding to a further downward closed subcategory
  $\Cat_3$) consisting of the $(00)$-resolutions and the remaining
  $(01)$-resolutions. Moreover, the complement, $C_4$, corresponds
  exactly to the $(10)$-resolutions. The corresponding subcategory
  $\Cat_4$ is obviously isomorphic to $\KhFlowCat(L)$. So, two
  applications of \Lemma{exact-sequence-space} give the result.
\end{proof}

There are two general principles at work in the proof of
\Proposition{RII}, which we abstract for use in the RIII case. Let $v$
and $u$ be vertices in a (partial) cube of resolutions, such that
there is an arrow from $v$ to $u$, and one of the following holds.
\begin{itemize}
\item The arrow from $v$ to $u$ merges a circle $U$ of the (partial)
  resolution at $v$. Let $S$ be the set of all generators that
  correspond to $u$; and let $T$ be the set of all generators
  corresponding to $v$ with the circle $U$ labeled by $x_+$.
\item The arrow from $v$ to $u$ splits off a circle $U$ in the (partial)
  resolution at $u$. Let $S$ be the set of all generators that
  correspond to $u$ with the circle $U$ labeled by $x_-$; and let $T$
  be the set of all generators that correspond to $v$. 
\end{itemize}
Let $C$ be the chain complex generated by $S$ and $T$; it is an
acyclic complex, and therefore, we can delete it without changing the
homology. If in addition, $C$ is a subcomplex or a quotient complex of
the original chain complex, then in deleting $C$, we do not introduce
any new boundary maps.

\begin{figure}
  \centering
  \input{draws/RIIIcube}
  \caption{\textbf{Cube of resolutions for the braid-like RIII.}}
  \label{fig:RIII-cube}
\end{figure}

\begin{prop}\label{prop:RIII}
  Let $L$ be a link diagram and suppose $L'$ is obtained from $L$ by
  performing the braid-like Reidemeister move RIII. Then
  $\KhSpace^j(L)$ is stably homotopy equivalent to $\KhSpace^j(L')$.
\end{prop}
\begin{proof}
  Once again, we suppress the bi-grading information.  The relevant
  part of the cube of resolutions for $L'$ is shown in
  \Figure{RIII-cube}. The top half of the figure indicates a
  subcomplex of $\KhCx(L')$ (corresponding to an upward
  closed subcategory of $\KhFlowCat(L')$), the bottom half a
  quotient complex of $\KhCx(L')$ (corresponding to a
  downward closed subcategory of $\KhFlowCat(L')$); the homology is
  carried by the vertex $(000111)$ (corresponding to a subcategory
  isomorphic to $\KhFlowCat(L)$) on the middle-right of the
  figure. Note that the division essentially corresponds to the
  homological grading, with some of the vertices in the middle of the
  homological grading occurring in both the bottom and top halves
  (with some circle labeled differently). We will show that the top
  and bottom halves are acyclic. Assuming this, as in the proof of
  \Proposition{RII}, two applications of \Lemma{exact-sequence-space}
  show that $\Realize{\KhFlowCat(L')}$ is homotopy equivalent to
  $\Realize{\KhFlowCat(L)}$.

  In order to complete the proof, it only remains to see that the two
  halves are indeed acyclic chain complexes. We demonstrate this by
  doing the following two sequences of cancellations (the
  cancellations are similar to the ones in~\cite[Section
  7.3]{Baldwin-hf-s-seq}):
  \begin{center}
    \begin{tabular}{lp{0.7\textwidth}}
      \toprule
      Top half & 
      1*1111, 1*1110, 1*1101, 1*1011,
      110*11, 0111*1, *01111,
      1*1100, 1*1010, 1*1001,
      0111*0, 110*01, *01110, 01101*,
      *01011, 110*10, 010*11, 1*0011,
      01*101, *01101, 1001*1, 10*101.\\
      \midrule
      Bottom half &
      0000*0, 1000*0, 0100*0, 0010*0,
      00*100, 0*0001, 00001*,
      0110*0, 1100*0, 1*1000, 1010*0,
      01*100, 1*0001, *01001, 00*101,
      01*001, 01001*, 010*10, 0*0110, *01100,
      *01010, 100*10, 1*0100, 10*100.\\
      \bottomrule
    \end{tabular}
  \end{center}
  Here, for instance, 1*1111 means to cancel along the edge from the
  101111 resolution to the 111111 resolution (using the principles
  enunciated above). It is straightforward to verify that each of
  these cancellations corresponds to taking either a subcomplex or a
  quotient complex of the image under the previous cancellations;
  hence, during the cancellations we do not introduce additional
  maps. 
\end{proof}

Finally, we show that the space $\KhSpace^j(L)$ is independent of the
choice of ladybug matching. This argument is inspired by ideas of
O.~Viro.
\begin{proposition}\label{prop:ladybug-invariance}
  Up to stable homotopy equivalence, $\KhSpace^j(L)$ is independent of
  the ladybug matching.
\end{proposition}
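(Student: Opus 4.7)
The plan is to exhibit, following Viro, an isomorphism between the Khovanov flow categories arising from the two possible ladybug matchings via a reflection of $S^2$. Let $\tau\from S^2\to S^2$ be an orientation-reversing involution fixing $\infty$, for example a reflection chosen to avoid $L$. I would study how $\tau$ transforms the combinatorial data underlying $\KhFlowCat(L)$.

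Applying $\tau$ to the planar diagram has several interacting effects. First, because the \OutIn{}/\InOut{} distinction at each ladybug configuration depends on the orientation of the disk each circle bounds in $\R^2$, and $\tau$ reverses this orientation, $\tau$ interchanges the two choices of pair at every ladybug (cf.\ \Lemma{in-out-preserved}, which shows that isotopies in $S^2$ preserve the \OutIn{} pair; an orientation-reversing map does not). Second, because $\tau$ reverses the orientation of the projection plane, positive and negative crossings are interchanged, and likewise the $0$- and $1$-resolutions at each crossing; consequently $\tau(L)$ is a planar diagram for the mirror link $m(L)$. Third, the grading data (the counts $n_\pm$ and the labels $x_\pm$) transforms in a controlled way.

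Combining these three effects yields, up to explicit grading shifts, an isomorphism between the flow category of $L$ built using the \InOut{} matching (that is, with the $\Moduli_{\inout}$ moduli spaces of \Section{1D} in place of $\Moduli$) and the standard \OutIn{} flow category $\KhFlowCat(\tau(L))$; applied to the mirror $\tau(L)$ in place of $L$, it yields the complementary isomorphism. Since the proofs of \Proposition{RI}, \Proposition{RII}, and \Proposition{RIII} go through verbatim for either choice of matching, both resulting Khovanov spectra are link invariants. Composing the two isomorphisms above with this Reidemeister invariance closes the loop and produces the desired stable homotopy equivalence between the two versions of $\KhSpace^j(L)$.

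The main obstacle will be carefully bookkeeping the grading shifts, sign-assignment changes, and identifications of labelings induced by $\tau$, since $\tau$ simultaneously alters crossings, resolutions, labels on circles, and orientations of moduli spaces. Once the combinatorial identification of flow categories is made precise (at the level of objects, morphism spaces, composition maps, and covers of $\CubeFlowCat$), the corresponding stable homotopy equivalence of realizations follows from the framing-manipulation arguments used in \Proposition{choice-independent} together with \Lemma{exact-sequence-space}.
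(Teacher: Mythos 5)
Your reflection idea is the right starting point (and is indeed the Viro-inspired trick the paper uses), but the way you apply it breaks down in two places, both stemming from the same choice: you reflect the diagram \emph{without} reversing the crossings. A planar reflection $\tau$ keeps the over/under data, so $\tau(L)$ is a diagram of the mirror $\mirror{L}$, and—as you yourself note—it exchanges the $0$- and $1$-resolutions at every crossing. That means the induced correspondence on labeled resolution configurations \emph{reverses} the partial order on the cube: a morphism space $\Moduli_{\KhFlowCat(L)}(\ob{x},\ob{y})$ is carried to a would-be morphism space going the opposite way in $\KhFlowCat(\tau(L))$. So there is no isomorphism of flow categories between the \InOut{} category for $L$ and the standard category for $\tau(L)$; what $\tau$ gives is an identification with the \emph{opposite} flow category of $\mirror{L}$, and promoting that to a statement about $\KhSpace$ is exactly the Spanier--Whitehead duality statement of \Conjecture{mirror}, which this paper does not prove. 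Second, even if both claimed identifications were available, the loop does not close: you would obtain relations between $\KhSpace^{j}_{\inout}(L)$ and $\KhSpace(\mirror{L})$, and between $\KhSpace(L)$ and $\KhSpace_{\inout}(\mirror{L})$, and Reidemeister invariance (which indeed holds verbatim for either matching) only lets you move within the isotopy class of a fixed link. To conclude $\KhSpace^j(L)\simeq\KhSpace^j_{\inout}(L)$ you would need to compare $L$ with its mirror, which is false at the level of Khovanov homology already for the trefoil.

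The paper's proof repairs precisely this point: take $L'$ to be the reflection of $L$ across the $y$-axis \emph{with all crossings reversed}. Then $L'$ is a diagram of the \emph{same} link (the two differ by a rotation by $\pi$ about the $y$-axis in space), the two flips of the $0/1$ convention cancel so the reflection induces a bijection of labeled resolution configurations preserving the cube coordinates, gradings, and differential, while still exchanging the \OutIn{} and \InOut{} pairs in every ladybug configuration. This gives an honest isomorphism of framed flow categories $\KhFlowCat(L)\cong\KhFlowCat^{*}(L')$ (the higher-index moduli spaces being determined inductively by the index $\leq 2$ data), and then Reidemeister invariance for the fixed $\InOut{}$-matching category yields $\Realize{\KhFlowCat(L)}\simeq\Realize{\KhFlowCat(L')}\cong\Realize{\KhFlowCat^{*}(L)}$, with no grading shifts since $n_{\pm}$ are unchanged. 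If you adjust your construction so that $\tau$ is accompanied by reversing every crossing, the rest of your outline goes through.
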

\begin{proof}
  Fix a link diagram $L$, and let $L'$ be the result of reflecting $L$
  across the $y$-axis, say, and reversing all of the crossings. (In
  other words, the corresponding links differ by rotation by $\pi$
  around the $y$-axis.) The Khovanov homologies of $L$ and $L'$ are
  isomorphic for two different reasons:
  \begin{enumerate}
  \item The diagrams $L$ and $L'$ represent the same 
    link, and so are related by a sequence of Reidemeister moves.
  \item There is a bijection between states for $L$ and $L'$, obtained
    by reflecting across the $y$-axis.
  \end{enumerate}
  
  This observation allows us to prove independence of the ladybug
  matching, as follows. Let $\KhFlowCat$ denote the standard Khovanov
  flow category, constructed using the usual, right pair in the
  ladybug matching, and let $\KhFlowCat^*$ denote the Khovanov flow
  category constructed using the left pair in the ladybug
  matching. 

  We claim that there is an isomorphism of framed flow categories
  $\KhFlowCat(L)\cong \KhFlowCat^*(L')$. Indeed, reflecting across the
  $y$-axis gives a bijection between objects of $\KhFlowCat(L)$ and
  objects of $\KhFlowCat^*(L')$. This bijection respects the Khovanov
  differential, and hence induces an identification of the index $1$
  moduli spaces. The reflection exchanges the right and left pairs in
  each ladybug configuration, and thus induces an identification
  between index $2$ moduli spaces in $\KhFlowCat(L)$ and
  $\KhFlowCat(L')$.  Since the higher index moduli spaces are built
  inductively from these, it follows that the reflection induces the
  claimed isomorphism $\KhFlowCat(L)\cong \KhFlowCat^*(L')$.

  So, we have homotopy equivalences
  \[
  \Realize{\KhFlowCat(L)}\simeq \Realize{\KhFlowCat(L')}\cong
  \Realize{\KhFlowCat^*(L)}.
  \]
  Here, the first stable homotopy equivalence uses Reidemeister
  invariance of $\Realize{\KhFlowCat}$, and the second isomorphism
  uses the claim above. This proves the result.
\end{proof}

\begin{proof}[Proof of \Theorem{kh-space}]
  The proof of \Part{thm-refines} is straightforward. To be precise,
  choose the standard sign assignment $s_0$ in \Definition{Kh-space};
  then it follows from \Definition{knot-diag-to-kh-chain-complex} and
  \Definition{khovanov-flow-category} that the subcategory of the
  Khovanov flow category $\KhFlowCat(L)$ in quantum grading $j$
  refines $\KhCx^{*,j}(L)$. Hence,
  \Lemma{realization-well-defined-refines-chain-complex} implies that
  the reduced cohomology of $\KhSpace^j(L)$ is the Khovanov homology
  in quantum grading $j$.

  \Part{thm-invariant} follows from the previous
  propositions. \Proposition{choice-independent} implies that the
  stable homotopy type of $\KhSpace^j(L)$ depends only on the link diagram
  $L$, and not on the other choices made in the
  construction. Propositions~\ref{prop:RI},~\ref{prop:RII}
  and~\ref{prop:RIII} imply that, if $L$ and $L'$ are link diagrams
  for isotopic links then, for appropriate auxiliary choices,
  $\KhSpace^j(L)\simeq \KhSpace^j(L')$. Finally
  \Proposition{ladybug-invariance} tells us that this construction is
  independent of the choice of ladybug matching as well.
\end{proof}

\section{Skein sequence}\label{sec:skein}
Let $L$ be a link diagram with a distinguished crossing $c$, and let
$L_0$ and $L_1$ be the links obtained by performing the $0$- and
$1$-resolutions at $c$, respectively. Then for any choice of
orientations of $L$, $L_0$ and $L_1$ there is an exact triangle
on Khovanov homology
\[
\cdots\to\Kh^{i+a,j+b}(L_1)\to \Kh^{i,j}(L)\to \Kh^{i+c,j+d}(L_0)\to\Kh^{i+a+1,j+b}(L_1)\to\cdots.
\]
Here, $a$, $b$, $c$ and $d$ are integers which depend on how the
orientations were chosen.

\begin{theorem}
  With notation as above, there is a cofibration sequence
  \[
  \Sigma^{-c}\KhSpace^{j+d}(L_0)\into \KhSpace^{j}(L)\onto \Sigma^{-a}\KhSpace^{j+b}(L_1).
  \]
\end{theorem}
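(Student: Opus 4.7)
The plan is to produce the cofibration sequence directly by splitting the Khovanov flow category along the distinguished crossing. Order the crossings of $L$ so that $c$ is the first crossing, and let $\Cat_\epsilon \sbseq \KhFlowCat(L)$ (for $\epsilon\in\{0,1\}$) denote the full subcategory whose objects are those $(\AssRes{L}{u},\gen{x})$ with $u_1=\epsilon$. A morphism in $\KhFlowCat(L)$ exists only when $v\leq u$ componentwise (see \Definition{prec}), so no morphism can flip the first coordinate from $0$ to $1$. Hence $\Cat_0$ is a downward closed subcategory of $\KhFlowCat(L)$ and $\Cat_1$ is its complementary upward closed subcategory. By \Lemma{down-sub-up-quot}, equipping everything with compatible induced framings, $\Realize{\Cat_0}$ is a subcomplex of $\Realize{\KhFlowCat(L)}$ with quotient $\Realize{\Cat_1}$, yielding a cofibration
\[
\Realize{\Cat_0}\into \Realize{\KhFlowCat(L)}\onto \Realize{\Cat_1}.
\]

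Next I would identify $\Cat_\epsilon$ with $\KhFlowCat(L_\epsilon)$. The canonical bijection $\AssRes{L}{(\epsilon,u')}\leftrightarrow \AssRes{L_\epsilon}{u'}$ (dropping the now-irrelevant first coordinate) matches decorated resolution configurations appearing in $\Cat_\epsilon$ with those appearing in $\KhFlowCat(L_\epsilon)$, and preserves the partial order $\prec$. Since the inductive construction of the moduli spaces in \Section{mod-dec-res-config} depends only on the underlying decorated resolution configurations, this upgrades to a canonical isomorphism $\Cat_\epsilon\cong \KhFlowCat(L_\epsilon)$ of flow categories (forgetting bigradings). The framing on $\Cat_\epsilon$ induced from that of $\KhFlowCat(L)$ may differ from a framing chosen independently for $\KhFlowCat(L_\epsilon)$, but by \Proposition{choice-independent} the associated spectra are homotopy equivalent.

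Finally, one must translate the discrepancy in bigradings between $\Cat_\epsilon\sbs \KhFlowCat(L)$ and $\KhFlowCat(L_\epsilon)$ into suspension shifts. By \Definition{knot-diag-to-kh-chain-complex}, the homological grading of $(\AssRes{L}{(\epsilon,u')},\gen{x})\in\Cat_\epsilon$ differs from that of the corresponding object in $\KhFlowCat(L_\epsilon)$ by the integer $\epsilon+n_-(L_\epsilon)-n_-(L)$, and the quantum grading differs by a similar linear expression in $\epsilon$, $n_\pm(L)$, $n_\pm(L_\epsilon)$; both depend on the chosen orientations. By \Lemma{CW-indep-A-B}, an integer shift of homological grading corresponds to a suspension of the resulting spectrum, and the quantum shift respects the wedge decomposition $\KhSpace=\bigvee_j\KhSpace^j$. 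Choosing orientations so that these shifts realize the integers $a,b,c,d$ of the statement, the cofibration above becomes
\[
\Sigma^{-c}\KhSpace^{j+d}(L_0)\into \KhSpace^j(L)\onto \Sigma^{-a}\KhSpace^{j+b}(L_1),
\]
as desired. The categorical splitting is essentially immediate from \Lemma{down-sub-up-quot}, so the main (though routine) obstacle is the grading bookkeeping.
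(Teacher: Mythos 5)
Your proposal is correct and follows essentially the same route as the paper: the paper's proof is precisely the observation that $\KhFlowCat(L_0)$ sits inside $\KhFlowCat(L)$ as a downward closed subcategory with complementary upward closed subcategory $\KhFlowCat(L_1)$, so that \Lemma{down-sub-up-quot} gives the subcomplex/quotient-complex cofibration. The identification of the two subcategories with $\KhFlowCat(L_0)$ and $\KhFlowCat(L_1)$ and the orientation-dependent grading shifts, which you spell out, are left implicit in the paper.
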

\begin{proof}
  This is immediate from \Lemma{down-sub-up-quot}: the flow category
  $\KhFlowCat(L_0)$ is downward closed subcategory of $\KhFlowCat(L)$
  with complementary upward closed
  subcategory $\KhFlowCat(L_1)$, so $\Realize{\KhFlowCat(L_0)}$ is a
  subcomplex of $\Realize{\KhFlowCat(L)}$ with quotient complex
  $\Realize{\KhFlowCat(L_1)}$.
\end{proof}

\section{Reduced version}\label{sec:reduced}
In~\cite[Section 3]{Kho-kh-patterns}, Khovanov introduced a reduced
version of Khovanov homology, whose Euler characteristic is the
ordinary Jones polynomial $V(L)$.  To define it, fix a
point $p\in L$, not at one of the crossings. For each resolution
$\AssRes{L}{v}$ of $L$, $p$ lies in some circle $Z_p(v)$. The
generators of $\KhCx(L)$ in which $Z_p(v)$ is labeled by $x_-$ span a
subcomplex $\rKhCx_-(L)$ of $\KhCx(L)$, and the generators of
$\KhCx(L)$ in which $Z_p(v)$ is labeled by $x_+$ generate the
corresponding quotient complex $\rKhCx_+(L)$. Moreover, it is easy to
see that the complexes $\rKhCx_-(L)$ and $\rKhCx_+(L)$ are isomorphic;
keeping track of gradings, the isomorphism identifies
$\rKhCx^{i,j-1}_-(L)$ and $\rKhCx^{i,j+1}_+(L)$. Let $\rKhCx(L)$
denote either of these complexes, graded so that
$\rKhCx^{i,j}(L)=\rKhCx^{i,j-1}_-(L)$. Then there are short exact
sequences of chain complexes
\[
0\to \rKhCx^{*,j+1}\to \KhCx^{*,j}\to \rKhCx^{*,j-1}\to 0,
\]
which induce long exact sequences
\begin{equation}
  \label{eq:rKh-and-Kh}
  \dots\to \rKh^{i,j+1}(L)\to \Kh^{i,j}(L)\to \rKh^{i,j-1}(L)\to \rKh^{i+1,j+1}(L)\to\cdots;
\end{equation}
compare~\cite[Proposition 4.3]{Ras-kh-survey}.

We can imitate this construction to produce a spectrification of the
reduced Khovanov homology. Again, fix $p\in L$, not at one of the crossings. Let $\rKhFlowCat_-(L)$
(\respectively $\rKhFlowCat_+(L)$) denote the full subcategory of
$\KhFlowCat(L)$ whose objects are labeled resolution configurations
$(\AssRes{L}{v},\gen{x})$ so that the labeling in $x$ of the circle
$Z_p(v)$ is $x_-$ (\respectively $x_+$); this is an upward closed
(\respectively downward closed) subcategory. As with the Khovanov chain
complex, it is straightforward to verify that there is an isomorphism
$\rKhFlowCat_-(L)\cong \rKhFlowCat_+(L)$, commuting with the functor
$\Forget\co \KhFlowCat(L)\to \CubeFlowCat(n)$. Let $\rKhFlowCat(L)$
denote either of these (isomorphic) categories. Define the gradings on
$\rKhFlowCat(L)$ to agree with the gradings on $\rKhCx(L)$; that is,
the quantum grading is shifted by $\mp1$ from the quantum grading on
$\rKhFlowCat_\pm(L)$, and the internal (homological) grading is
inherited from $\KhFlowCat(L)$. Let
\[
\rKhSpace(L)=\Sigma^{-C} \Realize{\rKhFlowCat(L)},
\]
where $C$ is chosen so that the cohomology of $\rKhSpace(L)$ agrees
with the Khovanov homology; compare \Definition{Kh-space}.

\begin{theorem} The spectrum $\rKhSpace(L)=\bigvee_j \rKhSpace^j(L)$
  satisfies the following properties:
  \begin{enumerate}
  \item\label{item:rKh-refines} The reduced cohomology of
    $\rKhSpace^j(L)$ is the reduced Khovanov homology $\rKh^{*,j}(L)$.
  \item\label{item:rKh-invariant} The stable homotopy type of
    $\rKhSpace^j(L)$ depends only on the isotopy class of $L$, and not
    on any of the auxiliary choices made in the construction.
  \item\label{item:rKh-sequence} There is a cofibration
    $\rKhSpace^{j-1}(L)\to \KhSpace^j(L)$ so that
    $\KhSpace^j(L)/\rKhSpace^{j-1}(L)\simeq \rKhSpace^{j+1}(L)$. These
    maps induce the long exact sequence in \Equation{rKh-and-Kh}.
  \end{enumerate}
\end{theorem}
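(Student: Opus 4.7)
The plan has three steps: identify $\rKhFlowCat_\pm(L)$ as sub/quotient subcategories of $\KhFlowCat(L)$ and prove the isomorphism $\rKhFlowCat_-(L)\cong\rKhFlowCat_+(L)$; deduce \Part{rKh-refines} and \Part{rKh-sequence} directly from this; then deduce \Part{rKh-invariant} by restricting the invariance arguments of \Section{invariance}.

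First I will verify that $\rKhFlowCat_+(L)$ is a downward-closed subcategory of $\KhFlowCat(L)$ and $\rKhFlowCat_-(L)$ is the complementary upward-closed subcategory. The content is that if $(E,\gen{y})\prec(D,\gen{x})$ then $\gen{y}(Z_p)=x_-$ forces $\gen{x}(Z_p)=x_-$; a short case analysis on the merge and split possibilities in \Definition{prec}, applied to the circle containing $p$, establishes this. Next I produce the isomorphism $\rKhFlowCat_-(L)\cong\rKhFlowCat_+(L)$ by swapping the $x_\pm$-label of $Z_p(v)$ on every object: the same case analysis shows this swap is compatible with $\prec$ and thus induces isomorphisms of the posets $P(D,\gen{x},\gen{y})$. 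Since the higher moduli spaces are built inductively from these posets in \Section{mod-dec-res-config}, and the ladybug matching depends only on the underlying resolution configuration (not on its labels), the swap extends to an isomorphism of flow categories commuting with the functor $\Forget\from\KhFlowCat(L)\to\CubeFlowCat(n)$; framings transport accordingly. At this point \Part{rKh-refines} and \Part{rKh-sequence} are immediate: by \Lemma{down-sub-up-quot} we have a cofibration
\[
\Realize{\rKhFlowCat_+(L)}\to\Realize{\KhFlowCat(L)}\to\Realize{\rKhFlowCat_-(L)};
\]
\Lemma{realization-well-defined-refines-chain-complex} identifies the cochain complexes of the two outer terms with $\rKhCx_\pm(L)$ up to shift, which gives \Part{rKh-refines}; and tracking the quantum grading shifts turns the cofibration into the one of \Part{rKh-sequence}, whose long exact sequence is \Equation{rKh-and-Kh}.

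For \Part{rKh-invariant}, the arguments in \Proposition{choice-independent} (independence of the sign assignment, crossing order, framing, etc.) are natural with respect to restriction to a downward-closed subcategory and so carry over verbatim to $\rKhFlowCat(L)$. For Reidemeister invariance, I place $p$ away from the Reidemeister region. Each of \Proposition{RI}, \Proposition{RII}, and \Proposition{RIII} establishes the equivalence by identifying acyclic subcomplexes or quotient complexes of $\KhCx(L')$ generated by configurations with prescribed labels on certain circles near the Reidemeister region. Since $Z_p$ is disjoint from these circles, restricting to the reduced subcategory commutes with passing to those acyclic pieces, and the restricted complexes remain acyclic (the cancellation maps in those proofs do not touch the label of $Z_p$). \Lemma{exact-sequence-space} then yields $\rKhSpace^j(L)\simeq\rKhSpace^j(L')$.

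The main obstacle will be independence of $\rKhSpace^j(L)$ under isotopy of the basepoint $p$ along $L$. Motion of $p$ that does not cross any double point of the diagram leaves every $Z_p(v)$ unchanged and is therefore automatic. Moving $p$ past a crossing does alter the reduced subcategory, but the change can be absorbed by an acyclic sub/quotient cancellation analogous to the standard chain-level proof that reduced Khovanov homology depends only on the connected component containing $p$; lifting this cancellation to the flow-category level is the main technical point, and uses the same kind of sub/quotient techniques as the Reidemeister arguments above.
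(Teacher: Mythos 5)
Your treatment of the sub/quotient structure, the isomorphism $\rKhFlowCat_-(L)\cong\rKhFlowCat_+(L)$, \Part{rKh-refines}, \Part{rKh-sequence}, the choice-independence, and the Reidemeister moves away from $p$ all match the paper's argument. The problem is your last paragraph, which is where the actual content of \Part{rKh-invariant} lives: you identify ``moving $p$ past a crossing'' (equivalently, isotopies that push a strand over or under the basepoint) as the main technical point and then do not prove it, only asserting that it can be absorbed by ``an acyclic sub/quotient cancellation analogous to the standard chain-level proof.'' That assertion is doubly problematic. First, the standard chain-level proof that the basepoint can be moved along a component is not a cancellation of an acyclic subcomplex or quotient complex; it is a chain-homotopy argument (the two basepoint actions of $x_-$ are chain homotopic, cf.\ Khovanov's module-structure argument), and the only tool this paper provides for lifting chain-level simplifications to the spectrum level is \Lemma{exact-sequence-space}, which applies exclusively to downward/upward closed subcategories with acyclic associated complex. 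A general chain homotopy equivalence does not lift through this machinery, so your ``main technical point'' is a genuine unproved claim, not a routine verification.

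Second, the paper shows that this difficulty never has to be confronted: the theorem is an invariance statement for the pointed link, and any two diagrams of isotopic pointed links can be connected by Reidemeister moves that avoid $p$ together with isotopies of the diagram in $S^2$ (possibly passing over $\infty$), because passing a strand over or under $p$ can be traded for carrying the strand around the other side of $S^2$. Diagram isotopies in $S^2$ do not change the flow category at all, so the Reidemeister arguments you already have (restricted to the reduced subcategory, exactly as you set them up) suffice, and no new spectrum-level cancellation is needed. Without either this $S^2$ observation or an actual construction of your proposed flow-category-level cancellation, your proof of \Part{rKh-invariant} is incomplete.
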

\begin{proof}
  \Part[s]{rKh-refines} and~\PartCont{rKh-sequence} are immediate from
  the definitions; for \Part{rKh-sequence} see
  also \Section{sub-quot}. For \Part{rKh-invariant}, observe that for
  Reidemeister moves not crossing the marked point $p$, the stable homotopy
  equivalences in \Section{invariance} preserve the subcomplex
  $\Realize{\rKhFlowCat_+(L)}\subset\Realize{\rKhFlowCat(L)}$. Any two
  diagrams for isotopic pointed links can be connected by Reidemeister
  moves not crossing the basepoint $p$ and and isotopies of the
  diagrams in $S^2$ (but possibly passing over $\infty$): passing a
  strand over (or under) $p$ can be exchanged for taking the strand
  around $S^2$ the other way (compare~\cite[Section
  3]{Kho-kh-patterns}).
\end{proof}

\begin{remark}
  It is not the case that $\KhSpace^j(L)\cong
  \rKhSpace^{j-1}(L)\vee\rKhSpace^{j+1}(L)$; this can be seen already
  in the case of the trefoil (see \Example{kh-space-trefoil}), and
  follows from the fact that $\Kh^{i,j}(L)\not\cong
  \rKh^{i,j-1}(L)\oplus\rKh^{i,j+1}(L)$.
\end{remark}
\section{Examples}\label{sec:examples}
In this section, we start by computing the Khovanov spectra associated
with a one-crossing unknot and the Hopf link.  After the explicit
examples, we observe that the Khovanov spectrum
for any alternating link contains no more information than
the Khovanov homology of the link.

\subsection{An unknot}
Consider the $1$-crossing unknot $U$ shown in \Figure{unknot}. For
convenience, name the generators of the Khovanov complex (or
equivalently the labeled resolution configurations, or equivalently
the objects of the Khovanov flow category $\KhFlowCat(U)$, or
equivalently the cells of the Khovanov space $\Realize{\KhFlowCat(U)}$)
as follows:

\begin{figure}
  \centering
\captionsetup[subfloat]{width=0.4\textwidth}
\subfloat[A one-crossing unknot $U$.]{\label{fig:unknot-diagram}\makebox[0.4\textwidth][c]{\includegraphics{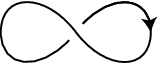}}}
\hspace{0.08\textwidth}
\subfloat[The $0$-resolution and the $1$-resolution.]{
  \includegraphics{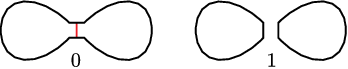}}
  \caption[A one-crossing unknot.]{\textbf{A $1$-crossing unknot $U$ and its
      resolutions.}}
  \label{fig:unknot}
\end{figure}

\begin{center}
  \begin{tabular}{clrrcclrr}
    \toprule
    Name & Generator & $\gr_h$ & $\gr_q$&\qquad\qquad & Name & Generator & $\gr_h$ & $\gr_q$ \\
    \midrule
    $\ob{a}$ & $(\AssRes{U}{1},x_+x_+)$ & $0$ & $1$ & &$\ob{d}$ & $(\AssRes{U}{1}, x_-x_-)$ & $0$ & $-3$\\
    $\ob{b}$ & $(\AssRes{U}{1},x_+x_-)$ & $0$ & $-1$ & & $\ob{x}$ & $(\AssRes{U}{0},x_+)$ & $-1$ & $-1$\\
    $\ob{c}$ & $(\AssRes{U}{1},x_-x_+)$ & $0$ & $-1$ & &$\ob{y}$ & $(\AssRes{U}{0},x_-)$ & $-1$ & $-3$\\
    \bottomrule
  \end{tabular}
\end{center}

The only $\prec$-comparable resolution configurations (\Definition{prec}) are:
\[
\ob{x}\prec \ob{b}\qquad \ob{x}\prec \ob{c} \qquad \ob{y}\prec \ob{d}.
\]
The moduli spaces $\Moduli(\ob{b},\ob{x})$, $\Moduli(\ob{c},\ob{x})$
and $\Moduli(\ob{d},\ob{y})$ each consist of a single,
positively-framed point. (The framing comes from the standard sign
assignment; see \Section{framing}).

To choose an embedding of this flow category as in
\Definition{neat-immersion}, choose $d_{-1}=1$ and $d_i=0$ for $i\neq
-1$. Then, embed the three points $\Moduli(\ob{b},\ob{x}) \amalg \Moduli(\ob{c},\ob{x})
\amalg \Moduli(\ob{d},\ob{y})$ in $\ESpace[\TupV{d}]{-1}{0}=\RR^1$; call the resulting
real numbers $b$, $c$ and $d$. These points inherit the
standard, positive framing. See \Figure{unknot-flow-cat}.

\captionsetup[subfloat]{width=0.35\textwidth}
\begin{figure}
  \centering
\subfloat[The Khovanov flow category $\KhFlowCat(U)$.]{\tiny
\psfrag{-1}{$-1$}
\psfrag{0}{$0$}
\psfrag{z1}{$\ob{y}$}
\psfrag{z2}{$\ob{x}$}
\psfrag{y1}{$\ob{b}$}
\psfrag{y2}{$\ob{c}$}
\psfrag{y3}{$\ob{d}$}
\psfrag{y4}{$\ob{a}$}
\psfrag{p}{$+$}
  \includegraphics[width=0.35\textwidth]{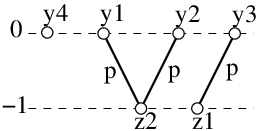}}
\hspace{0.1\textwidth}
\captionsetup[subfloat]{width=0.3\textwidth}
\subfloat[A framing of $\KhFlowCat(U)$.]{\tiny
\psfrag{b}{$b$}
\psfrag{c}{$c$}
\psfrag{d}{$d$}
\includegraphics[width=0.3\textwidth]{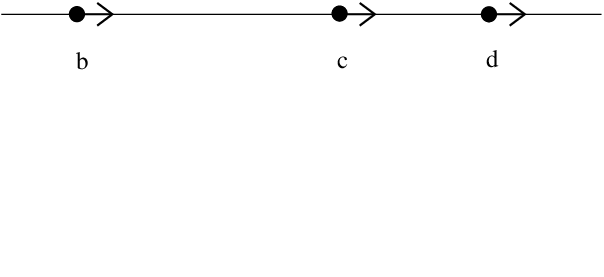}}\\
\subfloat[The cell $\Cell{\ob{x}}$ is the thick interval.]{\tiny
\psfrag{ep1}{$\{0\}\times[-\ep,\ep]$}
\includegraphics[width=0.3\textwidth]{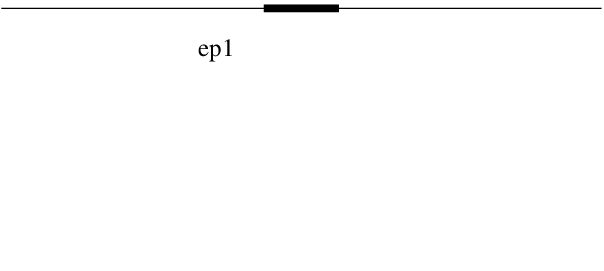}}
\hspace{0.025\textwidth}
\subfloat[The cell $\Cell{\ob{y}}$.]{\tiny
\psfrag{ep1}{$\{0\}\times[-\ep,\ep]$}
\includegraphics[width=0.3\textwidth]{unknot-x-space}}
\hspace{0.025\textwidth}
\subfloat[The cell $\Cell{\ob{a}}$ is the shaded rectangle.]{\tiny
\psfrag{R}{$[0,R]\times[-R,R]$}
\includegraphics[width=0.3\textwidth]{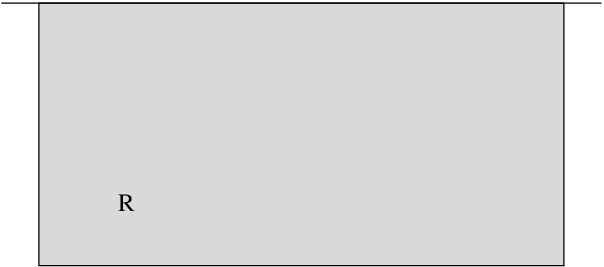}}\\
\subfloat[The cell $\Cell{\ob{b}}$. The thick interval on the boundary
is identified with $\Cell{\ob{x}}$.]{\tiny
  \psfrag{ep1}{$\{0\}\times[b-\ep,b+\ep]$}
  \psfrag{R}{$[0,R]\times[-R,R]$}
\includegraphics[width=0.3\textwidth]{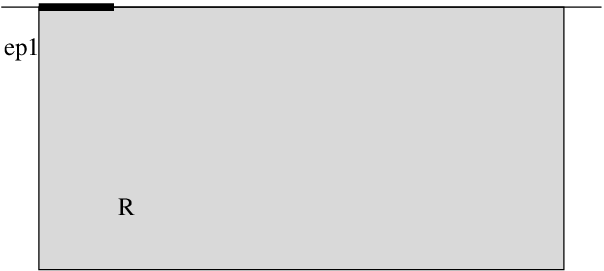}}
\hspace{0.025\textwidth}
\subfloat[The cell $\Cell{\ob{c}}$.]{\tiny
  \psfrag{ep1}{$\{0\}\times[c-\ep,c+\ep]$}
  \psfrag{R}{$[0,R]\times[-R,R]$}
\includegraphics[width=0.3\textwidth]{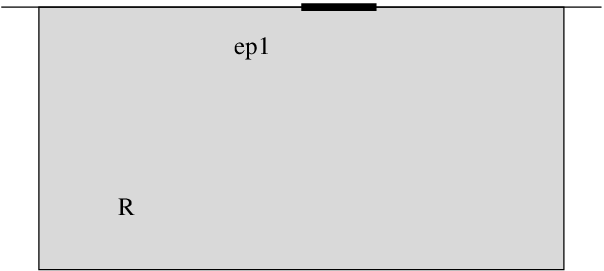}}
\hspace{0.025\textwidth}
\subfloat[The cell $\Cell{\ob{d}}$.]{\tiny
  \psfrag{ep1}{$\{0\}\times[d-\ep,d+\ep]$}
  \psfrag{R}{$[0,R]\times[-R,R]$}
\includegraphics[width=0.3\textwidth]{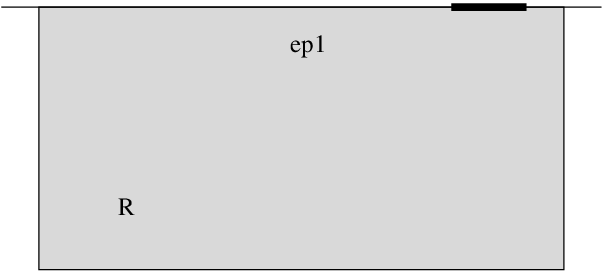}}
  \caption[The Khovanov space for the one-crossing
  unknot.]{\textbf{Constructing the Khovanov space for the
      $1$-crossing unknot $U$ of \Figure{unknot-diagram}.}}
  \label{fig:unknot-flow-cat}
\end{figure}

Next we turn to the construction of $\Realize{\KhFlowCat}$ from the
framed flow category (\Definition{flow-gives-space}). Take $B=-1$ and
$A=0$. The cells $\Cell{\ob{x}}$ and $\Cell{\ob{y}}$ are given by
\[
\bigl(\{0\}\times[-\epsilon,\epsilon]^{d_{-1}=1}\bigr)/\bdy\bigl(\{0\}\times[-\epsilon,\epsilon]^{d_{-1}=1}\bigr)\cong S^1.
\]
So, the $1$-skeleton of $\Realize{\KhFlowCat}$ is $S^1\vee S^1$. The
cell corresponding to $\ob{b}$ is
\[
\Cell{\ob{b}}= [0,R]\times [-R,R]^{d_{-1}=1} /\sim,
\]
where $\sim$ identifies $\{0\}\times [b-\epsilon,b+\epsilon]$ with
$\Cell{\ob{x}}$ and the rest of the boundary with the basepoint. So,
$\Cell{\ob{b}}$ consists of a $2$-cell whose boundary is attached by a
degree-one map to the first $S^1$-summand of $S^1\vee S^1$. The story
for $\Cell{\ob{c}}$ is exactly the same. The cell $\Cell{\ob{d}}$
consists of a $2$-cell whose boundary is attached by a degree-one
map to the second $S^1$-summand of $S^1\vee S^1$. Finally, all of the
boundary of $\Cell{\ob{a}}$ is collapsed to the basepoint. So,
\begin{align*}
  \Realize{\KhFlowCat}&=S^2\vee S^2\vee \DD^2,\\
  \shortintertext{where the first $S^2$ is
    $\Cell{\ob{b}}\displaystyle\fibercup_{\Cell{\ob{x}}}
    \Cell{\ob{c}}$; the second one is $\Cell{\ob{a}}$; and the $\DD^2$
    is $\Cell{\ob{d}}$. Hence,} \KhSpace&\simeq
  \Sigma^{-2}(S^2\vee S^2)\\
&= S^0_{-1}\vee S^0_1.
\end{align*}
The first $S^0$ summands lies in quantum grading $-1$ and the second in
quantum grading $1$. We have indicated this by the subscripts.

\subsection{The Hopf link}
We turn next to the Hopf link $H$, shown in \Figure{Hopf}. As before, for
convenience we name the generators:
\begin{center}
  \begin{tabular}{clrrcclrr}
    \toprule
    Name & Generator & $\gr_h$ & $\gr_q$ &\qquad\qquad& Name &
    Generator & $\gr_h$ & $\gr_q$\\
    \midrule
    $\ob{a}$ & $(\AssRes{H}{00},x_+x_+)$ & $0$ & $4$&& $\ob{c}$ & $(\AssRes{H}{01}, x_+)$ & $1$ &
    $4$\\  
    $\ob{v}$ & $(\AssRes{H}{00},x_+x_-)$ & $0$ & $2$&& $\ob{y}$ & $(\AssRes{H}{01}, x_-)$ & $1$ &
    $2$ \\ 
   $\ob{w}$ & $(\AssRes{H}{00},x_-x_+)$ & $0$ & $2$ &&$\ob{e}$ & $(\AssRes{H}{11}, x_-x_+)$ & $2$
   & $4$\\ 
    $\ob{m}$ & $(\AssRes{H}{00},x_-x_-)$ & $0$ & $0$ &&$\ob{z}$ & $(\AssRes{H}{11}, x_-x_-)$ &
    $2$ &$2$\\ 
    $\ob{b}$ & $(\AssRes{H}{10}, x_+)$ & $1$ & $4$ &&$\ob{l}$ & $(\AssRes{H}{11}, x_+x_+)$ & $2$ &$6$\\
    $\ob{x}$ & $(\AssRes{H}{10}, x_+)$ & $1$ & $2$ &&$\ob{d}$ & $(\AssRes{H}{11}, x_+x_-)$ & $2$ &$4$\\
    \bottomrule
  \end{tabular}
\end{center}

\begin{figure}
  \centering
  \captionsetup[subfloat]{width=0.35\textwidth}
  \subfloat[The Hopf link $H$.]{\makebox[0.35\textwidth][c]{\includegraphics{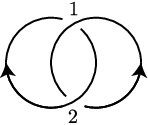}}}
  \hspace{0.03\textwidth}
  \captionsetup[subfloat]{width=0.45\textwidth}
  \subfloat[The resolutions of $H$.]{\includegraphics{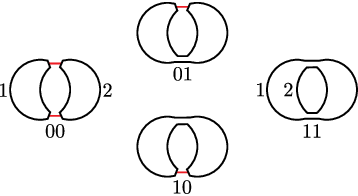}}
  \caption[The Hopf link.]{\textbf{The Hopf link $H$ and its resolutions.}}
  \label{fig:Hopf}
\end{figure}

To keep the notation simple, we will discuss each quantum grading
separately. In quantum grading $0$ (\respectively $6$) there is a
unique generator, and hence the Khovanov spectrum is a sphere in this
grading, of virtual dimension $0$ (\respectively $2$) after
de-suspending.

In quantum gradings $2$ and $4$ the Khovanov chain complex is given by
\[
\xymatrix{
\ob{v} \ar[r]\ar[ddr] & \ob{x}\ar[dr]^{-1} & & &  & \ob{b}\ar[r]^{-1}\ar[ddr]^(.65){-1} & \ob{d}\\
& & \ob{z} & \text{and} & \ob{a}\ar[ur]\ar[dr] & & \\
\ob{w}\ar[uur]\ar[r] & \ob{y}\ar[ur] & & & &  \ob{c}\ar[uur]\ar[r] & \ob{e},
}
\]
respectively. 

\captionsetup[subfloat]{width=0.45\textwidth}
\begin{figure}
  \centering
  \subfloat[Embedding of the subcategory in quantum grading $4$.]{\label{fig:Hopf-embedding-4}
    \tiny
    \psfrag{e1}{$\R^{d_0}$}
    \psfrag{e2}{$\R^{d_1}$}
    \psfrag{f}{}
    \psfrag{e}{$\Moduli(\ob{e},\ob{a})$}
    \psfrag{d}{$\Moduli(\ob{d},\ob{a})$}
    \psfrag{eb}{$\Moduli(\ob{e},\ob{b})$}
    \psfrag{ec}{$\Moduli(\ob{e},\ob{c})$}
    \psfrag{db}{$\Moduli(\ob{d},\ob{b})$}
    \psfrag{dc}{$\Moduli(\ob{d},\ob{c})$}
    \psfrag{b}{$\Moduli(\ob{b},\ob{a})$}
    \psfrag{c}{$\Moduli(\ob{c},\ob{a})$}
    \includegraphics[height=0.35\textwidth]{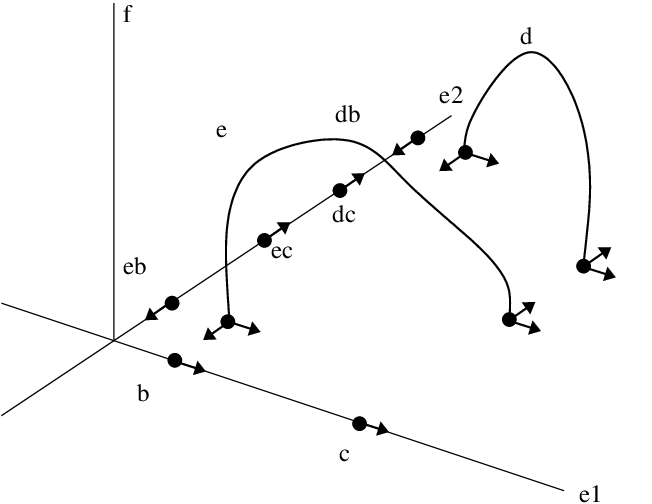}}
  \hspace{0.05\textwidth}
  \subfloat[\label{fig:emb-Hopf-2}Embedding of the subcategory in quantum grading $2$.]{
    \tiny
    \psfrag{e1}{$\R^{d_0}$}
    \psfrag{e2}{$\R^{d_1}$}
    \psfrag{f}{}
    \psfrag{v}{$\Moduli(\ob{z},\ob{v})$}
    \psfrag{w}{$\Moduli(\ob{z},\ob{w})$}
    \psfrag{x}{$\Moduli(\ob{z},\ob{x})$}
    \psfrag{y}{$\Moduli(\ob{z},\ob{y})$}
    \psfrag{vx}{$\Moduli(\ob{x},\ob{v})$}
    \psfrag{vy}{$\Moduli(\ob{y},\ob{v})$}
    \psfrag{wx}{$\Moduli(\ob{x},\ob{w})$}
    \psfrag{wy}{$\Moduli(\ob{y},\ob{w})$}
    \includegraphics[height=0.35\textwidth]{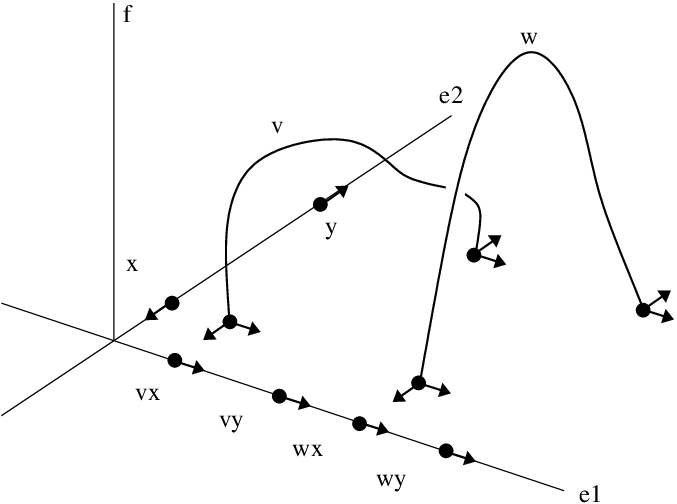}}
  \caption[Embedding of the flow category for the Hopf link.]{\textbf{Embedding of the flow category for the Hopf link.}}
  \label{fig:Hopf-embedding}
\end{figure}

Consider quantum grading $4$. Embed this flow category relative
$\TupV{d}$ where $d_0=1$, $d_1=1$, and $d_i=0$ for $i\notin\{0,1\}$, as
shown in \Figure{Hopf-embedding-4}. Choose $B=0$ and $A=2$. The cells of
this part of the flow category are given by 
\begin{align*}
\Cell{\ob{a}}&=\{0\}\times [-\epsilon,\epsilon]\times\{0\}\times[-\epsilon,\epsilon]/\sim_a\\
\Cell{\ob{b}}&=[0,R]\times [-R,R]\times\{0\}\times[-\epsilon,\epsilon]/\sim_b\\
\Cell{\ob{c}}&=[0,R]\times[-R,R]\times\{0\}\times[-\epsilon,\epsilon]/\sim_c\\
\Cell{\ob{d}}&=[0,R]\times [-R,R]\times[0,R]\times[-R,R]/\sim_d\\
\Cell{\ob{e}}&=[0,R]\times [-R,R]\times[0,R]\times[-R,R]/\sim_e.
\end{align*}
The identification $\sim_a$ identifies all of the boundary to the
basepoint. Each of the $3$-cells $\Cell{\ob{b}}$ and $\Cell{\ob{c}}$
has part of its boundary identified to $\Cell{\ob{a}}$, by a
degree-one map, and the rest collapsed to the basepoint; the picture
for this identification is exactly as in
\Figure{Cell-z-1}. The result of this attaching is two $3$-balls with
their boundaries glued together, i.e., a $3$-sphere.

Each of the $4$-cells $\Cell{\ob{d}}$ and $\Cell{\ob{e}}$ has part of
its boundary attached to $\Cell{\ob{b}}$ (\respectively $\Cell{\ob{c}}$) via
a degree $\pm 1$ map (according to the signs in the Khovanov complex),
part attached to $\Cell{\ob{a}}$, and the rest collapsed to the
basepoint; the picture for this identification is like
\Figure{Cell-x-bdy} but without the furthest back box (the one
corresponding to $\Cell{z_4}$). Up to homotopy, one can shrink the
thickened arc corresponding to $\Cell{\ob{a}}$ to become a
$2$-dimensional rectangle, leaving the boxes corresponding to
$\Cell{\ob{b}}$ and $\Cell{\ob{c}}$ glued together along a rectangle
on each of their boundaries. This rectangle is the $S^2$ along which
the $3$-balls $\Cell{\ob{b}}$ and $\Cell{\ob{c}}$ are glued. So,
gluing on $\Cell{\ob{d}}$ and $\Cell{\ob{e}}$ gives two $4$-balls
glued along their boundary. Thus, in quantum grading $4$, the Khovanov
space $\Realize{\KhFlowCat(H)}$ is homotopy equivalent to $S^4$. To obtain the
Khovanov spectrum, we de-suspend twice, so $\KhSpace^4\simeq S^2$.

Finally we turn to quantum grading $2$.  Each of $\Cell{\ob{v}}$ and
$\Cell{\ob{w}}$ are $2$-cells; $\Cell{\ob{x}}$ and $\Cell{\ob{y}}$ are
$3$-cells; and $\Cell{\ob{z}}$ is a $4$-cell. In this quantum grading,
the $2$-skeleton of the Khovanov space $\Realize{\KhFlowCat}$ is
$S^2\vee S^2$. The attaching map of $\Cell{\ob{x}}$ (\respectively
$\Cell{\ob{y}}$) is degree-one onto each $S^2$. (Each of
$\Cell{\ob{x}}$ and $\Cell{\ob{y}}$ is a box with two rectangles on
top, one identified with $\Cell{\ob{v}}$ and the other with
$\Cell{\ob{w}}$; compare \Figure{emb-Hopf-2}.) The result is the
suspension of the $2$-sphere with the north and south poles
identified, and is homotopy equivalent to $S^3\vee S^2$. The cell
$\Cell{\ob{z}}$ is attached by a map which is degree $-1$ to
$\Cell{\ob{x}}$ and degree $1$ to $\Cell{\ob{y}}$. To understand this
map better, consider the embedding of the moduli spaces from
\Figure{emb-Hopf-2}. This gives an embedding of $\Cell{\ob{x}}\cup
\Cell{\ob{y}}\cup([0,1]\times\Cell{\ob{v}})\cup([0,1]\times\Cell{\ob{w}})$
in $\del\Cell{\ob{z}}=S^3$. As in the previous paragraph, collapse the $[0,1]$ factors
to points. We are left with two $3$-balls, corresponding to
$\Cell{\ob{x}}$ and $\Cell{\ob{y}}$, glued together along two disks in
their boundaries. The complement of these two balls in
$S^3$ is an unknotted solid torus which is quotiented to the
basepoint. Therefore, the result of attaching $\Cell{\ob{z}}$
according to this embedding is $\DD^4$ with a solid torus on its
boundary quotiented to the basepoint. Thus, in quantum grading $4$,
the Khovanov space $\Realize{\KhFlowCat}$ is homotopy equivalent to
$S^2$. To obtain the Khovanov spectrum, we again de-suspend twice, so
$\KhSpace^2\simeq S^0$.

In sum,
\[
\KhSpace(H)=S^0_0\vee S^0_2\vee S^2_4\vee S^2_6.
\]

\subsection{The Khovanov spectrum of alternating links}
Generalizing the previous two examples, we show that for quasi-alternating links the Khovanov
spectrum is somewhat uninteresting. We start by collecting some facts
about Khovanov homology:
\begin{lem}\label{lem:quasi-alt-thin}
  If $L$ is a quasi-alternating link with signature\footnote{We use
    the convention in which a positive knot, such as
    the right-handed trefoil $T_{2,3}$, has negative signature.} $\sigma$ then:
  \begin{enumerate}
  \item\label{item:rKh-thin} The reduced Khovanov homology
    $\rKh^{i,j}(L)$ is supported on the single diagonal
    $2i-j=\sigma(K)$.
  \item\label{item:rKh-torsion} The reduced Khovanov homology is torsion-free.
  \item\label{item:Kh-thin} The un-reduced Khovanov homology is
    supported on the two diagonals $2i-j=\sigma\pm 1$.
  \item\label{item:Kh-torsion} All torsion in the un-reduced Khovanov
    homology lies on the diagonal $2i-j=\sigma+1$.
  \end{enumerate}
\end{lem}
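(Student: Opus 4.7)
The plan is to proceed by induction on the determinant $\det(L)$, exploiting the inductive definition of quasi-alternating links. For the base case, when $L$ is an unknot, one has $\rKh(L) = \Z$ concentrated in bigrading $(0,0)$, which lies on the diagonal $2i - j = 0 = \sigma(L)$ and is torsion-free, so all four statements hold trivially. For the inductive step, I would fix a quasi-alternating crossing $c$ of $L$ so that both resolutions $L_0$ and $L_1$ are quasi-alternating of strictly smaller determinant and $\det(L) = \det(L_0) + \det(L_1)$, making the inductive hypothesis available for $L_0$ and $L_1$.

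The main tool will be the unoriented skein long exact sequence on reduced integral Khovanov homology relating $\rKh(L)$ to appropriately bigraded copies of $\rKh(L_0)$ and $\rKh(L_1)$. The key observation is that, after the grading shifts built into that sequence and the signature identities for quasi-alternating crossings (as computed by Manolescu and \Ozsvath), the inductively thin groups $\rKh(L_0)$ and $\rKh(L_1)$ are carried to the \emph{same} diagonal $2i - j = \sigma(L)$ inside $\rKh(L)$. Since the connecting homomorphism in the skein sequence has bidegree $(+1,0)$, it moves elements between diagonals differing by $2$; because both shifted copies sit on the single diagonal $\sigma(L)$, this map must vanish. The long exact sequence then collapses to a short exact sequence
\[
0 \to \rKh(L_1)\{s_1\} \to \rKh(L) \to \rKh(L_0)\{s_0\} \to 0,
\]
which yields parts (1) and (2) at once: $\rKh(L)$ is concentrated on $2i - j = \sigma(L)$ and, as an extension of torsion-free groups, is itself torsion-free.

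Parts (3) and (4) will then be deduced from the long exact sequence \Equation{rKh-and-Kh}. Since by part (1) the groups $\rKh^{i,j\pm 1}(L)$ can be nonzero only when $2i - j = \sigma \mp 1$, the middle term $\Kh^{i,j}(L)$ of
\[
\rKh^{i-1,j-1}(L) \to \rKh^{i,j+1}(L) \to \Kh^{i,j}(L) \to \rKh^{i,j-1}(L) \to \rKh^{i+1,j+1}(L)
\]
must vanish off the two diagonals $2i - j = \sigma \pm 1$, giving (3). On the diagonal $2i - j = \sigma - 1$ the left flanking term $\rKh^{i,j+1}(L)$ vanishes, so $\Kh^{i,j}(L)$ injects into the torsion-free group $\rKh^{i,j-1}(L)$ and is itself torsion-free; on the diagonal $2i - j = \sigma + 1$, however, $\Kh^{i,j}(L)$ appears as the cokernel of a map between two torsion-free groups, which is exactly the place torsion can arise, establishing (4).

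I expect the main technical hurdle to be the bookkeeping underlying the inductive step: verifying that the bidegree shifts $(a_k,b_k)$ in the Khovanov skein sequence, combined with the changes in signature $\sigma(L_k) - \sigma(L)$ across a quasi-alternating crossing, really do align the diagonals of $\rKh(L_0)$ and $\rKh(L_1)$ with the predicted diagonal $2i - j = \sigma(L)$. This is precisely the point at which the defining condition $\det(L) = \det(L_0) + \det(L_1)$ for quasi-alternating links is used essentially, since it forces the relevant signatures to behave in the way required for the two shifted diagonals to coincide.
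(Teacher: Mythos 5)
Your argument is correct, and for parts (3) and (4) it is essentially identical to the paper's: both use the long exact sequence \EquationCont{rKh-and-Kh} together with thinness of $\rKh$ to kill $\Kh^{i,j}$ off the diagonals $2i-j=\sigma\pm1$, and both observe that on $2i-j=\sigma-1$ the unreduced group is a kernel (hence torsion-free) while on $2i-j=\sigma+1$ it is a cokernel of a map of torsion-free groups (the only place torsion can live). Where you genuinely diverge is in parts (1) and (2): the paper does not re-run any induction, it simply quotes Manolescu--Ozsv\'ath's theorem for thinness over $\Z$, and deduces torsion-freeness by noting that their proof works over $\Z/p$ and then applying the universal coefficient theorem (integral $p$-torsion on the diagonal would produce a $\mathrm{Tor}$ class off the diagonal in $\Z/p$-coefficients, contradicting mod-$p$ thinness). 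You instead propose to reprove the Manolescu--Ozsv\'ath theorem integrally: induct on determinant, use the reduced unoriented skein sequence, and argue that the connecting map vanishes because the shifted copies of $\rKh(L_0)$ and $\rKh(L_1)$ lie on a common diagonal while the connecting map shifts diagonals by $2$, so the sequence splits into a short exact sequence of thin torsion-free groups. This is sound, and it buys torsion-freeness directly without the mod-$p$/UCT detour, but the ``bookkeeping'' you defer---that the grading shifts in the skein sequence plus the signature identities at a quasi-alternating crossing align both copies on $2i-j=\sigma(L)$---is exactly the nontrivial content of Manolescu--Ozsv\'ath's argument (their Gordon--Litherland signature computation), so your route amounts to reproducing their proof over $\Z$ where the paper is content to cite it; as long as you import those signature identities explicitly rather than treating them as routine, there is no gap.
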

\begin{proof}
  \Part{rKh-thin} is exactly~\cite[Theorem
  1.1]{MO-hf-quasialternating} (Note that their $j$ is one half of our $j$). See also~\cite{Lee-kh-endomorphism}
  in the alternating case.

  \Part{rKh-torsion} follows from the fact
  that~\cite[Theorem 1.1]{MO-hf-quasialternating} holds over the ring
  $\ZZ/p$ and the universal coefficient theorem. (This is not
  explicitly stated in~\cite{MO-hf-quasialternating}, but is clear
  from their proof.)
  
  To prove \Part{Kh-thin}, it follows from the exact sequence in
  \Equation{rKh-and-Kh} and \Part{rKh-thin} that the only cases in
  which $\Kh^{i,j}(L)$ is non-zero are when $j=2i+\sigma\pm
  1$. Further,
  \begin{align*}
    \Kh^{i,2i-\sigma-1}(L)&=\cokernel\bigl(\rKh^{i-1,2i-\sigma-2}(L)\to\rKh^{i,2i-\sigma}(L)
    \bigr)\\
    \Kh^{i,2i-\sigma+1}(L)&=\ker\bigl(\rKh^{i,2i-\sigma}(L)\to\rKh^{i+1,2i-\sigma+2}(L)
    \bigr).
  \end{align*}
  Combined with \Part{rKh-torsion}, this gives \Part{Kh-torsion}.
\end{proof}

\begin{prop}\label{prop:Moore}
  Let $L$ be a quasi-alternating link. Then $\KhSpace(L)$ and $\rKhSpace(L)$
  are wedge sums of Moore spaces.
\end{prop}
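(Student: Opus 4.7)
First I would handle the reduced case. By \Lemma{quasi-alt-thin}\Part{rKh-thin}--\PartCont{rKh-torsion}, $\wt H^*(\rKhSpace^j(L))=\rKh^{*,j}(L)$ is a free abelian group concentrated in a single degree $n=(j+\si)/2$ (or is zero, when $j\not\equiv\si\pmod 2$). Any connective spectrum of finite type with such cohomology is stably equivalent to a wedge of $n$-spheres: the bounded-below Hurewicz theorem for spectra (using $\pi_q^s=0$ for $q<0$) gives a lift of a basis of $H_n$ to stable maps $S^n\to\rKhSpace^j(L)$, and the resulting map from a wedge of spheres induces an isomorphism on all integral homology and hence a stable equivalence by Whitehead. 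This dispatches the reduced case, since a wedge of spheres is in particular a wedge of Moore spaces.

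For the unreduced case, I would invoke the cofibration
\[
\rKhSpace^{j-1}(L)\to \KhSpace^j(L)\to \rKhSpace^{j+1}(L)
\]
from \Section{reduced}. Setting $n=(j+\si+1)/2$, the reduced case gives $\rKhSpace^{j-1}(L)\simeq\bigvee_a S^{n-1}$ and $\rKhSpace^{j+1}(L)\simeq\bigvee_b S^n$ for some $a,b\geq 0$. Rotating the Puppe sequence realizes $\KhSpace^j(L)$ as the mapping cone of a stable map $f\co\bigvee_b S^{n-1}\to\bigvee_a S^{n-1}$; such an $f$ is determined up to stable homotopy by the integer matrix $N\co\Z^b\to\Z^a$ it induces on $\pi_{n-1}^s$. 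Since $\ker N\subseteq\Z^b$ is free (being a subgroup of a free abelian group), it splits off as a direct summand, and this algebraic splitting lifts to a stable wedge decomposition $\bigvee_b S^{n-1}\simeq\bigvee_{\ker N}S^{n-1}\vee\bigvee_{\Z^b/\ker N}S^{n-1}$ with respect to which $f$ is null on the first factor and, on the second, realizes the free resolution $0\to\Z^b/\ker N\to\Z^a\to\cokernel N\to 0$ of $\cokernel N$. Using the elementary stable identity that the mapping cone of $(0,g)\co A\vee B\to C$ is $\Sigma A\vee\mathrm{cone}(g)$, we conclude
\[
\KhSpace^j(L)\simeq \bigvee_{\ker N}S^n\vee M(\cokernel N,n-1),
\]
a wedge of Moore spaces. (A cohomology check against \Lemma{quasi-alt-thin}\Part{Kh-thin}--\PartCont{Kh-torsion} is a useful sanity test: the torsion lives only in degree $n$, matching $\Ext(\cokernel N,\Z)$, and the free part in degree $n-1$ matches $\ker N^T=\Hom(\cokernel N,\Z)$.)

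The essential content, and the main potential obstacle, is the stable wedge splitting of the mapping cone of $f$. Crucially, one must work in the stable category: there, maps between wedges of spheres in a common dimension are classified by integer matrices, so the algebraic direct-sum splitting of $\ker N$ lifts to a topological wedge splitting of the source, and the cofiber of the null component contributes just a suspension. Unstably, nontrivial $k$-invariants could obstruct such splittings, but stably these obstructions vanish, and the argument reduces to the elementary decomposition of the matrix $N$.
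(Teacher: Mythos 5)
Your argument is correct, but it takes a genuinely different route from the paper's. For the reduced case you and the paper do essentially the same thing (cohomology of $\rKhSpace^j(L)$ is free and concentrated in one degree by \Part[s]{rKh-thin}--\PartCont{rKh-torsion} of \Lemma{quasi-alt-thin}, hence a wedge of spheres). For the unreduced case, however, the paper never touches the cofibration $\rKhSpace^{j-1}(L)\to\KhSpace^j(L)\to\rKhSpace^{j+1}(L)$: it works directly with the homology of $\KhSpace^j(L)$ (via \Part[s]{Kh-thin}--\PartCont{Kh-torsion}), builds a map from the bottom Moore space $M(G,n)$ inducing an isomorphism on $H_n$, uses surjectivity of the Hurewicz map in degree $n+1$ to map in a wedge of $(n+1)$-spheres hitting the free top group, and concludes by Whitehead. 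You instead rotate the Puppe sequence from \Part{rKh-sequence} of the theorem in \Section{reduced} to exhibit $\KhSpace^j(L)$ as the stable mapping cone of a map $f\co\bigvee_b S^{n-1}\to\bigvee_a S^{n-1}$, classify $f$ by its integer matrix $N$, and split off $\ker N$; all the stable-category points you rely on (matrices classify such maps, $GL_b(\Z)$ acts by stable self-equivalences, the cone of $(0,g)$ splits as $\Sigma A\vee\mathrm{cone}(g)$, injective $g_*$ gives a Moore spectrum) are sound, and the degree bookkeeping with $n=(j+\sigma+1)/2$ matches the paper's. The trade-off: your route uses more of the structure of the construction (the reduced--unreduced cofibration) but needs only thinness and torsion-freeness of \emph{reduced} homology, and it yields an explicit identification of the summands, $\KhSpace^j(L)\simeq\bigvee_{\ker N}S^n\vee M(\cokernel N,n-1)$, in effect re-deriving \Part[s]{Kh-thin}--\PartCont{Kh-torsion}; the paper's route is purely homological about $\KhSpace^j(L)$ itself and therefore, as \Remark{not-Moore}'s neighboring remark points out, applies verbatim to any spectrum with that homological profile, independent of how it was constructed.
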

\begin{proof}
  We start with the reduced Khovanov homology.  Since $\rKhSpace(L)$
  is a wedge-sum over quantum gradings, it suffices to show that in
  each quantum grading $j$, $\rKhSpace^j(L)$ is a Moore space. But by
  \Lemma{quasi-alt-thin}, the reduced cohomology
  $\widetilde{H}^*(\rKhSpace^j(L))$ is nontrivial in a single grading,
  and so $\rKhSpace(L)$ is by definition a Moore space (and is, in
  fact, a wedge of spheres).

  Next, we turn to the unreduced story. Again, fix a quantum grading
  $j$. By \Lemma{quasi-alt-thin}, the reduced cohomology of
  $\KhSpace^j(L)$ is nontrivial in gradings $(j-1+\sigma)/2$ and
  $(j+1+\sigma)/2$, and the torsion lies in grading
  $(j+1+\sigma)/2$. For notational convenience, let
  $n=(j-1+\sigma)/2$.
  By the universal coefficient theorem, the
  reduced homology of $\KhSpace^j(L)$ is nontrivial in gradings
  $n$ and $n+1$, and the torsion lies in
  grading $n$. Let
  $G=H_{n}(\KhSpace^j(L))$ and
  $K=H_{n+1}(\KhSpace^j(L))$ (so $K$ is a free
  abelian group). Since the homology of $\KhSpace$ vanishes in degrees
  less than $n$,
  there is a map $f$ from the Moore space
  $M(G,n)$ to $\KhSpace$ inducing an isomorphism on
  $H_{n}$. Moreover, the map $\pi_{n+1}(\KhSpace^j(L))\to
  H_{n+1}(\KhSpace^j(L))$ is surjective (see, for
  instance,~\cite[Theorem 10.25]{Switzer-top-book-old} or~\cite[Exercise 4.2.23]{Hatcher-top-book}). So,
  there is a map $S^{n+1}\vee\dots\vee S^{n+1}=M(K,n+1)\to
  \KhSpace^j(L)$ inducing an isomorphism on $H_{n+1}$. Thus, we have
  obtained a map $M(G,n)\vee M(H,n+1)\to \KhSpace^j(L)$ inducing an
  isomorphism on homology; by Whitehead's theorem, this map is a
  homotopy equivalence.
\end{proof}

\begin{corollary}
  If $L$ is an alternating link then the Khovanov spectrum (reduced or
  un-reduced) is determined by its Khovanov homology. In particular,
  the reduced Khovanov spectrum of an alternating link $L$ is
  determined by the Jones polynomial and signature of $L$.
\end{corollary}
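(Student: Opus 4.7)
The plan is to derive the corollary essentially as an immediate consequence of \Proposition{Moore} combined with two standard facts: that alternating links are quasi-alternating, and that a wedge of Moore spaces $M(G_i, n_i)$ (for $n_i \geq 2$) is determined up to homotopy equivalence by the collection $\{(G_i, n_i)\}$, i.e.\ by its (reduced) homology groups. Since $\KhSpace(L) = \bigvee_j \KhSpace^j(L)$ and similarly for the reduced version, it suffices to work one quantum grading at a time, where each $\KhSpace^j(L)$ or $\rKhSpace^j(L)$ is (by \Proposition{Moore}) itself a wedge of Moore spaces and hence determined up to homotopy equivalence by its reduced homology. This reduced homology is computed from the (reduced) Khovanov cohomology via the universal coefficient theorem, giving the first sentence of the corollary.

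For the second (sharper) sentence, I would invoke \Part{rKh-thin} and \Part{rKh-torsion} of \Lemma{quasi-alt-thin}: for an alternating (indeed, quasi-alternating) link $L$, the reduced Khovanov cohomology $\rKh^{i,j}(L)$ is torsion-free and concentrated on the diagonal $2i - j = \sigma(L)$. Hence, in each quantum grading $j$, $\rKhSpace^j(L)$ is a wedge of spheres, all of the same dimension $(j+\sigma)/2$ (shifted appropriately by the desuspension constant $C$), and the number of such spheres equals $\rank \rKh^{(j+\sigma)/2,\, j}(L)$. By \Part{rKh-torsion} this rank equals $(-1)^{(j+\sigma)/2}$ times the coefficient of $q^j$ in the unnormalized Jones polynomial $V(L)$ (using the Euler characteristic identity for reduced Khovanov homology), and hence is determined by $V(L)$ and $\sigma(L)$ alone.

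The only slightly delicate point — and the main obstacle, if any — is ensuring that the identification ``wedge of Moore spaces is determined by its homology'' is applied legally: one must check that the Moore spaces in question have index $\geq 2$, so that they are simply connected and hence unique up to homotopy equivalence, and that the homology lives in a range where the Hurewicz/Whitehead argument used in the proof of \Proposition{Moore} actually produces a homotopy equivalence. Since \Proposition{Moore} has already done this work (producing an equivalence $M(G,n) \vee M(K, n+1) \to \KhSpace^j(L)$ with $n = (j-1+\sigma)/2$, which can be made $\geq 2$ after a sufficiently large suspension in the definition of the spectrum), this issue is taken care of at the spectrum level.

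Thus the proof is short: (i) invoke \Proposition{Moore} to reduce to the statement that a wedge of Moore spaces is determined by its homology; (ii) read off the homology from the Khovanov cohomology via universal coefficients; (iii) in the reduced case, use \Lemma{quasi-alt-thin} to observe that only one homological degree per quantum grading is nonzero and that there is no torsion, so the ranks in question are computed by the coefficients of $V(L)$ and the diagonal is fixed by $\sigma(L)$.
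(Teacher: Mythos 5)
Your proposal is correct and takes essentially the same route the paper intends: the corollary is stated as an immediate consequence of \Proposition{Moore}, the point being exactly that a wedge of Moore spaces (or spectra) is determined up to equivalence by its homology, and that \Lemma{quasi-alt-thin} together with the graded Euler characteristic identity recovers the reduced Khovanov homology (a free group on a single diagonal in each quantum grading) from $V(L)$ and $\sigma(L)$. One cosmetic slip: the step extracting the ranks from the Jones polynomial rests on thinness, \Part{rKh-thin}, while \Part{rKh-torsion} only guarantees the groups are free so that the ranks determine them.
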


\begin{exam}\label{exam:kh-space-trefoil}
  The Khovanov homology of the left-handed trefoil $3_1$ is given by
  $\Kh^{-3,-9}=\Kh^{-2,-5}=\Kh^{0,-3}=\Kh^{0,-1}=\ZZ$ and
  $\Kh^{-2,-7}=\ZZ/2$; and the reduced Khovanov homology is given by
  $\rKh^{-3,-8}=\rKh^{-2,-6}=\rKh^{0,-2}=\Z$. Thus,
  \begin{align*}
  \KhSpace(3_1)&=\Sigma^{-3}S^0_{-9}\vee \Sigma^{-2}S^0_{-5}\vee
  S^0_{-3}\vee S^0_{-1}\vee \Sigma^{-4}\mathbb{R}\mathrm{P}^2_{-7}\\
  \shortintertext{and}
  \rKhSpace(3_1)&=\Sigma^{-3}S^0_{-8}\vee \Sigma^{-2}S^0_{-6}\vee S^0_{-2},
  \end{align*}
  where the subscripts denote the quantum gradings.
\end{exam}

\begin{rem}
  The proof of \Proposition{Moore} was completely independent
  of our construction of $\KhSpace$; the corresponding result holds
  for any Khovanov stable homotopy type of this form (whatever its
  provenance).
\end{rem}

\begin{rem}\label{rem:not-Moore}
  We show in~\cite{RS-steenrod} that for many non-alternating knots,
  such as $T_{3,4}$, the Khovanov space is not a wedge sum of Moore
  spaces.
\end{rem}

\section{Some structural conjectures}\label{sec:speculations}
We conclude with a few structural conjectures about the behavior of
the Khovanov homotopy type under some basic operations on knots.

\subsection{Mirrors}
Given a link $L$, let $\mirror{L}$ denote the mirror of $L$.

Given a spectrum $X$, let $\SWdual{X}$ denote the Spanier-Whitehead
dual (or $S$-dual) of $X$. That is, $\SWdual{X}$ is the dual of $X$ as
a module over the sphere spectrum. Equivalently, if $X$ is the
suspension spectrum of a finite CW complex (which we also denote $X$)
then $\SWdual{X}$ can be obtained by choosing an embedding $i\co X\hookrightarrow
S^N$ (for some large $N$) and formally desuspending the complement
$S^N\setminus i(X)$ $(N-1)$ times:
\[
\SWdual{X}=\Sigma^{1-N}(S^N\setminus i(X));
\]
see~\cite{Spanier-top-dual}.

\begin{conjecture}\label{conj:mirror}
  Let $L$ be a link. Then
  \[
  \KhSpace^j(m(L))\simeq\SWdual{\KhSpace^{-j}(L)}
  \]
\end{conjecture}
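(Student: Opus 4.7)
\textbf{Proof proposal for Conjecture \ref{conj:mirror}.}

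The plan is to reduce the conjecture to two separate statements: (i) a general duality between a framed flow category and its ``opposite,'' realized at the level of spectra as Spanier--Whitehead duality; and (ii) an identification of $\KhFlowCat(m(L))$ with (an appropriate grading-shifted version of) the opposite flow category of $\KhFlowCat(L)$. The payoff of splitting the argument this way is that (ii) is largely combinatorial, controlled by the duality of decorated resolution configurations from \Definition{decorated-res-config-dual} and \Lemma{res-config-dual}, while (i) is a general fact about the Cohen--Jones--Segal construction.

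First I would formalize the notion of the \emph{opposite} of a framed flow category $\Cat$: $\Cat^{op}$ has the same objects, grading $\gr_{\Cat^{op}}(x)=-\gr_\Cat(x)$, and morphism spaces $\Moduli_{\Cat^{op}}(x,y)=\Moduli_\Cat(y,x)$, with composition reversed. Given a neat embedding of $\Cat$ relative to $\TupV{d}$ (with a coherent framing), one obtains a neat embedding and framing of $\Cat^{op}$ after reflecting each $\R_+$-factor of $\ESpace[\TupV{d}]{a}{b}$. Then I would prove the duality statement
\[
\Realize{\Cat^{op}} \;\simeq\; \Sigma^{-N}\bigl(S^N\setminus \Realize{\Cat}\bigr),
\]
once both sides are embedded in a sufficiently large sphere. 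The key observation is that $\Realize{\Cat}$ is assembled from boxes $\Cell{x}$ attached along faces $\Cell[y]{x}\cong \Moduli(x,y)\times\Cell{y}$, and the complement of the interior of these boxes in a large ball decomposes dually as boxes $\Cell{x}^{op}$ attached along faces $\Cell[x]{y}^{op}\cong \Moduli(y,x)\times\Cell{x}^{op}$. This is essentially the Atiyah-duality/Pontrjagin--Thom picture, applied cell by cell; it is the generalization of the observation that the Cohen--Jones--Segal construction applied to $-f$ gives a spectrum S-dual to that of $f$.

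Next I would construct the comparison between $\KhFlowCat(m(L))$ and $\KhFlowCat(L)^{op}$. Mirroring $L$ and simultaneously reversing the order of all crossings interchanges $0$- and $1$-resolutions, so $\AssRes{m(L)}{u}$ is canonically identified with the surgery $s(\AssRes{L}{\bar u})$, where $\bar u$ is the complement. This in turn identifies the basic decorated resolution configuration associated to a pair $\ob{y}\prec\ob{x}$ in $\KhFlowCat(m(L))$ with the \emph{dual} decorated resolution configuration $(D^*,\gen{y}^*,\gen{x}^*)$ of the one appearing for the reversed pair in $\KhFlowCat(L)$. By \Lemma{res-config-dual} the posets $P$ are then related by reversal, and an inductive argument parallel to \Section{mod-dec-res-config} (using \Part{extend-unique} of \Proposition{extend-moduli} to propagate uniqueness above dimension $1$, together with the ladybug invariance \Proposition{ladybug-invariance} to handle that $D\mapsto D^*$ swaps the \OutIn{} and \InOut{} pairs in ladybug configurations) yields a diffeomorphism of moduli spaces $\Moduli_{\KhFlowCat(m(L))}(\ob{x},\ob{y})\cong \Moduli_{\KhFlowCat(L)}(\ob{y}^*,\ob{x}^*)$ compatible with composition. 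This gives an isomorphism of (unframed) flow categories $\KhFlowCat(m(L))\cong \KhFlowCat(L)^{op}$, after the homological grading shift dictated by the fact that $(n_+,n_-)$ for $m(L)$ equals $(n_-,n_+)$ for $L$, and a parallel shift on quantum grading that accounts for swapping $x_+\leftrightarrow x_-$ on each circle. Combining this with the duality statement of the first step, and checking that the quantum grading matches $j\leftrightarrow -j$, yields the conjecture.

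The main obstacle is the general S-duality statement for framed flow categories. One must show not merely that opposite flow categories give chain-complex-duality at the level of cohomology (which is immediate from \Lemma{realization-well-defined-refines-chain-complex}), but that the assembly procedure of \Definition{flow-gives-space} is genuinely Spanier--Whitehead self-dual. The cleanest approach is probably to use the Cohen--Jones--Segal formulation and observe that the proper projections and umkehr maps used in \Equation{CJS-realize-functor} swap roles under the opposite category, identifying the complement of the realization in a large sphere with the realization of the opposite category. A secondary subtlety is bookkeeping for the framing: reflecting the $\R_+$-factors of $\ESpace[\TupV{d}]{a}{b}$ alters the framing by a global sign that must be absorbed into a consistent choice of sign assignment on the dual cube, which is why one expects only a \emph{stable} homotopy equivalence (with possibly some additional suspensions and desuspensions that cancel).
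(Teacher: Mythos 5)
You should first be aware that the paper does not prove this statement: it appears in \Section{speculations} precisely as \Conjecture{mirror}, with no argument offered beyond an analogy to Manolescu's Seiberg--Witten spectrum. So there is no paper proof to compare against, and your proposal must be judged on its own terms. Its overall shape is reasonable (and is close in spirit to how the statement was eventually established in later work): split the problem into (i) a Spanier--Whitehead duality between the realization of a framed flow category and that of its opposite, and (ii) an identification of $\KhFlowCat(m(L))$, up to grading shifts and the ladybug choice, with the opposite of $\KhFlowCat(L)$ via the duality of decorated resolution configurations.

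As written, however, the central content is missing rather than proved. Step (i) is the theorem, and your argument for it does not go through literally: the cells $\Cell{x}$ of $\Realize{\Cat}$ from \Definition{flow-gives-space} are not embedded disjointly in a sphere --- each attaching map collapses most of $\bdy\Cell{x}$ to the single basepoint --- so ``the complement of the interiors of these boxes in a large ball'' is not a subspace of $\Realize{\Cat}$'s ambient anything, and the claimed dual box decomposition of a complement has no immediate meaning. To make this precise you would need an honest embedded or Thom-space model of $\Realize{\Cat}$ built from the neatly embedded, coherently framed moduli spaces, and then an Atiyah-duality argument for these $\Codim{n}$-manifolds compatible with all the corner strata and with the umkehr maps in \Equation{CJS-realize-functor}; none of that is routine, and it is exactly what a proof must supply. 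Relatedly, the framing of the opposite category is not obtained by ``reflecting the $\R_+$-factors'': passing to the opposite reverses the order of the factors $\R^{d_i}$ in $\ESpace[\TupV{d}]{a}{b}$, and one must check that the induced framings are coherent and that the resulting sign assignment on the dual cube is gauge equivalent to a standard one. Step (ii) also has an unverified point: you need that dualization of an index-$2$ ladybug configuration carries the \OutIn{} matching to the \InOut{} matching (so that $\KhFlowCat(m(L))$ is the opposite of the \emph{left-pair} category), and then you may only invoke \Proposition{ladybug-invariance} at the level of stable homotopy types, since the paper establishes it via Reidemeister moves, not as an isomorphism of framed flow categories; your higher-index induction must be organized so that \Part{extend-unique} of \Proposition{extend-moduli} applies after this bookkeeping, and the $(n_+,n_-)$ and quantum grading shifts must actually be computed to see that $j$ matches $-j$. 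In short: a sensible plan, but the duality statement for Cohen--Jones--Segal realizations is asserted, not proven, and that is where the real work lies.
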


\begin{remark}
  The analogous result for Manolescu's Seiberg-Witten stable homotopy type,
  that $\mathrm{SWF}(-Y)$ is Spanier-Whitehead dual to
  $\mathrm{SWF}(Y)$, is observed in~\cite[Remark
  2]{Man-gauge-swspectrum}, using the corresponding result about
  Conley indices from~\cite{Cornea-top-dual}.
\end{remark}

\subsection{Disjoint unions and connected sums}
For Khovanov homology, disjoint unions correspond to tensor
products. The obvious analogue at the space level is the smash
product:
\begin{conjecture}\label{conj:disjoint-union}
  Let $L_1$ and $L_2$ be links, and $L_1\amalg L_2$ their disjoint union. Then
  \[
  \KhSpace^j(L_1\amalg L_2)\simeq \bigvee_{j_1+j_2=j}\KhSpace^{j_1}(L_1)\smas\KhSpace^{j_2}(L_2).
  \]
  Moreover, if we fix a basepoint $p$ in $L_1$, not at a crossing, and
  consider the corresponding basepoint for $L_1\amalg L_2$, then
  \[
  \rKhSpace^j(L_1\amalg L_2)\simeq \bigvee_{j_1+j_2=j}\rKhSpace^{j_1}(L_1)\smas\KhSpace^{j_2}(L_2).
  \]
\end{conjecture}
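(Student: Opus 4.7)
The plan is to factor the Khovanov flow category of a disjoint union as a product of flow categories for the two components, and then show that the Cohen--Jones--Segal realization turns such a product into a smash product of spectra; the grading shifts and the quantum-grading decomposition then follow formally.

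First I would define a product $\Cat_1\times\Cat_2$ of two flow categories. Objects are pairs, the grading is additive, and for distinct pairs one sets
\[
\Moduli_{\Cat_1\times\Cat_2}((x_1,x_2),(y_1,y_2))=\Moduli_{\Cat_1}(x_1,y_1)\star\Moduli_{\Cat_2}(x_2,y_2),
\]
with the convention that an empty factor acts as the identity for the join $\star$. The pushout description $\partial(X\star Y)=X\cup_{X\times Y}Y$ supplies the composition maps and face structure of \Definition{flow-cat}, framings combine via disjoint Euclidean factors, and a direct check shows that the associated chain complex is $C^*(\Cat_1)\otimes C^*(\Cat_2)$. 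I would then identify $\KhFlowCat(L_1\amalg L_2)$ with $\KhFlowCat(L_1)\times\KhFlowCat(L_2)$: resolution configurations, labelings, and the partial order $\prec$ all split factorwise, and any ladybug configuration arising in $L_1\amalg L_2$ is connected and hence lies entirely in a single $L_i$, so the ladybug matching is inherited componentwise. Index-$1$ and index-$2$ moduli spaces agree on the nose (an index-$2$ product with one surgery on each side is $\ast\star\ast$, an interval), and for higher index \Part{extend-unique} of \Proposition{extend-moduli} gives diffeomorphism fixing the boundary once the $\partial_\expect$ structures are inductively matched.

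Next I would establish $\Realize{\Cat_1\times\Cat_2}\simeq\Realize{\Cat_1}\smas\Realize{\Cat_2}$. Using the product Euclidean embedding, \Definition{flow-gives-space} gives $\Cell{(x_1,x_2)}=\Cell{x_1}\times\Cell{x_2}$ and analogous product splittings for the subcells $\Cell[(y_1,y_2)]{(x_1,x_2)}$ on the boundary. In the Cohen--Jones--Segal formulation at the end of \Section{flow-to-space}, the functor $Z$ for $\Cat_1\times\Cat_2$ is the external smash of the $Z$'s for the two factors, so the realization of the former is the smash product of the realizations of the latter. The desuspension constants satisfy $C_{12}=C_1+C_2$, and since quantum gradings are additive on the product of Khovanov categories, this yields
\[
\KhSpace^j(L_1\amalg L_2)\simeq\bigvee_{j_1+j_2=j}\KhSpace^{j_1}(L_1)\smas\KhSpace^{j_2}(L_2).
\]
For the reduced statement, with $p\in L_1$, the downward-closed subcategory $\rKhFlowCat_+(L_1\amalg L_2)$ equals $\rKhFlowCat_+(L_1)\times\KhFlowCat(L_2)$, and the same argument applies.

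The hard part will be making the identification $\KhFlowCat(L_1\amalg L_2)\cong\KhFlowCat(L_1)\times\KhFlowCat(L_2)$ respect framings in a way that survives the passage to stable homotopy types. \Part{extend-unique} only pins down moduli spaces up to diffeomorphism fixing the boundary, and the two constructions produce a priori distinct framed flow categories. The clean way around this is to prove an auxiliary invariance result along the lines of \Proposition{choice-independent} and \Lemma{perturb-connect-framings}: any two framed flow categories refining the same Khovanov chain complex, with the same underlying combinatorial $\prec$-structure and sign assignment, have stably homotopy equivalent realizations. One then simply equips $\KhFlowCat(L_1)\times\KhFlowCat(L_2)$ with the product framing, realizes it as $\KhSpace(L_1)\smas\KhSpace(L_2)$ via the smash formula above, and applies the invariance result to conclude.
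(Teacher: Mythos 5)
Note first that the paper offers no proof of this statement---it is posed as a conjecture in \Section{speculations}---so the only question is whether your outline itself closes it, and it does not. The first gap is your definition of the product flow category via joins. \Condition{boundary-is} forces the codimension-one faces of $\Moduli_{\Cat_1\times\Cat_2}((x_1,x_2),(y_1,y_2))$ to be the products $\Moduli((z_1,z_2),(y_1,y_2))\times\Moduli((x_1,x_2),(z_1,z_2))$ over all intermediate objects, and this already fails in the smallest mixed case: for a basic decorated resolution configuration of $L_1\amalg L_2$ with one arc in $L_1$ and two arcs in $L_2$ forming a non-ladybug index $2$ configuration, these faces form a $6$-cycle which must cover $\del\Moduli_{\CubeFlowCat(3)}(\vect{1},\vect{0})$, the hexagon of \Lemma{cube-cat-struct}, via $\Forget$; but $\pt\star[0,1]$ is a triangle with three boundary faces, whose end strata are collapsed copies of the factors rather than the products $\Moduli_1\times\Moduli_2$ recording the two orders of breaking. (The coincidence at index $2$, where $\pt\star\pt$ is an interval, does not persist.) The correct product moduli spaces are permutohedron-type compactifications of $\interior{\Moduli}_1\times\interior{\Moduli}_2\times\R$, the extra $\R$ being the relative flow-time shift, and constructing these coherently together with neat embeddings and framings is the actual content of the problem. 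Your appeal to \Part{extend-unique} of \Proposition{extend-moduli} cannot substitute for this: that uniqueness applies only after the $\Codim{n}$-boundaries and their maps to the cube moduli spaces have been matched, which is exactly what fails here.

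The second gap is the ``auxiliary invariance result'' invoked at the end: that any two framed flow categories refining the same Khovanov chain complex, with the same $\prec$-structure and sign assignment, have stably equivalent realizations. This is not ``along the lines of'' \Proposition{choice-independent} and \Lemma{perturb-connect-framings}---those compare framings of one fixed flow category through perturbations and $1$-parameter families. As stated, your lemma asserts that the realization is determined by the underlying combinatorics, which runs against the whole point of the construction: with the moduli spaces fixed, changing the framed cobordism class of a $1$-dimensional moduli space (a class in $\pi_1(O)$) changes the attaching maps and, in general, the stable homotopy type; this is precisely the extra information that makes $\Sq^2$ nontrivial in \cite{RS-steenrod} and that Seed's examples \cite{See-kh-squares} detect. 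So the final comparison between your product framing on $\KhFlowCat(L_1)\times\KhFlowCat(L_2)$ and the framing the paper assigns to $\KhFlowCat(L_1\amalg L_2)$ (pulled back from a framed $\CubeFlowCat(n_1+n_2)$ via the cover $\Funky$) is unproved, and it is where the real work lies. Both difficulties are genuine: this conjecture was settled only later, by Lawson, Lipshitz and Sarkar, using a Burnside-functor reformulation of the Khovanov spectrum rather than a direct product of framed flow categories.
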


Similarly, reduced Khovanov homology should be well behaved for connected sums:
\begin{conjecture}\label{conj:connect-sum}
  Let $L_1$ and $L_2$ be based links and $L_1\# L_2$ the connected sum
  of $L_1$ and $L_2$, where we take the connected sum near the
  basepoints. Then
  \[
  \rKhSpace^j(L_1\# L_2)\simeq \bigvee_{j_1+j_2=j}\rKhSpace^{j_1}(L_1)\smas\rKhSpace^{j_2}(L_2).
  \]
\end{conjecture}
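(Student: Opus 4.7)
The plan is to exhibit a natural product operation on framed flow categories whose realization is the smash product of realizations, and then to identify $\rKhFlowCat(L_1\# L_2)$ with the product $\rKhFlowCat(L_1)\times\rKhFlowCat(L_2)$. First, arrange the connected sum near the basepoints so no new crossings are created, and order the crossings so those of $L_1$ precede those of $L_2$; then the cube of resolutions for $L_1\# L_2$ is the product cube $\{0,1\}^{n_1}\times\{0,1\}^{n_2}$. At each vertex $(u_1,u_2)$, the resolution $\AssRes{L_1\# L_2}{(u_1,u_2)}$ is obtained from $\AssRes{L_1}{u_1}\amalg \AssRes{L_2}{u_2}$ by merging the two basepoint circles into a single combined basepoint circle $Z_p$.

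Using the convention that $Z_p$ is labeled $x_-$, generators of $\rKhCx(L_1\# L_2)$ at $(u_1,u_2)$ biject with pairs of generators of $\rKhCx(L_1)\otimes\rKhCx(L_2)$ via the labelings of the non-basepoint circles. A short case analysis of the Khovanov differential confirms this is a chain isomorphism with bigrading shift $j=j_1+j_2$: surgeries disjoint from $Z_p$ give $d_1\otimes 1$ or $1\otimes d_2$ directly; surgeries merging a circle into $Z_p$ force the other circle to carry the label $x_+$ (since the output must be $x_-$), matching the reduced merge rule $x_-\otimes x_+\mapsto x_-$ in the factor; and surgeries splitting $Z_p$ yield two $x_-$'s, again matching the reduced split rule. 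This chain isomorphism is the target of the spectrum-level statement.

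To promote this to the spectrum level, define a product of framed flow categories $\Cat_1\times\Cat_2$ with $\Ob=\Ob(\Cat_1)\times\Ob(\Cat_2)$, with $\gr(x_1,x_2)=\gr(x_1)+\gr(x_2)$, and with moduli spaces
\[
\Moduli\bigl((x_1,x_2),(y_1,y_2)\bigr)=\Moduli(x_1,y_1)\times\Moduli(x_2,y_2)\times I
\]
when both components change (and $\Moduli(x_i,y_i)$ otherwise), together with product neat framed embeddings. The realization $|\Cat_1\times\Cat_2|$ should be naturally $|\Cat_1|\smas|\Cat_2|$ since each cell $\Cell{(x_1,x_2)}$ from \Definition{flow-gives-space} is a product of the cells $\Cell{x_i}$ and the boundary attachments respect this product up to the identifications that collapse everything outside the distinguished sub-boxes (which is exactly what the smash product does). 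Granting this, the identification $\rKhFlowCat(L_1\# L_2)\cong \rKhFlowCat(L_1)\times\rKhFlowCat(L_2)$ — forced on index-$1$ moduli spaces by the chain isomorphism of Step 2 and extended at higher index by the inductive construction of \Section{mod-dec-res-config} (each basic resolution configuration in $L_1\# L_2$ factors, after restricting to the relevant factor, as a basic configuration in $L_i$) — yields the claimed wedge decomposition $\rKhSpace^j(L_1\# L_2)\simeq\bigvee_{j_1+j_2=j}\rKhSpace^{j_1}(L_1)\smas\rKhSpace^{j_2}(L_2)$ after desuspension by $C_1+C_2$.

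The main obstacle is verifying that the product of framed flow categories realizes to the smash product in the sense described, and that the higher-index moduli spaces really decompose as products. For $\ind(D)\geq 4$, the moduli spaces of \Section{nd-moduli-spaces} are constructed by obstruction theory rather than by an explicit formula, so the product filling and the filling of the corresponding configuration in $L_1\# L_2$ need not be literally equal; however, \Part{extend-unique} of \Proposition{extend-moduli} provides a diffeomorphism (fixing the boundary) between any two such fillings, which suffices for the CW structures to match up to homotopy of attaching maps. Similar care is needed for framings — using the product sign assignment on $\Cube(n_1+n_2)=\Cube(n_1)\times\Cube(n_2)$, \Lemma{perturb-connect-framings} lets one connect the product framing of $\CubeFlowCat(n_1)\times\CubeFlowCat(n_2)$ to any chosen framing of $\CubeFlowCat(n_1+n_2)$, so the perturbation machinery of \Definition{coherent-framing-to-framed} delivers compatible framings on both sides of the desired equivalence.
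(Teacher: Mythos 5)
This statement is one the paper explicitly leaves as a conjecture (no proof is given, and it was only established later by quite different, more functorial methods), so your argument has to stand entirely on its own; as written it has two genuine gaps. The first is your definition of the product flow category. Setting $\Moduli\bigl((x_1,x_2),(y_1,y_2)\bigr)=\Moduli(x_1,y_1)\times\Moduli(x_2,y_2)\times I$ does not produce a flow category: \Condition{boundary-is} demands one codimension-one boundary face for every intermediate object, and the naive product-with-interval has far too few faces. The cube itself is the test case: by \Lemma{cube-cat-struct}, $\Moduli_{\CubeFlowCat(4)}(\vect{1},\vect{0})$ is the permutohedron $\Permu{4}$ (a truncated octahedron, with $14$ facets indexed by the intermediate vertices), whereas your recipe applied to $\CubeFlowCat(2)\times\CubeFlowCat(2)$ gives $I\times I\times I$ with only $6$ facets. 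The same problem infects the Khovanov side: a basic resolution configuration of $L_1\# L_2$ with arcs from both factors attached to the shared basepoint circle $Z_p$ does not split as a disjoint union, and its moduli space is a cover of a permutohedron, not of a product-with-interval; so the claimed isomorphism $\rKhFlowCat(L_1\# L_2)\cong\rKhFlowCat(L_1)\times\rKhFlowCat(L_2)$ is not forced by the index-one data plus \Part{extend-unique} of \Proposition{extend-moduli} (which in any case only applies in dimensions $\geq 2$, and says nothing about whether a correctly defined external product has the same boundary data as the connected-sum category).

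The second gap is the step you yourself flag as the main obstacle and then treat as routine: that the realization of a product of framed flow categories is the smash product of the realizations. Nothing in the paper gives this, and it is not formal in the Cohen--Jones--Segal framework used here: the cells of $\Realize{\Cat}$ are cut out by framed neat embeddings into the interleaved spaces $\ESpace[\TupV{d}]{a}{b}$, and a product of two such neat embeddings is not a neat embedding of the required interleaved form; reordering coordinates, producing a coherent framing for the whole product category, and checking that the resulting attaching maps realize the smash product is precisely where the difficulty lies. Your appeals to \Lemma{perturb-connect-framings} and \Lemma{change-framing-cube} only compare two framings of one and the same flow category refining a fixed sign complex; they cannot manufacture the multiplicativity statement. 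Until a correct external product of framed flow categories is defined, shown to agree with $\rKhFlowCat(L_1\# L_2)$ including all higher moduli spaces and framings, and shown to realize to $\Realize{\rKhFlowCat(L_1)}\smas\Realize{\rKhFlowCat(L_2)}$, the argument establishes only the chain-level isomorphism, which is already known and is strictly weaker than the conjecture.
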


The unreduced Khovanov homology of a connected sum is slightly more
complicated. A choice of basepoint makes the Khovanov complex
$\KhCx(L)$ into a module over $\Kh(U)=\Z\langle x_+,x_-\rangle$. (The
multiplication on $\Kh(U)$ is defined as follows: $x_+$ acts as the
identity and $x_-^2=0$.)  At the level of complexes,
$\KhCx(L_1\#L_2)\cong
\KhCx(L_1)\otimes_{\Kh(U)}\KhCx(L_2)$~\cite[Proposition
3.3]{Kho-kh-patterns}. There is also an analogous
statement using comultiplication: $\Kh(U)$ is a coalgebra with
$\Delta(x_-)=x_-\otimes x_-$ and $\Delta(x_+)=x_+\otimes
x_-+x_-\otimes x_+$, and a choice of basepoint makes $\KhCx(L)$ into a
comodule over $\Kh(U)$.

\begin{lemma}\label{lem:conn-sum-comod}
  Let $L_1$ and $L_2$ be based links and $L_1\# L_2$ the connected sum
  of $L_1$ and $L_2$, where we take the connected sum near the
  basepoints. Then $\KhCx(L_1\# L_2)$ is the cotensor product over
  $\Kh(U)$ of $\KhCx(L_1)$ and $\KhCx(L_2)$:
  \[
  \KhCx(L_1\# L_2)=\KhCx(L_1)\Box_{\Kh(U)}\KhCx(L_2).
  \]
\end{lemma}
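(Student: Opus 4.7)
The plan is to construct an explicit chain isomorphism between $\KhCx(L_1 \# L_2)$ and $\KhCx(L_1)\Box_{\Kh(U)}\KhCx(L_2)$ realized via the split cobordism at the basepoint. Topologically, $L_1 \amalg L_2$ is obtained from $L_1 \# L_2$ by a single saddle that splits the basepoint circle $Z_p$ into the two basepoint circles $Z_{p_1}$ and $Z_{p_2}$. Khovanov's TQFT assigns to this cobordism a chain map
\[
\iota \co \KhCx(L_1 \# L_2) \to \KhCx(L_1)\otimes\KhCx(L_2),
\]
which, on each resolution, is the identity on the labels of the non-basepoint circles and applies the comultiplication $\Delta \co \Kh(U) \to \Kh(U) \otimes \Kh(U)$ to the label of $Z_p$.

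The first thing I would check is that $\iota$ factors through $\KhCx(L_1)\Box_{\Kh(U)}\KhCx(L_2)$: both $(\Delta_1 \otimes \Id)\circ \iota$ and $(\Id \otimes \Delta_2)\circ\iota$ topologically amount to splitting $Z_p$ twice (first into $Z_{p_1}$ and $Z_{p_2}$, then splitting one of these again), and these agree by coassociativity of $\Delta$. Next, I would verify that $\iota$ is an isomorphism onto the cotensor product resolution-by-resolution: since each $\KhCx(L_i)$ is a cofree $\Kh(U)$-comodule on the labels of the non-basepoint circles, the computation reduces to the statement $\Kh(U)\Box_{\Kh(U)}\Kh(U) \cong \Kh(U)$ via $\Delta$, which is a direct calculation from the formulas $\Delta(x_+)=x_+\otimes x_-+x_-\otimes x_+$ and $\Delta(x_-)=x_-\otimes x_-$: the equalizer of $\Delta\otimes\Id$ and $\Id\otimes\Delta$ on $\Kh(U)\otimes\Kh(U)$ is precisely the rank-$2$ submodule spanned by $\Delta(x_+)$ and $\Delta(x_-)$.

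The main obstacle is verifying that $\iota$ commutes with the Khovanov differential. The differential is a sum of local merge/split operations at each crossing, and for crossings near the basepoint of $L_i$ the relevant circle is $Z_{p_i}$; one must check that performing such a merge or split and then applying $\Delta$ at $Z_p$ agrees, up to sign, with first applying $\iota$ and then performing the merge/split on the appropriate tensor factor. This boils down to the Frobenius compatibility $(m\otimes\Id)(\Id\otimes\Delta) = \Delta\circ m = (\Id\otimes m)(\Delta\otimes\Id)$ on $\Kh(U)$, together with coassociativity of $\Delta$, and is a tedious but standard diagrammatic check. Alternatively, one can appeal directly to Khovanov's identification $\KhCx(L_1 \# L_2) \cong \KhCx(L_1)\otimes_{\Kh(U)}\KhCx(L_2)$ from \cite[Proposition 3.3]{Kho-kh-patterns} and then use that, because $\Kh(U)$ is a Frobenius algebra, the tensor and cotensor products over $\Kh(U)$ of the natural $(\text{module}, \text{comodule})$ pair on $(\KhCx(L_1), \KhCx(L_2))$ are canonically isomorphic via $\Delta$; this bypasses the bare-hands differential check entirely.
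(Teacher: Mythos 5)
Your argument is correct and is essentially the paper's proof: the paper also uses the split map at the basepoint circle (your $\iota$), unpacks the cotensor product as the kernel of $\Delta_1\otimes\Id-\Id\otimes\Delta_2$ with the same explicit rank-two description you obtain from $\Kh(U)\Box_{\Kh(U)}\Kh(U)\cong\Kh(U)$, and identifies the image of the split map with that subcomplex. Your write-up merely makes explicit the chain-map and cofreeness checks that the paper leaves implicit (and adds an optional alternative via Khovanov's tensor-product formula), so there is nothing substantively different to flag.
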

\begin{proof}
  Recall that the cotensor product $M\Box_R N$ is defined by the short
  exact sequence
  \[
  \xymatrix{
    0 \ar[r]& M\Box_R N\ar[r] & M\otimes
    N\ar[rrr]^-{\Delta_M\otimes\Id_N-\Id_M\otimes\Delta_N}&&& M\otimes R\otimes N.}
  \]
  see, for instance,~\cite[Section 1.10]{BW-other-corings}.
  In particular, $\KhCx(L_1)\Box_{\Kh(U)}\KhCx(L_2)$ is the subcomplex
  of $\KhCx(L_1)\otimes \KhCx(L_2)$ generated by:
  \begin{itemize}
  \item Elements $(\AssRes{L_1}{u_1},x_1)\otimes(\AssRes{L_2}{u_2},x_2)$ where
    $x_i$ labels the marked circle in $\AssRes{L_i}{u_i}$ by $x_-$, and
  \item Elements $(\AssRes{L_1}{u_1},x_1)\otimes(\AssRes{L_2}{u_2},y_2)+
    (\AssRes{L_1}{u_1},y_1)\otimes(\AssRes{L_2}{u_2},x_2)$ where $x_i$
    (respectively $y_i$) labels the marked circle in $D_{L_i}(u_i)$ by
    $x_-$ (respectively $x_+$).
  \end{itemize}
  The split map $\Delta\co \KhCx(L_1\#L_2)\to \KhCx(L_1)\otimes
  \KhCx(L_2)$ is injective, and its image is exactly
  $\KhCx(L_1)\Box_{\Kh(U)}\KhCx(L_2)$ as just described.
\end{proof}

We can lift this to the level of spaces as follows. The space
$\KhSpace(U)$ is $S^0\vee S^0$. Denote the basepoint in $\KhSpace(U)$
by $*$, and the other two points by $p_+$ and $p_-$. Then
\[
\KhSpace(U)\smas \KhSpace(U)=\{*\smas *, p_-\smas p_-, p_-\smas p_+, p_+\smas p_-, p_+\smas p_+\}.
\]
The comultiplication on $\Kh(U)$ dualizes to a basepoint-preserving
map
$\KhSpace(U)\smas \KhSpace(U)\to \KhSpace(U)$ given by
\begin{align*}
  p_-\smas p_-&\mapsto p_- & p_+\smas
  p_-&\mapsto p_+\\
  p_-\smas p_+&\mapsto p_+ & p_+\smas
  p_+&\mapsto *.
\end{align*}
This makes $\KhSpace(U)$ into a (discrete) ring spectrum.

Similarly, if $L$ is a based link then $\KhSpace(L)\smas
\KhSpace(U)\simeq \KhSpace(L)\vee \KhSpace(L)$, where the first
summand corresponds to $p_-$ (say) and the second to
$p_+$. There is a map 
\[
m\co \KhSpace(L)\smas \KhSpace(U)\simeq \KhSpace(L)\vee \KhSpace(L)\to \KhSpace(L)
\]
given as follows:
\begin{itemize}
\item On the first wedge summand $\KhSpace(L)$, $m$ is the identity map.
\item Recall from \Section{reduced} that $\Realize{\KhFlowCat(L)}$ has
  a subcomplex $\Realize{\rKhFlowCat_+(L)}$ and a quotient complex
  $\Realize{\rKhFlowCat_-(L)}=\Realize{\KhFlowCat(L)}/\Realize{\rKhFlowCat_+(L)}$, and there is a canonical
  identification
  $\Realize{\rKhFlowCat_+(L)}\cong\Realize{\rKhFlowCat_-(L)}$. On the
  second wedge summand of $\KhSpace(L)$, $m$ collapses
  $\Realize{\rKhFlowCat_+(L)}$ to the basepoint and takes
  $\Realize{\rKhFlowCat_-(L)}$ to $\Realize{\rKhFlowCat_+(L)}$ by the
  canonical identification.
\end{itemize}
This makes $\KhSpace(L)$ into a module spectrum over $\KhSpace(U)$.

We can now formulate a space-level version of \Lemma{conn-sum-comod}:
\begin{conjecture}\label{conj:unred-con-sum}
  Let $L_1$ and $L_2$ be based links. Then 
  \[
  \KhSpace(L_1\# L_2)\simeq \KhSpace(L_1)\otimes_{\KhSpace(U)}\KhSpace(L_2).
  \]
\end{conjecture}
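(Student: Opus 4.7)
The plan is to construct a natural comparison map
\[
\Phi\co \KhSpace(L_1)\otimes_{\KhSpace(U)}\KhSpace(L_2)\to \KhSpace(L_1\# L_2),
\]
modeling the derived tensor product by the geometric realization of the two-sided bar construction $B(\KhSpace(L_1),\KhSpace(U),\KhSpace(L_2))$, and then to show that $\Phi$ is a stable equivalence by comparing it to the chain-level isomorphism of \Lemma{conn-sum-comod}. The first step is to give a geometric description of $\KhFlowCat(L_1\# L_2)$ in terms of $\KhFlowCat(L_1)$ and $\KhFlowCat(L_2)$: the crossings of $L_1\# L_2$ biject with the disjoint union of the crossings of $L_1$ and $L_2$, and each resolution of $L_1\# L_2$ is obtained from the corresponding disjoint union of resolutions by a single saddle merge at the connected sum region. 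This identifies the objects of $\KhFlowCat(L_1\# L_2)$ with the subset of pairs $(\ob{x}_1,\ob{x}_2)\in\Ob(\KhFlowCat(L_1))\times\Ob(\KhFlowCat(L_2))$ compatible at the basepoint circles, which is precisely the cotensor product condition from the proof of \Lemma{conn-sum-comod}. I would then lift this bijection of objects to a map of framed flow categories by analyzing how the resolution configurations $\AssRes{L_1\#L_2}{(u_1,u_2)}$ decompose as products of those for $L_1$ and $L_2$, glued along the basepoint circle.

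The second step is to establish a Künneth-type spectral sequence
\[
E_2^{s,t}=\mathrm{Tor}^{\Kh(U)}_{-s,t}(\Kh(L_1),\Kh(L_2))\Rightarrow \wt{H}^{s+t}(\KhSpace(L_1)\otimes_{\KhSpace(U)}\KhSpace(L_2))
\]
for the derived tensor product of module spectra (or, dually, to use the coefficient structure dual to this). Because $\Kh(U)$ is a Frobenius algebra, the cotensor product $\Box_{\Kh(U)}$ can be identified with the tensor product $\otimes_{\Kh(U)}$ via the Frobenius isomorphism, and one expects the higher Tor groups to vanish for the specific pair $(\Kh(L_1),\Kh(L_2))$; combined with \Lemma{conn-sum-comod}, this collapses $E_2$ to an edge concentrated at $\Kh(L_1\# L_2)$. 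By construction, $\Phi$ realizes this identification on $E_2$, so it induces an isomorphism on cohomology and is therefore a stable equivalence by Whitehead's theorem for spectra.

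The main obstacle is the rigorous construction of $\Phi$ at the spectrum level rather than the chain level. The module structure $m\co \KhSpace(L)\smas\KhSpace(U)\to\KhSpace(L)$ defined before the conjecture is only specified up to homotopy as a map of spectra, which is not enough to form an associative bar construction. To remedy this, one would need to develop a theory of \emph{module} framed flow categories over a \emph{ring} framed flow category, generalizing \Section{covers}, such that the module structure on $\KhFlowCat(L)$ over $\KhFlowCat(U)$ is strict (or at least $A_\infty$-coherent) at the flow-category level, and such that the Cohen-Jones-Segal realization of \Definition{flow-gives-space} produces a genuine module spectrum. One possible route is to enhance each cell $\Cell{\ob{x}}$ with an action of the cells of $\KhSpace(U)$ coming from the basepoint circle in the underlying resolution configuration, and to check that this action is compatible with the attaching maps. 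Once this foundational framework is in place, the comparison map $\Phi$ should arise from a map of bar-type semisimplicial objects in framed flow categories, and the spectral sequence argument above should conclude the proof.
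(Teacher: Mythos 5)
You are proposing a proof of \Conjecture{unred-con-sum}, which the paper states precisely as a conjecture: the paper offers no proof of it, only the chain-level statement \Lemma{conn-sum-comod} and the definition of the $\KhSpace(U)$-module structure on $\KhSpace(L)$, so your argument has to stand entirely on its own. As written it does not, because the central construction is deferred rather than carried out. You yourself identify the main obstacle---the multiplication $m\co\KhSpace(L)\smas\KhSpace(U)\to\KhSpace(L)$ is just a map of spectra, with no associativity or coherence data at the flow-category or cell level---and your ``remedy'' (develop a theory of module framed flow categories over a ring framed flow category whose Cohen--Jones--Segal realization is a genuine module spectrum) is exactly the missing mathematics, not a step you have supplied. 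Without it there is no well-defined object $\KhSpace(L_1)\otimes_{\KhSpace(U)}\KhSpace(L_2)$ (no bar construction to realize), let alone a comparison map $\Phi$. The geometric step is also too quick: in a resolution of $L_1\# L_2$ the two basepoint circles fuse into a single circle, and under the split map of \Lemma{conn-sum-comod} a generator whose fused circle is labeled $x_+$ corresponds to a sum $x_1\otimes y_2+y_1\otimes x_2$, not to a single pair, so ``objects of $\KhFlowCat(L_1\#L_2)$ = pairs compatible at the basepoint'' is only a numerical coincidence of ranks, not the cotensor condition; and, more seriously, you say nothing about why the higher moduli spaces $\Moduli(D,\gen{x},\gen{y})$ for $L_1\# L_2$, which are built by the inductive, ladybug-dependent construction of Section 5 and are not products, should be compatible with any bar-type decomposition. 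That compatibility is where the actual content of the conjecture lies.

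The spectral-sequence step has independent problems. The K\"unneth spectral sequence for modules over a ring spectrum $R$ has $E_2=\mathrm{Tor}^{\pi_*R}(\pi_*M,\pi_*N)$; for $R=\KhSpace(U)=S^0\vee S^0$ the coefficient ring involves the stable homotopy groups of spheres, not $\Kh(U)$, and the cohomological (Eilenberg--Moore type) sequence you write down would require a convergence argument you have not given. The claim that higher Tor vanishes ``because $\Kh(U)$ is Frobenius'' is not an argument: $\Z[x]/(x^2)$ has infinite global dimension and Tor over it generically does not vanish. What makes the chain-level statement work is that $\KhCx(L)$ is a complex of \emph{free} $\Kh(U)$-modules (each marked circle contributes a free rank-one factor), so the underived and derived tensor products agree; the spectrum-level analogue---that $\KhSpace(L)$ is a cellular $\KhSpace(U)$-module built from free cells---is again exactly what would need to be proved, and would essentially be a strengthened form of the conjecture itself. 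So your proposal is a sensible plan of attack, but it is a plan with the two hardest steps (coherent module structure at the flow-category level, and freeness/collapse of the resulting spectral sequence) left open; the statement remains unproven.
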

We leave the gradings in \Conjecture{unred-con-sum} as an exercise to the reader.

\begin{remark}
  Note that a smash product of Moore spaces is typically not a Moore
  space. (For example, consider $\RR P^2\smas \RR P^2$.) So,
  \Conjecture{disjoint-union}, together with the computation of the
  Khovanov homology of the trefoil $T$, implies that $\KhSpace(T\amalg
  T)$ is not a wedge sum of Moore spaces. Similarly,
  \Conjecture{connect-sum} (respectively \Conjecture{unred-con-sum})
  implies that there are non-prime knots $K$ for with $\rKhSpace(K)$
  (respectively $\KhSpace(K)$) is not a wedge sum of Moore spaces.
\end{remark}

\bibliographystyle{myalpha}
\bibliography{newbibfile}

\end{document}